\theoremstyle{plain}
\newtheorem{theorem}{Theorem}
\newtheorem{theoremann}{Theorem}  
\newtheorem{conjecture}[theorem]{Conjecture}
\newtheorem{corollary}[theorem]{Corollary}
\newtheorem{lemma}[theorem]{Lemma}
\newtheorem{proposition}[theorem]{Proposition}
\theoremstyle{definition}
\newtheorem{definition}[theorem]{Definition}
\newtheorem{definitionp}[theorem]{Definition/Proposition}
\newtheorem{example}[theorem]{Example}
\newtheorem{remark}[theorem]{Remark}
\newtheorem{construction}[theorem]{Construction}
\newtheorem*{acknowledgement}{Acknowledgement}
\numberwithin{equation}{section}
\numberwithin{theorem}{section}
\newcommand{\myitem}[1]{%
\item[#1]\protected@edef\@currentlabel{#1}%
}
\let\cal\mathcal
\def\AA{{\cal A}}
\def\BB{{\cal B}}
\def\CC{{\cal C}}
\def\DD{{\cal D}}
\def\EE{{\cal E}}
\def\FF{{\cal F}}
\def\OO{{\cal O}}
\def\VV{{\cal V}}
\DeclareMathOperator{\Acb}{\textbf{Ac}^b}
\DeclareMathOperator{\Kb}{\textbf{K}^b}
\DeclareMathOperator{\Db}{\mathbf{D}^b}
\DeclareMathOperator{\DAb}{\mathbf{D}_{\mathcal{A}}^b}
\DeclareMathOperator{\Dbinf}{\mathbf{D}^b_\infty}
\DeclareMathOperator{\Pl}{{\mathcal{P}\ell}}
\DeclareMathOperator{\Plfin}{{\mathcal{P}\ell}_{\operatorname{fin}}}
\DeclareMathOperator{\Hom}{Hom}
\DeclareMathOperator{\Mod}{Mod}
\DeclareMathOperator{\smod}{mod}
\DeclareMathOperator{\im}{im}
\DeclareMathOperator{\coker}{coker}
\newcommand{\inflation}{\rightarrowtail}
\newcommand{\deflation}{\twoheadrightarrow}
\newcommand{\ex}[1]{{#1}^\text{ex}}
\newcommand{\T}{\mathsf{T}}
\DeclareMathOperator{\DTb}{\mathbf{D}_{\T}^b}
\DeclareMathOperator{\tor}{tor}
\newcommand{\fA}{\mathfrak{A}}
\newcommand{\bA}{\mathbb{A}}
\newcommand{\bK}{\mathbb{K}}
\newcommand{\bQ}{\mathbb{Q}}
\newcommand{\bZ}{\mathbb{Z}}
\begin{document}
\title[Non-commutative id\`{e}le class group]{A non-commutative analogue of Clausen's view \linebreak on the id\`{e}le class group}
\author{Oliver Braunling}
\address{Oliver Braunling \\ Albert-Ludwigs-University Freiburg \\ Institute for Mathematics \\ D-79104 Freiburg \\ Germany}
\email{oliver.braeunling@math.uni-freiburg.de}
\author{Ruben Henrard}
\address{Ruben Henrard \\ Universiteit Hasselt \\ Campus Diepenbeek \\ Departement WNI \\ 3590 Diepenbeek \\ Belgium}
\email{ruben.henrard@uhasselt.be}
\author{Adam-Christiaan van Roosmalen}
\address{Adam-Christiaan van Roosmalen \\ Universiteit Hasselt \\ Campus Diepenbeek \\ Departement WNI \\ 3590 Diepenbeek \\ Belgium}
\email{adamchristiaan.vanroosmalen@uhasselt.be}
\thanks{The first author was supported by DFG GK1821 \textquotedblleft Cohomological Methods
in Geometry\textquotedblright.}
\thanks{The third author was supported by FWO (12.M33.16N)}

\keywords{Locally compact modules, $K$-theory, exact category, id\`{e}le class group, Hilbert symbol}
\subjclass[2020]{19B28, 19F05, 22B05; 18E35}

\begin{abstract}
Clausen predicted that Chevalley's id\`{e}le class group of a number field
$F$ appears as the first $K$-group of the category of locally compact
$F$-vector spaces. This has turned out to be true, and even
generalizes to the higher $K$-groups in a suitable sense. We replace $F$ by a semisimple $\mathbb{Q}$-algebra, and obtain Fr\"{o}hlich's non-commutative id\`{e}le class group in
an analogous fashion, modulo the reduced norm one elements. Even in the number
field case our proof is simpler than the existing one, and based on the
localization theorem for percolating subcategories. Finally, using class field theory as input, we interpret Hilbert's reciprocity law (as well as a noncommutative variant) in terms of our results.
\end{abstract}
\maketitle

 \tableofcontents          




\section{Introduction}

Let $F$ be a number field and $\mathsf{LCA}_{F}$ the category of locally
compact topological $F$-vector spaces, that is: objects are topological $F$-vector spaces with a locally compact topology and morphisms are continuous $F$-linear maps. 
Clausen \cite{clausen} had predicted that%
\[
K_{1}(\mathsf{LCA}_{F})=\text{Chevalley's id\`{e}le class group,}%
\]
so that the first $K$-group gives the automorphic side of the id\`{e}le
formulation of global class field theory in the number field situation. This
is quite remarkable in that it requires no manual modifications in order to get
the infinite places into the picture, which is frequently
needed when using cohomological, cycle-theoretic or $K$-theoretic approaches
in arithmetic applications.

This picture also finds the correct automorphic object for local class field
theory or finite fields, every time just using $\mathsf{LCA}_{F}$ for the
respective type of field. In this paper we focus exclusively on the hardest part, the
number field case. The original predictions were confirmed in
\cite{kthyartin}, along with a determination of the entire $K$-theory spectrum
of $\mathsf{LCA}_{F}$. In this paper, we replace the number field $F$ by an
arbitrary finite-dimensional semisimple $\mathbb{Q}$-algebra $A$ and $\mathsf{LCA}_{A}$ is analogously defined to consist of locally compact $A$-modules.

We find generalizations of the corresponding results in \cite{kthyartin}, and
with simpler proofs. Let $K\colon\operatorname*{Cat}_{\infty}^{\operatorname*{ex}%
}\rightarrow\mathsf{A}$ be a localizing invariant in the sense of \cite{MR3070515} with values in a stable presentable $\infty$-category $\mathsf{A}$
which commutes with countable direct products of categories, e.g.,
non-connective algebraic $K$-theory.

\begin{theoremann}\label{theorem:MainIntroduction}
Let $A$ be a finite-dimensional semisimple $\mathbb{Q}$-algebra. Then there is
a fiber sequence%
\[
K(A)\longrightarrow K\mathbb{(}\mathsf{LCA}_{A,ab})\longrightarrow
K(\mathsf{LCA}_{A})\text{,}%
\]
where the middle category $\mathsf{LCA}_{A,ab}$ (the so-called
\textquotedblleft adelic blocks\textquotedblright) can be characterized in any
of the following equivalent ways:

\begin{enumerate}
\item It is the category of locally compact $A$-modules of type $\mathbb{A}%
$\ in the sense of Hoffmann--Spitzweck \cite[Definition 2.1]{MR2329311}.

\item It is the full subcategory of objects in $\mathsf{LCA}_{A}$ which are
simultaneously injective and projective.

\item It is a categorical restricted product,%
\[
\mathsf{LCA}_{A,ab}\simeq\left.  \prod\nolimits_{p}^{\prime}\right.
\operatorname*{proj}(A_{p})\text{,}%
\]
where the index $p$ runs over all prime numbers as well as $p=\mathbb{R}$, with the
meaning $A_{\mathbb{R}}=A\otimes_{\mathbb{Q}}\mathbb{R}$ and $A_{p}=A\otimes
_{\mathbb{Q}}\mathbb{Q}_{p}$, respectively. A precise definition of the middle category is
to employ $\mathsf{J}_{A}^{(\infty)}$ from Definition \ref{def_A} below.
\end{enumerate}
The map $K(A)\to K (\mathsf{LCA}_{A,ab})$ is induced by the functor $- \otimes_A \bA$ where $\bA$ is the ring of ad\`{e}les of $A$.
\end{theoremann}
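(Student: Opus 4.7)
The plan is in two parts: first, to verify that the three descriptions of $\mathsf{LCA}_{A,ab}$ agree, and second, to obtain the fiber sequence as an application of the localization theorem for percolating subcategories.

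For the equivalences, I would first establish (1)$\Leftrightarrow$(3) by unwinding the Hoffmann--Spitzweck definition: a type-$\bA$ module is a restricted product over all places $p$ of locally compact $A_p$-modules with a compact open subgroup at almost every finite place. Because $A_p$ is a finite-dimensional semisimple algebra over the local field $\mathbb{Q}_p$, the locally compact $A_p$-modules of this shape are exactly the finitely generated projective $A_p$-modules in their canonical topology (and real vector spaces at the archimedean place). For (1)$\Leftrightarrow$(2), I would use Pontryagin duality on $\mathsf{LCA}_A$: type-$\bA$ modules are self-dual, and simultaneous projectivity and injectivity amounts to the vanishing of both $\operatorname{Ext}^1(-,X)$ and $\operatorname{Ext}^1(X,-)$, conditions that duality swaps; combined with a local analysis at each place, this pins down the adèlic shape.

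For the fiber sequence, my plan is to construct an auxiliary exact category $\mathcal{E}$ whose objects are adèlic blocks equipped with a compatible rational structure (equivalently, pairs consisting of a projective $A$-module together with a coherent embedding into an adèlic block), in which $\operatorname*{proj}(A)$ embeds as a percolating subcategory. The key structural input is the canonical short exact sequence
\[
0 \to M \to M \otimes_A \bA \to M \otimes_A (\bA/A) \to 0
\]
in $\mathsf{LCA}_A$, displaying every projective $A$-module $M$ as a cocompact lattice inside the rational adèlic block $M \otimes_A \bA$. The localization theorem for percolating subcategories then produces a fiber sequence $K(\operatorname*{proj}(A)) \to K(\mathcal{E}) \to K(\mathcal{E}/\operatorname*{proj}(A))$. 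One then identifies $K(\mathcal{E})$ with $K(\mathsf{LCA}_{A,ab})$ via the forgetful functor $\mathcal{E} \to \mathsf{LCA}_{A,ab}$ and $K(\mathcal{E}/\operatorname*{proj}(A))$ with $K(\mathsf{LCA}_A)$ via a cokernel-type functor induced by quotienting by the global lattice; together these yield the stated fiber sequence, and the induced first map is tautologically $- \otimes_A \bA$ by construction.

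The main obstacle is the rigorous identification of both ends of the localization sequence, especially $K(\mathcal{E}/\operatorname*{proj}(A)) \simeq K(\mathsf{LCA}_A)$. Verifying the percolating axioms for the auxiliary category requires showing that quotients by rational sub-structures remain representable by honest $\mathsf{LCA}_A$-objects, which in turn uses the compactness of $\bA/A$ and the exactness of the adèlic tensor functor. The archimedean place demands separate attention because $A_\mathbb{R}$ has no compact open subgroup, so the local structure analysis there differs qualitatively from the non-archimedean places and must be spliced in by hand.
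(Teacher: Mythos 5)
Your reduction of the fiber sequence to an auxiliary category $\mathcal{E}$ of pairs $(P,\,P\hookrightarrow X)$ has a genuine gap, and it sits exactly where the actual content of the theorem lies. First, the comparison $K(\mathcal{E})\simeq K(\mathsf{LCA}_{A,ab})$ via the forgetful functor fails as stated: an adelic block admits a discrete cocompact $A$-lattice only if it is of the form $P\otimes_A\bA$, and already for $A=\mathbb{Q}$ the block $\mathbb{Q}_p$ admits no such lattice (any nonzero $\mathbb{Q}$-subspace of $\mathbb{Q}_p$ contains a sequence $p^nx\to 0$, hence is not discrete). So the forgetful functor is far from essentially surjective, and on $K_0$ its image only accounts for the ``diagonal'' classes inside the restricted product $\prod_p' K_0(A_p)$ computed in Theorem~\ref{thm_ComputeKn}; there is no $K$-equivalence here. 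Second, the identification $K(\mathcal{E}/\operatorname{proj}(A))\simeq K(\mathsf{LCA}_A)$ is asserted via a ``cokernel-type functor'', but a quotient $X/P$ of an adelic block by a cocompact lattice is never a nonzero discrete module (if $X/P$ were discrete, $P$ would be open, forcing the block to be discrete, hence zero), so the discrete constituent $D$ in the decomposition $M\cong K\oplus X\oplus D$ of Corollary~\ref{cor_Structure} is never reached. Making this end of the sequence match $K(\mathsf{LCA}_A)$ is not a loose end one can defer: it is the theorem, and your plan contains no mechanism for it.

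For comparison, the paper runs the localization machinery in the opposite direction: it quotients $\mathsf{LCA}_A$ by the strictly inflation-percolating subcategory $\mathsf{LCA}_{A,com}$ of compact modules (Proposition~\ref{Proposition:KillingCompacts}), passes to the exact hull, quotients further by the image $\VV\simeq\mathsf{LCA}_{A,ab}$ (which is inflation-percolating but not strictly so), identifies the resulting quotient with $\Mod(A)/\smod(A)$, and then rotates using the Eilenberg swindle on $\Mod(A)$ to obtain $K(A)\to K(\mathsf{LCA}_{A,ab})\to K(\mathsf{LCA}_A)$; the first map is identified with $-\otimes_A\bA$ because the conflation $P\inflation P\otimes_A\bA\deflation (P\otimes_A\bA)/P$ becomes an isomorphism once compacts are killed. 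Your sketch of characterization (2) is likewise too thin to count as a proof: the substantive points are that $\operatorname{Ext}^1(K,Y)=0$ for $K$ compact and $Y$ adelic (proved in Theorem~\ref{thm_AdelicBlocksAreProjectiveAndInjective} by a pullback and the Snake Lemma in the quasi-abelian setting, resting on the structure theory of Proposition~\ref{prop_Structure} and Lemma~\ref{lemma_DecompQAB}), and the converse that no nonzero discrete or compact object is simultaneously injective and projective (proved using the ad\`ele sequence and the Hoffmann--Spitzweck filtration); ``Pontryagin duality swaps the two Ext-vanishing conditions plus a local analysis'' addresses neither point, and note the paper flags (2) as the genuinely new characterization.
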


The characterization (3) shows most clearly how the idea of the restricted
product in Chevalley's id\`{e}les is lifted to a categorical level by using
locally compact modules. The characterization in (2) is new, has no previous
counterpart in the literature, and affirmatively settles \cite[Conjecture
1]{kthyartin}. The fiber sequence will be Theorem \ref{thm_FiberSeq}. The
different characterizations of $\mathsf{LCA}_{A,ab}$ follow from the
definition in case of (1), from Theorem
\ref{thm_AdelicBlocksAreProjectiveAndInjective} for (2), and from Proposition
\ref{prop_IdentifyJ} for (3).

To establish the fiber sequence in the above theorem, we follow the method of \cite{paper1} closely.  Specifically, we perform two subsequent localizations of the category $\mathsf{LCA}_{A}$.  In the first step, we consider the quotient by the category $\mathsf{LCA}_{A,com}$ of compact $A$-modules. In the second step, we further quotient by the (image of the) category $\mathsf{LCA}_{A,ab}$ of adelic blocks.  After performing these quotient constructions, one is left with the abelian category $\Mod A / \operatorname{mod} A$ and obtains the required fiber sequence.  This will be done in \S\ref{KTheoryComputations}.

The quotient $Q\colon \mathsf{LCA}_{A} \to \mathsf{LCA}_{A} / \mathsf{LCA}_{A, com}$ is obtained as a localization of $\mathsf{LCA}_{A}$ at the following class of morphisms: all inflations (deflations) with cokernel (kernel) in $\mathsf{LCA}_{A, com}.$   It is not clear that this localization can be equipped with an exact structure such that the localization functor $Q$ is exact, but it follows from \cite{hr} that $\mathsf{LCA}_{A} / \mathsf{LCA}_{A, com}$ has a natural one-sided exact structure (in the sense of \cite{BazzoniCrivei13}) for which $Q$ is exact.  We will recall the necessary setup for one-sided exact categories, and their place in the theory of localizations of exact categories in \S\ref{Section:OneSided}.

On the one hand, one-sided exact categories are a generalization of exact categories.  On the other hand, a one-sided exact category $\EE$ can be completed $2$-canonically to an exact category $\ex{\EE}$, called the \emph{exact hull} of $\EE$.  This completion yields an equivalence $\Dbinf(\EE) \to \Dbinf(\ex{\EE})$ of the derived $\infty$-categories, see \cite{hr2}, and as such, the flexibility added by considering the one-sided exact setting comes at no cost to applications concerning localizing invariants.

For the next quotient, namely the quotient of $\ex{[\mathsf{LCA}_{A} / \mathsf{LCA}_{A, com}]}$ by the image of the category $\mathsf{LCA}_{A,ab}$ of adelic blocks, we use the following attractive property of $\ex{[\mathsf{LCA}_{A} / \mathsf{LCA}_{A, com}]}$: all morphisms have cokernels and all cokernels are deflations.  As is shown in \S\ref{Section:OneSided}, this property holds for $\ex{[\mathsf{LCA}_{A} / \mathsf{LCA}_{A, com}]}$, as it holds for $\mathsf{LCA}_{A}$ (since the latter is quasi-abelian), and is preserved by both the localization and the exact hull.

The primary application is using non-connective $K$-theory as the localizing invariant $K$. It is easily shown to agree with ordinary Quillen $K$-theory (i.e., connective $K$-theory) for $\mathsf{LCA}_A$. Following this path, we recover the non-commutative analogue of the id\`{e}le class group due to Fr\"{o}hlich \cite{MR0376619}. That is, we rediscover a group which Fr\"{o}hlich defined manually in 1975 and without any reference to $K$-theory, and every part of Fr\"{o}hlich's definition fits perfectly (including being a restricted product and the infinite places).

\begin{theoremann}
Let $A$ be a finite-dimensional semi-simple $\mathbb{Q}$-algebra.

\begin{enumerate}
\item There is a natural isomorphism%
\[
K_{1}(\mathsf{LCA}_{A,ab})\overset{\sim}{\longrightarrow}\frac{J(A)}{J^{1}%
(A)}\text{,}%
\]
where $J(A)$ is Fr\"{o}hlich's id\`{e}le class group and $J^{1}(A)$ the
subgroup of reduced norm one elements.

\item There is a natural isomorphism%
\[
K_{1}(\mathsf{LCA}_{A})\overset{\sim}{\longrightarrow}\frac{J(A)}%
{J^{1}(A)\cdot\operatorname*{im}A^{\times}}\text{,}%
\]
where the units $A^{\times}$ are diagonally mapped to the id\`{e}les.
\end{enumerate}
\end{theoremann}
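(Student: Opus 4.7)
The plan is to extract parts (1) and (2) from Theorem A by applying $K_1$ and identifying the resulting groups with Fröhlich's constructions.

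For part (1), I would start from characterization (3) of Theorem A, which presents $\mathsf{LCA}_{A,ab}$ as the categorical restricted product $\left.\prod\nolimits_p^\prime\right.\operatorname{proj}(A_p)$. Since the localizing invariant $K$ commutes with countable products, and the restricted product is built from honest finite products extended by $\operatorname{proj}(\mathcal{O}_p)$ for maximal orders $\mathcal{O}_p \subset A_p$, applying $K_1$ should give
\[
K_1(\mathsf{LCA}_{A,ab}) \;\cong\; \left.\prod\nolimits_p^\prime\right. K_1(A_p),
\]
restricted with respect to the images of $K_1(\mathcal{O}_p)$. The classical Dieudonné / reduced-norm description for a semisimple algebra then gives, at each place, a surjection $K_1(A_p) \twoheadrightarrow \operatorname{nrd}(A_p^\times)$ whose kernel is $SK_1(A_p)$. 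Assembling these identifications over all places recovers exactly Fröhlich's definition of $J(A)/J^1(A)$.

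For part (2), I would apply the long exact homotopy sequence to the fiber sequence of Theorem A:
\[
K_1(A) \xrightarrow{\alpha} K_1(\mathsf{LCA}_{A,ab}) \longrightarrow K_1(\mathsf{LCA}_A) \longrightarrow K_0(A) \xrightarrow{\beta} K_0(\mathsf{LCA}_{A,ab}).
\]
Theorem A identifies $\alpha$ as induced by $-\otimes_A \mathbb{A}$; under the isomorphism of part (1) this is precisely the diagonal embedding $A^\times \hookrightarrow J(A)$ composed with the projection $J(A) \twoheadrightarrow J(A)/J^1(A)$, so its image matches $\operatorname{im} A^\times$ in the statement. For $\beta$, I would reduce via reduced rank to the map $K_0(Z(A)) \to K_0(Z(A)\otimes\mathbb{Q}_p)$ for a single place $p$: writing $Z(A) = \prod F_i$ as a product of number fields, each basis vector of $K_0(Z(A)) = \mathbb{Z}^r$ maps to a non-zero tuple supported on the local factors above the corresponding $F_i$, whence injectivity. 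Injectivity of $\beta$ then yields $K_1(\mathsf{LCA}_A) = \operatorname{coker}(\alpha) = J(A)/(J^1(A)\cdot\operatorname{im} A^\times)$.

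The main obstacle is part (1): one must show that the categorical restriction built into $\mathsf{J}_A^{(\infty)}$ matches Fröhlich's arithmetic restriction defining $J(A)$ on the nose after passing through $K_1$. Concretely, one has to verify that the image of $K_1(\mathcal{O}_p)$ inside $K_1(A_p)$ coincides with the subgroup of elements whose reduced norm lies in $\mathcal{O}_{Z(A_p)}^\times$ for each finite $p$, and that the archimedean factor at $p=\mathbb{R}$ enters unconstrained, as $A_\mathbb{R}^\times$, in agreement with Fröhlich's convention. A secondary bookkeeping task is to deduce the restricted-product description of $K_1$ from the countable-product hypothesis on $K$ together with the categorical filtered-colimit presentation of $\mathsf{J}_A^{(\infty)}$; this should be routine but requires attention to how the "small" compact-open pieces interact with the localizing invariant.
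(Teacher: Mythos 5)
Your proposal is correct and takes essentially the same route as the paper: the paper likewise obtains $K_1(\mathsf{LCA}_{A,ab})\cong\prod'_p K_1(A_p)$ (Wilson's $JK_1(A)$) from the restricted-product description together with commutation with countable products, and identifies it with $J(A)/J^1(A)$ via the reduced norm, the needed classical inputs being exactly the ones you flag (surjectivity of $A_p^\times\to K_1(A_p)$, triviality of the local reduced-norm kernels, and the integral comparison at maximal orders, cited there from Curtis--Reiner following Wilson). For (2) the paper also runs the long exact sequence of the fiber sequence, uses that $K(A)\to K(\mathsf{LCA}_{A,ab})$ is induced by $-\otimes_A\mathbb{A}$ so that the image of $K_1(A)$ is the diagonal image of $A^\times$, and checks injectivity of $K_0(A)\to K_0(\mathsf{LCA}_{A,ab})$ by noting that no simple $A$-module is killed by $-\otimes_A A_p$, which is the same argument as your reduced-rank reduction.
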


This will be Theorem \ref{thm_ComputeK1}. Not only does this re-establish
Clausen's prediction, it also proves the natural non-commutative analogue.

We can also describe $K_2$ in a precise way. Unlike the previous results, this relies on tools from class field theory.
\begin{theoremann}
\label{thm_B2}
Let $A$ be a finite-dimensional semisimple $\mathbb{Q}$-algebra. Write $\zeta(A)$ for its center and $(-)^{\wedge }$ for profinite completion.
\begin{enumerate}
\item There is a natural isomorphism%
\[
K_{2}(\mathsf{LCA}_{A,ab})\cong\bigoplus_{v}K_{2}(\zeta(A)_{v})\text{,}%
\]
where $v$ runs through the places of the number field $\zeta(A)$.

\item If $A$ is commutative (i.e. a number field),%
\[
K_{2}(\mathsf{LCA}_{A})^{\wedge }\cong\mu(A)\text{,}%
\]
where $\mu(-)$ denotes the group of roots of unity.

\item If Conjecture \ref{Conj_MerkurjevSuslin} of Merkurjev--Suslin holds,%
\[
K_{2}(\mathsf{LCA}_{A})^{\wedge }\cong\mu(\zeta(A))\text{,}%
\]
i.e. no condition on commutativity as in (2) is needed.
\end{enumerate}
\end{theoremann}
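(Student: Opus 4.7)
\textbf{Part (1).} I would apply characterization (3) of $\mathsf{LCA}_{A,ab}$ from Theorem A, which identifies it with the categorical restricted product $\prod'_{p} \operatorname{proj}(A_{p})$. In the $K_2$-degree, the restricted-product structure should collapse to a direct sum, since $K_2$ of the local integral pieces $\OO_p \otimes A$ is small (finite/divisible) and gets absorbed into the filtered colimit that assembles the restricted product. This would yield $K_{2}(\mathsf{LCA}_{A,ab}) \cong \bigoplus_{p} K_{2}(A_{p})$. At each $p$, decomposing $A_{p} = \prod_{v \mid p} A_{v}$ into its central simple factors over the local fields $\zeta(A)_{v}$, I would use Morita invariance together with Merkurjev's identification of $K_{2}$ of a central simple algebra over a local field with $K_2$ of its center, to obtain $K_{2}(A_{v}) \cong K_{2}(\zeta(A)_{v})$. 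Summing over all $v$ gives (1).

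\textbf{Part (2).} For $A = F$ a number field, the long exact sequence of the fiber sequence from Theorem A, combined with part (1) and the $K_1$-computation, reads
\[
K_{2}(F) \xrightarrow{\alpha} \bigoplus_{v} K_{2}(F_{v}) \longrightarrow K_{2}(\mathsf{LCA}_{F}) \longrightarrow F^{\times} \xrightarrow{\operatorname{diag}} J(F),
\]
because $J^{1}(F) = 1$ in the commutative case. The diagonal embedding $F^{\times} \hookrightarrow J(F)$ is evidently injective, so $K_{2}(\mathsf{LCA}_{F}) \cong \coker(\alpha)$. I would then apply Moore's reciprocity theorem: after profinite completion, the divisible summands of $K_{2}(F)$ and of each $K_{2}(F_{v})$ vanish, and what is left is precisely Moore's exact sequence $\bigoplus_{v}\mu(F_{v}) \to \mu(F) \to 0$, whose cokernel piece yields $\coker(\alpha)^{\wedge} \cong \mu(F)$.

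\textbf{Part (3).} The noncommutative case proceeds along the same lines. The noncommutative Moore reciprocity of Merkurjev--Suslin (Conjecture \ref{Conj_MerkurjevSuslin}) replaces the classical Moore input and identifies $\coker(K_{2}(A) \to \bigoplus_{v} K_{2}(\zeta(A)_{v}))^{\wedge}$ with $\mu(\zeta(A))$. There is, however, a new subtlety in the long exact sequence: the kernel of $K_{1}(A) \to J(A)/J^{1}(A)$ is $SK_{1}(A)$, which need not vanish in general. By Platonov's finiteness of $SK_{1}$ over number fields this kernel is finite, and one must verify that the resulting extension
\[
0 \to \coker(K_{2}(A) \to K_{2}(\mathsf{LCA}_{A,ab}))^{\wedge} \to K_{2}(\mathsf{LCA}_{A})^{\wedge} \to SK_{1}(A)^{\wedge} \to 0
\]
collapses to give $\mu(\zeta(A))$, e.g.\ by comparing with the Hilbert-symbol / norm-residue interpretation that underlies the Merkurjev--Suslin conjecture.

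The main obstacle I expect is the clean passage from the restricted-product description to the direct-sum formula in part (1): one must rule out contributions from the integral blocks, which requires understanding $K_{2}$ of local maximal orders in semisimple $\mathbb{Q}_{p}$-algebras. Once this is in place, the Moore-type reciprocity inputs (classical in (2), conjectural in (3)) slot into the long exact sequence and the arithmetic of $SK_{1}$ is the only remaining care.
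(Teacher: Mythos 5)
Your plan for part (1) rests on a mechanism that fails, and this is the central gap. The $K$-theory of the categorical restricted product is the restricted product of $K$-groups (Theorem \ref{thm_ComputeKn} with $n=2$): tuples $(\alpha_p)$ with $\alpha_p\in\operatorname{im}K_2(\mathfrak{A}_p)$ for almost all $p$. Nothing is ``absorbed'' in the filtered colimit, because the integral pieces are not small: already in the commutative case $K_2(\mathcal{O}_v)$ injects into $K_2(F_v)$ with image the kernel of the tame symbol, a subgroup of \emph{finite index}, so $K_2(\mathsf{LCA}_{A,ab})$ contains the full product $\prod_v K_2(\mathcal{O}_v)$ and is strictly larger than $\bigoplus_v K_2(\zeta(A)_v)$ under the evident map. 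The reason the statement of part (1) holds at all is the class-field-theoretic structure theory recorded in Lemma \ref{lemma:ConstructionOfT} (Moore, Merkurjev): $K_2(F_v)\cong\mu(F_v)\oplus(\text{uniquely divisible})$, and $K_2(\mathcal{O}_v)$ differs from it only by the wild part $\mu(F_v)[p^{\infty}]$, which vanishes for all but finitely many $v$; with this, both the restricted product and the direct sum become $\bigoplus_{v\ \mathrm{noncomplex}}\mu(F_v)$ plus a uniquely divisible group of continuum dimension. So the identification is not the inclusion-induced map and cannot be had ``for free''---this is exactly why the paper stresses that Theorem \ref{thm_B2}, unlike Theorem \ref{theorem:MainIntroduction}, needs class field theory. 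Your appeal to the reduced norm $K_2(A_v)\cong K_2(\zeta(A)_v)$ also needs the caveat the paper makes explicit: at a real place where $A_v$ is nonsplit this fails (e.g.\ $K_2(\mathbb{H})$ versus $K_2(\mathbb{R})$), which is the source of the ``after inverting $2$'' qualifications in the noncommutative section.

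For part (2) your outline is essentially the paper's route (Construction \ref{construction_AbstractHReciprocityLaw} truncates the long exact sequence; Theorem \ref{thm_numfieldsit} maps it to Moore's sequence via the Hilbert-symbol map $T$, shows $T$ and $\overline{T}$ are surjective with divisible kernels, and then completes profinitely), but it must be run with the restricted product, and the fact that the Hilbert symbols even define a map into $\bigoplus_{v\ \mathrm{noncomplex}}\mu(F_v)$ is Lemma \ref{lemma:ConstructionOfT}(2c); since profinite completion is not exact, the divisibility (hence splitness) of the kernels is what makes the completion step legitimate, so it cannot be waved through. In part (3) you both misstate the input and introduce a spurious difficulty: Conjecture \ref{Conj_MerkurjevSuslin} is not a noncommutative Moore sequence---it asserts injectivity of the reduced norm $K_2(A)\to K_2(F)$ with cokernel $\bigoplus\mathbb{Z}/2$ over the nonsplit real places---and the paper uses it, for $A$ \emph{and} for every completion $A_v$, to compare the sequences for $A$ and for $F$ by reduced norms, deduce $K_2(\mathsf{LCA}_A)\cong K_2(\mathsf{LCA}_F)$, and then quote part (2). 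The $SK_1(A)$ extension you propose to analyze does not occur: over a number field $SK_1(A)=0$ by Wang's theorem (Hasse--Schilling--Maass; see Remark \ref{rmk_ReducedNormInjectivities}), so $K_1(A)\to\prod'_v K_1(A_v)$ is injective and the long exact sequence truncates exactly as in the commutative case; Platonov's nontrivial $SK_1$ examples live over other fields, so ``need not vanish in general'' is wrong in this setting.
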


The individual parts of the above theorem are shown in the last three sections of the paper. These results also allow us to phrase Moore's formulation of the Hilbert Reciprocity Law \cite{MR244258},
\[
K_{2}(A)\longrightarrow\bigoplus_{v\enskip\mathrm{noncomplex}}\mu(A_{v}) \longrightarrow \mu(A)\longrightarrow0
\]
in terms of the fiber sequence of our Theorem \ref{theorem:MainIntroduction}. Let us stress however that our work does not give an independent proof of Hilbert Reciprocity, because the proof of Theorem \ref{thm_B2} relies on class field theory itself. For this reason, it would be very interesting if one could prove Theorem \ref{thm_B2} using different technology.
 Moreover, we conjecture the following.

\begin{conjecture}
Suppose $A$ is a finite-dimensional simple $\mathbb{Q}$-algebra
with center $F$. Then%
\[
K_{2}(\mathsf{LCA}_{A})\cong K_{2}(\mathsf{LCA}_{F})\text{.}%
\]

\end{conjecture}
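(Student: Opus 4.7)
The plan is to compare the fiber sequence of Theorem~\ref{theorem:MainIntroduction} for $A$ with the analogous one attached to its center $F = \zeta(A)$. Since $A$ is simple, it is Morita equivalent to a central division $F$-algebra $D$, so by the Morita invariance of any localizing invariant I may reduce to the case $A = D$.

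First, I would apply Theorem~\ref{thm_B2}(1) to both $A$ and $F$ to obtain matching identifications of the middle $K_2$-terms
$$K_2(\mathsf{LCA}_{A,ab}) \cong \bigoplus_v K_2(F_v) \cong K_2(\mathsf{LCA}_{F,ab})\text{,}$$
where $v$ runs over the places of $F$. The localization map $K_2(A) \to K_2(\mathsf{LCA}_{A,ab})$ factors through the reduced norm as $K_2(A) \xrightarrow{\operatorname{Nrd}} K_2(F) \to \bigoplus_v K_2(F_v)$, so under Conjecture~\ref{Conj_MerkurjevSuslin} (surjectivity of the reduced norm on $K_2$) the images of $K_2(A)$ and $K_2(F)$ in the middle term coincide, and hence so do the cokernels of the two localization maps.

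Next, I would analyze the connecting homomorphisms to $K_1$. For $F$ commutative, the diagonal $F^\times \hookrightarrow \mathbb{A}_F^\times$ is injective, so the connecting map $K_2(\mathsf{LCA}_F) \to K_1(F)$ vanishes; for $A$, the kernel of $K_1(A) = A^\times_{ab} \to K_1(\mathsf{LCA}_{A,ab})$ is precisely the reduced Whitehead group $SK_1(A) = \ker(A^\times_{ab} \xrightarrow{\operatorname{Nrd}} F^\times)$, as one reads off from the explicit description of $K_1$ of the middle category in Theorem~\ref{thm_ComputeK1}. Assembling the two long exact sequences of the fiber sequences then yields an extension
$$0 \longrightarrow K_2(\mathsf{LCA}_F) \longrightarrow K_2(\mathsf{LCA}_A) \longrightarrow SK_1(A) \longrightarrow 0\text{.}$$

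Under these identifications, the conjecture becomes equivalent to the vanishing $SK_1(A) = 0$ for every central simple $F$-algebra over a number field, and this is where the main obstacle lies. Wang's theorem delivers the vanishing whenever the Schur index of $A$ is squarefree, which already covers a large class of cases. In full generality $SK_1(A)$ may however be non-trivial (Platonov), so either the statement must be restricted to the squarefree-index case or the long-exact-sequence approach must be supplemented. A conceptually more satisfying route would be to construct a genuine spectrum-level reduced-norm transformation $K(\mathsf{LCA}_A) \to K(\mathsf{LCA}_F)$ compatible with the two fiber sequences and verify directly that it induces an equivalence on $\pi_2$, thereby sidestepping the long exact sequence entirely; producing and controlling such a natural transformation, at a level finer than individual $K$-groups, is the principal difficulty.
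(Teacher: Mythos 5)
Your overall strategy---compare the two fiber sequences of Theorem \ref{theorem:MainIntroduction} for $A$ and for its center $F$ via the reduced norm, conditionally on Conjecture \ref{Conj_MerkurjevSuslin}---is essentially the route the paper takes (the statement is only established there conditionally, in the final section). But the point at which you stop is not an actual obstacle: over a number field $F$ the reduced Whitehead group $SK_1(A)$ vanishes for \emph{every} central simple $F$-algebra, with no hypothesis on the index. This is Wang's theorem, a consequence of the Hasse--Schilling--Maass norm theorem, and it is precisely what the paper uses in Remark \ref{rmk_ReducedNormInjectivities} to see that $K_1(A)\to\prod_v^{\prime}K_1(A_v)$ is injective, hence that the boundary map $K_2(\mathsf{LCA}_A)\to K_1(A)$ vanishes (Construction \ref{construction_AbstractHReciprocityLaw}). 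Platonov's examples of nontrivial $SK_1$ live over fields that are not global fields, so they are irrelevant here. Consequently your extension $0\to K_2(\mathsf{LCA}_F)\to K_2(\mathsf{LCA}_A)\to SK_1(A)\to 0$ already closes the (conditional) argument; no restriction to square-free index is needed on account of $SK_1$, and no spectrum-level reduced norm is required. You are also conflating two different hypotheses: square-free index is the case in which Conjecture \ref{Conj_MerkurjevSuslin} (about the reduced norm on $K_2$) is known, not a condition needed for $SK_1(A)=0$.

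A secondary gap: the claim that, under Merkurjev--Suslin, ``the images of $K_2(A)$ and $K_2(F)$ in the middle term coincide'' is too quick at the real places $w$ where $A$ ramifies. There the local reduced norm $K_2(A_w)\to K_2(F_w)$ is not surjective (it misses $\{-1,-1\}$), so the middle terms $\prod_v^{\prime}K_2(A_v)$ and $\prod_v^{\prime}K_2(F_v)$ of Theorem \ref{thm_ComputeKn} are not identified by the reduced norm, and the image of $K_2(F)$ contains classes whose $w$-components are not local reduced norms, so the two images do not literally agree. The paper instead maps the whole right-exact row $K_2(A)\to\prod_v^{\prime}K_2(A_v)\to K_2(\mathsf{LCA}_A)\to 0$ to the corresponding row for $F$ by the reduced norms and observes that both vertical maps have the same cokernel $\bigoplus_w\mathbb{Z}/2$ (one copy for each non-split real place), using the Merkurjev--Suslin sequence for $A/F$ and for each $A_v/F_v$; a diagram chase then yields the isomorphism $K_2(\mathsf{LCA}_A)\overset{\sim}{\longrightarrow}K_2(\mathsf{LCA}_F)$ on the row-cokernels. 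Your argument should be phrased through this comparison of cokernels rather than through an equality of images (and, for the same reason, be careful with quoting Theorem \ref{thm_B2}(1) as an identification of the middle terms compatible with the maps).
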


This conjecture would follow from the aforementioned conjecture of Merkurjev--Suslin (see \ref{Conj_MerkurjevSuslin} for its statement).

\begin{acknowledgement}
{We thank P. Arndt, B. Chow, B. Kahn, U. Rehmann, A. Vishik for correspondence. Thanks go to the Oberwolfach Seminar on $\operatorname{TC}$ and arithmetic in October 2019.}  The last author is currently a postdoctoral researcher at FWO (12.M33.16N).
\end{acknowledgement}

\section{Structure theory}

\textit{Conventions.} Algebras are associative and unital, but not necessarily
commutative. Ring homomorphisms preserve the unit. The notation
$\operatorname*{mod}(R)$ resp. $\operatorname*{proj}(R)$ refers to the
categories of finitely generated right $R$-modules, resp. finitely generated
projective right modules. Unless otherwise stated, all modules will be right modules.

Suppose $A$ is a finite-dimensional semisimple $\mathbb{Q}$-algebra.

\begin{definition}
The category $\mathsf{LCA}_{A}$ has

\begin{enumerate}
\item as objects locally compact right $A$-modules, where $A$ is given the
discrete topology,

\item as morphisms all continuous right $A$-module homomorphisms.
\end{enumerate}

An exact structure is given by declaring closed injections inflations and open
surjections deflations.
\end{definition}

The proof that this describes a quasi-abelian category with its standard exact
structure can be adapted verbatim from Hoffmann--Spitzweck \cite{MR2329311},
who pioneered this kind of consideration and studied $\mathsf{LCA}%
_{\mathbb{Z}}$.

\begin{remark}
\label{rmk_z1}Alternatively, one can define $\mathsf{LCA}_{A}$ as follows.  Regard the ring $A$ as a category with one object with itself as its
endomorphism ring. Then let%
\[
\mathsf{LCA}_{A}\coloneqq\mathsf{Fun}(A,\mathsf{LCA}_{\mathbb{Z}})\text{,}%
\]
i.e.~we consider the functor category to plain locally compact abelian groups.
Equip the functor category with the pointwise exact structure from
$\mathsf{LCA}_{\mathbb{Z}}$. Kernels and cokernels are computed pointwise, and
hence $\mathsf{LCA}_{A}$ is quasi-abelian because $\mathsf{LCA}_{\mathbb{Z}}$ is.
\end{remark}

\begin{example}
We stress that the condition to be a topological $A$-module is a non-trivial
constraint even though $A$ carries the discrete topology. For example, not
every $\mathbb{Q}$-vector space, equipped with some locally compact topology
on its additive group, is a locally compact $\mathbb{Q}$-module. Indeed, from
the viewpoint of Remark \ref{rmk_z1}, an object is given by some
$M\in\mathsf{LCA}_{\mathbb{Z}}$ along with a ring homomorphism $A\rightarrow
\operatorname*{End}_{\mathsf{LCA}_{\mathbb{Z}}}(M)$ and even if $A=\mathbb{Q}%
$, the induced endomorphisms need not be continuous on $M$.
\end{example}

Pontryagin duality induces an exact equivalence of exact categories%
\[
\mathsf{LCA}_{A}^{op}\overset{\sim}{\longrightarrow}\mathsf{LCA}_{A^{op}%
}\text{.}%
\]
If $A$ is commutative, this renders $\mathsf{LCA}_{A}$ an exact category with
duality. We first set up some basic structure results about the objects of the
category $\mathsf{LCA}_{A}$. The main definition is the following.

\begin{definition}\label{Definition:QuasiAdelicBlock}
We call a module $Q\in\mathsf{LCA}_{A}$ a \emph{quasi-adelic block} if it can
be written as%
\begin{equation}
Q\simeq V\oplus H\qquad\text{with}\qquad H=\bigcup_{n\geq1}\frac{1}%
{n}C\text{,}\label{l_Eq_4}%
\end{equation}
where $V$ is a vector $A$-module and $C$ a compact clopen $\mathfrak{A}%
$-submodule of $H$, where $\mathfrak{A}\subset A$ is any $\mathbb{Z}$-order.
We call it an \emph{adelic block} if additionally $C$ can be chosen such that%
\[
\bigcap_{n\geq1}nC=0\text{.}%
\]
We write $\mathsf{LCA}_{A,qab}$ for the full subcategory of quasi-adelic
blocks, and $\mathsf{LCA}_{A,ab}$ for the one of adelic blocks.
\end{definition}

This definition generalizes \cite[Definition 2.3]{kthyartin}.

\begin{remark}
If $\mathfrak{A},\mathfrak{A}^{\prime}\subset A$ are $\mathbb{Z}$-orders, then
there exists some $N\geq1$ such that $\mathbb{Z}[\frac{1}{N}]\cdot
\mathfrak{A}=\mathbb{Z}[\frac{1}{N}]\cdot\mathfrak{A}^{\prime}$, so there is
no difference whether in the above definition we pick one order $\mathfrak{A}
$ once and for all, or allow any order (the way the definition is stated).
\end{remark}

\begin{lemma}
\label{Lemma_ForgetfulFuncIsFullyFaithful}Let $\mathfrak{A}\subset A$ be any
order. Then the forgetful functors $\mathsf{LCA}_{A}\longrightarrow
\mathsf{LCA}_{\mathfrak{A}}$ and $\mathsf{LCA}_{A}\longrightarrow
\mathsf{LCA}_{\mathbb{Q}}$ are fully faithful and reflect exactness.
\end{lemma}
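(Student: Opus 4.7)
The lemma breaks into three independent claims: faithfulness, fullness, and reflection of exactness. The strategy is to reduce everything to the underlying topological abelian group: morphisms in $\mathsf{LCA}_R$ are continuous maps with an added $R$-linearity constraint, and a conflation is determined by purely topological conditions (closed injection, open surjection, image equals kernel).

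Faithfulness is immediate for either forgetful functor, since two continuous $A$-linear maps that coincide as functions on underlying sets are equal, regardless of the category in which we view them.

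For fullness, the essential case is the first forgetful functor. Let $f\colon M \to N$ be a continuous $\mathfrak{A}$-linear map between $A$-modules $M, N \in \mathsf{LCA}_A$. Because $\mathfrak{A}\subset A$ is a $\mathbb{Z}$-order, we have $\mathbb{Q}\cdot\mathfrak{A}=A$: for every $a \in A$ there exists some integer $n \geq 1$ with $na \in \mathfrak{A}$. The computation
\[
n\cdot f(m\cdot a) \;=\; f(m\cdot na) \;=\; f(m)\cdot (na) \;=\; n\cdot\bigl(f(m)\cdot a\bigr),
\]
together with the fact that $N$ is a $\mathbb{Q}$-vector space (being an $A$-module with $\mathbb{Q}$ embedded in $A$ via the unit, so multiplication by $n$ is invertible), forces $f(m\cdot a) = f(m)\cdot a$, whence $f$ is $A$-linear. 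The analogous statement for $\mathsf{LCA}_A \to \mathsf{LCA}_{\mathbb{Q}}$ uses the same pattern, with the central $\mathbb{Q}$-structure playing the role of $\mathfrak{A}$.

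Finally, reflection of exactness: a conflation $L\inflation M\deflation N$ in any $\mathsf{LCA}_R$ amounts to $L\to M$ being a closed injection, $M\to N$ being an open surjection, and the set-theoretic condition $\operatorname{image}(L\to M) = \ker(M\to N)$. None of these conditions depend on the ring $R$ acting on the underlying topological abelian groups, so a sequence in $\mathsf{LCA}_A$ that becomes a conflation after forgetting remains one in $\mathsf{LCA}_A$. There is no real obstacle to overcome: the only mildly nontrivial ingredient is the interplay between the $\mathbb{Z}$-order property $\mathbb{Q}\mathfrak{A}=A$ and the $\mathbb{Q}$-divisibility of $A$-modules used in the fullness step; the remainder is a direct unraveling of definitions.
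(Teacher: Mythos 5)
Your argument for faithfulness, for reflection of exactness, and for fullness of the first functor $\mathsf{LCA}_{A}\rightarrow\mathsf{LCA}_{\mathfrak{A}}$ is correct, and it is precisely the content hiding behind the paper's one-word proof: since $\mathbb{Q}\cdot\mathfrak{A}=A$, every $a\in A$ has an integer multiple $na\in\mathfrak{A}$, and the computation $n\cdot f(m\cdot a)=f(m\cdot na)=f(m)\cdot na=n\cdot(f(m)\cdot a)$ together with unique divisibility of the $\mathbb{Q}$-vector space $N$ cancels the $n$; conflations are indeed detected on underlying topological groups, so exactness is reflected.

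The gap is your last step, the claim that the functor $\mathsf{LCA}_{A}\rightarrow\mathsf{LCA}_{\mathbb{Q}}$ is handled by \emph{the same pattern} with $\mathbb{Q}$ in place of $\mathfrak{A}$. The pattern crucially uses that the smaller ring generates $A$ over $\mathbb{Q}$, i.e.\ $\mathbb{Q}\cdot\mathfrak{A}=A$; with $\mathbb{Q}$ itself in the role of $\mathfrak{A}$ this would require $\mathbb{Q}\cdot\mathbb{Q}=A$, which holds only when $A=\mathbb{Q}$. In fact fullness of $\mathsf{LCA}_{A}\rightarrow\mathsf{LCA}_{\mathbb{Q}}$ is false whenever $A\neq\mathbb{Q}$: take $A=\mathbb{Q}(\sqrt{2})$ and $M=N=A$ with the discrete topology; the $\mathbb{Q}$-linear projection $x+y\sqrt{2}\mapsto x$ is continuous but not $A$-linear, so $\Hom_{\mathsf{LCA}_{A}}(M,N)\subsetneq\Hom_{\mathsf{LCA}_{\mathbb{Q}}}(M,N)$. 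Thus for the second functor only faithfulness and reflection of exactness can be established (your argument does give these); the fullness assertion there is an overstatement of the lemma itself rather than something a corrected version of your argument could recover. Note that the paper only ever invokes this lemma to transport splitting maps from $\mathsf{LCA}_{\mathfrak{A}}$ back to $\mathsf{LCA}_{A}$, i.e.\ it only needs fullness of the first functor, which you have proved; but as written, your "same pattern" sentence asserts something false and should be replaced by the weaker (faithful, exactness-reflecting) statement for the $\mathbb{Q}$-forgetful functor.
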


\begin{proof}
Clear.
\end{proof}

\begin{lemma}
\label{lemma_CharacterizeInjectivesAndProjectives}We record the following observations:

\begin{enumerate}
\item Every discrete right $A$-module is a projective object in $\mathsf{LCA}%
_{A}$.

\item Every compact right $A$-module is an injective object in $\mathsf{LCA}%
_{A}$. Moreover, they are connected.

\item Every vector $A$-module is both injective and projective in
$\mathsf{LCA}_{A}$. Moreover, they are connected.
\end{enumerate}
\end{lemma}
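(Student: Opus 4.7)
The plan is to address (1), (2), (3) in sequence, using Pontryagin duality and basic LCA structure theory.

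For (1), I would exploit the semisimplicity of $A$. Every discrete right $A$-module $D$ decomposes as an internal direct sum $D\cong\bigoplus_{i\in I}S_{i}$ of simples, and this decomposition lives in $\mathsf{LCA}_A$ since a direct sum of discrete modules remains discrete (hence locally compact). The cornerstone is that $A_A$ itself is projective: given a deflation $\phi\colon X\twoheadrightarrow A$, pick any $\tilde{1}\in\phi^{-1}(1)$ and set $s(a)\coloneqq\tilde{1}\cdot a$. This is $A$-linear, continuous because the source is discrete, and a section of $\phi$. Each simple $S_{i}$ is then projective as a direct summand of $A_A$, and for a general deflation onto $D$ one splits each summand $S_{i}$ separately and assembles; continuity of the assembled section is automatic.

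For (2), I would dualize. Pontryagin duality gives the exact equivalence $\mathsf{LCA}_{A}^{op}\simeq\mathsf{LCA}_{A^{op}}$ recorded before the lemma; it interchanges compact and discrete modules and swaps projective with injective. Applying (1) to the finite-dimensional semisimple $\mathbb{Q}$-algebra $A^{op}$ then yields the injectivity claim. For connectedness, use the standard fact that a compact LCA group is connected iff its Pontryagin dual is torsion-free: the dual is a discrete right $A^{op}$-module, hence a $\mathbb{Q}$-vector space (since $A^{op}\supset\mathbb{Q}$), hence torsion-free.

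For (3), connectedness is immediate since vector $A$-modules are isomorphic to $\mathbb{R}^{n}$. For projectivity, let $\phi\colon X\twoheadrightarrow V$ be a deflation. I would invoke the structure theorem for connected LCA groups: the identity component $X^{\circ}$ has a unique maximal compact subgroup $K$, with $X^{\circ}/K$ a vector group. Canonicity makes $K$ characteristic, hence $A$-stable, and the sequence
\[
0\to K\to X^{\circ}\to X^{\circ}/K\to 0
\]
is exact in $\mathsf{LCA}_A$, with $X^{\circ}/K$ inheriting the structure of a vector $A$-module. Since $V$ has no nontrivial compact subgroups, $\phi(K)=0$, so $\phi|_{X^{\circ}}$ descends to $\bar{\phi}\colon X^{\circ}/K\to V$; this is a continuous group homomorphism of real vector spaces, hence $\mathbb{R}$-linear, and also $A$-linear, hence $A_{\mathbb{R}}$-linear. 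To check surjectivity, observe that the image $W=\phi(X^{\circ})$ is a closed $\mathbb{R}$-subspace of $V$ and that the induced open surjection $X/X^{\circ}\twoheadrightarrow V/W\cong\mathbb{R}^{n-\dim W}$ sends a compact open subgroup of the totally disconnected source to a nonempty compact open subset of the connected target, which forces $n-\dim W=0$. Now semisimplicity of $A_{\mathbb{R}}$ provides an $A_{\mathbb{R}}$-linear section $\tau\colon V\to X^{\circ}/K$, and by (2) the compact module $K$ is injective in $\mathsf{LCA}_A$, so that sequence splits $A$-linearly by some $\iota\colon X^{\circ}/K\to X^{\circ}$. The composition $V\xrightarrow{\tau}X^{\circ}/K\xrightarrow{\iota}X^{\circ}\hookrightarrow X$ is the desired $A$-linear continuous section. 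Injectivity of $V$ is then obtained by dualizing to $A^{op}$ and running the same argument.

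The main obstacle is securing $A$-linearity in (3): the decomposition $X^{\circ}=\mathbb{R}^{m}\oplus K$ from the LCA structure theorem is typically \emph{not} $A$-equivariant. The remedy is to work with the canonical quotient $X^{\circ}/K$ (which inherits an $A_{\mathbb{R}}$-structure), construct the section there, and re-lift $A$-linearly using the injectivity of $K$ from (2).
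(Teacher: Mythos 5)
Your proof is correct. Parts (1) and (2) follow the paper's own argument: semisimplicity gives an abstract module-theoretic splitting which is automatically continuous from a discrete source (your detour through simples and summands of $A_A$ is just a more hands-on version of this), and (2) is the same Pontryagin-duality transfer to $A^{op}$ with connectedness via torsion-freeness of the dual. Part (3) is where you genuinely diverge: the paper disposes of vector modules by viewing them as vector $\mathfrak{A}$-modules for an order $\mathfrak{A}\subset A$, citing \cite[Proposition~8.1]{etnclca} for their injectivity and projectivity in $\mathsf{LCA}_{\mathfrak{A}}$, and then transporting the splittings back along the fully faithful, exactness-reflecting forgetful functor of Lemma \ref{Lemma_ForgetfulFuncIsFullyFaithful}; you instead give a self-contained structure-theoretic argument, descending $\phi|_{X^{\circ}}$ to the vector quotient $X^{\circ}/K$, proving surjectivity by the van Dantzig/connectedness argument on $X/X^{\circ}\to V/W$, splitting $A_{\mathbb{R}}$-linearly by semisimplicity of $A_{\mathbb{R}}$, and re-lifting through $X^{\circ}$ using the injectivity of $K$ from (2). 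Your route is longer but avoids the external citation and makes visible exactly where semisimplicity of $A_{\mathbb{R}}$ and injectivity of compacts enter; the paper's route is shorter and reuses the order-level result, which is in the same spirit as your remedy anyway (the non-$A$-equivariance of the splitting $X^{\circ}\cong\mathbb{R}^{m}\oplus K$ is precisely the issue both approaches must dodge). Two small points you should spell out, though both are fine: stability of $K$ under the $A$-action needs that continuous \emph{endomorphisms} (not just automorphisms) carry compact subgroups into the maximal compact subgroup of $X^{\circ}$, which holds since $K$ contains every compact subgroup; and closedness of $W=\phi(X^{\circ})$ is not automatic for connected subgroups of $\mathbb{R}^{n}$ but follows because $W=\bar{\phi}(X^{\circ}/K)$ is the image of an $\mathbb{R}$-linear map from a finite-dimensional vector group.
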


\begin{proof}
(1) Since $A$ is semisimple, every (right) $A$-module $P$ is projective, so
for any surjection $M\twoheadrightarrow P$ a splitting exists as an $A$-module
homomorphism. This gives a splitting also in $\mathsf{LCA}_{A}$ as soon as it
is continuous, but since $P$ carries the discrete topology, this is
automatically true. (2) If $I$ is a compact right $A$-module, its Pontryagin
dual $I^{\vee}$ is a discrete right $A^{op}$-module. By (1) applied to
$I^{\vee}$ for the semisimple algebra $A^{op}$, it follows that $I^{\vee}$ is
a projective object in $\mathsf{LCA}_{A^{op}}$, and since Pontryagin duality
is an exact functor, it follows that $I=I^{\vee\vee}$ is injective in
$\mathsf{LCA}_{A}$. Finally, a compact LCA group is connected if and only if
its dual its torsion-free; but all $A$-modules are $\mathbb{Q}$-vector spaces
and thus torsion-free; see \cite[Lemma 2.21]{kthyartin}. (3) Pick any order
$\mathfrak{A}\subset A$. Regard the vector module as a vector $\mathfrak{A}%
$-module. Then it is projective and injective by \cite[Proposition~8.1]{etnclca} in
$\mathsf{LCA}_{\mathfrak{A}}$. The relevant splitting maps thus exist in
$\mathsf{LCA}_{\mathfrak{A}}$, then apply Lemma
\ref{Lemma_ForgetfulFuncIsFullyFaithful}.
\end{proof}


\begin{definition}
We call an object $M\in\mathsf{LCA}_{A}$ \emph{vector-free} if it does not
have a non-zero vector $A$-module as a subobject (equivalently as a quotient,
or equivalently as a direct summand).
\end{definition}

The equivalence of these characterizations follows from the fact that vector
$A$-modules are both injective and projective by Lemma
\ref{lemma_CharacterizeInjectivesAndProjectives}.

\begin{lemma}
\label{lemma_VectorFreePieceOfAdelicBlockIsTopTorsion}Any vector-free adelic block $Q$ is a topological torsion LCA group. In particular, $Q$ is totally disconnected.
\end{lemma}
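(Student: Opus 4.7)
The plan is to exploit the adelic block decomposition $Q\simeq V\oplus H$ with $H=\bigcup_{n\geq 1}\frac{1}{n}C$, where $C$ is a compact clopen $\mathfrak{A}$-submodule satisfying $\bigcap_{n\geq 1}nC=0$. Since the decomposition exhibits $V$ as a vector $A$-submodule of $Q$, the vector-free hypothesis forces $V=0$, so $Q=H$.

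The key structural step is to show that the family $\{nC\}_{n\geq 1}$, indexed by positive integers partially ordered by divisibility, consists of compact subgroups of $C$ and is a fundamental system of neighborhoods of $0$. Because $\mathbb{Z}\cdot 1\subseteq\mathfrak{A}$ and $C$ is an $\mathfrak{A}$-submodule, multiplication by $n$ sends $C$ into itself, so $nC\subseteq C$ is a compact subgroup, and $mnC\subseteq nC$ for all $m$. Given any open $U\ni 0$ in $C$, the compact sets $\{(C\setminus U)\cap nC\}_{n\geq 1}$ have empty intersection by hypothesis, hence by compactness finitely many of them already do; taking the least common multiple of the corresponding indices yields some $N_{0}$ with $N_{0}C\subseteq U$.

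Total disconnectedness then follows at once: the $nC$ are subgroups and form a neighborhood basis of $0$ in $C$, so $C$ is totally disconnected, and since $C$ is open in $Q$ the connected component of $0$ in $Q$ lies in $C$ and must vanish. For the topological torsion property, let $x\in Q$, so that $x\in\frac{1}{m}C$ for some $m\geq 1$, i.e.\ $mx\in C$. Given a neighborhood $U$ of $0$, pick $N_{0}$ as above. A quick $p$-adic valuation check (using $v_{p}(n!)\to\infty$) shows that for all sufficiently large $n$ the integer $n!/m$ is divisible by $N_{0}$, and then
\[
n!\,x=(n!/m)(mx)\in N_{0}C\subseteq U,
\]
so $n!\,x\to 0$ as $n\to\infty$. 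Hence $Q$ is topological torsion.

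The argument is elementary once one notices that $nC\subseteq C$ by virtue of the $\mathfrak{A}$-module structure; the only mildly nontrivial ingredient is the compactness step extracting a neighborhood basis of $0$ from the hypothesis $\bigcap_{n}nC=0$, after which both conclusions drop out immediately.
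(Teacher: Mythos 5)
Your proof is correct in substance and takes a genuinely more self-contained route than the paper's. The paper, after reducing to $Q=H=\bigcup_{n\ge 1}\frac1n C$, quotes Hewitt--Ross to get total disconnectedness from the vanishing of $\bigcap_n nC$ and then invokes Armacost's characterization of topological torsion groups via arbitrarily small compact open subgroups with torsion quotients $Q/nC$. You instead prove the convergence $n!\,x\to 0$ directly: the finite-intersection-property argument showing that every open $U\ni 0$ contains some $N_0C$ is correct (the sets $(C\setminus U)\cap nC$ are closed in the compact $C$ and have empty total intersection, and $N_0=\operatorname{lcm}$ works since $N_0C\subseteq n_iC$), and the computation $n!\,x=(n!/m)(mx)\in N_0C\subseteq U$ for $n$ large is fine. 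What your route buys is independence from the structure-theoretic references; what the paper's route buys is brevity and the reusable fact that the $nC$ are arbitrarily small compact \emph{open} subgroups with torsion quotients.

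One step does need a supplementary justification: you assert that $\{nC\}_{n\ge 1}$ is a fundamental system of neighborhoods of $0$ and base the total-disconnectedness claim on it, but your compactness argument only yields the cofinality half (every neighborhood of $0$ contains some $N_0C$); it does not show that each $nC$ is itself a neighborhood of $0$, and a family of small closed subgroups cofinal in this weak sense does not by itself force total disconnectedness. The missing observation is exactly the one the paper makes: multiplication by $n$ is a homeomorphism of $Q$ (its inverse is multiplication by $\frac1n\in\mathbb{Q}\subseteq A$), so $nC$ is open because $C$ is clopen; with that one line the $nC$ are open subgroups, your neighborhood-basis claim holds, and the deduction of total disconnectedness is valid. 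Alternatively, you can drop the claim altogether: total disconnectedness follows from the topological torsion property you have already fully established (a standard fact, found in Armacost), or directly from divisibility of the connected component $C^0$ of the compact group $C$, which gives $C^0=nC^0\subseteq nC$ for all $n$ and hence $C^0\subseteq\bigcap_{n\ge 1}nC=0$.
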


\begin{proof}
By definition, $Q=\bigcup_{n\geq 1}\frac{1}{n}C$ and $\bigcap_{n\geq1}nC=0$ for $C$ a compact clopen $\mathfrak{A}$-submodule of $Q$. Thus,
the intersection of all open $\mathbb{Z}$-submodules is also zero. It follows
that $Q$ is totally disconnected (\cite[Theorems~7.8 and 7.3]{MR551496}). Combined with \cite[Theorem~7.7]{MR551496}, it follows that $Q$
admits arbitrarily small compact clopen subgroups such that the quotients%
\[
Q/nC=\bigcup_{N\geq1}\frac{1}{N}(C/nC)
\]
are discrete torsion groups. The topology is the discrete one since $nC$ is
open in $C$ since multiplication with $\mathbb{Q}^{\times}$ acts as
homeomorphisms. Thus, by \cite[Theorem~3.5]{MR637201} the claim follows.
\end{proof}

\begin{lemma}\label{lemma_NoMapsFromQAToDiscrete}
	Write $\mathsf{LCA}_{A,dis}$  and $\mathsf{LCA}_{A,com}$ for the full subcategories of $\mathsf{LCA}_{A}$ of discrete $A$-modules and of compact $A$-modules. The following hold:
	\begin{enumerate}
		\item $\Hom_{\mathsf{LCA}_A}(\mathsf{LCA}_{A,qab},\mathsf{LCA}_{A,dis})=0;$
		\item $\Hom_{\mathsf{LCA}_A}(\mathsf{LCA}_{A,com},\mathsf{LCA}_{A,ab})=0.$
	\end{enumerate}
\end{lemma}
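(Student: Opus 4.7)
The plan is to handle the two items separately, exploiting the decomposition $Q \simeq V \oplus H$ of a (quasi-)adelic block and arguing on each summand.

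For (1), I would take $f \colon Q \to D$ with $Q$ quasi-adelic and $D$ discrete. The vector module $V$ is connected by Lemma \ref{lemma_CharacterizeInjectivesAndProjectives}(3), so $f(V)$ is a connected subgroup of the discrete module $D$ and therefore zero. For the remaining summand $H = \bigcup_{n \geq 1} \frac{1}{n}C$, the image $f(C)$ is a compact subset of the discrete space $D$, hence finite. Since $D$ is an $A$-module and therefore a torsion-free $\mathbb{Q}$-vector space, this forces $f(C) = 0$. Any $x \in H$ lies in $\frac{1}{n}C$ for some $n \geq 1$, so $n f(x) = f(nx) = 0$, and then $f(x) = 0$ again by torsion-freeness.

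For (2), I would take $f \colon K \to B$ with $K$ compact and $B$ an adelic block, and argue by connectedness: $K$ is connected by Lemma \ref{lemma_CharacterizeInjectivesAndProjectives}(2), so $f(K)$ is a compact connected subset of $B \simeq V \oplus H$. The key preparation is that $H$ itself is automatically vector-free, which is where the hypothesis $\bigcap_{n \geq 1} nC = 0$ enters: any vector submodule $W \subseteq H$ must satisfy $W \cap C = 0$ (a real topological vector space has no nonzero compact subgroup), and then $W \subseteq \bigcup_{n \geq 1} \frac{1}{n}C$ combined with torsion-freeness forces $W = 0$. Lemma \ref{lemma_VectorFreePieceOfAdelicBlockIsTopTorsion} now applies to $H$ and shows it is totally disconnected, so the connected component of $0$ in the product $V \oplus H$ is precisely $V$. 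Hence $f(K) \subseteq V$, and being a compact subgroup of a real topological vector space, $f(K) = 0$.

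The only real subtlety I anticipate is this verification that $H$ is vector-free, which is exactly where the adelic (rather than merely quasi-adelic) hypothesis is used; this is consistent with the fact that (2) would fail if one replaced $\mathsf{LCA}_{A,ab}$ by $\mathsf{LCA}_{A,qab}$. Once this structural point is in place, both parts reduce to standard observations: connected subgroups of a discrete group are trivial, and compact subgroups of a real topological vector space are trivial.
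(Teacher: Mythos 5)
Your argument is correct and is essentially the paper's own proof: part (1) uses connectedness of $V$, compactness of $C$, and torsion-freeness of $A$-modules exactly as the paper does, and part (2) fills in the details the paper leaves implicit (connectedness of $K$ from Lemma \ref{lemma_CharacterizeInjectivesAndProjectives}, plus total disconnectedness of the non-vector part via Lemma \ref{lemma_VectorFreePieceOfAdelicBlockIsTopTorsion}, so that $f(K)$ lands in $V$ and must vanish). One small correction to your commentary: your verification that $H$ is vector-free never actually uses $\bigcap_{n\geq 1} nC=0$ (it works verbatim for quasi-adelic blocks); the adelic hypothesis genuinely enters only when you invoke Lemma \ref{lemma_VectorFreePieceOfAdelicBlockIsTopTorsion} to conclude that $H$ is totally disconnected, which is indeed what fails for $\mathsf{LCA}_{A,qab}$.
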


\begin{proof}
	\begin{enumerate}
		\item Let $h\colon Q\to D$ be a map with $Q\in \mathsf{LCA}_{A,qab}$ and $D\in \mathsf{LCA}_{A,dis}$. Let $Q\cong V\oplus H$ and $H=\bigcup_{n\geq1}\frac{1}{n}C$ be as in Definition \ref{Definition:QuasiAdelicBlock}. By Lemma \ref{lemma_CharacterizeInjectivesAndProjectives}, $h(V)$ is connected and thus $h(V)=0$ as $D$ is discrete. Similarly, $h(C)$ is compact as $C$ is compact and thus $h(C)$ is finite as $D$ is discrete. It follows that $h(H)$ is a torsion group and an $A$-module, thus $h(H)=0$.
		\item	Let $h\colon K\to W$ be a map with $K\in \mathsf{LCA}_{A,com}$ and $W\in \mathsf{LCA}_{A,ab}$. By Lemma \ref{lemma_CharacterizeInjectivesAndProjectives}, $K$ is connected. By Definition \ref{Definition:QuasiAdelicBlock} and Lemma \ref{lemma_VectorFreePieceOfAdelicBlockIsTopTorsion}, the result follows.\qedhere
	\end{enumerate}
\end{proof}

\begin{proposition}
\label{prop_Structure}For all $M\in\mathsf{LCA}_{A}$, there exists a split conflation
\[Q\rightarrowtail M \twoheadrightarrow D,\]
unique up to isomorphism, where $Q$ is a quasi-adelic $A$-module and $D$ is a discrete $A$-module. In other words, $(\mathsf{LCA}_{A,qab},\mathsf{LCA}_{A,dis})$ is a split torsion pair in $\mathsf{LCA}_{A}$.
\end{proposition}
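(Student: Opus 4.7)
My strategy is to reduce the question to a structural statement about the underlying LCA group, apply the classical structure theorem, and then lift back to $A$-modules using the rational-division trick built into the definition of a quasi-adelic block.

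\emph{Uniqueness and splitting come for free.} If $Q_{i} \inflation M \deflation D_{i}$ ($i=1,2$) are two conflations of the required form, then Lemma~\ref{lemma_NoMapsFromQAToDiscrete}(1) forces each composite $Q_{1} \to M \to D_{2}$ to vanish, so $Q_{1} \subseteq Q_{2}$; by symmetry $Q_{1}=Q_{2}$ as subobjects and $D_{1} \cong D_{2}$ as quotients. Moreover, since discrete $A$-modules are projective in $\mathsf{LCA}_{A}$ by Lemma~\ref{lemma_CharacterizeInjectivesAndProjectives}(1), any deflation onto a discrete $A$-module admits a section; so any conflation of the stated shape automatically splits. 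Only the existence of the conflation carries real content.

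\emph{Existence.} Fix a $\mathbb{Z}$-order $\mathfrak{A} \subseteq A$. By Lemma~\ref{Lemma_ForgetfulFuncIsFullyFaithful}, it suffices to construct an open submodule $Q \subseteq M$ which is quasi-adelic as an $\mathfrak{A}$-module and is in fact $A$-stable; for then $M/Q$ is discrete and both pieces carry the induced $A$-action. The classical structure theorem for LCA groups produces an open subgroup of $M$ of the form $\bR^{n} \oplus K$ with $K$ compact. Using that $\mathfrak{A}$ is a finitely generated $\bZ$-module acting by continuous endomorphisms, I would enlarge this to an $\mathfrak{A}$-stable open submodule $V \oplus C$, where $V$ is a vector $\mathfrak{A}$-module and $C \subseteq M$ is a compact clopen $\mathfrak{A}$-submodule. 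Then set
\[
Q \coloneqq V \oplus \bigcup_{n \geq 1} \tfrac{1}{n} C \subseteq M.
\]
This is quasi-adelic by construction, contains the open submodule $V \oplus C$, and hence is itself open, so $M/Q$ is discrete.

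\emph{Upgrade to $A$-stability.} Since $A = \bQ \cdot \mathfrak{A}$, any $a \in A$ satisfies $Na \in \mathfrak{A}$ for some $N \geq 1$, so $aC = \tfrac{1}{N}(Na)C \subseteq \tfrac{1}{N} C \subseteq Q$; thus the $H$-factor of $Q$ is automatically closed under $A$. The vector summand $V$ is stable under $A$ because it is a $\bQ$-subspace on which $\mathfrak{A}$ already acts. Hence $Q \in \mathsf{LCA}_{A,qab}$ and $M/Q \in \mathsf{LCA}_{A,dis}$, yielding the required conflation.

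\emph{Main obstacle.} The main technical point is the middle step: producing the $\mathfrak{A}$-stable open submodule in the direct-sum shape $V \oplus C$ with $V$ genuinely a vector $\mathfrak{A}$-module and $C$ clopen. One way is to first work with the connected component $M^{0}$ (which is automatically $\mathfrak{A}$-stable and divisible), extract the vector summand there, and then saturate a compact-open subgroup of $M/V$ under the finite generating set of $\mathfrak{A}$ to guarantee $\mathfrak{A}$-stability while retaining compactness. Once this decomposition is in place, the rational-division trick in the final paragraph makes the upgrade from $\mathfrak{A}$ to $A$ essentially formal.
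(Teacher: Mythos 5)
Your overall route coincides with the paper's: fix an order $\mathfrak{A}\subset A$, find an open $\mathfrak{A}$-submodule of $M$ of the shape $V\oplus C$ with $V$ a vector module and $C$ compact clopen, pass to $Q=\bigcup_{n\geq1}\frac{1}{n}(V\oplus C)$, note that $Q$ is clopen and $A$-stable because $\mathbb{Q}\cdot\mathfrak{A}=A$ and $\mathbb{Q}^{\times}$ acts by homeomorphisms, conclude that $M/Q$ is discrete, split the resulting conflation using projectivity of discrete modules (Lemma \ref{lemma_CharacterizeInjectivesAndProjectives}), and get uniqueness and the torsion-pair statement from Lemma \ref{lemma_NoMapsFromQAToDiscrete}. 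All of those portions of your argument are correct; one small point worth recording is that $V\cap\frac{1}{n}C=0$ (a compact subgroup of a vector group is trivial, or use torsion-freeness), so that the union really stays an internal direct sum $V\oplus H$ and $Q$ is quasi-adelic in the sense of Definition \ref{Definition:QuasiAdelicBlock}.

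The genuine issue is the step you yourself flag as the main obstacle: producing the $\mathfrak{A}$-stable open submodule $V\oplus C$ inside $M$. This is precisely the point where the paper does not argue from scratch but invokes the structure theorem for $\mathsf{LCA}_{\mathfrak{A}}$, the order-equivariant analogue of the classical LCA structure theorem (Hoffmann--Spitzweck \cite{MR2329311} for $\mathbb{Z}$, and \cite{etnclca}, \cite{paper1} for orders); all the non-formal content of the existence statement lives there. Your sketch does not yet close it. First, ``extract the vector summand of $M^{0}$'' is not a canonical operation: maximal vector subgroups of an LCA group are in general not unique, so you must actually argue that an $\mathfrak{A}$-stable vector subgroup $V$ exists and splits off equivariantly. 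Second, your compact piece is built in $M/V$: saturating a compact open subgroup $C_{0}$ of $M/V$ under a finite $\mathbb{Z}$-generating set $a_{1},\dots,a_{k}$ of $\mathfrak{A}$ does work (the sum $\sum_{i}a_{i}C_{0}$ is a compact clopen $\mathfrak{A}$-submodule), but you still have to lift it back into $M$ as a direct complement of $V$, i.e.\ split the extension $V\rightarrowtail\pi^{-1}(C)\twoheadrightarrow C$ in $\mathsf{LCA}_{\mathfrak{A}}$; in the literature this uses that vector modules are injective objects of $\mathsf{LCA}_{\mathfrak{A}}$ (\cite[Proposition~8.1]{etnclca}). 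So either cite the structure theorem of $\mathsf{LCA}_{\mathfrak{A}}$ outright, as the paper does, or carry out these two equivariant splittings in detail; as written, the existence half of your proof rests on an asserted but unproved decomposition.
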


\begin{proof}
(This generalizes and simplifies \cite[Theorem 2.7]{kthyartin}) Let
$\mathfrak{A}\subset A$ be any order. By the structure theorem of $\mathsf{LCA}%
_{\mathfrak{A}}$ there exists a conflation%
\begin{equation}
V\oplus C\rightarrowtail M\twoheadrightarrow D^{\prime} \label{l_Eq_A1}%
\end{equation}
with $V$ a vector $\mathfrak{A}$-module, $C$ a compact clopen $\mathfrak{A}$-module in $M$ and
$D^{\prime}$ a discrete $\mathfrak{A}$-module. Define%
\[
Q\coloneqq\bigcup_{n\geq1}\frac{1}{n}\left(  V\oplus C\right)
\]
inside $M$. As Sequence \ref{l_Eq_A1} is exact in $\mathsf{LCA}_{\mathfrak{A}}$ and $D^{\prime}$ discrete, it
follows that $V\oplus C$ is open in $M$. As the action of $\mathbb{Q}\subset A$ is
continuous, $\mathbb{Q}^{\times}$ acts through homeomorphisms and therefore
each $\frac{1}{n}(V\oplus C)$ is also open. Thus, $Q$ is a union of open sets and
therefore itself an open (and thus clopen) $\mathfrak{A}$-module of $M$. As
$\mathbb{Q}\cdot\mathfrak{A}=A$, it follows that $Q$ is even an $A$-submodule.
Since the scalar action of $A$ on $M$ is continuous, the restriction to the
submodule $Q$ is also continuous. Thus, $Q\in\mathsf{LCA}_{A}$. Moreover, we
clearly have the direct sum decomposition%
\[
Q=V\oplus H\qquad\text{with}\qquad H=\bigcup_{n\geq1}\frac{1}{n}C
\]
in $\mathsf{LCA}_{A}$ since $V=\bigcup_{n\geq1}\frac{1}{n}V$ is already an $A
$-module and $V\cap\frac{1}{n}C=0$ for all $n\geq1$ because the intersection
is a compact subset of a real vector space and thus trivial. As $Q$ is a clopen
$A$-submodule of $M$, it is clear that the quotient exists in $\mathsf{LCA}%
_{A}$ and carries the discrete topology. Thus, we have a conflation%
\[
Q\rightarrowtail M\twoheadrightarrow D\text{.}%
\]
By Lemma \ref{lemma_CharacterizeInjectivesAndProjectives} the
discrete $A$-module $D$ is a projective object, so this conflation splits.

It now follows from Lemma \ref{lemma_NoMapsFromQAToDiscrete} and the fact that every $M$ fits into a (split) conflation $Q\rightarrowtail M \twoheadrightarrow D$ that $(\mathsf{LCA}_{A,qab},\mathsf{LCA}_{A,dis})$ is a split torsion theory.
\end{proof}

\begin{lemma}
\label{lemma_DecompQAB}For all $Q\in\mathsf{LCA}_{A,qab}$, there exists a split conflation 
\[K\rightarrowtail Q\twoheadrightarrow W,\] unique up to isomorphism, where $K$ is a compact $A$-module and $W$ is an adelic block. In other words, $(\mathsf{LCA}_{A,com}, \mathsf{LCA}_{A,ab})$ is a split torsion theory in $\mathsf{LCA}_{A,qab}$.
\end{lemma}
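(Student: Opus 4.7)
The plan is to carve out the torsion (compact) part inside the ``$H$-piece'' provided by Definition~\ref{Definition:QuasiAdelicBlock}. Write $Q \cong V \oplus H$ with $V$ a vector $A$-module and $H = \bigcup_{n \geq 1} \tfrac{1}{n} C$ for some compact clopen $\mathfrak{A}$-submodule $C \subset H$. The vector factor $V$ is already an adelic block (take $C = 0$), so the entire compact torsion should live inside $H$. The natural candidate is
\[
K \;\coloneqq\; \bigcap_{n \geq 1} nC \;\subseteq\; C,
\]
and I would take $W \coloneqq V \oplus (H/K)$ as the adelic quotient.

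First I would verify that $K$ is a compact $A$-submodule of $Q$. Each $nC$ is the continuous image of the compact set $C$, hence compact and closed in $C$; so $K$ is a closed subset of the compact space $C$, and therefore compact. As an intersection of $\mathfrak{A}$-submodules, $K$ is itself an $\mathfrak{A}$-module. For $x \in K$ and $m \geq 1$, the relation $x \in mnC$ gives $x = mn y$ for some $y \in C$, whence $\tfrac{1}{m} x = n y \in nC$; letting $n$ vary yields $\tfrac{1}{m} x \in K$. Thus $K$ is $\mathbb{Q}$-stable, and since $\mathbb{Q} \cdot \mathfrak{A} = A$, it is in fact an $A$-submodule. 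Because $K$ is compact in the Hausdorff group $Q$, the inclusion $K \hookrightarrow Q$ is a closed embedding, i.e.\ an inflation in $\mathsf{LCA}_A$.

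The next step is to show that $W = V \oplus (H/K)$ is an adelic block, witnessed by the compact clopen $\mathfrak{A}$-submodule $C/K \subset H/K$. The identity $H/K = \bigcup_n \tfrac{1}{n}(C/K)$ is immediate, and the decisive point is the intersection condition
\[
\bigcap_{n \geq 1} n(C/K) \;=\; 0.
\]
Here the key observation is that $C$ is a subgroup of $H$, so each $nC$ is a subgroup as well, and since $K \subseteq nC$ one has $nC + K = nC$. Hence the preimage of $n(C/K)$ under $C \twoheadrightarrow C/K$ is exactly $nC$, and
\[
\bigcap_{n} n(C/K) \;=\; \Bigl( \bigcap_{n} nC \Bigr)\big/ K \;=\; K/K \;=\; 0.
\]
I expect this to be the main obstacle; once the ``$C$ is a subgroup'' observation is in place, the rest is bookkeeping with topologies and $A$-actions.

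Finally, to see that the conflation $K \rightarrowtail Q \twoheadrightarrow W$ splits and that the decomposition is unique, I would invoke the preceding two lemmas. By Lemma~\ref{lemma_CharacterizeInjectivesAndProjectives}(2), every compact $A$-module is injective in $\mathsf{LCA}_A$, so applying injectivity of $K$ to $\mathrm{id}_K$ along the inflation $K \hookrightarrow Q$ produces a retraction; hence the conflation splits. Uniqueness, and therewith the fact that $(\mathsf{LCA}_{A,com}, \mathsf{LCA}_{A,ab})$ is a split torsion pair in $\mathsf{LCA}_{A,qab}$, then follows formally from the vanishing $\Hom_{\mathsf{LCA}_A}(\mathsf{LCA}_{A,com}, \mathsf{LCA}_{A,ab}) = 0$ of Lemma~\ref{lemma_NoMapsFromQAToDiscrete}(2), exactly as in the last step of Proposition~\ref{prop_Structure}.
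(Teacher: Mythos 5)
Your proposal is correct and takes essentially the same route as the paper: both define $K=\bigcap_{n\geq1}nC$, verify it is a compact $A$-submodule, show the quotient is an adelic block using $K\subseteq nC$, split the conflation via injectivity of compact modules, and deduce the torsion-pair statement from $\Hom_{\mathsf{LCA}_A}(\mathsf{LCA}_{A,com},\mathsf{LCA}_{A,ab})=0$. The only cosmetic differences are that the paper first reduces to the vector-free case $Q=H$ whereas you carry $V$ along, and your structural observation $nC+K=nC$ is the paper's element-wise computation $x=n(x_n+h_n)$ in group-theoretic form.
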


\begin{proof}
(This generalizes and simplifies \cite[Proposition~2.23]{kthyartin}) If $Q$ has a
vector module summand $V$, we can write $Q=Q^{\prime}\oplus V$, where
$Q^{\prime}$ is vector-free. If we prove our claim for $Q^{\prime}$, we
prove it in general because if we have $Q^{\prime}\cong K^{\prime}\oplus
W^{\prime}$, we get $Q\cong K^{\prime}\oplus(W^{\prime}\oplus V)$ and the
second summand in brackets is an adelic block. Thus, we may assume without
loss of generality that $Q$ is vector-free. We shall first
construct a conflation%
\begin{equation}
K\rightarrowtail Q\twoheadrightarrow W \label{l_Eq_A2}%
\end{equation}
where $K$ is a compact $A$-module and $W$ an adelic block. To this end, write%
\[
Q=H\qquad\text{with}\qquad H=\bigcup_{n\geq1}\frac{1}{n}C
\]
with $C$ a compact clopen $\mathfrak{A}$-submodule of $H$ as in Definition \ref{Definition:QuasiAdelicBlock}. Define%
\begin{equation}
K\coloneqq\bigcap_{n\geq1}nC\text{.} \label{lmmx1}%
\end{equation}
Since $C$ is compact, each $nC$ is compact (image under the multiplication by
$n$ map), and thus $K$ is an intersection of closed sets and therefore closed.
Moreover, since each $nC$ is an $\mathfrak{A}$-module, so is $K$. Finally, any
$k\in K$ admits elements $c_{N}\in C$ such that $k=N\cdot c_{N}$ for all
$N\in\mathbb{Z}_{\geq1}$. Thus,%
\[
\frac{1}{n}k=\frac{1}{n}Nnc_{Nn}=Nc_{Nn}%
\]
holds for all $N$. This shows that $\frac{1}{n}k\in K$, and by $\mathbb{Q}%
\cdot\mathfrak{A}=A$ it follows that $K$ is an $A$-submodule of $H$. Further,
since the scalar action of $A$ is continuous on $Q$, it remains so on $K$.
Thus, $K\in\mathsf{LCA}_{A}$ and since $K\subseteq C$, it is a compact
$A$-module. The quotient $\pi\colon Q\deflation Q/K(\eqqcolon W)$ is an open map. We now show that $W$ is an adelic block. As $C$ is a clopen compact in $Q$, the image $\pi(C)$ is clopen compact in $W$.  As $\pi$ is surjective, we have%
\[
W=\bigcup_{n\geq1}\frac{1}{n}\pi(C)\text{.}%
\]
We need to show $\bigcap_{n\geq1}n\pi(C)=0$ in $W$. Suppose $x\in Q$ such that
in the quotient $W$ we have $q(x)=\bigcap_{n\geq1}n\pi(C)$. Thus, for any
$n\geq1$ we find $x_{n}\in C$ such that%
\[
x=nx_{n}+k_{n}%
\]
with $k_{n}\in K$. Write $k_{n}=nh_{n}$ for some $h_{n}\in C$, which is
possible by Equation \eqref{lmmx1}. Hence, $x=n(x_{n}+h_{n})$ with $x_{n}%
,h_{n}\in C$, so $x\in\bigcap_{n\geq1}nC=K$, i.e. $x$ maps to zero in $W $.
Thus, $K$ is an adelic block. Since $K$ is a compact $A$-module, it
is injective by Lemma \ref{lemma_CharacterizeInjectivesAndProjectives}, so the
conflation in Equation \eqref{l_Eq_A2} splits. Finally, by Lemma \ref{lemma_NoMapsFromQAToDiscrete}, $\Hom_{\mathsf{LCA}_A}(\mathsf{LCA}_{A,com},\mathsf{LCA}_{A,ab})=0$. The result follows.
\end{proof}

\begin{corollary}
\label{cor_Structure}Suppose $M\in\mathsf{LCA}_{A}$. Then the canonical
filtration of Hoffmann--Spitzweck \cite[Proposition~2.2]{MR2329311} lifts to a
canonical filtration%
\[
M_{\mathbb{S}^{1}}\rightarrowtail F_{\mathbb{Z}}M\rightarrowtail M
\]
in $\mathsf{LCA}_{A}$. Here $M_{\mathbb{S}^{1}}$ is a compact $A$-module,
$M_{\mathbb{A}}\coloneqq F_{\mathbb{Z}}M/M_{\mathbb{S}^{1}}$ is an adelic block and
$M_{\mathbb{Z}}\coloneqq M/F_{\mathbb{Z}}M$ is a discrete vector space. Each step of
the filtration splits, and thus yields a (non-canonical) direct sum
decomposition%
\begin{equation}
M=K\oplus X\oplus D \label{l_c1}%
\end{equation}
such that $X$ is an adelic block, $K$ a compact $A$-module and $D$ a discrete
$A$-module.
\end{corollary}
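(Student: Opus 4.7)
The plan is to obtain the filtration by iterating the two split torsion pair results just established. First, by Proposition \ref{prop_Structure} applied to $M$, there is a split conflation $Q \rightarrowtail M \twoheadrightarrow D$, unique up to isomorphism, with $Q$ quasi-adelic and $D$ discrete. I set $F_{\mathbb{Z}}M := Q$ and $M_{\mathbb{Z}} := D$; since $A$ is a $\mathbb{Q}$-algebra, any discrete $A$-module is in particular a discrete $\mathbb{Q}$-vector space, so $M_{\mathbb{Z}}$ has the form required by the statement.

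Next, I apply Lemma \ref{lemma_DecompQAB} to the quasi-adelic block $F_{\mathbb{Z}}M$ to obtain a split conflation $K \rightarrowtail F_{\mathbb{Z}}M \twoheadrightarrow W$, unique up to isomorphism, with $K$ a compact $A$-module and $W$ an adelic block. Setting $M_{\mathbb{S}^{1}} := K$, the quotient $M_{\mathbb{A}} := F_{\mathbb{Z}}M/M_{\mathbb{S}^{1}}$ is isomorphic to $W$, hence is an adelic block. Concatenating the two inflations yields the desired two-step filtration $M_{\mathbb{S}^{1}} \rightarrowtail F_{\mathbb{Z}}M \rightarrowtail M$.

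For the direct sum decomposition, I note that both conflations split by construction (the first uses the projectivity of discrete $A$-modules, the second the injectivity of compact $A$-modules, both recorded in Lemma \ref{lemma_CharacterizeInjectivesAndProjectives}). Composing splittings gives a (non-canonical) isomorphism $M \cong K \oplus W \oplus D$, and setting $X := W$ yields Equation \eqref{l_c1}. Canonicity of each piece of the filtration is automatic from the fact that the underlying pairs are split torsion pairs, so that the torsion subobject of $M$, and then the torsion subobject of $F_{\mathbb{Z}}M$, are uniquely determined.

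The only real point of substance is to verify that our filtration agrees, under the forgetful functor, with the Hoffmann--Spitzweck filtration on the underlying object in $\mathsf{LCA}_{\mathbb{Z}}$. For this I would invoke Lemma \ref{Lemma_ForgetfulFuncIsFullyFaithful}: the forgetful functor $\mathsf{LCA}_{A} \to \mathsf{LCA}_{\mathbb{Z}}$ is fully faithful and reflects exactness, so it suffices to check the qualitative features of each piece (compact connected for $M_{\mathbb{S}^{1}}$ by Lemma \ref{lemma_CharacterizeInjectivesAndProjectives}(2), topological torsion on the vector-free part of $M_{\mathbb{A}}$ by Lemma \ref{lemma_VectorFreePieceOfAdelicBlockIsTopTorsion}, and discrete for $M_{\mathbb{Z}}$), which together characterize the Hoffmann--Spitzweck filtration uniquely. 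This is the only step requiring more than bookkeeping, and it is essentially a direct translation.
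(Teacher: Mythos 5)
Your proof is correct and takes essentially the same route as the paper: combine Proposition \ref{prop_Structure} with Lemma \ref{lemma_DecompQAB} to obtain the split two-step decomposition, then identify it with the Hoffmann--Spitzweck filtration of the underlying group via the topological nature of the three pieces. The only (minor) difference is in that last identification, where the paper pins down the pieces concretely via the connected component and Pontryagin duality rather than appealing to a uniqueness characterization of the Hoffmann--Spitzweck filtration, but this is the same idea at a comparable level of detail.
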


Let us also recall that the Hoffmann--Spitzweck filtration is functorial: any
continuous morphisms respects the filtration \cite[Proposition~2.2]{MR2329311}.

\begin{proof}
(This generalizes and simplifies \cite[Theorem 3.10]{kthyartin})   Combining Proposition
\ref{prop_Structure} and Lemma \ref{lemma_DecompQAB} gives the second claim,
Equation \eqref{l_c1}. Now combine this with \cite[Proposition~2.2]{MR2329311} and
observe that the topological properties of $K,X,D$ pin down how they have to
occur in the canonical filtration. The rough idea is as follows: let $M^{0}$
be the connected component of $M$. This is canonically determined. Using%
\[
M=K\oplus X\oplus D\text{,}%
\]
we see that it can only stem from $K\oplus V$, where $V$ is the vector module
summand in $X$, since $D$ is discrete and adelic blocks aside from the vector
module are totally disconnected (Lemma
\ref{lemma_VectorFreePieceOfAdelicBlockIsTopTorsion}). Thus, $K\oplus V$ is
canonically determined. The Pontryagin dual is $K^{\vee}\oplus V^{\vee}$,
where $V^{\vee}$ is connected and $K^{\vee}$ discrete. So, again the connected
component uniquely pins down a subobject; and thus allows us to canonically
characterize $V$ as a quotient of $M^{0}$. This settles $M_{\mathbb{S}^{1}}$.
\end{proof}

\begin{remark}
It is not true that the entire category $\mathsf{LCA}_{A,qab}$ is a direct sum
of $\mathsf{LCA}_{A,com}$ and $\mathsf{LCA}_{A,ab}$ as there can be non-zero morphisms from adelic blocks
to compact $A$-modules. To see this, consider the dense image morphism%
\[
\mathbb{Q}\longrightarrow\mathbb{Q}_{p}%
\]
in $\mathsf{LCA}_{\mathbb{Q}}$ and take its Pontryagin dual, noting that
$\mathbb{Q}_{p}^{\vee}\simeq\mathbb{Q}_{p}$ is self-dual, but the dual of
$\mathbb{Q}$ is a compact $\mathbb{Q}$-vector space.
\end{remark}

\begin{lemma}
\label{lemma_CharQABFD}For an object $M\in\mathsf{LCA}_{A}$ its discrete piece
$M_{\mathbb{Z}}$ in the Hoffmann--Spitzweck filtration is a finitely generated
$A$-module if and only if $\operatorname*{Hom}\nolimits_{\mathsf{LCA}_{A}%
}(M,A_{\mathbb{R}})$ is a finite-dimensional real vector space.
\end{lemma}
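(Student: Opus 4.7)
The plan is to exploit the canonical decomposition $M \cong K \oplus X \oplus D$ from Corollary \ref{cor_Structure}, with $K$ a compact $A$-module, $X$ an adelic block, and $D \cong M_{\mathbb{Z}}$ a discrete $A$-module, and then to analyze $\Hom_{\mathsf{LCA}_A}(-, A_{\mathbb{R}})$ summand by summand. Since $A_{\mathbb{R}}$ is a real vector space, every bounded subgroup of $A_{\mathbb{R}}$ is trivial; thus any continuous morphism from a compact group into it has zero image, giving $\Hom_{\mathsf{LCA}_A}(K, A_{\mathbb{R}}) = 0$. Writing $X \cong V \oplus H$ with $V$ a vector $A$-module and $H$ vector-free (Definition \ref{Definition:QuasiAdelicBlock}), Lemma \ref{lemma_VectorFreePieceOfAdelicBlockIsTopTorsion} shows $H$ is totally disconnected, which kills any continuous map into the connected group $A_{\mathbb{R}}$; and for $V$ the $A$-action extends to $A_{\mathbb{R}}$, so $\Hom_{\mathsf{LCA}_A}(V, A_{\mathbb{R}}) = \Hom_{A_{\mathbb{R}}}(V, A_{\mathbb{R}})$ is finite-dimensional over $\mathbb{R}$.

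This reduces the lemma to the assertion that $\Hom_{\mathsf{LCA}_A}(D, A_{\mathbb{R}})$ is finite-dimensional if and only if $D$ is finitely generated. Since $D$ is discrete, continuity is automatic and I may work with ordinary $A$-module Homs. Semisimplicity of $A$ lets me decompose $D \cong \bigoplus_{i \in I} S_i$ with the $S_i$ drawn from the finite list $S_1, \ldots, S_r$ of simple $A$-modules, and then
\[
\Hom_A(D, A_{\mathbb{R}}) \;\cong\; \prod_{i \in I} \Hom_A(S_i, A_{\mathbb{R}})\text{.}
\]
If $I$ is finite each factor is finite-dimensional (each $S_i$ is finitely generated over $A$, and $A_{\mathbb{R}}$ is finite-dimensional over $\mathbb{R}$), so the Hom is finite-dimensional.

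The main obstacle is the converse: I need $\Hom_A(S_i, A_{\mathbb{R}}) \neq 0$ for every simple $A$-module $S_i$. For this, observe that the natural map $A \hookrightarrow A_{\mathbb{R}}$, $a \mapsto a \otimes 1$, is an $A$-linear injection (since $\mathbb{Q} \to \mathbb{R}$ is a flat extension), and by semisimplicity $S_i$ embeds as a direct summand of $A$; composing yields a non-zero element of $\Hom_A(S_i, A_{\mathbb{R}})$. Once this is in hand, every factor in the above product is non-zero, so $\prod_{i \in I} \Hom_A(S_i, A_{\mathbb{R}})$ is infinite-dimensional over $\mathbb{R}$ whenever $I$ is infinite, contradicting finite-dimensionality of $\Hom_{\mathsf{LCA}_A}(M, A_{\mathbb{R}})$. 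This verifies the converse and completes the plan; the only subtle verification is the non-vanishing of $\Hom_A(S_i, A_{\mathbb{R}})$, which is settled by the embedding $A \hookrightarrow A_{\mathbb{R}}$.
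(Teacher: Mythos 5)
Your proposal follows essentially the same route as the paper's proof: decompose $M \cong K \oplus X \oplus D$ via Corollary \ref{cor_Structure}, show that the compact summand contributes $0$ and the adelic summand a finite-dimensional space to $\Hom_{\mathsf{LCA}_A}(-,A_{\mathbb{R}})$, and reduce to the claim that $\Hom_A(D,A_{\mathbb{R}})$ is finite-dimensional if and only if $D$ is finitely generated, proved by decomposing $D$ into simples and converting the direct sum into a product of $\Hom$-spaces. Your explicit verification that $\Hom_A(S_i,A_{\mathbb{R}})\neq 0$ for every simple $S_i$, via $S_i \subseteq A \hookrightarrow A_{\mathbb{R}}$, spells out a point the paper leaves implicit, and is correct.

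One justification, as you state it, is not valid: for the vector-free part $H$ of the adelic block you argue that $H$ is totally disconnected and that this ``kills any continuous map into the connected group $A_{\mathbb{R}}$.'' That general principle is false: $\mathbb{Z}$ is totally disconnected and maps nontrivially into $\mathbb{R}$. The conclusion $\Hom_{\mathsf{LCA}_A}(H,A_{\mathbb{R}})=0$ is nonetheless correct and can be repaired from the very sources you cite: either use that $H$ is \emph{topological torsion} (the actual content of Lemma \ref{lemma_VectorFreePieceOfAdelicBlockIsTopTorsion}) and that $A_{\mathbb{R}}$ has no nonzero topological torsion elements, or argue as the paper does that the compact clopen $C\subseteq H$ maps onto a compact subgroup of the vector group $A_{\mathbb{R}}$, hence to $0$, and since $H=\bigcup_{n\geq 1}\frac{1}{n}C$ the image of $H$ is a torsion subgroup of $A_{\mathbb{R}}$ and therefore zero. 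With that substitution your argument matches the paper's.
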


As the output object $A_{\mathbb{R}}$ has a $\mathbb{R}$-vector space
structure, this $\operatorname*{Hom}$-group canonically comes with an enriched
structure as an $\mathbb{R}$-vector space itself.

\begin{proof}
By Corollary
\ref{cor_Structure} we have%
\[
\operatorname*{Hom}(M,A_{\mathbb{R}})=\operatorname*{Hom}(K,A_{\mathbb{R}%
})\oplus\operatorname*{Hom}(X,A_{\mathbb{R}})\oplus\operatorname*{Hom}%
(D,A_{\mathbb{R}})
\]
and since morphisms respect the Hoffmann--Spitzweck filtration \cite[Proposition~2.2]{MR2329311}, the first summand is always zero, the second always
finite-dimensional (since the vector-free summand of the adelic block maps
trivially to $A_{\mathbb{R}}$ because the compact $C$ in $H=\bigcup\frac{1}%
{n}C$ as in Equation \eqref{l_Eq_4} must map to zero in $A_{\mathbb{R}}$), so we
deduce that $\operatorname*{Hom}(M,A_{\mathbb{R}})$ is finite-dimensional if
and only if $\operatorname*{Hom}(D,A_{\mathbb{R}})$ is. We have
$\operatorname*{Hom}(A,A_{\mathbb{R}})\cong\mathbb{R}^{n}$ for $n\coloneqq\dim
_{\mathbb{Q}}(A)$ since $A$ is a free rank one module over itself, so an
$A$-module homomorphism is given by assigning an arbitrary value, which since
$A$ is discrete, any such map is automatically continuous. Since $A$ is
semisimple, any indecomposable $D\subseteq A$ correspondingly has
$\operatorname*{Hom}(A,A_{\mathbb{R}})$ a finite-dimensional $\mathbb{R}%
$-vector space, and moreover any discrete $A$-module is a (possibly infinite)
direct sum of indecomposables, $D=\bigoplus_{i\in I}D_{i}$. Thus,
$\operatorname*{Hom}(\bigoplus_{i}D_{i},A_{\mathbb{R}})=\prod_{i}%
\operatorname*{Hom}(D_{i},A_{\mathbb{R}})$ is a finite-dimensional
$\mathbb{R}$-vector space if and only if $I$ is a finite set, i.e. if and only
if $D$ is finitely generated.
\end{proof}

Next, we prove a \textquotedblleft Serre subcategory\textquotedblright-type
statement about whether the discrete piece in the Hoffmann--Spitzweck
filtration is finitely generated as an $A$-module.

\begin{lemma}
\label{lemma_QABFDIsSerreSubcat}Suppose $X^{\prime}\rightarrowtail
X\twoheadrightarrow X^{\prime\prime}$ is a conflation in $\mathsf{LCA}_{A}$.
Then the middle object $X$ in its decomposition of Corollary
\ref{cor_Structure} has finitely generated discrete $A$-module part
$X_{\mathbb{Z}}$, if and only $X_{\mathbb{Z}}^{\prime}$ and $X_{\mathbb{Z}%
}^{\prime\prime} $ are finitely generated $A$-modules.
\end{lemma}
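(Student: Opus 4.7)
The plan is to reduce the statement to a dimension-count for short exact sequences of real vector spaces, using the characterisation of finite generation of the discrete piece from Lemma \ref{lemma_CharQABFD}. The key observation is that $A_{\mathbb{R}} = A\otimes_{\mathbb{Q}}\mathbb{R}$ is a vector $A$-module, hence both injective and projective in $\mathsf{LCA}_{A}$ by Lemma \ref{lemma_CharacterizeInjectivesAndProjectives}(3). Consequently, the functor $\operatorname{Hom}_{\mathsf{LCA}_{A}}(-,A_{\mathbb{R}})$ sends conflations to short exact sequences.

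First, I would apply this functor to the given conflation $X'\rightarrowtail X\twoheadrightarrow X''$ to obtain a short exact sequence
\[
0\longrightarrow\operatorname{Hom}_{\mathsf{LCA}_{A}}(X'',A_{\mathbb{R}})\longrightarrow\operatorname{Hom}_{\mathsf{LCA}_{A}}(X,A_{\mathbb{R}})\longrightarrow\operatorname{Hom}_{\mathsf{LCA}_{A}}(X',A_{\mathbb{R}})\longrightarrow0
\]
of $\mathbb{R}$-vector spaces, the surjectivity on the right coming from the injectivity of $A_{\mathbb{R}}$. Next, I would invoke the elementary fact that in a short exact sequence of real vector spaces the middle term is finite-dimensional if and only if both outer terms are. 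Finally, combining this with Lemma \ref{lemma_CharQABFD} applied to each of $X',X,X''$ translates the dimension statement back into the desired statement about finite generation of the discrete pieces $X'_{\mathbb{Z}}$, $X_{\mathbb{Z}}$, $X''_{\mathbb{Z}}$.

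There is no substantial obstacle here: the only non-formal ingredient is the injectivity of the vector $A$-module $A_{\mathbb{R}}$, which has already been established, and the rest is bookkeeping. If anything subtle arises, it is merely to note that the enriched $\mathbb{R}$-vector space structure on $\operatorname{Hom}_{\mathsf{LCA}_{A}}(-,A_{\mathbb{R}})$ (coming from the $\mathbb{R}$-action on the target) is respected by the connecting maps in the exact sequence, so that the dimension argument genuinely takes place in the category of $\mathbb{R}$-vector spaces.
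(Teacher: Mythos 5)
Your proposal is correct and is essentially the paper's own argument: apply $\operatorname{Hom}_{\mathsf{LCA}_{A}}(-,A_{\mathbb{R}})$ to the conflation, use injectivity of $A_{\mathbb{R}}$ (Lemma \ref{lemma_CharacterizeInjectivesAndProjectives}) to get a short exact sequence of real vector spaces, and conclude via Lemma \ref{lemma_CharQABFD} together with the Serre-subcategory property of finite-dimensional vector spaces. No gaps.
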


\begin{proof}
(This generalizes and isolates the essence of \cite[Propositions~2.14, 2.17 and Theorem~2.19]{kthyartin}) Since $A_{\mathbb{R}}$ is an injective object by Lemma
\ref{lemma_CharacterizeInjectivesAndProjectives}, the sequence%
\[
\operatorname*{Hom}(X^{\prime\prime},A_{\mathbb{R}})\rightarrowtail
\operatorname*{Hom}(X,A_{\mathbb{R}})\twoheadrightarrow\operatorname*{Hom}%
(X^{\prime},A_{\mathbb{R}})
\]
is exact and now the claim follows from Lemma \ref{lemma_CharQABFD} and the
fact that finitely generated real vector spaces are a Serre subcategory in the
category of all real vector spaces.
\end{proof}

\begin{example}
\label{example_CannotTestZeroDiscretePartViaARMaps}The above lemma is only
true for \textquotedblleft finite generation\textquotedblright, but not in a
stronger form which would test being non-zero. The ad\`{e}le sequence
$A\rightarrowtail\mathbb{A}_{A}\twoheadrightarrow\mathbb{A}_{A}/A$ for example
has $(\mathbb{A}_{A})_{\mathbb{Z}}=0$, yet $A_{\mathbb{Z}}=A$ is non-zero.
\end{example}

\begin{lemma}
\label{lemma_ExtensionFacts}We also obtain the following facts.
\begin{enumerate}
\item If $M\in \mathsf{LCA}_A$ and $\Hom(M,A)=0$, then $M\in \mathsf{LCA}_{A,qab}$. 
\item If $M\twoheadrightarrow M^{\prime\prime}$ is a deflation and $M$
quasi-adelic, then $M^{\prime\prime}$ is quasi-adelic.
\item The subcategories $\mathsf{LCA}_{A,qab}$ and $\mathsf{LCA}_{A,ab}$ of $\mathsf{LCA}_A$ both lie extension-closed.
\item If $M^{\prime}\rightarrowtail M$ is an inflation with $M^{\prime}$
discrete and $M$ quasi-adelic, then $M^{\prime}$ is finitely generated.
\end{enumerate}
\end{lemma}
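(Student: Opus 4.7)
My plan is to reduce each of the four statements to the structure theorems already established (Proposition \ref{prop_Structure}, Lemma \ref{lemma_DecompQAB}, Corollary \ref{cor_Structure}) together with the Hom-vanishings recorded in Lemma \ref{lemma_NoMapsFromQAToDiscrete}. The common pattern will be: decompose the module in question and then invoke a Hom-vanishing to kill the unwanted summand.

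For (1), I decompose $M = Q \oplus D$ by Proposition \ref{prop_Structure}. The assumption forces $\Hom(D,A)=0$, so it suffices to observe that any nonzero discrete $A$-module admits a nonzero continuous $A$-map to $A$: since $A$ is semisimple Artinian, such a $D$ splits off a simple summand $D_{i}$ which itself embeds as a direct summand of $A$, and composing projection with inclusion provides the desired morphism. Hence $D=0$. For (2), I decompose $M'' = Q'' \oplus D''$; the composition $M \twoheadrightarrow M'' \twoheadrightarrow D''$ is then a map from a quasi-adelic to a discrete module, hence zero by Lemma \ref{lemma_NoMapsFromQAToDiscrete}(1), and surjectivity of $M \twoheadrightarrow M''$ forces $D''=0$.

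For (3), given a conflation $M' \rightarrowtail M \twoheadrightarrow M''$ with both outer terms quasi-adelic, I write $M = Q \oplus D$ by Proposition \ref{prop_Structure}. Lemma \ref{lemma_NoMapsFromQAToDiscrete}(1) kills the composition $M' \rightarrowtail M \twoheadrightarrow D$, so $M'$ lands in $Q$, and therefore $D$ appears as a direct summand of the cokernel $M''$. Part (2) then shows this $D$ is quasi-adelic, but a discrete quasi-adelic block must be zero (its vector summand is non-discrete unless zero, and $\bigcup \frac{1}{n}C$ discrete would force $C$ finite, making $\bigcup \frac{1}{n}C$ torsion, contradicting that it is a nonzero $A$-module). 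For the adelic block case I need one further iteration: knowing $M$ is quasi-adelic, write $M = K \oplus W$ by Lemma \ref{lemma_DecompQAB}; Lemma \ref{lemma_NoMapsFromQAToDiscrete}(2) forces $K$ to map to zero in the adelic block $M''$, so $K \hookrightarrow M'$, and applying the same vanishing once more yields $K=0$.

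For (4), the key observation is that if $M$ is quasi-adelic then its discrete Hoffmann--Spitzweck piece vanishes, so by Lemma \ref{lemma_CharQABFD} the space $\Hom(M, A_{\mathbb{R}})$ is finite-dimensional. Injectivity of $A_{\mathbb{R}}$ (Lemma \ref{lemma_CharacterizeInjectivesAndProjectives}(3)) turns the inflation $M' \rightarrowtail M$ into a surjection $\Hom(M, A_{\mathbb{R}}) \twoheadrightarrow \Hom(M', A_{\mathbb{R}})$, so $\Hom(M', A_{\mathbb{R}})$ is also finite-dimensional; since $M'$ is already discrete we have $M'_{\mathbb{Z}} = M'$, and Lemma \ref{lemma_CharQABFD} concludes. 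I expect the adelic-block half of (3) to be the most delicate step, as it requires iterating both parts of Lemma \ref{lemma_NoMapsFromQAToDiscrete} in sequence; the other three items are essentially immediate once the canonical decompositions are in hand.
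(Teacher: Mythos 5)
Your proof is correct, and for parts (2) and the quasi-adelic half of (3) it takes a somewhat different route from the paper. The paper first upgrades your item (1) to a biconditional ($M$ is quasi-adelic if and only if $\Hom(M,A)=0$, using Lemma \ref{lemma_NoMapsFromQAToDiscrete} for the forward direction) and then deduces (2) and the extension-closure of $\mathsf{LCA}_{A,qab}$ purely formally from left-exactness of $\Hom(-,A)$ applied to the conflation; your argument instead applies the split torsion pair of Proposition \ref{prop_Structure} to $M''$ (resp.\ to $M$) and kills the discrete summand via Lemma \ref{lemma_NoMapsFromQAToDiscrete}(1). This is fine, but it carries the small extra obligations you implicitly discharge: that the image of $M'$ lies in $Q$, hence $M''\cong (Q/M')\oplus D$ in $\mathsf{LCA}_A$ (the quotient splits because $M'\subseteq Q$), and that a discrete quasi-adelic module vanishes — your torsion argument for the latter works, though it also follows at once from $\Hom_{\mathsf{LCA}_A}(\mathsf{LCA}_{A,qab},\mathsf{LCA}_{A,dis})=0$ applied to the identity. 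The adelic half of (3) is essentially the paper's argument (the paper splits $K$ off $M'$ using injectivity of compacts and then uses that an adelic block has no nonzero compact summand; your double application of Lemma \ref{lemma_NoMapsFromQAToDiscrete}(2) is even a bit more direct). For (4), the paper simply cites Lemma \ref{lemma_QABFDIsSerreSubcat} together with $M_{\mathbb{Z}}=0$; you re-derive exactly the needed case of that lemma (injectivity of $A_{\mathbb{R}}$ plus Lemma \ref{lemma_CharQABFD}), which is the same mechanism. In sum: the paper's $\Hom(-,A)$ criterion makes (2) and (3) one-line functorial consequences, while your decompositional version is more hands-on but equally valid.
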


\begin{proof}
Let $M\in\mathsf{LCA}%
_{A}$ be arbitrary. Then we may decompose $M$ as a direct sum as done in
Corollary \ref{cor_Structure}, giving%
\[
\operatorname*{Hom}(K\oplus X\oplus D,A)=\operatorname*{Hom}(D,A)
\]
because all morphisms from $K$ or $X$ to the discrete module $A$ must be zero
by preservation of the Hoffmann--Spitzweck filtration. As $A$ is semisimple,
any $D\neq0$ is a projective $A$-module, i.e. it occurs as a direct summand of
some direct sum $A^{\oplus\kappa}$. It follows that $D\neq0$ is equivalent to
$\operatorname*{Hom}(D,A)\neq0$.

\begin{enumerate}
	\item Assume that $M$ is quasi-adelic. By Lemma \ref{lemma_NoMapsFromQAToDiscrete}, $\Hom(M,A)=0$ as $A$ is discrete. Conversely, assume that $\Hom(M,A)=0$. As above, $\Hom(M,A)=\Hom(D,A)=0$ implies that $D=0$ and thus $M$ is quasi-adelic.
	\item Let $M'\inflation M\deflation M''$ be a conflation in $\mathsf{LCA}_A$. Applying $\Hom(-,A)$, we obtain the left-exact sequence 
	\[0\to \Hom(M'',A)\to \Hom(M,A)\to \Hom(M',A).\]
	If $M$ is quasi-adelic, $\Hom(M,A)=0$ by (1) and thus $\Hom(M'',A)=0$ as well. Again, by (1), $M''$ is quasi-adelic.
	\item Analogously to (2), one finds that if $M',M''\in \mathsf{LCA}_{A,qab}$, then $M\in \mathsf{LCA}_{A,qab}$ and thus $\mathsf{LCA}_{A,qab}\subseteq \mathsf{LCA}_A$ is extension-closed. If $M',M''\in \mathsf{LCA}_{A,ab}$, then $M\in \mathsf{LCA}_{A,qab}$. Hence $M\cong K\oplus X$, furthermore, by Lemma \ref{lemma_NoMapsFromQAToDiscrete}, $\Hom(K,M'')=0$ and thus $K$ is a direct summand of $M'$. As $M'\in \mathsf{LCA}_{A,ab}$, $K=0$ and thus $M\cong X\in \mathsf{LCA}_{A,ab}$.
	\item This follows from Lemma \ref{lemma_QABFDIsSerreSubcat} by observing that $M_{\mathbb{Z}}=0$ if $M$ is quasi-adelic.\qedhere
\end{enumerate}
\end{proof}


\section{Recollections on the id\`{e}le class group}\label{section:Recollections}

Let $\mathfrak{A}\subset A$ be an arbitrary $\mathbb{Z}$-order. We shall use
standard notation, inspired by the choices in \cite[\S 2]{MR0376619},
\cite[\S 2.7]{MR1884523}. For any prime number $p$ define%
\[
\mathfrak{A}_{p}\coloneqq\mathfrak{A}\otimes_{\mathbb{Z}}\mathbb{Z}_{p}%
\qquad\text{and}\qquad A_{p}\coloneqq A\otimes_{\mathbb{Q}}\mathbb{Q}_{p}\text{,}%
\]
where $\mathbb{Z}_{p}$ denotes the $p$-adic integers and $\mathbb{Q}_{p}$ its
field of fractions, the $p$-adic numbers.

\begin{definition}
We write $\Pl_{F}$ for the set of places of a number field $F$. For simplicity, we also write $\Pl$ in place of $\Pl_{\mathbb{Q}}$. We write $\Plfin_{,F}$ for the finite places. We denote the infinite place of $\mathbb{Q}$, often denoted by $\infty$ in the literature, by $\mathbb{R}$. This increases the compatibility to literature, where the real completion is denoted by $A_{\mathbb{R}}$, in the style of a base change, rather than the less customary $A_{\infty}$ (as for example in \cite{MR1884523}).
\end{definition}

Recall that%
\[
\mathfrak{A}_{\mathbb{R}}=A_{\mathbb{R}}=A\otimes_{\mathbb{Q}}\mathbb{R}%
\text{.}%
\]
As $A$ is a finite-dimensional semisimple $\mathbb{Q}$-algebra, it follows
that $A_{p}$ (for $p$ a prime number) is a finite-dimensional semisimple
$\mathbb{Q}_{p}$-algebra and $\mathfrak{A}_{p}$ is a $\mathbb{Z}_{p}$-order in
it (this is \cite[(11.1), (11.2) and (11.5)]{MR1972204}). We also define%
\[
\widehat{\mathfrak{A}}=\mathfrak{A}\otimes_{\mathbb{Z}}\widehat{\mathbb{Z}%
}=\prod_{p\neq\mathbb{R}}\mathfrak{A}_{p}\qquad\text{and}\qquad\widehat
{A}=\widehat{\mathfrak{A}}\otimes_{\mathbb{Z}}\mathbb{Q}\text{.}%
\]
Here $\widehat{\mathbb{Z}}$ is the profinite completion of the integers. The
alternative characterization follows from the decomposition into topological
$p$-parts, $\widehat{\mathbb{Z}}=\prod\mathbb{Z}_{p}$.

Let us recall the generalization of id\`{e}les to the non-commutative setting.
This follow Fr\"{o}hlich's paper \cite[\S 2]{MR0376619}. The \emph{id\`{e}le
group} is defined to be%
\begin{equation}
J(A)\coloneqq\left\{  (a_{p})_{p}\in\left.  \prod_{p \in \Pl}A_{p}^{\times}\right\vert
a_{p}\in\mathfrak{A}_{p}^{\times}\text{ for all but finitely many places
}p\right\}  \text{.} \label{l_def_idelegroupplain}%
\end{equation}
At the infinite place
$p=\mathbb{R}$ this means $\mathfrak{A}_{\mathbb{R}}=A_{\mathbb{R}}$, so there
the condition is void.

It turns out that this group actually does not depend on the choice of an
order $\mathfrak{A}$ because if $\mathfrak{A}^{\prime}\subset A$ is another
$\mathbb{Z}$-order, we have $\mathfrak{A}_{p}=(\mathfrak{A}^{\prime})_{p}$ for
all but finitely many places $p$. But then in view of Equation
\eqref{l_def_idelegroupplain}, the group $J(A)$ does not change. The group%
\begin{equation}
J^{1}(A)\coloneqq\left\{  (a_{p})_{p}\in J(A)\mid\operatorname*{nr}\nolimits_{A_{p}%
}(a_{p})=1\right\}  \label{l_red_norm_one_ideles}%
\end{equation}
is the subgroup of \emph{reduced norm one} id\`{e}les. We denote the center of
a ring $A$ by $\zeta(A)$.

We
write%
\begin{equation}
\underset{v\,\smallskip\,\smallskip\,\smallskip}{\prod\nolimits^{\prime}}%
K_{n}(A_{v})\coloneqq\left\{  \left.  (\alpha_{v})_{v}\in\prod_{v\in\mathcal{P}%
\ell_{F}}K_{n}(A_{v})\right\vert
\begin{array}
[c]{l}%
\alpha_{v}\in\operatorname*{im}K_{n}(\mathfrak{A}_{v})\\
\text{for all but finitely many }v
\end{array}
\right\}  \label{lcw1}%
\end{equation}
for $\mathfrak{A}\subset A$ any order and $\mathfrak{A}_{v}\coloneqq\mathfrak{A}%
\otimes_{\mathcal{O}_{F}}\mathcal{O}_{F,v}$. The group defined in Equation
\ref{lcw1} is independent of the choice of the order $\mathfrak{A}\subset A$.
Here and everywhere below, we use the notation $\underset{v\,\smallskip
\,\smallskip\,\smallskip}{\prod\nolimits^{\prime}}$ with the above meaning and
not in the sense of a restricted product of topological groups.

\begin{example}
Suppose $A=F$ is a number field and $n=1$. Then each $K_{1}(F_{v}%
)=F_{v}^{\times}$ may also be regarded as a locally compact topological group
with respect to the metric on $F_{v}$ and $\mathcal{O}_{F,v}^{\times}\subseteq
F_{v}^{\times}$ is a compact clopen subgroup. Then the restricted product
$\underset{v\,\smallskip\,\smallskip\,\smallskip}{\prod\nolimits^{\prime}%
}K_{1}(F_{v})$ in the sense of topological groups has the same underlying
abstract group as the group in Equation \eqref{lcw1}. This justifies using the
notation $\underset{v\,\smallskip\,\smallskip\,\smallskip}{\prod
\nolimits^{\prime}}$ also for $K_{n}$ with $n\neq1$.
\end{example}

\section{Adelic blocks}

\subsection{Description as a 2-colimit}

Let $\mathfrak{A}\subset A$ be a maximal order. Such always exists
\cite[Corollary~10.4]{MR1972204}, but it is not unique.

As $\mathfrak{A}$ is maximal, it follows that each $\mathbb{Z}_{p}$-order
$\mathfrak{A}_{p}\subset A_{p}$ is also maximal (this is \cite[Corollary~11.6]{MR1972204}). We shall need the following basic fact.

\begin{lemma}
\label{lemma_StructOfFinGenModulesOverMaxOrder}Every finitely generated
$\mathfrak{A}$-module $M$ (for $\mathfrak{A}\subset A$ a maximal order) has
the shape%
\[
M\cong M_{tor}\oplus P\text{,}%
\]
where $M_{tor}$ is the submodule of $\mathbb{Z}$-torsion elements, and $P$ a
finitely generated projective $\mathfrak{A}$-module. The same is true for
maximal orders $\mathfrak{A}_{p}\subseteq A_{p}$.
\end{lemma}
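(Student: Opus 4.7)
The plan is to exhibit the short exact sequence
\[
0 \longrightarrow M_{tor} \longrightarrow M \longrightarrow M/M_{tor} \longrightarrow 0
\]
and argue that the quotient is a finitely generated projective $\mathfrak{A}$-module, so that the sequence splits. First, I would check that $M_{tor}$ is genuinely an $\mathfrak{A}$-submodule: if $n \in \mathbb{Z}_{\geq 1}$ satisfies $nm=0$, then $n(am) = a(nm) = 0$ for every $a \in \mathfrak{A}$, because $\mathbb{Z}$ lies in the center of $\mathfrak{A}$. Since $\mathfrak{A}$ is Noetherian (being finitely generated as a $\mathbb{Z}$-module), both $M_{tor}$ and $P \coloneqq M/M_{tor}$ are finitely generated, and by construction $P$ is $\mathbb{Z}$-torsion-free, i.e.\ an $\mathfrak{A}$-lattice in the sense of Reiner.

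The essential input is the classical fact that a maximal $\mathbb{Z}$-order in a finite-dimensional semisimple $\mathbb{Q}$-algebra is a hereditary ring, and that consequently every $\mathfrak{A}$-lattice is $\mathfrak{A}$-projective; both statements are found in Reiner's book \cite{MR1972204} (hereditariness of maximal orders is developed alongside the structure theory of lattices over them, see \S 21 there). Applied to $P = M/M_{tor}$ this yields projectivity, and therefore the displayed short exact sequence splits, producing the desired decomposition $M \cong M_{tor} \oplus P$.

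The statement for the local maximal orders $\mathfrak{A}_p \subseteq A_p$ then follows by the identical argument, because $\mathbb{Z}_p$ is again a Dedekind domain, $A_p$ is a finite-dimensional semisimple $\mathbb{Q}_p$-algebra, and $\mathfrak{A}_p$ is a maximal $\mathbb{Z}_p$-order in $A_p$ by \cite[Corollary~11.6]{MR1972204}; these are precisely the hypotheses needed to reuse hereditariness and the lattice-is-projective principle verbatim.

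There is no real obstacle here; the only point that requires care is invoking the correct structure result for lattices over maximal orders. Conceptually, the proof is a clean reduction to the projectivity of torsion-free finitely generated modules over hereditary orders, and no further ingredient is needed.
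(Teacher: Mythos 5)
Your proposal is correct and follows essentially the same route as the paper: split off the $\mathbb{Z}$-torsion submodule, observe the quotient is a finitely generated torsion-free module, and invoke hereditariness of maximal orders (Reiner, Theorem~21.4) to conclude it is projective, so the sequence splits. The only difference is cosmetic: the paper spells out why a torsion-free finitely generated module is projective by embedding it into $\tfrac{1}{N}\mathfrak{A}^{\oplus n}$ via $M\subseteq M\otimes_{\mathbb{Z}}\mathbb{Q}\subseteq A^{\oplus n}$ (using semisimplicity of $A$) and then using that submodules of projectives over a hereditary order are projective, whereas you cite the standard ``lattices over hereditary orders are projective'' principle directly.
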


\begin{proof}
This is standard, but apparently not spelled out in the standard reference
\cite{MR1972204}. Since any morphism sends a $\mathbb{Z}$-torsion element to a
$\mathbb{Z}$-torsion element, this in particular holds for the $\mathfrak{A}%
$-module structure. Thus, the $\mathbb{Z}$-torsion elements $M_{tor}$ form a
right $\mathfrak{A}$-submodule of $M$. As $M$ is finitely generated (even over
$\mathbb{Z}$), it follows that $M_{tor}$ is finite as a set. We get a
conflation $M_{tor}\hookrightarrow M\twoheadrightarrow M/M_{tor}$. Thus, we
are done if we can show that every $\mathbb{Z}$-torsionfree $\mathfrak{A}%
$-module is projective. So, suppose $M$ is torsionfree. We follow \cite[Theorem~10.6]{MR1972204}. Tensoring with $\mathbb{Q}$ gives the embedding
$M\hookrightarrow M\otimes_{\mathbb{Z}}\mathbb{Q}$, the latter is an
$A$-module, thus has some free resolution, giving a surjection%
\[
A^{\oplus n}\twoheadrightarrow M\otimes_{\mathbb{Z}}\mathbb{Q}%
\]
for some integer $n$. As $A$ is semisimple, this sequence must split, giving
$M\subseteq M\otimes_{\mathbb{Z}}\mathbb{Q}\subseteq A^{\oplus n}$. As $M$ is
finitely generated, there exists some $N\geq1$ such that $M\subseteq\frac
{1}{N}\mathfrak{A}^{\oplus n}$ (just using that $\mathfrak{A}\cdot
\mathbb{Q}=A$). Since $\mathfrak{A}$ is a maximal order, it is hereditary
(\cite[Theorem~21.4]{MR1972204}), and thus its submodule $M$ is projective.
\end{proof}

For any exact categories $(\mathsf{C}_{i})_{i\in I}$ and index set $I$ one may
define a product exact category $\prod_{i\in I}\mathsf{C}_{i}$. Its objects
are arrays $X=(X_{i})_{i\in I}$ with $X_{i}\in\mathsf{C}_{i}$ and morphisms
$X\rightarrow X^{\prime}$ are termwise morphisms $X_{i}\rightarrow
X_{i}^{\prime}$ in $\mathsf{C}_{i}$. Conflations are termwise conflations.

\begin{lemma}
\label{lemma_ProductsOfSplitExact}If each exact category $\mathsf{C}_{i}$ is
split exact, so is $\prod_{i\in I}\mathsf{C}_{i}$.
\end{lemma}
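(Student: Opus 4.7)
The plan is to unwind the definitions and observe that splittings of conflations, like the conflations themselves, can be assembled componentwise. Concretely, let
\[
X' \xrightarrow{f} X \xrightarrow{g} X''
\]
be a conflation in $\prod_{i \in I} \mathsf{C}_i$. By the definition of the exact structure on the product category (conflations are declared to be termwise conflations), each component
\[
X'_i \xrightarrow{f_i} X_i \xrightarrow{g_i} X''_i
\]
is a conflation in $\mathsf{C}_i$.

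Since $\mathsf{C}_i$ is split exact by hypothesis, for each $i \in I$ there exists a morphism $s_i \colon X''_i \to X_i$ in $\mathsf{C}_i$ with $g_i \circ s_i = \mathrm{id}_{X''_i}$ (equivalently, a retraction $r_i \colon X_i \to X'_i$ with $r_i \circ f_i = \mathrm{id}_{X'_i}$). Because morphisms in $\prod_{i \in I} \mathsf{C}_i$ are precisely families of componentwise morphisms, the family $s \coloneqq (s_i)_{i \in I}$ is a morphism $X'' \to X$ in the product, and the identity $g \circ s = \mathrm{id}_{X''}$ holds componentwise and hence holds in the product. This exhibits the chosen conflation as split.

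No real obstacle arises; the only non-trivial ingredient is the (trivial) fact that the product exact structure is defined termwise on both objects and morphisms, which is exactly how one assembles the pointwise splittings into a single splitting. Choosing one splitting per index $i$ requires the axiom of choice in general, but this is standard and does not warrant comment in the proof.
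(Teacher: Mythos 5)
Your proof is correct and is essentially the same as the paper's: both take the termwise conflations, choose a splitting $s_i$ in each $\mathsf{C}_i$, and observe that the family $(s_i)_{i\in I}$ is a splitting in the product since composition and identities are computed componentwise. No gaps.
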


\begin{proof}
Given a conflation $(X_{i}^{\prime})\hookrightarrow(X_{i})\twoheadrightarrow
(X_{i}^{\prime\prime})$ in the product category, we have conflations
$X_{i}^{\prime}\hookrightarrow X_{i}\overset{q_{i}}{\twoheadrightarrow}%
X_{i}^{\prime\prime}$ in $\mathsf{C}_{i}$, and if $s_{i}$ is a splitting of
$q_{i}$, the morphism $(q_{i})_{i\in I}$ defines a splitting in the product category.
\end{proof}

\begin{definition}
\label{def_A} Suppose $\mathfrak{A}\subset A$ is a maximal order. Let $S$ be a
finite set of prime numbers. Define%
\[
\mathsf{J}_{A}^{(S)}\coloneqq\operatorname*{proj}(A_{\mathbb{R}})\times\prod_{p\notin
S}\operatorname*{proj}(\mathfrak{A}_{p})\times\prod_{p\in S}%
\operatorname*{proj}(A_{p})\text{.}%
\]
For any inclusion of finite sets $S\subseteq S^{\prime}$ of primes there is an
exact functor $\mathsf{J}^{(S)}\rightarrow\mathsf{J}^{(S^{\prime})}$ induced
from $(-)\mapsto(-)\otimes_{\mathfrak{A}_{p}}A_{p}$ termwise for all primes
$p\in S^{\prime}\setminus S$, and the identity functor for the remaining $p$.
Define%
\begin{equation}
\mathsf{J}_{A}^{(\infty)}\coloneqq\left.  \underset{S}{\operatorname*{colim}}\left.
\mathsf{J}_{A}^{(S)}\right.  \right.  \text{.} \label{lszv1}%
\end{equation}

\end{definition}

This definition is actually independent of the choice of the maximal order,
see Remark \ref{rmk_ChangeMaximalOrder}.

\begin{remark}
[$2$-Colimit]\label{rmk_2Colim}The colimit in Equation \eqref{lszv1} is taken of
a diagram, indexed by a partially ordered set, taking values in the
$2$-category of exact categories. One might call this a $2$-colimit, but it is
also common to merely call this a colimit.
\end{remark}

\begin{proposition}
\label{prop_IdentifyJ}Suppose $\mathfrak{A}\subset A$ is a maximal order. Then
there is an exact equivalence of exact categories%
\[
\mathsf{LCA}_{A,ab}\overset{\sim}{\longrightarrow}\mathsf{J}_{A}^{(\infty)}%
\]
and in particular the definition of $\mathsf{J}_{A}^{(\infty)}$ is independent
of the choice of the order $\mathfrak{A}\subset A$. The functor sends $X$ to
the array $(X_{p})_{p}$ of topological $p$-torsion parts\footnote{See
Definition \ref{def_ModuleStruct_Xp}.}, resp. the vector module part; both on
objects as well as morphisms.
\end{proposition}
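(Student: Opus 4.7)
My plan is to construct the equivalence by decomposing an adelic block into its vector module part together with its topological $p$-torsion parts, and by building an explicit quasi-inverse from the restricted-product description.

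First I would construct the forward functor $\Phi\colon \mathsf{LCA}_{A,ab} \to \mathsf{J}_A^{(\infty)}$. Writing $X = V \oplus H$ via Definition~\ref{Definition:QuasiAdelicBlock}, Lemma~\ref{lemma_VectorFreePieceOfAdelicBlockIsTopTorsion} says that $H$ is a topological torsion LCA group. Applying the structure theorem for such groups (\cite[Theorem~3.13]{MR637201}), $H$ canonically decomposes as a topological restricted product $H \cong \prod\nolimits_p^{\prime} H_p$, where $H_p$ is the closed subgroup of topologically $p$-torsion elements; continuity of the scalar action upgrades the $A$-structure on $H_p$ to an $A_p$-module structure. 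Given any compact clopen $\mathfrak{A}$-lattice $C \subseteq H$ as in Definition~\ref{Definition:QuasiAdelicBlock}, its topological $p$-part $C_p$ is a compact open $\mathfrak{A}_p$-submodule of $H_p$, and is $\mathbb{Z}$-torsion-free as it sits inside a $\mathbb{Q}$-vector space. By Lemma~\ref{lemma_StructOfFinGenModulesOverMaxOrder} applied to the maximal order $\mathfrak{A}_p$, we have $C_p \in \operatorname*{proj}(\mathfrak{A}_p)$, and localizing gives $H_p = C_p \otimes_{\mathfrak{A}_p} A_p \in \operatorname*{proj}(A_p)$. For any finite set $S$ of primes, the tuple $(V, (C_p)_{p \notin S}, (H_p)_{p \in S})$ is then an object of $\mathsf{J}_A^{(S)}$; different choices of $S$ or of $C$ yield canonically isomorphic objects after passing to the colimit.

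The quasi-inverse $\Psi$ sends $(V, (P_p)_p) \in \mathsf{J}_A^{(S)}$ to $V \oplus H$, where $H \coloneqq \bigcup_{n \geq 1} \frac{1}{n}\bigl(\prod_{p \notin S} P_p \times \prod_{p \in S} L_p\bigr)$ with $L_p \subseteq P_p$ any chosen finitely generated projective $\mathfrak{A}_p$-lattice (these exist by Lemma~\ref{lemma_StructOfFinGenModulesOverMaxOrder}), embedded inside the evident restricted product equipped with the topology making $\prod_{p \notin S} P_p \times \prod_{p \in S} L_p$ compact clopen. One checks this lies in $\mathsf{LCA}_{A,ab}$, is independent of the lattice choice up to canonical isomorphism (since $\bigcup_n \frac{1}{n} L_p = P_p$), and is compatible with the transition functors $\mathsf{J}_A^{(S)} \to \mathsf{J}_A^{(S')}$, hence factors through the colimit. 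The mutual-inverse property is then essentially formal: the topological $p$-parts of $\Psi(V, (P_p))$ recover $(V, (P_p))$, while assembling the topological $p$-parts of $X$ into a restricted product recovers $X$.

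That $\Phi$ is exact and reflects exactness follows because both sides are split exact. The source is split exact because adelic blocks are both injective and projective in $\mathsf{LCA}_A$ (a consequence of Lemma~\ref{lemma_CharacterizeInjectivesAndProjectives} together with the structural decomposition, and underpinning Theorem~\ref{theorem:MainIntroduction}(2)); the target is split exact by Lemma~\ref{lemma_ProductsOfSplitExact} combined with the fact that filtered colimits of split exact categories along exact functors remain split exact. Full faithfulness is automatic from the functoriality of both decompositions (topological $p$-part on one side, termwise on the other). Independence of the choice of maximal order $\mathfrak{A}$ follows because any two maximal orders agree $p$-locally for all but finitely many $p$, hence induce the same $\mathsf{J}_A^{(S)}$ for sufficiently large $S$ and thus the same colimit. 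The main obstacle I anticipate is the end of the first step: verifying that the topological $p$-part $C_p$ is genuinely a finitely generated projective $\mathfrak{A}_p$-module with the correct topology, and that $H_p = C_p \otimes_{\mathfrak{A}_p} A_p$ holds as topological (not merely abstract) $A_p$-modules; this relies on $\mathfrak{A}_p$ being maximal (through Lemma~\ref{lemma_StructOfFinGenModulesOverMaxOrder}) and on careful bookkeeping of the interplay between the profinite decomposition $C \cong \prod_p C_p$ and the restricted-product structure of $H$.
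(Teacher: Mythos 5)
Your strategy is essentially the paper's (decompose an adelic block into its vector part and topological $p$-torsion parts, identify the lattices $C_p$ with finitely generated projective $\mathfrak{A}_p$-modules, and build a quasi-inverse via a restricted/local direct product), but the two points you treat as routine are exactly where the substance lies, and as written they are genuine gaps. First, you never define the forward functor on morphisms, and "full faithfulness is automatic from functoriality" is not an argument: a morphism in $\mathsf{J}_A^{(\infty)}$ is, by the colimit definition, a family that is $\mathfrak{A}_p$-linear from $C_p$ to $C_p'$ for all but finitely many $p$, so to have a well-defined functor you must prove that every continuous $A$-module map $f\colon X\to X'$ satisfies $f(C_p)\subseteq C_p'$ for almost all $p$. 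The paper's proof of this is a compactness argument: $f(C)$ is compact and $X'=\bigcup_n\frac{1}{n}C'$ is an open cover, so $f(C)\subseteq\frac{1}{n}C'$ for some $n$, and $\frac{1}{n}C'_p=C'_p$ whenever $p\nmid n$. Conversely, fullness requires assembling termwise module maps into a continuous map of restricted products (rescaling the target lattice at the finitely many $p\in S$), which also needs a proof. Second, the step you flag as the "main obstacle" really is missing: Lemma \ref{lemma_StructOfFinGenModulesOverMaxOrder} only applies to finitely generated modules, so you must first show $C_p$ is finitely generated over $\mathfrak{A}_p$; the paper does this via the topological Nakayama lemma, using that $C_p$ is compact and $C_p/pC_p$ is finite (because $pC$ is open in the compact $C$). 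Torsion-freeness alone does not suffice.

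There is also a circularity in your exactness argument. That adelic blocks are injective and projective in $\mathsf{LCA}_A$, and that $\mathsf{LCA}_{A,ab}$ is split exact, are Theorem \ref{thm_AdelicBlocksAreProjectiveAndInjective} and Proposition \ref{prop_LCAAbHasKernelsAndCokernels}, whose proofs in the paper depend on Proposition \ref{prop_IdentifyJ} itself; Lemma \ref{lemma_CharacterizeInjectivesAndProjectives} covers only discrete, compact and vector modules, not general adelic blocks. So you cannot invoke these facts here. The paper instead verifies exactness directly: under the equivalence, inflations and deflations of adelic blocks correspond to termwise injections and surjections of projective modules, and such injections are automatically closed immersions, so conflations match on both sides without any appeal to split exactness of the source.
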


Note that the individual categories $\left.  \mathsf{J}_{A}^{(S)}\right.  $ do
not even possess an enrichment in $A$-modules (or even $\mathbb{Q}$-vector
spaces). This only exists in the colimit.

\subsection{Proof of Proposition \ref{prop_IdentifyJ}}

We sketch the strategy. We first `fatten up' the category $\mathsf{LCA}%
_{A,ab}$ to a category $\mathsf{M}_{0}$ with extra data and a forgetful
functor%
\[
\mathsf{M}_{0}\overset{\sim}{\longrightarrow}\mathsf{LCA}_{A,ab}\text{.}%
\]
Then we consider what looks like a subcategory $\mathsf{M}_{1}\subseteq \mathsf{M}_{0}$ at first, but turns out to be an equivalence as well:$\mathsf{M}_{1}\overset{\sim}{\longrightarrow}\mathsf{M}_{0}.$  Next, on $\mathsf{M}_{1}$ one can explicitly write down a functor to
$\mathsf{J}_{A}^{(\infty)}$ and show that it is an equivalence. We obtain a
chain of exact equivalences%
\[
\mathsf{J}_{A}^{(\infty)}\overset{\sim}{\longleftarrow}\mathsf{M}_{1}%
\overset{\sim}{\longrightarrow}\mathsf{M}_{0}\overset{\sim}{\longrightarrow
}\mathsf{LCA}_{A,ab}%
\]
Finally, we check that these functors transform Pontryagin duality into linear duality.

\subsubsection{Step 1}

We define an exact category $\mathsf{M}_{0}$ first. Its objects are pairs%
\[
(X,C)\text{,}%
\]
where $X$ is a vector-free adelic block and $C\subset X$ is a compact clopen
$\mathfrak{A}$-submodule as in Equation \eqref{l_Eq_4}, i.e.%
\begin{equation}
\bigcup\frac{1}{n}C=X\qquad\text{and}\qquad\bigcap\frac{1}{n}C=0\text{.}
\label{lwwaa3}%
\end{equation}
Morphisms $(X,C)\rightarrow(X^{\prime},C^{\prime})$ are just morphisms of
adelic blocks $X\rightarrow X^{\prime}$ in $\mathsf{LCA}_{A,ab}$. Conflations
are such if the adelic blocks form a conflation. In particular, neither for
morphisms nor conflations, there is any dependency on $C$. There is an obvious
functor%
\[
\Psi_{0}\colon\mathsf{M}_{0}\longrightarrow\mathsf{LCA}_{A,ab},\qquad
(X,C)\longmapsto X
\]
forgetting the choice of $C$. This is clearly fully faithful and exact. It is
also essentially surjective since each adelic block possesses a choice of $C$
by definition. Thus, the above functor $\Psi_{0}$ is an exact equivalence of
exact categories,

\begin{definitionp}
\label{def_ModuleStruct_Cp}For a prime number $p$, define the
\emph{topological }$p$\emph{-torsion part}%
\begin{equation}
C_{p}\coloneqq\left\{  x\in C\left\vert \underset{n\rightarrow\infty}{\lim}%
p^{n}x=0\right.  \right\}  \text{.} \label{lwwaa1}%
\end{equation}
This is a closed $\mathfrak{A}$-submodule of $C$. Moreover, it carries a
canonical right $\mathfrak{A}_{p}$-module structure through%
\begin{equation}
x\cdot\alpha=\underset{n\rightarrow\infty}{\lim}(x\cdot\alpha_{n})\text{,}
\label{lwwaa2}%
\end{equation}
where $(\alpha_{n})$ is any sequence with $\alpha_{n}\in\mathfrak{A}$
converging to $\alpha$ in the $p$-adic topology of $\mathfrak{A}_{p}$.
\end{definitionp}

\begin{proof}
For a general LCA group, the topological $p$-torsion part need not be a closed
subgroup, but we know that $C$ is totally disconnected and can invoke
\cite[Lemma~3.8]{MR637201}, settling that $C_{p}$ is closed in $C$. The
limit condition is preserved under the $\mathfrak{A}$-module action. The
$\mathfrak{A}_{p}$-module stems from $\mathfrak{A}_{p}=\mathfrak{A}%
\otimes_{\mathbb{Z}}\mathbb{Z}_{p}$ and $\mathbb{Z}$ being dense in
$\mathbb{Z}_{p}$ in the $p$-adic topology. The existence of the limit is
ensured since in the $p$-adic topology $\left\vert \alpha_{n}-\alpha
\right\vert $ getting smaller means $\alpha_{n}-\alpha\in p^{m}\mathbb{Z}_{p}$
for $m\rightarrow\infty$ as $n\rightarrow\infty$, and therefore $(\alpha
_{n}-\alpha)x\rightarrow0$ thanks to the topological $p$-torsion condition in
Equation \eqref{lwwaa1}.
\end{proof}

\begin{definition}
\label{def_ModuleStruct_Xp}For a prime number $p$, we also define%
\[
X_{p}\coloneqq\left\{  x\in X\left\vert \underset{n\rightarrow\infty}{\lim}%
p^{n}x=0\right.  \right\}  \text{.}%
\]
This is a closed $A$-submodule of $X$. It carries a canonical right $A_{p}%
$-module structure, again defined by Equation \eqref{lwwaa2}.
\end{definition}

This is shown analogously. Again, \cite[Lemma~3.8]{MR637201} settles that
$X_{p}\subseteq X$ is closed. The rest can be deduced in the same way, but by
a simple exercise we also have the description%
\begin{equation}
X_{p}=\bigcup\frac{1}{n}C_{p}\qquad\text{and}\qquad\bigcap\frac{1}{n}C_{p}=0
\label{lwwaa5}%
\end{equation}
analogous to Equation \eqref{lwwaa3}, so one can alternatively reduce this to
the previous considerations about $C_{p}$. Equation \eqref{lwwaa5} also shows
that each $X_{p}$ is a vector-free adelic block.

\begin{lemma}
\label{lemma7}$C_{p}$ is a finitely generated projective $\mathfrak{A}_{p}%
$-module, $X_{p}$ is a finitely generated projective $A_{p}$-module, and%
\begin{equation}
X_{p}=C_{p}\otimes_{\mathfrak{A}_{p}}A_{p} \label{lwwaa6}%
\end{equation}
holds.
\end{lemma}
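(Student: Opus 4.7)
The plan is to first show that $C_p$ is finitely generated projective over $\mathfrak{A}_p$, and then bootstrap the corresponding statements for $X_p$ via Equation \eqref{lwwaa5}. I would begin by noting that $C_p = C \cap X_p$ is compact open in $X_p$: openness follows because $C$ is open in $X$, and compactness because $C_p$ is closed in the compact set $C$. This gives a compact open $\mathfrak{A}_p$-submodule $C_p$ of $X_p$, setting the stage for topological Nakayama.

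The main step is to show that $C_p/pC_p$ is finite. Since $X$ is a $\mathbb{Q}$-vector space (as an $A$-module), multiplication by $p$ is a homeomorphism of $X$ with continuous inverse $p^{-1} \in \mathbb{Q} \subseteq A$. Both $p$ and $p^{-1}$ clearly preserve $X_p$ (if $p^n x \to 0$, then $p^n (p^{\pm 1} x) = p^{n \pm 1} x \to 0$), so $p$ restricts to a self-homeomorphism of $X_p$. Consequently $pC_p$ is open in $X_p$, and since $pC_p \subseteq C_p$ it is open in $C_p$. The quotient $C_p/pC_p$ is then discrete and compact, hence finite. Topological Nakayama for the compact $\mathbb{Z}_p$-module $C_p$ now forces $C_p$ to be finitely generated over $\mathbb{Z}_p$, and a fortiori over $\mathfrak{A}_p$. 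Combined with $\mathbb{Z}$-torsion-freeness of $C_p \subseteq X$, Lemma \ref{lemma_StructOfFinGenModulesOverMaxOrder} upgrades this to the statement that $C_p$ is finitely generated projective over $\mathfrak{A}_p$.

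For Equation \eqref{lwwaa6}, observe that $A_p = \mathfrak{A}_p \otimes_{\mathbb{Z}_p} \mathbb{Q}_p$, so $C_p \otimes_{\mathfrak{A}_p} A_p$ is the localization of $C_p$ obtained by inverting $p$. Integers coprime to $p$ act bijectively on $C_p$ (being units in $\mathbb{Z}_p \subseteq \mathfrak{A}_p$), so the union in Equation \eqref{lwwaa5} already reduces to $X_p = \bigcup_{n \geq 0} p^{-n}C_p$, identifying $X_p$ algebraically with that same localization. The natural map $C_p \otimes_{\mathfrak{A}_p} A_p \to X_p$ sending $c \otimes a \mapsto c \cdot a$ is therefore surjective, and injective by torsion-freeness of $C_p$. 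Consequently $X_p$ is finitely generated projective over $A_p$. The only real obstacle lies in the Nakayama step; once one exploits the $\mathbb{Q}$-vector-space structure on $X$ to turn multiplication by $p$ into a homeomorphism of $X_p$, the finiteness of $C_p/pC_p$ drops out, and the rest is a direct application of the structure theorems already assembled above.
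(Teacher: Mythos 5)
Your proof is correct and follows essentially the same route as the paper: establish finiteness of $C_p/pC_p$ using that multiplication by $p$ is a homeomorphism, apply the topological Nakayama lemma to the compact module $C_p$, invoke Lemma \ref{lemma_StructOfFinGenModulesOverMaxOrder} together with $\mathbb{Z}$-torsion-freeness for projectivity over $\mathfrak{A}_p$, and obtain Equation \eqref{lwwaa6} as the localization inverting $p$ via Equation \eqref{lwwaa5}. The only (harmless) cosmetic difference is that you prove $pC_p$ is open in $C_p$ directly from $C_p$ being open in $X_p$, whereas the paper embeds $C_p/pC_p$ into the finite group $C/pC$, and you spell out the localization step in slightly more detail.
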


\begin{proof}
From $C_{p}\subseteq C$ we get $C_{p}/pC_{p}\subseteq C/pC$. However, $p\in
A^{\times}$ is a unit, so multiplication by $p$ is a homeomorphism of $X$ to
itself, so it maps the open $C$ to an open $pC$. Thus, $C/pC$ is compact (as
$C$ is compact) and discrete (as $pC$ is open), and therefore finite.\ It
follows that $C_{p}/pC_{p}$ is finite. Now conclude that $C_{p}$ is finitely
generated by the topological Nakayama Lemma for compact modules, see
\cite[Chapter 5, \S 1]{MR1029028} (instead of $C_{p}$ being finitely
generated, this takes compactness as input). Next, $X_{p}=C_{p}\otimes
_{\mathbb{Z}_{p}}\mathbb{Q}_{p}$ follows directly from Equation \eqref{lwwaa5}.
This yields Equation \eqref{lwwaa6}, and also implies the finite generation
property for $X_{p}$. As $A_{p}$ is semisimple, it is clear that $X_{p}$ is
projective. Next, by Lemma \ref{lemma_StructOfFinGenModulesOverMaxOrder} we
learn that $C_{p}$ is projective, since by $C_{p}\subseteq X_{p}$ it must be
$\mathbb{Z}$-torsionfree.
\end{proof}

\begin{lemma}
\label{lemma5}There is an isomorphism of topological $\mathfrak{A}$-modules,
$\prod_{p}C_{p}\overset{\sim}{\longrightarrow}C$, induced from the respective
inclusions $C_{p}\subseteq C_{p}$.
\end{lemma}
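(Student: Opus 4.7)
The plan is to identify $C$ with a profinite abelian group, then apply the standard decomposition of a profinite abelian group into its pro-$p$ parts, and finally reinterpret these parts as the topological $p$-torsion parts $C_p$. Since $C$ is a compact clopen submodule of the vector-free adelic block $X$, and $X$ is totally disconnected by Lemma \ref{lemma_VectorFreePieceOfAdelicBlockIsTopTorsion}, the group $C$ is compact and totally disconnected, hence profinite. Writing $\mathcal{U}$ for the downward-directed set of open $\mathfrak{A}$-submodules of $C$, we therefore have $C = \varprojlim_{U \in \mathcal{U}} C/U$, with each $C/U$ a finite $\mathfrak{A}$-module.

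For each such $U$, the finite abelian group $C/U$ decomposes canonically as $\bigoplus_p (C/U)_p$ into its $p$-primary components. This decomposition is $\mathfrak{A}$-equivariant, because any group endomorphism of a finite abelian group preserves $p$-power torsion. Canonicity also guarantees that it is respected by every transition map of the inverse system, so passing to the limit and using that $\varprojlim$ commutes with products gives $C = \prod_p \varprojlim_{U} (C/U)_p$. I then identify the factor $\varprojlim_{U} (C/U)_p$ with $C_p$: a compatible family $(c_U)$ lies in this limit iff every $c_U$ is $p$-power torsion in $C/U$, and since $\mathcal{U}$ is a neighborhood basis of $0$ in $C$, this is equivalent to $p^n c \to 0$ in $C$, matching the defining condition in Equation \eqref{lwwaa1}.

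Finally, I check that this isomorphism is induced by the inclusions $C_p \hookrightarrow C$. For $(c_p)_p \in \prod_p C_p$, modulo any $U \in \mathcal{U}$ only finitely many reductions $c_p \bmod U$ are nonzero, because $C/U$ has only finitely many nonvanishing $p$-primary parts; hence the formal sum $\sum_p c_p$ converges to a well-defined coherent element of $\varprojlim_U C/U = C$, and this sum realizes the inverse-limit identification. Since $\prod_p C_p$ is compact by Tychonoff and $C$ is Hausdorff, the resulting continuous bijection is automatically a homeomorphism. I do not foresee a serious obstacle; the only routine item is the $\mathfrak{A}$-equivariance of the $p$-primary decomposition, which is immediate from the general remark above.
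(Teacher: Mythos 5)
Your argument is correct, but it takes a different route from the paper: the paper disposes of this lemma by citing Armacost \cite[Proposition~3.10]{MR637201} (the decomposition of a compact totally disconnected, equivalently topological torsion, group into the product of its topological $p$-components), whereas you essentially reprove that structure result from scratch via the profinite presentation $C=\varprojlim_U C/U$ and the $p$-primary decomposition of the finite quotients $C/U$. What the citation buys is brevity; what your argument buys is a self-contained proof that also carries the $\mathfrak{A}$-module structure along automatically, and your identification of $\varprojlim_U(C/U)_p$ with $C_p$ and the convergent-sum description of the inverse map are both correct (the equivalence between ``$c\bmod U$ is $p$-power torsion for all $U$'' and $p^nc\to 0$ uses only that the $U$ form a neighborhood basis of $0$ consisting of subgroups). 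One step you gloss over and should justify: you take the limit over open $\mathfrak{A}$-submodules, which requires these to be cofinal among open subgroups of $C$; this is true but needs a word, e.g.\ for an open subgroup $U$ the set $\{c\in C: c\mathfrak{A}\subseteq U\}$ is an open $\mathfrak{A}$-submodule contained in $U$ because $\mathfrak{A}$ is finitely generated as a $\mathbb{Z}$-module and acts continuously. Alternatively, you could run the whole argument with arbitrary open subgroups, obtaining the decomposition as topological groups, and then observe that equivariance is automatic since each $C_p$ is already an $\mathfrak{A}$-submodule by Definition/Proposition \ref{def_ModuleStruct_Cp} and the map is induced by the inclusions. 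Similarly, the continuity of $(c_p)_p\mapsto\sum_p c_p$ deserves one line (each composite to the discrete quotient $C/U$ is a finite sum of continuous maps), after which your compactness--Hausdorff argument indeed finishes the proof.
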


\begin{proof}
\cite[Proposition~3.10]{MR637201}.
\end{proof}

\subsubsection{Step 2}

Next, we define a category $\mathsf{M}_{1}$. Its objects are pairs $(X,C)$ as
before, but this time morphisms $(X,C)\rightarrow(X^{\prime},C^{\prime})$ are
morphisms of adelic blocks%
\[
f\colon X\longrightarrow X^{\prime}\qquad\text{such that}\qquad f(C_{p})\subseteq
C_{p}^{\prime}%
\]
holds for all but finitely many prime numbers $p$. This is clearly closed
under composition. There is an obvious faithful functor%
\[
\Psi_{1}\colon\mathsf{M}_{1}\longrightarrow\mathsf{M}_{0}\text{,}%
\]
as we have just imposed an additional condition on morphisms. We claim that
the functor is also full: let $(X,C)\rightarrow(X^{\prime},C^{\prime})$ be any
morphism in $\mathsf{M}_{0}$, say coming from $f\colon X\rightarrow X^{\prime}$. As
$C$ is compact, the set-theoretic image $f(C)\subseteq X^{\prime}$ is compact.
Further, $X^{\prime}=\bigcup\frac{1}{n}C^{\prime}$ is an open cover, so since
$f(C)$ is compact, a finite subcover suffices. Thus, there exists some
$n\geq1$ such that $f(C)\subseteq\frac{1}{n}C^{\prime}$. In particular,
$f(C_{p})\subseteq\frac{1}{n}C_{p}^{\prime}$. However, $C_{p}^{\prime}$ is an
$\mathbb{Z}_{p}$-module (as it is a $\mathfrak{A}_{p}$-module by the module
structure of Definition \ref{def_ModuleStruct_Cp}). Hence, for all primes $p$
not dividing $n$, $\frac{1}{n}\in\mathbb{Z}_{p}^{\times}$ is a unit, so
$\frac{1}{n}C_{p}^{\prime}=C_{p}^{\prime}$. In other words: except for
finitely many primes we have $f(C_{p})\subseteq C_{p}^{\prime}$, as required.
Since $\Psi_{1}$ is an equivalence of categories, we can transport the exact
structure from $\mathsf{M}_{0}$ to $\mathsf{M}_{1}$: it just amounts to
kernel-cokernel sequences%
\[
(X^{\prime},C^{\prime})\longrightarrow(X,C)\longrightarrow(X^{\prime\prime
},C^{\prime\prime})
\]
being a conflation if and only if $X^{\prime}\hookrightarrow
X\twoheadrightarrow X^{\prime\prime}$ is a conflation of adelic blocks.

\begin{lemma}
\label{lemma6}If $f\colon (X,C)\rightarrow(X^{\prime},C^{\prime})$ is a morphism in
$\mathsf{M}_{1}$, we have $f(X_{p})\subseteq X_{p}^{\prime}$. Moreover,
$f|_{X_{p}}$ is a morphism of $A_{p}$-modules.
\end{lemma}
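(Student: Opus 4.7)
The plan is to note that both conclusions follow purely from continuity of $f$ together with its $A$-linearity; the defining condition of $\mathsf{M}_{1}$ (that $f(C_{p}) \subseteq C_{p}^{\prime}$ for all but finitely many $p$) is in fact irrelevant for this particular lemma, so the statement actually holds for every morphism of adelic blocks $f \colon X \to X^{\prime}$.

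For the inclusion $f(X_{p}) \subseteq X_{p}^{\prime}$, I would take $x \in X_{p}$, which by Definition \ref{def_ModuleStruct_Xp} means $p^{n} x \to 0$ as $n \to \infty$. Since $f$ is a continuous group homomorphism, $p^{n} f(x) = f(p^{n} x) \to f(0) = 0$ in $X^{\prime}$, and by the very definition of $X_{p}^{\prime}$ this gives $f(x) \in X_{p}^{\prime}$.

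For the $A_{p}$-linearity of the restriction $f|_{X_{p}}$, additivity is automatic. For scalar compatibility, I would fix $\alpha \in A_{p}$ and choose a sequence $\alpha_{n} \in \mathfrak{A}$ of the kind featuring in Equation \eqref{lwwaa2}, so that the recipe there applies both to $x \in X_{p}$ and, thanks to the first claim, to $f(x) \in X_{p}^{\prime}$. Then continuity of $f$ combined with its $\mathfrak{A}$-linearity (a consequence of $A$-linearity) yields
\[
f(x \cdot \alpha) = f(\lim_{n} x \cdot \alpha_{n}) = \lim_{n} f(x \cdot \alpha_{n}) = \lim_{n} f(x) \cdot \alpha_{n} = f(x) \cdot \alpha.
\]
The only small point worth checking is that both limits actually exist as prescribed, but this is built into the topological $p$-torsion conditions ensuring $(\alpha_{n} - \alpha) \cdot x \to 0$ in $X_{p}$ and correspondingly $(\alpha_{n} - \alpha) \cdot f(x) \to 0$ in $X_{p}^{\prime}$, exactly as in the justification of \ref{def_ModuleStruct_Cp}. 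I expect no real obstacle here, which is why the argument ought to be very short.
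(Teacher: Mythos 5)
Your argument is correct, and it is genuinely more direct than the paper's own proof of the containment. The paper first shows the finer statement that $f(C_{p})\subseteq\frac{1}{n}C_{p}^{\prime}$: it uses $f(C)\subseteq\frac{1}{n}C^{\prime}$, the product decomposition $C^{\prime}\cong\prod_{\ell}C_{\ell}^{\prime}$ of Lemma \ref{lemma5}, and the observation that $pr_{\ell}f(x)$ is simultaneously topological $p$- and $\ell$-torsion, hence zero for $\ell\neq p$; only then does it pass to general $x\in X_{p}$ by clearing a denominator. You instead observe that $X_{p}^{\prime}$ is by Definition \ref{def_ModuleStruct_Xp} exactly the set of topological $p$-torsion elements of $X^{\prime}$, so continuity plus additivity of $f$ gives $p^{n}f(x)=f(p^{n}x)\to 0$ immediately; this suffices for the lemma as stated (and, as you note, the defining condition of $\mathsf{M}_{1}$ plays no role here, which is consistent with $\Psi_{1}$ being full). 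What the paper's longer route buys is the extra information that the image of $C_{p}$ lands in a rescaled copy of $C_{p}^{\prime}$, which is in the spirit of the bookkeeping done in Step 3, but is not needed for the statement itself. For the $A_{p}$-linearity your computation $f(x\cdot\alpha)=\lim f(x\cdot\alpha_{n})=\lim f(x)\cdot\alpha_{n}=f(x)\cdot\alpha$ is exactly what the paper summarizes in one sentence. One small point of precision (shared with the paper's own phrasing of Equation \eqref{lwwaa2}): a general $\alpha\in A_{p}$ is not a limit of elements of $\mathfrak{A}$, only elements of $\mathfrak{A}_{p}$ are; for arbitrary $\alpha$ one should either approximate by elements of $A$ (dense in $A_{p}$) or write $\alpha=p^{-k}\beta$ with $\beta\in\mathfrak{A}_{p}$ and use that $f$ commutes with multiplication by $p^{-k}$. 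With that routine adjustment your proof is complete.
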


\begin{proof}
We had seen a little above the lemma that $f(C)\subseteq\frac{1}{n}C^{\prime}%
$.\ Suppose $x\in C_{p}$. Then the element $f(x)$ is also topological
$p$-torsion because $f$ is continuous and therefore $\lim_{n\rightarrow\infty
}p^{n}f(x)=f(\lim_{n\rightarrow\infty}p^{n}x)=0$. If $\ell$ denotes a distinct
prime, let $pr_{\ell}$ denote the projection $C^{\prime}\twoheadrightarrow
C_{p}^{\prime}$ coming from applying Lemma \ref{lemma5} to $C^{\prime}$. It
follows that $pr_{\ell}f(x)$ is also topological $p$-torsion (note that
$\frac{1}{n}C^{\prime}$ is isomorphic to $C^{\prime}$). Hence, it is
topological $I$-torsion in for the ideal $I=(p,\ell)=(1)$. Thus, $pr_{\ell
}f(x)=0$. As this works for all primes $\ell\neq p$, we deduce that
$f(x)\in\frac{1}{n}C_{p}^{\prime}$. Now, for a general $x\in X_{p}$ some
multiple $mx$ lies in $C_{p}$, so our claim follows.\ Note that $f|_{X_{p}%
}$ being an $A_{p}$-module homomorphism just comes from the compatible way how
the module structure is defined in Definition \ref{def_ModuleStruct_Xp}.
\end{proof}

\subsubsection{Step 3}

Now we define a functor%
\[
\Psi_{2}\colon\mathsf{M}_{1}\longrightarrow\left.  \underset{S}%
{\operatorname*{colim}}\left.  \mathsf{J}_{A}^{(S)}\right.  \right.  \text{.}%
\]
On objects, send $(X,C)$ to the array $(C_{p})_{p}$. By Lemma \ref{lemma7}
each $C_{p}$ is a finitely generated projective $\mathfrak{A}_{p}$-module, so
we can take $S=\varnothing$. For any morphism%
\[
(X,C)\longrightarrow(X^{\prime},C^{\prime})
\]
in $\mathsf{M}_{1}$, we know that for all but finitely many primes $p$ we have
$f(C_{p})\subseteq C_{p}^{\prime}$ by the very definition of $\mathsf{M}_{1}$.
Take $S$ to be these primes. Then for all primes $p\notin S$ we have
$f|_{C_{p}}:C_{p}\rightarrow C_{p}^{\prime}$ as a morphism of
$\mathfrak{A}_{p}$-modules by Lemma \ref{lemma6}. For the primes $p\in S$, the
same two lemmata say that $f|_{X_{p}}$ is a morphism of finitely generated
projective $A_{p}$-modules. These constructions are compatible with the
transition morphisms of the $2$-colimit by Equation \eqref{lwwaa6}. It is clear
that this describes a faithful functor. It is also full: suppose we are given
a morphism in $\left.  \mathsf{J}_{A}^{(S)}\right.  $ for a finite set $S$.
For $p\notin S$, every morphism of finitely generated $\mathfrak{A}_{p}%
$-modules is automatically continuous. Thus, it defines a map $C_{p}%
\rightarrow C_{p}^{\prime}$ for each such $p$ and then induces one to the
product%
\begin{equation}
\prod_{p\notin S}C_{p}\rightarrow\prod_{p\notin S}C_{p}^{\prime}%
\text{.}\label{lwwaa8}%
\end{equation}
For the $p\in S$ one can do this similarly: since the source module is
finitely generated, its image is finitely generated, thus lies in $\frac
{1}{p^{N}}C_{p}^{\prime}$ for a suitable $N$; and just use this instead of
$C_{p}^{\prime}$ on the right in Equation \eqref{lwwaa8} for the $p\in S$. By
Lemma \ref{lemma5} and since $X=\bigcup\frac{1}{n}C$ (and similarly for
$X^{\prime}$), this defines a morphism $X\rightarrow X^{\prime}$.

Finally, we claim that $\Psi_{2}$ is essentially surjective: Let $Y$ be an
object in $\left.  \mathsf{J}_{A}^{(S)}\right.  $. Define for all primes $p$,%
\[
\widehat{Y}_{p}\coloneqq Y\otimes_{\mathfrak{A}_{p}}A_{p}\text{,}%
\]
and note that this is well-defined on the $2$-colimit (it does not change
under the transition maps). Each $\widehat{Y}_{p}$ is a finitely generated
projective $A_{p}$-module, having a natural topology as a finite-dimensional
$\mathbb{Q}_{p}$-vector space. Hence, we may choose some finitely generated
torsionfree $\mathfrak{A}_{p}$-submodule $\widehat{C}_{p}$ such that
$\mathbb{Q}\cdot\widehat{C}_{p}=\widehat{Y}_{p}$. Then each $\widehat{Y}_{p}$
is locally compact and $\widehat{C}_{p}$ a compact clopen group. Now take the
restricted product $\prod^{\prime}(\widehat{Y}_{p},\widehat{C}_{p})$, \cite[p.
6, P.14]{MR637201}\footnote{Instead of restricted product, it is called a
\emph{local direct product }in the cited reference.}. This is a locally
compact abelian group and a $\mathfrak{A}$-module through the inclusions
$\mathfrak{A}\subset\mathfrak{A}_{p}$. Its topological $p$-torsion component
is precisely $\widehat{Y}_{p}$.

Thus, $\Psi_{2}$ is an equivalence of categories. It also preserves the exact
structures since by the previous considerations an inflation in $\mathsf{M}%
_{1}$ just corresponds to an injection, and conversely all
these are indeed automatically closed immersions (and analogously for deflations).

This finishes the proof of Proposition \ref{prop_IdentifyJ}.

\begin{remark}
\label{rmk_ChangeMaximalOrder}Suppose we pick a different maximal order
$\mathfrak{A}^{\prime}\subset A$ in Definition \ref{def_A}. Then the
definition does not actually change because the local maximal orders
$\mathfrak{A}_{p}^{\prime}\subset A_{p}$ will be isomorphic, $\mathfrak{A}%
_{p}\simeq\mathfrak{A}_{p}^{\prime}$, \cite[Proposition 3.5]{MR0117252}. If
$A_{p}$ is a division algebra, the maximal order is unique.
\end{remark}

\subsection{Duality\label{subsect_Duality}}

There is an exact functor to the opposite category%
\begin{equation}
(-)_{A}^{\ast}:\mathsf{J}_{A}^{(S)}\longrightarrow\mathsf{J}_{A^{op}}%
^{(S),op}\text{,}\label{lwwaa4}%
\end{equation}
defined on the factor categories by%
\begin{equation}
X_{p}\longmapsto\left\{
\begin{array}
[c]{ll}%
\operatorname*{Hom}\nolimits_{A_{\mathbb{R}}}(X_{\mathbb{R}},A_{\mathbb{R}}) &
\text{for }p=\mathbb{R}\\
\operatorname*{Hom}\nolimits_{\mathfrak{A}_{p}}(X_{p},\mathfrak{A}_{p}) &
\text{for }p\notin S\\
\operatorname*{Hom}\nolimits_{A_{p}}(X_{p},A_{p}) & \text{for }p\in S\text{.}%
\end{array}
\right.  \label{lwwab1}%
\end{equation}
These $\operatorname*{Hom}$-modules use up the right module structures and
thus are then left $A_{\mathbb{R}}$ (resp. $\mathfrak{A}_{p}$, $A_{p}$)
modules. Or, as we shall phrase it, right modules over the opposite ring. Note
that $\mathfrak{A}^{op}$ is a maximal order in $A^{op}$ (if there was a bigger
one, its opposite would be bigger than $\mathfrak{A}$). Thus, all our
considerations above regarding $\mathsf{J}_{A}^{(S)}$ also apply to
$\mathsf{J}_{A^{op}}^{(S)}$. Since the input modules are projective, these
functors are all exact and moreover there is a natural equivalence of functors%
\[
\operatorname*{id}\longrightarrow(-)_{A}^{\ast}\circ\lbrack(-)_{A^{op}}^{\ast
}]^{op}\text{.}%
\]
In particular, $(-)_{A}^{\ast}$ is an exact equivalence of exact categories.

\begin{proposition}
\label{prop_J_Duality}The functors in Equation \eqref{lwwaa4} induce an exact
equivalence of categories in the $2$-colimit%
\[
\mathsf{J}_{A}^{(\infty)}\overset{\sim}{\longrightarrow}\mathsf{J}_{A^{op}%
}^{(\infty),op}%
\]
and under the exact equivalence of Proposition \ref{prop_IdentifyJ} this
identifies with Pontryagin duality on adelic blocks $\mathsf{LCA}%
_{A,ab}\overset{\sim}{\longrightarrow}\mathsf{LCA}_{A^{op},ab}^{op}$.
\end{proposition}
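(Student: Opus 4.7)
The plan splits naturally into two parts: (i) verifying that the factor-wise algebraic dualities $(-)^{\ast}_{A}$ descend to an exact equivalence on the $2$-colimit $\mathsf{J}_{A}^{(\infty)}$, and (ii) identifying this equivalence with Pontryagin duality via the equivalence of Proposition \ref{prop_IdentifyJ}. I expect (ii) to be the substantive work, and would handle it by a place-by-place reduction.

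For (i), I would check that $(-)^{\ast}_{A}$ is compatible with the transition functors of the $2$-colimit. The transition $\mathsf{J}_{A}^{(S)} \to \mathsf{J}_{A}^{(S')}$ for $S \subseteq S'$ replaces $\operatorname{proj}(\mathfrak{A}_{p})$ by $\operatorname{proj}(A_{p})$ via $(-)\otimes_{\mathfrak{A}_{p}} A_{p}$ for $p \in S'\setminus S$. For any finitely generated projective $Y \in \operatorname{proj}(\mathfrak{A}_{p})$, the standard identity
\[
\Hom_{\mathfrak{A}_{p}}(Y,\mathfrak{A}_{p}) \otimes_{\mathfrak{A}_{p}} A_{p} \;\cong\; \Hom_{A_{p}}(Y \otimes_{\mathfrak{A}_{p}} A_{p}, A_{p})
\]
supplies the required compatibility. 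Since each $(-)^{\ast}_{A} \colon \mathsf{J}_{A}^{(S)} \to \mathsf{J}_{A^{op}}^{(S),op}$ is already an exact equivalence (with biduality as stated just before the proposition), these properties pass to the induced functor on $\mathsf{J}_{A}^{(\infty)}$.

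For (ii), I first note that Pontryagin duality does restrict to an exact equivalence $\mathsf{LCA}_{A,ab} \overset{\sim}{\to} \mathsf{LCA}_{A^{op},ab}^{op}$: this follows from Proposition \ref{prop_Structure} and Lemma \ref{lemma_DecompQAB}, since Pontryagin duality interchanges the relevant summand types (compact $\leftrightarrow$ discrete, vector-module $\leftrightarrow$ vector-module). Under the equivalence of Proposition \ref{prop_IdentifyJ}, an adelic block $X$ corresponds to the array of topological $p$-torsion parts $X_{p}$, together with its vector module part. By Lemma \ref{lemma5} and its vector-module analogue, Pontryagin duality preserves this decomposition: the topological $p$-torsion part of $X^{\vee}$ is naturally $(X_{p})^{\vee}$. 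Thus it suffices to construct, naturally in $X_{p}$, an isomorphism of $A_{p}^{op}$-modules $(X_{p})^{\vee} \cong \Hom_{A_{p}}(X_{p}, A_{p})$ at each place, and the analogous one at $p = \mathbb{R}$.

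This local identification rests on an isomorphism of $A_{p}$-bimodules $A_{p}^{\vee} \cong A_{p}$, which exists for any finite-dimensional semisimple algebra over a local field: composing the nondegenerate reduced trace pairing $A_{p} \times A_{p} \to \mathbb{Q}_{p}$ with a nontrivial character $\mathbb{Q}_{p} \to \mathbb{T}$ (and analogously over $\mathbb{R}$) yields the desired bimodule self-duality. For $X_{p} \in \operatorname{proj}(A_{p})$ one then computes $(X_{p})^{\vee} = \Hom_{A_{p}}(X_{p}, A_{p}^{\vee}) \cong \Hom_{A_{p}}(X_{p}, A_{p})$ as $A_{p}^{op}$-modules, functorially in $X_{p}$. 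The main obstacle I foresee is bookkeeping at the compact level: the Pontryagin dual of a compact clopen $\mathfrak{A}_{p}$-submodule $C_{p} \subset X_{p}$ is its annihilator $C_{p}^{\perp} \subset (X_{p})^{\vee}$, which need not coincide on the nose with $\Hom_{\mathfrak{A}_{p}}(C_{p}, \mathfrak{A}_{p})$ for a fixed normalization of the trace character; however the two are commensurable compact clopen $\mathfrak{A}_{p}^{op}$-submodules of $(X_{p})^{\vee}$, hence become canonically identified after passage to the $2$-colimit $\mathsf{J}_{A^{op}}^{(\infty)}$. This is precisely the flexibility the colimit construction affords. Once these place-wise identifications are assembled, naturality together with the biduality $\operatorname{id} \to (-)^{\ast}_{A}\circ [(-)^{\ast}_{A^{op}}]^{op}$ already noted before the proposition shows that the two exact equivalences agree.
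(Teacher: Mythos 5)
Your proposal is correct, and its global shape (first, compatibility of the factor-wise dualities with the transition functors via base change; then a place-by-place comparison with Pontryagin duality) matches the paper's proof; the two differ in how the local comparison is carried out. The paper restricts the equivalence of Proposition \ref{prop_IdentifyJ} to the full subcategory $\operatorname{proj}(A_{p})$, uses that $A_{p}$ is a projective generator, and checks compatibility on this single object: it identifies $\operatorname{Hom}_{\mathbb{Q}_{p}}(A_{p},\mathbb{Q}_{p})\cong\operatorname{Hom}_{A_{p}}(A_{p},A_{p})$ and composes with the character of Equation \eqref{lwwaa7}, the inverse direction resting on the fact that every continuous character of the topological $p$-torsion group $A_{p}$ takes values in the $p$-primary roots of unity; the integral ($\mathfrak{A}_{p}$-lattice) level is never discussed, since comparing the dualities rationally at each place suffices in the $2$-colimit. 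You instead obtain a natural isomorphism on all of $\operatorname{proj}(A_{p})$ at once from the bimodule self-duality $A_{p}\cong A_{p}^{\vee}$ furnished by the (nondegenerate, since we are in characteristic zero) trace pairing together with a chosen additive character, and you explicitly address the lattice discrepancy between the annihilator $C_{p}^{\perp}$ and $\operatorname{Hom}_{\mathfrak{A}_{p}}(C_{p},\mathfrak{A}_{p})$, which differ by the inverse different and hence coincide at all but finitely many $p$, the remaining places being absorbed by the colimit exactly as you say. Your route buys functoriality on the nose without reduction to a generator, at the cost of invoking the trace form and the different computation; the paper's route is more elementary and stays entirely at the rational level. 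One minor point: your justification that Pontryagin duality restricts to an equivalence $\mathsf{LCA}_{A,ab}\overset{\sim}{\to}\mathsf{LCA}_{A^{op},ab}^{op}$ is stated rather loosely; the clean argument decomposes $X^{\vee}\cong K\oplus Y\oplus D$ by Corollary \ref{cor_Structure}, dualizes back, and applies Lemma \ref{lemma_NoMapsFromQAToDiscrete} to the adelic block $X\cong(X^{\vee})^{\vee}$ to force $K=D=0$.
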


\begin{proof}
For the first claim we only need to check that the duality functor respects
the transition morphisms in the $2$-colimit diagram. We only need to check
this for those $p$ where the transition map is not the identity, i.e., where
$X_{p}$ is a finitely generated $\mathfrak{A}_{p}$-module. However, the
compatibility then amounts to the harmless isomorphism%
\begin{equation}
A_{p}\otimes_{\mathfrak{A}_{p}}\operatorname*{Hom}\nolimits_{\mathfrak{A}_{p}%
}(X_{p},\mathfrak{A}_{p})\cong\operatorname*{Hom}\nolimits_{A_{p}}%
(X_{p}\otimes_{\mathfrak{A}_{p}}A_{p},A_{p})\text{,}\label{lmiopps1a}%
\end{equation}
where we see the relevant transition maps on either side, respectively.
Equation \eqref{lmiopps1a} is an isomorphism, because it is a central
localization by $\mathbb{Z}_{p}\setminus\{0\}$.

The second fact is more surprising since Pontryagin duality is based on $\operatorname*{Hom}%
\nolimits_{\mathsf{LCA}}(-,\mathbb{T})$ instead. To check this, we proceed by
the following reductions:\ For each prime $p$, the equivalence identifies the
full subcategory $\operatorname*{proj}(A_{p})\subset\mathsf{J}_{A}^{(\infty)}$
with the full subcategory of topological $p$-torsion adelic blocks. As we
already have an equivalence of categories, restrict it to
$\operatorname*{proj}(A_{p})$ and its essential image. Being an equivalence of
categories, it suffices to work with $\operatorname*{proj}(A_{p})$ and compare
its natural duality (Equation \eqref{lwwab1}) with the duality transported from
the topological $p$-torsion adelic blocks (i.e. Pontryagin duality) along this
equivalence to $\operatorname*{proj}(A_{p})$. If these are compatible for all
primes $p$ and $p=\mathbb{R}$, then compatibility follows for the $\not 2%
$-colimit. So, first, fix a prime $p$. Since $A_{p}$ is a projective generator
of $\operatorname*{proj}(A_{p})$, it suffices to prove compatibility on this
object. We have%
\[
\operatorname*{Hom}\nolimits_{\mathbb{Q}_{p}}(A_{p},\mathbb{Q}_{p}%
)\cong\operatorname*{Hom}\nolimits_{A_{p}}(A_{p},A_{p})=A_{p}^{\ast}%
\]
by tensoring with $A_{p}$. Now compose any $\varphi$ in the left group with
the composition of maps%
\begin{equation}
\mathbb{Q}_{p}\twoheadrightarrow\mathbb{Q}_{p}/\mathbb{Z}_{p}\subset
\mathbb{Q}/\mathbb{Z}\overset{e}{\longrightarrow}\mathbb{T}\qquad
e(x)\coloneqq\exp(2\pi ix)\text{,}\label{lwwaa7}%
\end{equation}
inducing a map $\operatorname*{Hom}\nolimits_{\mathbb{Q}_{p}}(A_{p}%
,\mathbb{Q}_{p})\rightarrow\operatorname*{Hom}\nolimits_{\mathsf{LCA}}%
(A_{p},\mathbb{T})=A_{p}^{\vee}$. The map $e$ is continuous as $\mathbb{Q}%
_{p}/\mathbb{Z}_{p}$ carries the discrete topology. As a composition of
continuous maps, the output is indeed a continuous character. For the inverse
map, one needs to check that any character%
\[
\psi\colon A_{p}\longrightarrow\mathbb{T}%
\]
actually takes values in the subgroup $\mathbb{Q}_{p}/\mathbb{Z}_{p}%
\subset\mathbb{Q}/\mathbb{Z}$ (under the same maps as in Equation
\eqref{lwwaa7}). Every element in $A_{p}$ is topological $p$-torsion, so a
continuous character sends it to a topological $p$-torsion element in the
circle, so $\psi(A_{p})\subseteq\mathbb{T}_{p}$, and the latter group is just
the $p$-primary roots of unity by \cite[Lemma~2.6]{MR637201}, i.e. precisely
the image of $\mathbb{Q}_{p}/\mathbb{Z}_{p}$ under the map $e$ in Equation
\eqref{lwwaa7}. A further computation checks that the left $A_{p}$-module
structures are compatible under this isomorphism. The argument for
$p=\mathbb{R}$ is analogous, but more straightforward.
\end{proof}

\begin{proposition}
\label{prop_LCAAbHasKernelsAndCokernels}The category $\mathsf{LCA}_{A,ab}$ is
extension-closed in $\mathsf{LCA}_{A}$. As such, it is a fully exact subcategory.
Equipped with this exact structure, it is a split quasi-abelian category.
\end{proposition}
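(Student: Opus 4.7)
The first assertion, that $\mathsf{LCA}_{A,ab}$ is extension-closed in $\mathsf{LCA}_{A}$, was already established in Lemma \ref{lemma_ExtensionFacts}(3); the second assertion that this makes it a fully exact subcategory is then a standard formal consequence, as any extension-closed additive subcategory of an exact category inherits a fully exact structure with the induced conflations. So the real content lies in the split quasi-abelian claim.

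To prove it, I would transport the question along the equivalence of Proposition \ref{prop_IdentifyJ} and work instead with $\mathsf{J}_{A}^{(\infty)}=\operatorname*{colim}_{S}\mathsf{J}_{A}^{(S)}$. Conflations, kernels, and cokernels in a filtered $2$-colimit of exact categories are represented at a finite stage, so it suffices to verify the split quasi-abelian property for each $\mathsf{J}_{A}^{(S)}$. By Lemma \ref{lemma_ProductsOfSplitExact} (together with the obvious fact that kernels and cokernels in a product are computed termwise), this reduces factorwise to showing that each of $\operatorname{proj}(A_{\mathbb{R}})$, $\operatorname{proj}(A_{p})$ for $p\in S$, and $\operatorname{proj}(\mathfrak{A}_{p})$ for $p\notin S$ is split quasi-abelian.

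For the semisimple rings $A_{\mathbb{R}}$ and $A_{p}$, we have $\operatorname{proj}=\operatorname{mod}$ and this is a semisimple abelian category, which is trivially split quasi-abelian. For $\operatorname{proj}(\mathfrak{A}_{p})$, the ring $\mathfrak{A}_{p}$ is a maximal order and hence hereditary (by \cite[Theorem~21.4]{MR1972204}), so submodules of finitely generated projectives are projective; this supplies kernels inside the category. Quasi-abelian cokernels are obtained by applying Lemma \ref{lemma_StructOfFinGenModulesOverMaxOrder} to the ordinary module-theoretic cokernel and discarding its $\mathbb{Z}$-torsion. Finally, every conflation in $\operatorname{proj}(\mathfrak{A}_{p})$ has a projective right-hand term and therefore splits, which yields the split exact structure.

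The main obstacle I foresee is rigorously tracking that the kernels, cokernels, and splittings constructed factorwise remain compatible with the transition functors $(-)\otimes_{\mathfrak{A}_{p}}A_{p}$ of the $2$-colimit; however, since these arise from a central Ore localization (as already exploited in the proof of Proposition \ref{prop_J_Duality} via Equation \eqref{lmiopps1a}), they preserve the relevant universal constructions for morphisms between finitely generated projectives, and this compatibility is essentially routine.
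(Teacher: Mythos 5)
Your proof is correct and follows the paper's strategy: the paper likewise transports the problem to $\mathsf{J}_{A}^{(\infty)}$ via Proposition \ref{prop_IdentifyJ}, obtains kernels in each $\mathsf{J}_{A}^{(S)}$ from the fact that maximal orders are hereditary, and gets splitness from Lemma \ref{lemma_ProductsOfSplitExact} together with the observation that any conflation in the $2$-colimit is represented at a finite stage. The one genuine divergence is the treatment of cokernels: the paper never constructs them directly, but deduces their existence from kernels in $\mathsf{J}_{A^{op}}^{(\infty)}$ via the duality of Proposition \ref{prop_J_Duality}, whereas you build the cokernel of a map in $\operatorname{proj}(\mathfrak{A}_{p})$ as the torsion-free part (Lemma \ref{lemma_StructOfFinGenModulesOverMaxOrder}) of the module-theoretic cokernel (for $\mathfrak{A}_{p}$-modules the relevant torsion is $\mathbb{Z}_{p}$-torsion, which coincides with what you call $\mathbb{Z}$-torsion). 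Your route is more self-contained and makes the compatibility with the transition functors $-\otimes_{\mathfrak{A}_{p}}A_{p}$ transparent, since this is a flat central localization killing exactly the torsion part and preserving kernels --- precisely the point you flag at the end --- while the paper's duality route avoids any explicit cokernel computation at the cost of invoking Proposition \ref{prop_J_Duality}. A minor further difference: the paper reproves extension-closure inside the proposition using Corollary \ref{cor_Structure} and the functoriality of the Hoffmann--Spitzweck filtration, while you cite Lemma \ref{lemma_ExtensionFacts}(3); both are available and non-circular. Like the paper, you do not verify the quasi-abelian stability axioms (kernel maps stable under pushout, cokernel maps under pullback) explicitly, but this is harmless here since in all the categories involved every kernel map and every cokernel map splits.
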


\begin{proof}
\textit{(Step 1) }First, we show that $\mathsf{LCA}_{A,ab}$ is
extension-closed. Suppose%
\[
X^{\prime}\hookrightarrow M\twoheadrightarrow X^{\prime\prime}%
\]
is a conflation in $\mathsf{LCA}_{A}$ with $X^{\prime},X^{\prime\prime}$
adelic blocks. By Corollary \ref{cor_Structure} we may write $M=K\oplus
X\oplus D$ and by the canonical filtration, applied to either map, we must
have $K=D=0$ (for example, $K$ must map to zero in $X^{\prime\prime}$ by the
filtration, so $K$ is a subobject of $X^{\prime}$, but compacts can only map
trivially to adelic blocks, so $K=0$). Like any extension-closed full
subcategory of an exact category, this equips $\mathsf{LCA}_{A,ab}$ with an
exact structure and then renders $\mathsf{LCA}_{A,ab}$ fully exact in
$\mathsf{LCA}_{A}$.\newline\textit{(Step 2) }We show that each $\mathsf{J}%
_{A}^{(S)}$ has kernels. This just uses that kernels (in the abelian category
of all modules) in each of the cases $\mathfrak{A}_{p},A_{p}$ or
$A_{\mathbb{R}}$ are finitely generated and since all these rings are
hereditary (\cite[Theorem~21.4]{MR1972204}), must again also be projective.
As all $\mathsf{J}_{A}^{(S)}$ have kernels, so has the $2$-colimit
$\mathsf{J}_{A}^{(\infty)}$. This also applies to $A^{op}$, so $\mathsf{J}%
_{A^{op}}^{(\infty)}$ also has all kernels. Under the equivalence of Proposition
\ref{prop_J_Duality} this implies that $\mathsf{J}_{A}^{(\infty)}$ has all
cokernels. Finally, invoke Proposition \ref{prop_IdentifyJ}. Having all
kernels and cokernels, we deduce that the category is quasi-abelian.\newline%
\textit{(Step 3)} It remains to show that $\mathsf{LCA}_{A,ab}$ is split
exact. We use Proposition \ref{prop_IdentifyJ} again. Each category
$\mathsf{J}_{A}^{(S)}$ is split exact by Lemma
\ref{lemma_ProductsOfSplitExact}. Any exact sequence in the $2$-colimit
$\mathsf{J}_{A}^{(\infty)}$ stems from $\mathsf{J}_{A}^{(S)}$ for $S$ big
enough; and induces a splitting in the $2$-colimit.
\end{proof}

\begin{example}\label{Example:AdelicsAreNotAbelian}
The category $\mathsf{LCA}_{A,ab}$ is not an abelian category. Suppose
$A=\mathbb{Q}$. There is an injective continuous morphism%
\[
r:\mathbb{A}\longrightarrow\mathbb{A}%
\]
given by multiplication with the ad\`{e}le $(2,3,5,\ldots,1)$, i.e. the
multiplication with $p$ on $\mathbb{Q}_{p}$, and the identity on $\mathbb{R}$.
That this is continuous follows for example from the ad\`{e}les being a
topological ring, or by using $\mathsf{J}_{A}^{(\infty)}$ instead and the
correspondence of Proposition \ref{prop_IdentifyJ}. If $\mathsf{LCA}_{A,ab}$ were an
abelian category, the morphism $r$ would have the analysis%
\[
r:\mathbb{A}\twoheadrightarrow\operatorname*{coim}(r)\hookrightarrow
\mathbb{A}\text{.}%
\]
However, the categorical coimage in $\mathsf{LCA}_{A}$ agrees with the
set-theoretic image, and thus the latter would have to be closed in
$\mathbb{A}$. As the category is split exact by Proposition
\ref{prop_LCAAbHasKernelsAndCokernels}, this would force $\mathbb{A}$ to be a
non-trivial direct summand of itself. While such a thing can happen in
categories, it cannot happen in $\mathsf{J}_{A}^{(\infty)}$ by a rank
consideration for the topological $p$-torsion parts.
\end{example}

\begin{theorem}
\label{thm_AdelicBlocksAreProjectiveAndInjective}Every adelic block $Y$ is an
injective and projective object in $\mathsf{LCA}_{A}$. No other objects are
simultaneously injective and projective.
\end{theorem}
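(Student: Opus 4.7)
By Proposition~\ref{prop_J_Duality}, Pontryagin duality identifies adelic blocks in $\mathsf{LCA}_A$ with adelic blocks in $\mathsf{LCA}_{A^{op}}$ while swapping injectivity and projectivity, so it suffices to verify that every adelic block is injective; applying the same argument to $A^{op}$ then yields projectivity. A similar duality reduction will handle the symmetric half of the second claim. Given an inflation $f\colon Y \inflation M$ with $Y$ adelic, the plan is to peel off the unwanted summands of $M$ using the two split torsion pairs at our disposal.

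First, Proposition~\ref{prop_Structure} writes $M = Q \oplus D$ with $Q$ quasi-adelic and $D$ discrete, and Lemma~\ref{lemma_NoMapsFromQAToDiscrete}(1) forces the discrete component of $f$ to vanish; extending any retraction $Q \to Y$ by zero on $D$ reduces us to $M$ quasi-adelic. Next, Lemma~\ref{lemma_DecompQAB} writes $M = K \oplus W$ with $K$ compact and $W$ adelic; let $\pi\colon M \twoheadrightarrow W$ be the canonical projection. The key claim is that $\pi \circ f\colon Y \to W$ is an inflation in $\mathsf{LCA}_{A,ab}$. Its injectivity is immediate because any compact $A$-submodule of an adelic block $Y = V \oplus H$ is zero: a connected subobject of $Y$ is forced into the vector module $V$ by total disconnectedness of $H$ (Lemma~\ref{lemma_VectorFreePieceOfAdelicBlockIsTopTorsion}), and a real vector space contains no nontrivial compact subgroup. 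Once the inflation property is established, Proposition~\ref{prop_LCAAbHasKernelsAndCokernels} delivers a retraction $\sigma\colon W \to Y$ by split quasi-abelianness of $\mathsf{LCA}_{A,ab}$, and $\sigma \circ \pi$ retracts $f$.

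For the converse, assume $M$ is both injective and projective. The Hoffmann--Spitzweck decomposition $M = K \oplus X \oplus D$ from Corollary~\ref{cor_Structure} displays $K$, $X$, and $D$ as direct summands, each therefore both injective and projective. The adelic part $X$ is allowed, so it remains to show $K = 0 = D$. Using semisimplicity of $A$, the discrete case reduces to a finitely generated nonzero summand $D_0 \subseteq D$; the classical diagonal embedding $D_0 \inflation D_0 \otimes_{\mathbb{Q}} \mathbb{A}_{\mathbb{Q}}$ is an inflation whose retractions are obstructed as follows. Any such retraction must vanish on the connected real part $D_0 \otimes \mathbb{R}$ (a connected-to-discrete $A$-linear map has image a single $A$-submodule point, hence zero) and on the compact open subgroup $\Lambda_0 \otimes \widehat{\mathbb{Z}}$ for any $\mathbb{Z}$-lattice $\Lambda_0 \subseteq D_0$ (the image is a finite subgroup of the torsion-free $\mathbb{Q}$-vector space $D_0$, hence zero); yet the diagonal of any nonzero $\lambda_0 \in \Lambda_0$ lies entirely in the sum $(D_0 \otimes \mathbb{R}) + (\Lambda_0 \otimes \widehat{\mathbb{Z}})$, which would force $\lambda_0 = 0$. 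The compact case $K = 0$ follows by applying the same argument to $\mathsf{LCA}_{A^{op}}$ and dualizing.

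The step I expect to be most delicate is the verification that $\pi \circ f$ is genuinely an inflation in $\mathsf{LCA}_{A,ab}$ rather than merely a monomorphism---Example~\ref{Example:AdelicsAreNotAbelian} shows these notions truly differ in $\mathsf{LCA}_{A,ab}$. This requires a sequential compactness argument (exploiting compactness of $K$: for $(k_n, w_n) \in f(Y)$ with $w_n \to w$, a convergent subnet of $k_n$ exhibits the limit in $f(Y)$) to promote $\pi \circ f$ to a closed embedding in $\mathsf{LCA}_A$, together with a short check that the cokernel computed in $\mathsf{LCA}_A$ remains an adelic block, which in turn relies on the split torsion structure on $\mathsf{LCA}_{A,qab}$.
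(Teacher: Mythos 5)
Your overall architecture (reduce along the two split torsion pairs, land in $\mathsf{LCA}_{A,ab}$, and split there) is reasonable, and several pieces are fine: the reduction to $M$ quasi-adelic via Lemma~\ref{lemma_NoMapsFromQAToDiscrete}, the injectivity of $\pi\circ f$ (a compact $A$-submodule of an adelic block is connected, hence sits in the vector part, hence is zero), the promotion of $\pi\circ f$ to a closed embedding (projection along the compact factor $K$ is a closed map, and a continuous closed bijection onto its image is a homeomorphism), and the converse direction, which is essentially the paper's own argument with the ad\`ele sequence and Pontryagin duality.

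The genuine gap is the step you dismiss as ``a short check that the cokernel computed in $\mathsf{LCA}_A$ remains an adelic block, which \ldots relies on the split torsion structure on $\mathsf{LCA}_{A,qab}$.'' The split torsion pair only tells you that $W/\,\mathrm{im}(\pi\circ f)$ is quasi-adelic, i.e.\ of the form $K'\oplus Z'$ with $K'$ compact; it gives no mechanism whatsoever to force $K'=0$, and without $K'=0$ the conflation does not lie in the fully exact subcategory $\mathsf{LCA}_{A,ab}$, so Proposition~\ref{prop_LCAAbHasKernelsAndCokernels} cannot be invoked to split it. Note that an adelic block can perfectly well admit a deflation onto a nonzero compact module --- the ad\`ele sequence $A\rightarrowtail\mathbb{A}_A\twoheadrightarrow\mathbb{A}_A/A$ does exactly this --- so the vanishing of $K'$ must use that the kernel $Y$ is adelic, and it cannot be detected by the $\operatorname{Hom}(-,A_{\mathbb{R}})$-type tests either (Example~\ref{example_CannotTestZeroDiscretePartViaARMaps}). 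In other words, you have reduced the theorem to a claim of essentially the same depth as the theorem itself (it is equivalent to the splitting you are after), and this is precisely the content the paper supplies by a genuine argument: it forms the pullback of $Y\rightarrowtail Q\twoheadrightarrow K$ along the compact summand and applies the Snake Lemma to show $\operatorname{Ext}^1(K,Y)=0$. Your gap is fillable --- for instance, for vector-free $W$ one can argue that $W/Y$ is totally disconnected while compact $A$-modules are connected, and in general one can combine the exact sequence obtained from the injective object $A_{\mathbb{R}}$ with the Pontryagin-dual sequence to force $\operatorname{Hom}(K'^{\vee},A^{op}_{\mathbb{R}})=0$ and hence $K'=0$ --- but as written the crucial point is asserted, not proved.
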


\begin{proof}
We show that any $Y\in\mathsf{LCA}_{A,ab}$ is injective. To this end, it
suffices to show that $\operatorname*{Ext}^{1}(M,Y)=0$ holds for all
$M\in\mathsf{LCA}_{A}$. By Corollary \ref{cor_Structure} we have $M\simeq
K\oplus X\oplus D$ with $K$ a compact $A$-module, $X$ adelic and $D$ discrete.
We have $\operatorname*{Ext}^{1}(D,Y)=0$ since $D$ is projective by Lemma
\ref{lemma_CharacterizeInjectivesAndProjectives}, and $\operatorname*{Ext}%
^{1}(X,Y)=\operatorname*{Ext}_{\mathsf{LCA}_{A,ab}}^{1}(X,Y)=0$ since the subcategory of adelic blocks is split exact by Proposition
\ref{prop_LCAAbHasKernelsAndCokernels}. Thus, we only need to show that
$\operatorname*{Ext}^{1}(K,Y)=0$. Suppose $\phi\in\operatorname*{Ext}%
^{1}(K,Y)$ is arbitrary. It is represented by a conflation%
\begin{equation}
Y\overset{\alpha}{\rightarrowtail}Q\twoheadrightarrow K\text{.}\label{lwmx2}%
\end{equation}
By Lemma \ref{lemma_ExtensionFacts} (2), as $Y$ and $K$ are quasi-adelic, so
is $Q$, and then $Q\simeq X\oplus C$ with $X$ adelic (and possibly different
from the earlier use of $X$) and $C$ compact. We now show that the splitting
for $X$ in this direct sum produces a splitting of $\alpha$. To this end, we
form the pullback of the inflation with $C\rightarrowtail Q$, giving the
commutative diagram%
\[
\xymatrix{
Y \cap C \ar@{>->}[d] \ar@{>->}[r] & C \ar@{>->}[d] \ar@
{->>}[r] & C/(Y \cap C) \ar@{->}[d]^{\tau} \\
Y \ar@{>->}[r]_{\alpha } & Q \ar@{->>}[r] & K.
}
\]
The downward arrows are admissible (clear for left and middle, and the right
is between compact $A$-modules, which form an abelian category). As
$\mathsf{LCA}_{A}$ is quasi-abelian, we can apply the Snake Lemma in the style
of \cite[Corollary 8.13]{MR2606234}. We get an exact sequence%
\begin{equation}
\ker\tau\rightarrowtail Y/(Y\cap C)\overset{\widetilde{\alpha}}{\longrightarrow
}X\twoheadrightarrow\operatorname*{coker}\tau\text{,}\label{lwmx1}%
\end{equation}
where $\widetilde{\alpha}$ is induced from $\alpha$. However, we must have $Y\cap
C=0$ since this is a compact $A$-module, which cannot be non-zero inside an
adelic block since that would violate the Hoffmann--Spitzweck filtration
(Corollary \ref{cor_Structure}). As $Y$ is adelic and $\ker\tau$ compact, we
must have $\ker\tau=0$, and then $\operatorname*{coker}\tau\cong K/C$. Thus,
Equation \eqref{lwmx1} simplifies to the conflation%
\[
Y\overset{\widetilde{\alpha}}{\rightarrowtail}X\twoheadrightarrow K/C\text{,}%
\]
showing that $K/C$ is simultaneously a compact $A$-module (as $K$ and $C$
are), and adelic as $X$ and $Y$ are. Thus, $K/C=0$, showing that the map
$\widetilde{\alpha}:Y\rightarrowtail X$ was an isomorphism to start with, and $K\cong C$.
The diagram of the Snake Lemma thus yields the commutative diagram
\[
\xymatrix@R=0.035in{
&  Q \ar@{->>}[dd]^{\text{quot}} \\
Y \ar@{>->}[ur]^{\alpha } \ar[dr]_{\widetilde{\alpha }}^{\cong } \\
& X,
}
\]
showing that $\alpha$ (coming from our input conflation) is a splitting for the direct sum splitting
$Q\cong X\oplus C$.
It follows that $\phi=0$. Thus, $\operatorname*{Ext}%
^{1}(K,Y)=0$. It follows that $Y$ is injective. Finally, by Pontryagin double
dualization $Y\cong(Y^{\vee})^{\vee}$, and since $Y^{\vee}$ is an adelic block
in $\mathsf{LCA}_{A^{op}}$, and thus an injective object by the above part of
the proof, $(Y^{\vee})^{\vee}$ is projective. 

For the converse: Suppose a
discrete $A$-module $D$ is injective. For every element $d\in D$ there is the
map of discrete right $A$-modules $A\rightarrow D$, $a\mapsto da$. Lift this
map along the inflation in $A\rightarrowtail\mathbb{A}_{A}\twoheadrightarrow
\mathbb{A}_{A}/A$ using that $D$ is injective. But by the Hoffmann--Spitzweck
filtration, any morphism $\mathbb{A}_{A}\rightarrow D$ from the ad\`{e}les of
$A$ to $D$ must be zero, so $d=0$ by commutativity. Hence, $D=0$. By
Pontryagin duality, no compact module can be projective. Corollary
\ref{cor_Structure} now implies the claim.
\end{proof}


\section{One-sided exact categories and quotients}\label{Section:OneSided}

Our approach to the calculation of the $K$-theory spectrum is similar to the one in \cite{paper1}, that is, we construct two subsequent quotients of the category $\mathsf{LCA}_A$.  First, we take the quotient by the category $\mathsf{LCA}_{A,com}$ of compact $A$-modules; secondly, we take a further quotient by the category $\mathsf{LCA}_{A,ab}$ of adelic blocks.  These quotients are taken using the framework in \cite{hr2, hr}, that means we show that they are inflation-percolating subcategories (see Definition \ref{definition:InflationPercolating}).

To better understand the first quotient, the following property will be useful: we say that a (one-sided) exact category has \emph{admissible cokernels} (see Definition \ref{Definition:AdmissibleCokernels}) if every morphism has a cokernel and this cokernel is admissible, that is, the cokernel is a deflation.  We show in this section that the property of having admissible cokernels is stable under both quotients of inflation-exact categories and taking exact hulls of such categories. These observations allow us to bypass some technical difficulties in the computation of the $K$-theory spectra of $\mathsf{LCA}_A$ in the next section.

\subsection{Quotients of (one-sided) exact categories by percolating subcategories}

A \emph{conflation category} $\EE$ is an additive category $\EE$ together with a chosen class of kernel-cokernel pairs (closed under isomorphisms), called \emph{conflations}. We refer to the kernel-part of a conflation as an \emph{inflation} (depicted by $\inflation$) and the cokernel-part as a \emph{deflation} (depicted by $\deflation$).  An additive functor $F\colon \CC\to \DD$ of conflation categories is called \emph{exact} (or \emph{conflation-exact}) if it maps conflations to conflations. 
 
\begin{definition}\label{Definition:InflationExact}
 A conflation category $\EE$ is called an \emph{inflation-exact category} if the following axioms are satisfied:
	\begin{enumerate}[label=\textbf{L\arabic*},start=0]
		\item\label{L0} For each $X\in \EE$, the map $0\to X$ is an inflation.
		\item\label{L1} Inflations are closed under composition.
		\item\label{L2} Pushouts along inflations exist and inflations are stable under pushouts.
	\end{enumerate}
	The notion of a \emph{deflation-exact category} is defined dually, the dual axioms are called \textbf{R0}-\textbf{R2}.
\end{definition}

In addition to the axioms listed above, we formulate a (weakly idempotent complete) version of Quillen's obscure axiom.

\begin{definition}
	Let $\EE$ be a conflation category. We define the following axiom:
	\begin{enumerate}
		\myitem{\textbf{L3}$^{+}$}\label{L3+} If $f\colon X\to Y$ and $g\colon Y\to Z$ are maps such that $g\circ f$ is an inflation, then $f$ is an inflation.
	\end{enumerate}
	The dual axiom is called \textbf{R3}$^+$.
\end{definition}

\begin{remark}
\makeatletter%
\hyper@anchor{\@currentHref}%
\makeatother\label{Remark:SnakeLemma}%
	\begin{enumerate}
	  \item What we call axiom \ref{L0} is stronger than the corresponding axiom in \cite{BazzoniCrivei13}.  The axiom as presented here is necessary for all split kernel-cokernel pairs to be conflations.
		\item A Quillen exact category is a two-sided exact category in the sense that it is a conflation category that is both inflation-exact and deflation-exact (see \cite[Appendix~A]{Keller90}).
		\item An inflation-exact category satisfies axiom \ref{L3+} if and only if the Snake Lemma holds (see \cite[Theorem~1.2]{HenrardvanRoosmalen20Obscure}).
	\end{enumerate}
\end{remark}

The theory of one-sided exact categories parallels the theory of exact categories and many notions from exact categories can be transferred easily to the one-sided exact setting.  For example, the notions of \emph{admissible morphisms} or \emph{strict morphisms} (morphisms that admit a deflation-inflation factorization) carries over verbatim from \cite{MR2606234}.  Similarly, one can define \emph{acyclic} or \emph{exact} sequences.  Given an inflation-exact category $\EE$, the bounded derived category $\Db(\EE)$ is defined as the Verdier localization $\Kb(\EE)/\Acb(\EE)$ where $\Kb(\EE)$ is the bounded homotopy category of cochain complexes and $\Acb(\EE)$ is the triangulated subcategory (not necessarily closed under isomorphisms) of $\Kb(\EE)$ consisting of bounded acyclic sequences (see \cite[Section 7]{BazzoniCrivei13}). In a similar vein, the stable $\infty$-category $\mathsf{D}_{\infty}^{\mathsf{b}}(\EE)$ is defined as well (see \cite[Section 8]{hr2}).

Let $\EE$ be an inflation-exact category. The \emph{exact hull} $\EE^{\mathsf{ex}}$ of $\EE$ is the extension-closure of $i(\EE)\subseteq \Db(\EE)$ where $i\colon \EE\to \Db(\EE)$ is the natural embedding (see \cite{hr2} or \cite[Proposition~I.7.5]{Rosenberg11}). Following \cite{hr2}, the exact hull is endowed with the structure of an exact category via the triangle structure of $\Db(\EE)$. Moreover, both the $2$-natural embedding $j\colon\EE\to\EE^{\mathsf{ex}}$ and the embedding $\EE^{\mathsf{ex}}\to \Db(\EE)$ lift to triangle equivalences $\Db(\EE)\simeq \Db(\EE^{\mathsf{ex}})$. In fact, $j$ lifts to an equivalence $\mathsf{D}_{\infty}^{\mathsf{b}}(\EE)\simeq \mathsf{D}_{\infty}^{\mathsf{b}}(\EE^{\mathsf{ex}})$ on the $\infty$-derived categories (these are stable $\infty$-categories in the sense of \cite{LurieHA}, see \cite{hr2} and the references therein).

We will use the following lemma (see \cite[Lemma~2.26]{HenrardKvammevanRoosmalenWegner21}).

\begin{lemma}\label{Lemma:Epi/MonoInHull}
	Let $\EE$ be an inflation-exact category. Let $f\colon X\to Y$ be a map in $\EE$. The map $f$ is an epimorphism (monomorphism) in $\EE$ if and only if $j(f)$ is an epimorphism (monomorphism) in $\EE^{\mathsf{ex}}$.
\end{lemma}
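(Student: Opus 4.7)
The plan is to prove the two implications in each of the mono and epi cases separately, with the easy direction coming from full faithfulness of $j$ and the hard direction by induction on the construction of objects of $\EE^{\mathsf{ex}}$ as iterated extensions of objects in $j(\EE)$.

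For the easy direction, assume $j(f)$ is an epimorphism in $\EE^{\mathsf{ex}}$ and let $g,h\colon Y\to Z$ in $\EE$ satisfy $gf=hf$. Applying the (fully faithful) embedding $j$ gives $j(g)j(f)=j(h)j(f)$, hence $j(g)=j(h)$, and full faithfulness of $j$ then forces $g=h$. The monomorphism version is formally identical: any parallel pair $g,h\colon W\to X$ in $\EE$ equalizing $f$ on the left transports to a parallel pair $j(g),j(h)$ equalized by $j(f)$, and one concludes by full faithfulness.

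For the converse direction (the actual obstacle), suppose $f\colon X\to Y$ is an epimorphism in $\EE$. Because $\EE^{\mathsf{ex}}$ is the extension-closure of $j(\EE)$ inside $\Db(\EE)$, every object $Z\in\EE^{\mathsf{ex}}$ admits a finite filtration whose subquotients lie in $j(\EE)$. I will induct on the length of such a filtration. In the base case $Z=j(Z')$ with $Z'\in\EE$, any pair $g,h\colon j(Y)\to Z$ with $gj(f)=hj(f)$ comes from a pair $g',h'\colon Y\to Z'$ in $\EE$ via full faithfulness, and the original epi-property of $f$ gives $g'=h'$, whence $g=h$. For the inductive step, fix a conflation $Z'\inflation Z\deflation Z''$ in $\EE^{\mathsf{ex}}$ with $Z',Z''$ admitting strictly shorter filtrations. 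Given $g,h\colon j(Y)\to Z$ with $gj(f)=hj(f)$, compose with the deflation $Z\deflation Z''$; by the inductive hypothesis applied to $Z''$, the compositions into $Z''$ agree, so $g-h$ factors uniquely as $\iota\circ k$ through the inflation $\iota\colon Z'\inflation Z$ for some $k\colon j(Y)\to Z'$. Precomposing with $j(f)$ yields $\iota\circ k\circ j(f)=gj(f)-hj(f)=0$, and since $\iota$ is a monomorphism, $k\,j(f)=0$. By the inductive hypothesis applied to $Z'$, the map $f$ remains an epimorphism with respect to targets of smaller filtration length, so $k=0$ and hence $g=h$.

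The monomorphism statement is handled by the dual induction: one filters the test object $W$ mapping into $j(X)$ and uses the base case together with a deflation of $W$ to reduce to a problem of smaller filtration length. The principal obstacle I anticipate is purely bookkeeping, namely verifying that the factorization of $g-h$ through $Z'$ (and its dual) is legitimate inside $\EE^{\mathsf{ex}}$ as an exact category rather than merely inside $\Db(\EE)$; this is where axiom \ref{L0} and the fact that split kernel-cokernel pairs are conflations in $\EE^{\mathsf{ex}}$ enter, but no deeper machinery beyond the construction of $\EE^{\mathsf{ex}}$ in \cite{hr2} is required.
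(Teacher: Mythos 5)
Your argument is correct. Note first that the paper does not prove this lemma at all: it simply cites \cite[Lemma~2.26]{HenrardKvammevanRoosmalenWegner21}, so your proposal supplies a self-contained proof rather than reproducing one from the text. The easy direction via full faithfulness of $j$ is fine, and the hard direction by induction on the number of extension steps needed to build the test object out of $j(\EE)$ inside $\Db(\EE)$ is sound: in the epi case, the factorization of $g-h$ through $\iota\colon Z'\inflation Z$ is legitimate precisely because conflations in the exact category $\EE^{\mathsf{ex}}$ are kernel--cokernel pairs, so $\iota=\ker(p)$ is a monomorphism and the two applications of the inductive hypothesis (to $Z''$ and then to $Z'$) close the induction; the monomorphism case is genuinely dual (first kill $(g-h)\circ\iota$ by the inductive hypothesis on $W'$, then factor $g-h$ through the cokernel $W\deflation W''$ and kill the factor by the inductive hypothesis on $W''$), and since $\EE^{\mathsf{ex}}$ is an honest two-sided exact category this dualization costs nothing even though $\EE$ itself is only inflation-exact. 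Two small remarks: your closing worry about axiom \ref{L0} and split conflations is misplaced --- all that is used is additivity of $\EE^{\mathsf{ex}}$ and the kernel--cokernel property of its conflations; and for the epimorphism half there is an even shorter route via \cite[Corollary~7.5]{hr2} (used elsewhere in the paper): every $Z\in\EE^{\mathsf{ex}}$ admits an inflation $Z\inflation W$ with $W\in\EE$, so one can compose with this monomorphism and reduce immediately to the base case, though this shortcut is not available verbatim for the monomorphism half, where your induction (or the cited lemma) is still needed.
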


We recall the following two definitions:

\begin{definition}\label{definition:InflationPercolating}
	Let $\EE$ be a conflation category. A full additive subcategory $\AA\subseteq \EE$ is called an \emph{inflation-percolating subcategory} if the following axioms are satisfied:
	\begin{enumerate}[label=\textbf{P\arabic*},start=1]
		\item\label{P1} $\AA$ is a \emph{Serre subcategory}, meaning:
		\[\mbox{ If } A'\inflation A \deflation A'' \mbox{ is a conflation in $\EE$, then } A\in \AA \mbox{ if and only if } A',A''\in \AA.\]
		\item\label{P2} Any morphism $f\colon A\to X$ with $A\in \AA$ factors as $A\xrightarrow{g}A'\stackrel{h}{\inflation}X$ where $A'\in \AA$.
		\item\label{P3} For any composition $T\xrightarrow{t}Y\stackrel{p}{\deflation}Z$ which factors through some object $B\in \AA$, there exists a commutative diagram
		\[\xymatrix{
			T\ar@/^/[rrd]^t\ar@/_/[ddr]\ar@{.>}[rd] &&\\
			& P\ar@{>->}[r]^{i'}\ar@{->>}[d]^{p'} & Y\ar@{->>}[d]^{p}\\
			& A\ar@{>->}[r]^{i} & Z
		}\] with $A\in \AA$ and such that the square $PYAZ$ is a pullback square.
		
		\item\label{P4} For all maps $X\stackrel{f}{\to}Y$ that factor through $\AA$ and for any deflation $Y\stackrel{p}{\deflation}A$ with $A\in \AA$ such that $p\circ f=0$, the induced map $X\to \ker(p)$ factors through $\AA$.
	\end{enumerate}
	
	Similarly, $\AA\subseteq \EE$ is called a \emph{strictly inflation-percolating} (or \emph{admissibly inflation-percolating}) if the following three axioms are satisfied:
	\begin{enumerate}[label=\textbf{A\arabic*},start=1]
		\item\label{A1} $\AA$ is a \emph{Serre subcategory}.
		\item\label{A2} Any morphism $f\colon A\to X$ with $A\in \AA$ is admissible (with image in $\AA$), i.e.~factors as $A\deflation A'\inflation X$ with $A'\in \AA$.
		\item\label{A3} Any cospan $\xymatrix{A\ar@{>->}[r]&Z&Y\ar@{->>}[l]}$ with $A\in \AA$ can be completed to a pullback square of the form:
		\[\xymatrix{P\ar@{>->}[r]\ar@{->>}[d] & Y\ar@{->>}[d]\\
		A\ar@{>->}[r] & Z}\]
	\end{enumerate}
\end{definition}

\begin{remark}%
\makeatletter%
\hyper@anchor{\@currentHref}%
\makeatother%
\label{Remark:StrictPercolatingIsAbelian}
	\begin{enumerate}
		\item Axiom \ref{P3} does not assume any relation between the objects $A$ and $B$.
		\item In many applications, a percolating subcategory satisfies a slightly stronger version of axiom \ref{P2}.  We say that $\AA\subseteq \EE$ satisfies \emph{strong axiom \ref{P2}} if $g$ can be chosen as an epimorphism in axiom \ref{P2}.
		\item	Note that axiom \ref{A2} implies strong axiom \ref{P2}.
		\item A strictly inflation-percolating subcategory of an inflation-exact category is automatically an abelian subcategory (see \cite[Proposition~6.4]{hr}). 
		\item If $\EE$ is an exact category and $\AA\subseteq \EE$ is a full additive subcategory, then $\AA$ automatically satisfies axiom \ref{A3} (see \cite[Proposition~2.15]{MR2606234}).
	\end{enumerate}
\end{remark}

\begin{definition}
	Let $\EE$ be a conflation category and let $\AA\subseteq \EE$ be a full additive subcategory. An \emph{$\AA$-inflation} is an inflation with cokernel in $\AA$, similarly, an \emph{$\AA$-deflation} is a deflation with kernel in $\AA$. A \emph{weak $\AA$-isomorphism} is a finite composition of $\AA$-inflations and $\AA$-deflations. The set of weak isomorphisms is denoted by $S_{\AA}$.
\end{definition}

The following theorem summarizes the main results of \cite{hr2,hr} in a convenient form.

\begin{theorem}\label{Theorem:MainLocalizationTheorem}
	Let $\EE$ be an inflation-exact category and let $\AA\subseteq \EE$ be an inflation-percolating subcategory. Write $Q\colon \EE\to \EE[S_{\AA}^{-1}]$ for the localization functor with respect to the set of weak isomorphisms $S_{\AA}$. The following hold:
	\begin{enumerate}
		\item The set $S_{\AA}$ is a left multiplicative system.
		\item The smallest conflation structure on $\EE[S_{\AA}^{-1}]$ for which the functor $Q\colon \EE\to \EE[S_{\AA}^{-1}]$ is conflation-exact, is an inflation-exact structure.
		\item The functor $Q$ satisfies the $2$-universal property of a quotient in the category of inflation-exact categories.  This motivates the notation $\EE/\AA \coloneqq \EE[S_{\AA}^{-1}]$.
		\item The localization sequence $\AA\to\EE\xrightarrow{Q}\EE/\AA$ lifts to a Verdier localization sequence 
		\[\DAb(\EE)\to \Db(\EE)\to \Db(\EE/\AA).\]
		Here, $\DAb(\EE)$ is the thick triangulated subcategory of $\Db(\EE)$ generated by $\AA$.
		\item If the natural functor $\Db(\AA)\to \DAb(\EE)$ is an equivalence, we obtain an exact sequence 
		\[\mathsf{D}_{\infty}^{\mathsf{b}}(\AA)\to \mathsf{D}_{\infty}^{\mathsf{b}}(\EE)\to \mathsf{D}_{\infty}^{\mathsf{b}}(\EE/\AA)\]
		in the sense of \cite{MR3070515}.
		\item For any morphism $f$ in $\EE$, we have that $Q(f)=0$ if and only if $f$ factors through $\AA$.
	\end{enumerate}
	
	Moreover, if $\BB\subseteq \EE$ is a strictly inflation-percolating subcategory, then $\BB\subseteq \EE$ is inflation-percolating (in particular all of the above holds). In addition, the following properties hold:
	\begin{enumerate}
		\item Every map $f\in S_{\BB}$ is strict (this explains the terminology).
		\item The set $S_{\BB}$ is saturated, i.e.~ if $Q(f)$ is an isomorphism, then $f\in S_{\BB}$.
	\end{enumerate}
\end{theorem}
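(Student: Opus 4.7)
The plan is to assemble the statement from the results of the companion papers \cite{hr} and \cite{hr2}, since the theorem is explicitly framed as a compilation. For items (1) and (2), I would begin with the construction of $\EE[S_{\AA}^{-1}]$ as a left Ore localization: verifying the left multiplicative system axioms for $S_{\AA}$ uses \ref{P2} for completing spans and \ref{P3} for the compatibility with deflations, while \ref{P4} controls when compositions become zero. The smallest conflation structure making $Q$ exact is obtained by declaring conflations to be images of conflations in $\EE$, with the inflation-exact axioms \ref{L0}--\ref{L2} descending from $\EE$ via the Ore property and the closure of $\AA$-inflations under pushouts. Item (3) is the 2-universal property, which drops out of the construction as a localization. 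Item (6) is immediate from the left-fraction description: $Q(f) = 0$ iff there exists $s \in S_{\AA}$ with $s \circ f = 0$, which by the factorization axioms is equivalent to $f$ factoring through an object of $\AA$.

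For item (4), I would observe that $Q\colon \EE \to \EE/\AA$ induces an exact functor $\Kb(\EE) \to \Kb(\EE/\AA)$ sending $\Acb(\EE)$ to $\Acb(\EE/\AA)$. The key point is then to identify the kernel of the induced functor $\Db(\EE) \to \Db(\EE/\AA)$ with $\DAb(\EE)$: any complex whose terms are all in $\AA$ maps to a $Q$-acyclic complex, and conversely a bounded complex with $Q$-acyclic image can be resolved by $\AA$-objects using \ref{P2} and \ref{P3} to produce the needed $\AA$-inflation/deflation factorizations. For item (5), the hypothesis $\Db(\AA) \simeq \DAb(\EE)$ promotes the kernel in (4) to $\Db(\AA)$ itself, and I would then lift to the $\infty$-level using the construction of $\mathsf{D}_{\infty}^{\mathsf{b}}$ from \cite{hr2}; exactness in the sense of \cite{MR3070515} amounts to $\mathsf{D}_{\infty}^{\mathsf{b}}(\EE/\AA)$ being the $\infty$-Verdier cofiber of $\mathsf{D}_{\infty}^{\mathsf{b}}(\AA) \to \mathsf{D}_{\infty}^{\mathsf{b}}(\EE)$, which follows formally from (4) since Verdier quotients of small stable $\infty$-categories are detected on homotopy categories.

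For the statement on strictly inflation-percolating $\BB$, the first task is to verify that $\BB \subseteq \EE$ is inflation-percolating. Axiom \ref{A2} is a strengthening of \ref{P2} (the admissible factorization provides the required $g$); axiom \ref{A3}, combined with the Serre property \ref{A1}, yields \ref{P3} by forming the pullback of $Y \deflation Z$ along the inflation $A \inflation Z$ coming from the factorization through $B \in \BB$; and \ref{P4} follows from the abelianness of $\BB$ inside $\EE$ noted in Remark \ref{Remark:StrictPercolatingIsAbelian}(4), since the factorization through $\ker(p)$ can then be realized internally to $\BB$. Once $\BB$ is recognized as inflation-percolating, property (i) on strictness is immediate since every $\BB$-inflation and $\BB$-deflation is admissible by \ref{A2}, and finite compositions remain admissible by repeatedly applying the pullback of \ref{A3} to strictify. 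For the saturation property (ii), given that $Q(f)$ is an isomorphism, I would factor $f$ through its image and analyze the kernel and cokernel using \ref{A2}: both are annihilated by $Q$, hence lie in $\BB$ by (6) and \ref{A1}, so $f$ itself is a composition of a $\BB$-deflation followed by a $\BB$-inflation.

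The principal obstacle is item (5): items (1)--(4) and (6) are purely 1-categorical and reduce to manipulations with left fractions, but (5) genuinely requires the $\infty$-categorical enhancement of $\Db$ from \cite{hr2} and its compatibility with localization by a percolating subcategory. A secondary difficulty is the verification of \ref{P4} from the axioms of a strictly inflation-percolating subcategory, since this is where the abelianness of $\BB$ enters crucially and is not a formal consequence of \ref{A1}--\ref{A3} alone without the structural results from \cite{hr}.
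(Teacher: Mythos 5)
The paper offers no proof of this theorem at all: it is introduced verbatim as a summary of the main results of \cite{hr2,hr}, so your plan of assembling the statement from those two companion papers is exactly the paper's approach. Your supplementary sketches are broadly consistent with how those references argue (though, for instance, your saturation argument already presupposes that $f$ is strict, which is part of what \cite{hr} actually proves), but since both you and the paper delegate the substance to the citations, the approaches coincide.
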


\subsection{Inflation-exact categories with admissible cokernels}

We will use the above results for an inflation-exact category with admissible cokernels (see Definition \ref{Definition:AdmissibleCokernels}).  We use this structure to avoid later technicalities.

\begin{definition}\label{Definition:AdmissibleCokernels}
	Let $\EE$ be an inflation-exact category.  We say that $\EE$ has \emph{admissible cokernels} if every morphism has a cokernel which is a deflation.
\end{definition}

The following description is \cite[Proposition~4.9]{HenrardKvammevanRoosmalenWegner21}.

\begin{proposition}
The following are equivalent for an inflation-exact category $\EE$:
\begin{enumerate}
	\item $\EE$ has admissible kernels,
	\item every map in $\EE$ has an epi-inflation factorization, that is, every map $f$ factors as $f = i \circ k$ where $k$ is a kernel and $i$ is an inflation.
\end{enumerate}
\end{proposition}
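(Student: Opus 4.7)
The plan is to establish the equivalence by exploiting the coimage-style factorization that an admissible kernel generates via its conflation, and conversely by reading off the admissible kernel from a given factorization. Both directions rely on the inflation-exact axioms \ref{L0}--\ref{L2}, and the delicate upgrade from monomorphism to inflation will use axiom \ref{L3+} (equivalently the Snake Lemma, see Remark \ref{Remark:SnakeLemma}).

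For the direction (1) $\Rightarrow$ (2), I start with a morphism $f \colon X \to Y$ and invoke (1) to obtain the admissible kernel $k_0 \colon K \inflation X$. By the inflation-exact axioms, $k_0$ sits in a conflation $K \inflation X \stackrel{q}{\deflation} X/K$, so in particular the cokernel $q$ of $k_0$ exists and is a deflation. Since $f \circ k_0 = 0$, the universal property of $q$ yields a unique $i \colon X/K \to Y$ with $f = i \circ q$; a short diagram chase shows that $\ker(i) = 0$, i.e.\ $i$ is monic. The critical step is to promote $i$ from a monomorphism to an inflation. I would do this by constructing, via a pullback along $q$, a composite that is visibly an inflation containing $i$ as a factor, and then invoking \ref{L3+} to conclude that $i$ itself is an inflation. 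Taking $k \coloneqq q$ (which, as the cokernel morphism of the inflation $k_0$, plays the role of a ``kernel'' in the paired sense demanded by (2)) then supplies the required factorization $f = i \circ k$.

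For the direction (2) $\Rightarrow$ (1), I fix $f \colon X \to Y$ with factorization $f = i \circ k$, where $i$ is an inflation and $k$ is a kernel morphism. Because $i$ is an inflation, it is in particular a monomorphism, so the kernels agree: $\ker(f) = \ker(k)$. Since $k$ is a kernel morphism of some map in $\EE$, it is part of the conflation data defining it; extracting its own kernel from this data (as an inflation) uses the inflation-exact structure directly. This realizes $\ker(f)$ as an admissible kernel, establishing (1).

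The principal obstacle is precisely the upgrade of $i$ from monomorphism to inflation in direction (1) $\Rightarrow$ (2). Generic monomorphisms in an inflation-exact category need not be inflations, so this step is not formal and requires genuine use of \ref{L3+}. The technical heart of the argument is exhibiting $i$ as an inflation by a pullback--pushout manipulation that embeds $i$ into a composite inflation constructed from $q$, $k_0$, and $f$, whereupon \ref{L3+} closes the argument. Once this is in place, the remainder of the equivalence unwinds from the universal properties of cokernels and the conflation axioms.
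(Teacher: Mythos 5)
Your argument for (1)~$\Rightarrow$~(2) builds the factorization on the wrong side of $f$, and the step you yourself flag as the technical heart cannot be carried out, because the claim it targets is false. You take the kernel $k_0\colon K\inflation X$ of $f$, pass to its cokernel $q\colon X\deflation X/K$, and then want to promote the induced map $i\colon X/K\to Y$ from a monomorphism to an inflation via \ref{L3+}. But a factorization $f=i\circ q$ with $q$ a deflation and $i$ an inflation says precisely that $f$ is a strict morphism, and this fails even in quasi-abelian categories, which have admissible kernels, admissible cokernels and satisfy \ref{L3+}: for the dense-image map $\mathbb{Q}\to\mathbb{Q}_p$ in $\mathsf{LCA}_{\mathbb{Q}}$ the kernel is $0$, so your $i$ is the map itself, a monomorphism that is not an inflation (its image is not closed); compare Example \ref{Example:AdelicsAreNotAbelian}. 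Hence no pullback manipulation combined with \ref{L3+} can close this gap. Two further problems: \ref{L3+} is not among the hypotheses (an inflation-exact category satisfies only \ref{L0}--\ref{L2}, and deriving \ref{L3+} from the factorization property would be circular here), and even your intermediate claim that $\ker(i)=0$ is not justified as stated, since the obvious chase needs a pullback along the deflation $q$, which need not exist in this setting.

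The intended content --- this is how the proposition is used later in the paper, and it is the dual of \cite[Proposition~4.9]{HenrardKvammevanRoosmalenWegner21}, which the paper cites rather than proves --- is that (1) should read ``admissible \emph{cokernels}'' as in Definition \ref{Definition:AdmissibleCokernels}, and in (2) the first factor is an epimorphism (read literally, with $k$ a kernel map, condition (2) would force every morphism to be monic, which is absurd). So the required factorization is an image factorization, not the coimage factorization you construct: given $f\colon X\to Y$, take the admissible cokernel $q\colon Y\deflation \coker f$, let $i\colon I\inflation Y$ be its kernel, which is an inflation because $(i,q)$ is a conflation, and write $f=i\circ e$. To see that $e$ is epic, take the admissible cokernel $c\colon I\deflation \coker(e)$ and push the conflation $I\inflation Y\deflation \coker f$ out along $c$ using \ref{L2}; one checks that $Y\to P$ is again a cokernel of $f$, so the deflation $P\deflation \coker f$ in the pushed-out conflation is an isomorphism, forcing $\coker(e)=0$, i.e.\ $e$ is an epimorphism. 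The converse is then immediate: if $f=i\circ e$ with $e$ epic and $i$ an inflation, then $\coker f=\coker i$, which is a deflation. Your (2)~$\Rightarrow$~(1), which tries to extract an admissible kernel from the factorization, establishes neither this (an epimorphism need not have a kernel at all) nor anything beyond the vacuous observation that a kernel map has zero kernel.
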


\begin{remark}\label{remark:AdditiveCoregular}
Following \cite{BarrGrilletVanOsdol71,BorceuxBourn04}, a category is called \emph{coregular} if (1) every morphism $f$ has factors as $f = k \circ p$ where $p$ is an epimorphism and $k$ is a kernel map and (2) kernels are stable under pushouts.  It is shown in \cite[Proposition~4.11]{HenrardKvammevanRoosmalenWegner21} that an additive coregular category is an inflation-exact category with admissible cokernels; here, the conflations are given by all kernel-cokernel pairs.
\end{remark}

\begin{remark}
	\begin{enumerate}
		\item One readily verifies that an inflation-exact category $\EE$ having admissible cokernels satisfies axiom \ref{L3+}.  In particular, the Snake Lemma holds in $\EE$ by Remark \ref{Remark:SnakeLemma}. 
		\item	Every quasi-abelian category $\EE$ is inflation-exact and has admissible cokernels.  In fact, a conflation category $\EE$ is a quasi-abelian category if and only if it is both an inflation-exact category having admissible cokernels and a deflation-exact category having admissible kernels.
	\end{enumerate}
\end{remark}

Having admissible cokernels is preserved under localizations at inflation-percolating subcategories as well as under taking the exact hull (see the duals of \cite[Theorem~5.9 and Proposition~6.5]{HenrardKvammevanRoosmalenWegner21}).

\begin{proposition}\label{Proposition:AdmissibleCokernelsAreStableUnderQuotients}
	Let $\EE$ be an inflation-exact category and let $\AA\subseteq \EE$ be an inflation-percolating subcategory. If $\EE$ has admissible cokernels, then so does $\EE/\AA$.
\end{proposition}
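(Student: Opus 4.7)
The plan is to transport admissible cokernels along the localization $Q\colon \EE\to \EE/\AA$, leveraging the fact that $S_{\AA}$ is a left multiplicative system (Theorem \ref{Theorem:MainLocalizationTheorem}). Any morphism $\phi\colon X\to Y$ in $\EE/\AA$ can be represented as a left fraction $Q(s)^{-1}\circ Q(f)$ with $f\colon X\to Z$ in $\EE$ and $s\colon Y\to Z$ in $S_\AA$. Since $Q(s)$ is an isomorphism, $\phi$ is identified with $Q(f)$ in the arrow category of $\EE/\AA$, and precomposing a deflation with an isomorphism yields a deflation (absorb the isomorphism into the inflation partner of the defining conflation). It therefore suffices to exhibit an admissible cokernel of $Q(f)$ for an arbitrary morphism $f$ in $\EE$.

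Since $\EE$ has admissible cokernels, $f$ possesses a cokernel $p\colon Z\deflation C$ which is a deflation, fitting in a conflation $K\inflation Z\deflation C$. Exactness of $Q$ turns this into a conflation in $\EE/\AA$, so $Q(p)$ is a deflation there satisfying $Q(p)\circ Q(f)=Q(pf)=0$. This is the candidate admissible cokernel of $Q(f)$. To establish its universal property, let $g\colon Q(Z)\to W$ be any morphism with $g\circ Q(f)=0$ and write $g = Q(t)^{-1}\circ Q(h)$ as a left fraction with $h\colon Z\to V$ in $\EE$ and $t\in S_\AA$. The vanishing hypothesis translates via Theorem \ref{Theorem:MainLocalizationTheorem} into the statement that $h\circ f$ factors through some object $A\in \AA$.

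The main obstacle, and the heart of the argument, is a \emph{strictification} step: to factor $g$ through $Q(p)$ one needs to replace $h$ by a representative $h'$ for which $h'\circ f = 0$ holds on the nose in $\EE$, not merely after applying $Q$. To produce $h'$, factor $h\circ f$ as $X\to A\to V$, and invoke axiom \ref{P2} to further factor the second arrow through an $\AA$-inflation $A'\inflation V$. Completing this inflation to a conflation $A'\inflation V \overset{q}{\deflation} V/A'$ in $\EE$, the deflation $q$ has kernel $A'\in \AA$ and hence lies in $S_\AA$. By construction $q\circ h\circ f = 0$ in $\EE$, so $h'\coloneqq q\circ h$ is the desired representative, and the equality of left fractions $Q(t)^{-1}\circ Q(h)=Q(qt)^{-1}\circ Q(h')$ verifies that this modification preserves $g$.

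The universal property of the admissible cokernel $p$ in $\EE$ now yields a unique $\bar{h}\colon C\to V/A'$ with $h' = \bar{h}\circ p$, and applying $Q$ gives the factorization $g = Q(qt)^{-1}\circ Q(\bar{h})\circ Q(p)$. Uniqueness of this factorization in $\EE/\AA$ follows from the fact that $Q(p)$, as the deflation part of a conflation, is a categorical epimorphism. This proves that $Q(p)$ is an admissible cokernel of $Q(f)$ in $\EE/\AA$, and transporting back through the isomorphism $Q(s)$ extends the conclusion to arbitrary morphisms in $\EE/\AA$.
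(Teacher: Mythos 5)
Your proof is correct, and it is worth noting that the paper itself gives no internal argument for this proposition: it simply cites the dual of \cite[Theorem~5.9]{HenrardKvammevanRoosmalenWegner21}, so your write-up supplies a self-contained proof where the paper relies on an external reference. Your route uses exactly the ingredients the paper makes available: by Theorem \ref{Theorem:MainLocalizationTheorem}(1) every morphism of $\EE/\AA$ is a left fraction $Q(s)^{-1}Q(f)$, so it suffices to treat $Q(f)$; since $Q$ is conflation-exact, the admissible cokernel $K\inflation Z\overset{p}{\deflation}\coker(f)$ of $f$ in $\EE$ maps to a conflation, making $Q(p)$ a deflation killing $Q(f)$; and the universal property is checked by strictifying a test fraction $Q(t)^{-1}Q(h)$ with $Q(hf)=0$ --- Theorem \ref{Theorem:MainLocalizationTheorem}(6) gives a factorization of $hf$ through $\AA$, axiom \ref{P2} refines it through an inflation $A'\inflation V$ with $A'\in\AA$, and the cokernel $q\colon V\deflation V/A'$ of that inflation is an $\AA$-deflation, hence lies in $S_{\AA}$, so replacing $h$ by $qh$ annihilates $f$ on the nose without changing the fraction; existence then follows from the cokernel property of $p$ in $\EE$, and uniqueness from $Q(p)$ being a deflation, hence an epimorphism. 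All of these steps are valid, including the initial reduction (a cokernel of $Q(f)$ precomposed with the isomorphism $Q(s)$ is a cokernel of the original fraction, and it remains a deflation because conflations are closed under isomorphism of kernel--cokernel pairs). One small terminological slip: $A'\inflation V$ is not an ``$\AA$-inflation'' in the paper's sense, which requires the \emph{cokernel} to lie in $\AA$; it is an inflation with domain in $\AA$, and what you actually use --- correctly --- is that the associated deflation $q$ has kernel $A'\in\AA$ and is therefore a weak $\AA$-isomorphism. This slip does not affect the argument.
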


\begin{proposition}\label{Proposition:ACPropertyLiftsToHull}
	Let $\EE$ be an inflation-exact category. If $\EE$ has admissible cokernels, then so does $\EE^{\mathsf{ex}}$.
\end{proposition}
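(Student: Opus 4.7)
The plan is to invoke the preceding proposition: since $\EE^{\mathsf{ex}}$ is a Quillen exact category it is, in particular, inflation-exact, so it has admissible cokernels if and only if every morphism in $\EE^{\mathsf{ex}}$ admits an epi-inflation factorization $f = i \circ p$ with $p$ an epimorphism and $i$ an inflation. I would therefore construct such a factorization for an arbitrary morphism $f\colon X \to Y$ in $\EE^{\mathsf{ex}}$.

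For the base case where $X, Y \in \EE$, the AC hypothesis on $\EE$ yields an epi-inflation factorization $X \xrightarrow{p} Z \stackrel{i}{\inflation} Y$ inside $\EE$. The $2$-natural embedding $j\colon \EE \to \EE^{\mathsf{ex}}$ is exact, so $j(i)$ is an inflation in $\EE^{\mathsf{ex}}$; and by Lemma \ref{Lemma:Epi/MonoInHull}, $j(p)$ remains an epimorphism in $\EE^{\mathsf{ex}}$. This settles the base case.

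For the general case I would induct on the combined $\EE$-extension filtration length of $X$ and $Y$, which is finite because $\EE^{\mathsf{ex}}$ is the extension-closure of $\EE$ inside $\Db(\EE)$. Given a conflation $Y_0 \inflation Y \deflation Y_1$ in $\EE^{\mathsf{ex}}$ with $Y_1 \in \EE$ and $Y_0$ of shorter filtration length, consider the composite $X \to Y \deflation Y_1$; by induction it admits an epi-inflation factorization $X \to Z \stackrel{\iota}{\inflation} Y_1$ in $\EE^{\mathsf{ex}}$. Pulling $\iota$ back along the deflation $Y \deflation Y_1$ produces an inflation $P \inflation Y$ sitting in a conflation $Y_0 \inflation P \deflation Z$, and the pullback property factors $f$ through $P \inflation Y$. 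Iterating the argument on $P$ (with its shorter filtration) and symmetrically on $X$ reduces to the base case.

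The main obstacle is verifying that the factor $X \to P$ produced by the reduction is still an \emph{epimorphism} in $\EE^{\mathsf{ex}}$, and not merely that $f$ factors through some inflation. The essential tool here is Lemma \ref{Lemma:Epi/MonoInHull}, which identifies epimorphisms in $\EE$ with those in $\EE^{\mathsf{ex}}$, combined with the stability of epimorphisms under pushouts in inflation-exact categories. Alternatively, since this statement is precisely the dual of \cite[Proposition~6.5]{HenrardKvammevanRoosmalenWegner21}, one may invoke that result directly and dualize.
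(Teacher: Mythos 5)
Your closing sentence is in fact the paper's entire proof: the paper offers no independent argument for Proposition \ref{Proposition:ACPropertyLiftsToHull} and simply invokes the dual of \cite[Proposition~6.5]{HenrardKvammevanRoosmalenWegner21}, so resting on that citation is exactly the paper's route and is fine. The direct inductive argument you sketch instead, however, does not close up as written, and the two problems are linked. First, the induction measure is not controlled: in your inductive step the epi-inflation factorization of $X\to Y_1$ produces an object $Z$ that is merely a subobject of $Y_1\in\EE$ via an inflation; $Z$ need not lie in $\EE$ and its filtration length in the extension-closure is a priori arbitrary. Consequently $P$, being an extension of $Z$ by $Y_0$, has no reason to have shorter (combined) filtration length than $Y$, so ``iterating the argument on $P$'' is not a legitimate induction. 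Second, the step you yourself flag as the main obstacle is genuinely false as stated: the induced map $g\colon X\to P$ into the pullback need not be an epimorphism. For instance take $Y=Y_0\oplus Z$, $q$ the projection, $f=(0,\mathrm{id})\colon Z\to Y_0\oplus Z$ and $e=\mathrm{id}_Z$; then $P=Y$ and $g=f$, which is not epic. The tools you propose to fix this do not apply: Lemma \ref{Lemma:Epi/MonoInHull} only compares epimorphisms for maps between objects of $\EE$, and no pushout occurs in your construction, so ``stability of epimorphisms under pushouts'' has nothing to act on. The only way forward inside your strategy is to factor $g$ again, which is precisely where the broken induction would be needed.

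A further loose end is the phrase ``and symmetrically on $X$'': the hull only provides resolutions of the form $X\inflation Y\deflation Z$ with $Y,Z\in\EE$ (this is \cite[Corollary~7.5]{hr2}, used in Proposition \ref{Proposition:StrongP2LiftsToHull}), i.e.\ it embeds $X$ into an object of $\EE$; it does not present $X$ as an admissible quotient of an object of $\EE$, which is what a reduction on the domain symmetric to your codomain reduction would seem to require. So either carry out (or transcribe) the actual argument of \cite[Proposition~6.5]{HenrardKvammevanRoosmalenWegner21} in dual form, or simply cite it as the paper does; the intermediate sketch should not be presented as a proof.
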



The following is an analogue of \cite[Proposition 2.6]{paper1}.

\begin{proposition}\label{Proposition:StrongP2LiftsToHull}
	Let $\EE$ be an inflation-exact category and let $\AA\subseteq \EE$ be a full additive subcategory satisfying strong axiom \ref{P2}. If $\EE$ has admissible cokernels, then $\AA\subseteq \EE^{\mathsf{ex}}$ satisfies strong axiom \ref{P2}.
\end{proposition}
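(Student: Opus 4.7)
Let $f\colon A \to X$ be a morphism in $\EE^{\mathsf{ex}}$ with $A \in \AA$. By Proposition \ref{Proposition:ACPropertyLiftsToHull}, $\EE^{\mathsf{ex}}$ has admissible cokernels, so $f$ admits an epi-inflation factorization $f = h \circ g$ with $g\colon A \to A'$ an epimorphism in $\EE^{\mathsf{ex}}$ and $h\colon A' \inflation X$ an inflation. The task is to show $A' \in \AA$.

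The first step is to reduce to showing $A' \in \EE$. Suppose $A' \in \EE$. Then $g\colon A \to A'$ is a morphism in $\EE$ via the fully faithful embedding $\EE \hookrightarrow \EE^{\mathsf{ex}}$, and it remains an epimorphism in $\EE$ by Lemma \ref{Lemma:Epi/MonoInHull}. Strong axiom \ref{P2} in $\EE$ applied to $g$ yields a factorization $g = h'' \circ g''$ with $g''$ an epimorphism, $A'' \in \AA$, and $h''\colon A'' \inflation A'$ an inflation. Since $g$ is epi and $g = h'' \circ g''$, the map $h''$ is also epi; an inflation that is epi has trivial cokernel and is therefore an isomorphism. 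Thus $A' \cong A'' \in \AA$.

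To establish $A' \in \EE$, I would induct on the length of $X$ as an iterated extension of objects of $\EE$, well-defined since $\EE^{\mathsf{ex}}$ is by construction the extension-closure of $\EE$ in $\Db(\EE)$. In the base case $X \in \EE$, the image $A' = \ker(X \to \coker f)$ computed in $\EE^{\mathsf{ex}}$ agrees with its computation in $\EE$, since cokernels and kernels of deflations are preserved by the exact embedding; hence $A' \in \EE$. For the inductive step, pick a conflation $X_1 \inflation X \deflation X_2$ in $\EE^{\mathsf{ex}}$ with $X_2 \in \EE$ and $X_1$ of strictly smaller length. Apply strong axiom \ref{P2} in $\EE$ to the composition $q \circ f\colon A \to X_2$ to factor it as $A \to A_2 \inflation X_2$ with $A_2 \in \AA$. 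Pulling back $A_2 \inflation X_2$ along $q\colon X \deflation X_2$ in the exact category $\EE^{\mathsf{ex}}$ produces an inflation $P \inflation X$ sitting in a conflation $X_1 \inflation P \deflation A_2$, together with a lift $\tilde f\colon A \to P$ whose image in $P$ is carried by the inflation $P \inflation X$ to the image $A'$ of $f$.

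The main obstacle is that $P$ has not strictly decreased in length compared to $X$. I would address this by applying the Snake Lemma in $\EE^{\mathsf{ex}}$ (valid by Remark \ref{Remark:SnakeLemma}, since $\EE^{\mathsf{ex}}$ has admissible cokernels) to $\tilde f$ together with the deflation $P \deflation A_2$: this identifies $A'$ as an extension of $A_2 \in \AA$ by the ``intersection'' $A' \cap X_1$, itself the image of an induced morphism into $X_1$ (of strictly smaller length). The induction hypothesis applied to that restricted morphism places $A' \cap X_1$ inside $\EE$, and a final assembly step, using the compatibility of the inflation $A' \inflation X$ with the peeled conflation together with the admissible cokernel structure of $\EE$, yields $A' \in \EE$. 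This last assembly step is the most delicate part of the argument.
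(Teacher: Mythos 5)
Your opening moves are correct: factoring $f$ as an epimorphism followed by an inflation using Proposition \ref{Proposition:ACPropertyLiftsToHull}, and the reduction ``if $A'\in\EE$ then $A'\in\AA$'' (an epic inflation has vanishing cokernel, hence is an isomorphism) are both fine. The genuine gap is in the inductive argument that is supposed to establish $A'\in\EE$, and you concede it yourself: the ``final assembly step'' is exactly the content that is missing, and the inductive step as sketched does not go through. Concretely: (i) your induction hypothesis concerns morphisms whose domain lies in $\AA$, but you invoke it for the ``restricted morphism'' into $X_1$, whose domain (some kernel or pullback attached to $A\to A_2$) has no reason to lie in $\AA$ --- here $\AA$ is only a full additive subcategory satisfying strong axiom \ref{P2}, with no Serre-type closure properties; (ii) the object $A'\cap X_1$ is implicitly a pullback of two inflations, which is not guaranteed to exist in an exact category (only pullbacks along deflations are available), and the appeal to the Snake Lemma is too vague to produce it; (iii) even granting both, passing from $A'\cap X_1\in\EE$ and $A_2\in\AA$ to $A'\in\EE$ is precisely the unproved gluing step. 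So the proposal is an outline of a strategy rather than a proof.

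The paper's argument avoids the induction entirely by using one input you did not: by \cite[Corollary 7.5]{hr2}, every $X\in\EE^{\mathsf{ex}}$ sits in a conflation $X\stackrel{i}{\inflation}Y\deflation Z$ with $Y,Z\in\EE$. Then $i\circ f\colon A\to Y$ is a morphism of $\EE$, so strong axiom \ref{P2} in $\EE$ gives an epi-inflation factorization of $i\circ f$ through some object of $\AA$, which remains an epi-inflation factorization in $\EE^{\mathsf{ex}}$ by Lemma \ref{Lemma:Epi/MonoInHull}. On the other hand, composing your factorization $A\to A'\inflation X$ with $i$ gives, by axiom \ref{L1}, a second epi-inflation factorization of $i\circ f$ in $\EE^{\mathsf{ex}}$. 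Uniqueness of epi-inflation factorizations then yields $A'\in\AA$ at once --- in particular it delivers your intermediate goal $A'\in\EE$ with no induction on extension length. If you want to repair your write-up, replacing the induction by this single embedding into an object of $\EE$ is the way to do it.
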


\begin{proof}
	Let $f\colon A\to X$ be a map in $\EE^{\mathsf{ex}}$ with $A\in \AA$. By \cite[Corollary 7.5]{hr2}, there is a conflation $X\stackrel{i}{\inflation}Y\stackrel{p}{\deflation}Z$ in $\EE^{\mathsf{ex}}$ such that $Y,Z\in \EE$. By strong axiom \ref{P2}, the composition $i\circ f$ factors as $A\stackrel{e}{\rightarrow}A'\inflation Y$ in $\EE$ with $A'\in \AA$ and $e$ epic. By Lemma \ref{Lemma:Epi/MonoInHull}, the latter yields an epi-inflation factorization of $i\circ f$ in $\EE^{\mathsf{ex}}$.
	
	By Proposition \ref{Proposition:ACPropertyLiftsToHull}, $f$ admits an epi-inflation factorization $A\stackrel{e'}{\rightarrow}T\stackrel{k}{\inflation}X$ in $\EE^{\mathsf{ex}}$. By axiom \ref{L1}, the composition $i\circ k$ is an inflation in $\EE^{\mathsf{ex}}$. Hence we obtain the epi-inflation factorization $A\stackrel{e'}{\rightarrow}T\stackrel{i\circ k}{\inflation} X$ of $i\circ f$. As epi-inflation factorizations are unique up to isomorphism, we conclude that $T\cong A'\in \AA$. This shows strong axiom \ref{P2}.
\end{proof}

\section{\texorpdfstring{$K$}{K}-theory computations}\label{KTheoryComputations}

We rely on the nomenclature of \emph{localizing invariants} from
\cite[Definition 8.1]{MR3070515}. Following their notation,
$\operatorname*{Cat}_{\infty}^{\operatorname*{ex}}$ is the $\infty$-category
of small stable $\infty$-categories. An invariant is \emph{weakly localizing}
if one drops the requirement to commute with filtering colimits.

\begin{theorem}
\label{thm_KAdelicBlocks}Suppose $\mathbb{K}\colon\operatorname*{Cat}_{\infty }^{\operatorname*{ex}}\longrightarrow\mathsf{A}$ is a localizing invariant with values in a stable presentable $\infty$-category $\mathsf{A}$. If $\mathbb{K}$ commutes with countable products, then there is a natural equivalence%
\[
\mathbb{K}(\mathsf{LCA}_{A,ab})\overset{\sim}{\longrightarrow}\left.
\underset{S}{\operatorname*{hocolim}}\left(  \prod_{p\notin S}\mathbb{K}%
(\mathfrak{A}_{p})\times\prod_{p\in S}\mathbb{K}(A_{p})\right)  \right.
\text{,}%
\]
where $S$ runs through all finite subsets of $\Pl$, partially ordered by inclusion. If $A$ is commutative,
all categories involved are naturally categories with dualities (as detailed
in \S \ref{subsect_Duality}) and if $\mathbb{K}$ is an invariant incorporating
duality, the equivalence still holds.
\end{theorem}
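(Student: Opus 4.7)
The plan is to transport the computation through the exact equivalence $\mathsf{LCA}_{A,ab}\simeq \mathsf{J}_{A}^{(\infty)}$ provided by Proposition \ref{prop_IdentifyJ}, and then exploit the explicit presentation of $\mathsf{J}_A^{(\infty)}$ as a filtered $2$-colimit of products of module categories. After applying $\mathbb{K}$, the task reduces to computing $\mathbb{K}(\mathsf{J}_A^{(\infty)})$, and from here only formal manipulations with the two hypotheses on $\mathbb{K}$ (localizing, commuting with countable products) are required.

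By Definition \ref{def_A} we have $\mathsf{J}_A^{(\infty)} = \operatorname{colim}_S \mathsf{J}_A^{(S)}$, indexed by the directed poset of finite sets of prime numbers $S$. A localizing invariant commutes with filtered colimits of small stable $\infty$-categories; applied to the derived $\infty$-categories, this yields $\mathbb{K}(\mathsf{J}_A^{(\infty)}) \simeq \underset{S}{\operatorname{hocolim}}\, \mathbb{K}(\mathsf{J}_A^{(S)})$. For each fixed $S$, the category $\mathsf{J}_A^{(S)} = \operatorname{proj}(A_{\mathbb{R}}) \times \prod_{p \notin S}\operatorname{proj}(\mathfrak{A}_p) \times \prod_{p \in S}\operatorname{proj}(A_p)$ is a countable product of exact categories, so the countable-products hypothesis gives
\[
\mathbb{K}(\mathsf{J}_A^{(S)}) \simeq \mathbb{K}(A_{\mathbb{R}}) \times \prod_{p \notin S}\mathbb{K}(\mathfrak{A}_p) \times \prod_{p \in S}\mathbb{K}(A_p).
\]
Since $\mathfrak{A}_{\mathbb{R}} = A_{\mathbb{R}}$, the infinite place can be freely absorbed into either product, matching the formulation in the statement. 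Combining the two steps gives the asserted equivalence, and naturality is automatic from the construction.

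The only place where real hypotheses on $\mathbb{K}$ are invoked is the tail $\prod_{p\notin S}\operatorname{proj}(\mathfrak{A}_p)$, which forces the countable-products hypothesis; the $2$-colimit itself is handled by the localizing hypothesis alone. The genuine conceptual input is therefore Proposition \ref{prop_IdentifyJ}, not the present argument. For the duality statement, I would invoke Proposition \ref{prop_J_Duality}: under the equivalence $\mathsf{LCA}_{A,ab}\simeq \mathsf{J}_A^{(\infty)}$, Pontryagin duality corresponds to the termwise $R$-linear duality on each factor of $\mathsf{J}_A^{(S)}$, and the transition functors of the $2$-colimit are compatible with these dualities via the central-localization isomorphism of Equation \eqref{lmiopps1a}. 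Products and filtered colimits of categories with duality commute with a duality-incorporating invariant in exactly the same way as in the non-equivariant case, so the same chain of equivalences carries over verbatim.
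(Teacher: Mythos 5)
Your proposal is correct and follows essentially the same route as the paper: the paper's proof likewise reduces everything to the exact equivalence $\mathsf{LCA}_{A,ab}\simeq\mathsf{J}_{A}^{(\infty)}$ of Proposition \ref{prop_IdentifyJ} (inducing an equivalence of the associated stable $\infty$-categories) and to Proposition \ref{prop_J_Duality} for the duality claim, leaving the commutation of $\mathbb{K}$ with the filtered $2$-colimit over $S$ and with the countable products implicit. You merely spell out those formal steps explicitly, which is a faithful expansion rather than a different argument.
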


\begin{proof}
This follows directly from the equivalence of exact categories in Proposition
\ref{prop_IdentifyJ}, which implies the equivalence of the attached stable
$\infty$-categories of bounded complexes (modulo acyclic complexes). Regarding
duality, combine this with Proposition \ref{prop_J_Duality}.
\end{proof}

Non-connective $K$-theory is an example of a localizing invariant commuting
with infinite products by \cite{kwinfproducts}. Since all our input categories
stem from exact categories, one could get by with the classical result of
\cite{MR1351941}.

Weakly localizing invariants decidedly do not suffice for the above because of
the colimit in Definition \ref{def_A} (so TC or negative cyclic homology
cannot be taken for $\mathbb{K}$).

\begin{theorem}
\label{thm_FiberSeq}Suppose $\mathbb{K}\colon\operatorname*{Cat}_{\infty
}^{\operatorname*{ex}}\longrightarrow\mathsf{A}$ is a weakly localizing
invariant with values in a stable presentable $\infty$-category $\mathsf{A}$.
There is a natural fiber sequence in $\mathsf{A}$,%
\[
\mathbb{K}(A)\longrightarrow\mathbb{K}(\mathsf{LCA}_{A,ab})\longrightarrow
\mathbb{K}(\mathsf{LCA}_{A})\text{.}%
\]
\end{theorem}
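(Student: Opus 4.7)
The plan is to execute the two-step localization strategy described in the introduction using the machinery of Section \ref{Section:OneSided}. Because $\mathsf{LCA}_A$ is quasi-abelian, it is an inflation-exact category with admissible cokernels, so the results of Propositions \ref{Proposition:AdmissibleCokernelsAreStableUnderQuotients}, \ref{Proposition:ACPropertyLiftsToHull}, and \ref{Proposition:StrongP2LiftsToHull} will be available throughout the construction.

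First I would verify that $\mathsf{LCA}_{A,com} \subseteq \mathsf{LCA}_A$ is inflation-percolating. The Serre property \ref{P1} is immediate since compactness is preserved under subobjects, quotients, and extensions in $\mathsf{LCA}_A$, and axioms \ref{P2}--\ref{P4} reduce to the observation that continuous images of compact $A$-modules are compact, together with the quasi-abelian structure. Theorem \ref{Theorem:MainLocalizationTheorem} then supplies a fiber sequence
\[
\mathbb{K}(\mathsf{LCA}_{A,com}) \to \mathbb{K}(\mathsf{LCA}_A) \to \mathbb{K}(\mathsf{LCA}_A/\mathsf{LCA}_{A,com}).
\]
Pontryagin duality identifies $\mathsf{LCA}_{A,com}^{op}$ with $\mathsf{LCA}_{A^{op},dis} \simeq \Mod A^{op}$, whose $K$-theory vanishes by the Eilenberg swindle, so the right-hand arrow is an equivalence.

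Next, set $\mathsf{E} \coloneqq \ex{[\mathsf{LCA}_A/\mathsf{LCA}_{A,com}]}$. I would show that the image $\overline{\mathsf{LCA}_{A,ab}}$ of the adelic blocks in $\mathsf{E}$ is inflation-percolating with strong \ref{P2}. The Serre property and strong \ref{P2} are verified first in the one-sided quotient using Corollary \ref{cor_Structure}, Lemma \ref{lemma_ExtensionFacts}, and Theorem \ref{thm_AdelicBlocksAreProjectiveAndInjective} (adelic blocks being simultaneously injective and projective in $\mathsf{LCA}_A$ forces morphisms in and out of them to factor through adelic pieces). Proposition \ref{Proposition:StrongP2LiftsToHull} transports strong \ref{P2} to the exact hull $\mathsf{E}$. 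One also checks the natural functor $\mathsf{LCA}_{A,ab} \to \overline{\mathsf{LCA}_{A,ab}}$ is an equivalence using Lemma \ref{lemma_NoMapsFromQAToDiscrete}, which ensures Hom-sets are unaffected by inverting the $\mathsf{LCA}_{A,com}$-weak isomorphisms. A second application of Theorem \ref{Theorem:MainLocalizationTheorem} produces
\[
\mathbb{K}(\mathsf{LCA}_{A,ab}) \to \mathbb{K}(\mathsf{LCA}_A) \to \mathbb{K}(\mathsf{E}/\overline{\mathsf{LCA}_{A,ab}}).
\]

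The final step is to identify $\mathsf{E}/\overline{\mathsf{LCA}_{A,ab}}$ with $\Mod A/\smod A$. The Hoffmann--Spitzweck discrete quotient $M \mapsto M_{\mathbb{Z}}$ (Corollary \ref{cor_Structure}) is exact, annihilates compacts and adelic blocks, and maps any discrete subobject of an adelic block into $\smod A$ by Lemma \ref{lemma_ExtensionFacts}(4). This induces an exact equivalence from the twice-localized category to $\Mod A/\smod A$. Since $A$ is semisimple we have $\smod A = \operatorname{proj}(A)$, so combining the localization $\smod A \to \Mod A \to \Mod A/\smod A$ with the Eilenberg swindle $\mathbb{K}(\Mod A) \simeq 0$ yields $\mathbb{K}(\mathsf{E}/\overline{\mathsf{LCA}_{A,ab}}) \simeq \Sigma \mathbb{K}(A)$. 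Rotating gives the desired fiber sequence, and the boundary map is identified with $- \otimes_A \mathbb{A}$ by tracking generators. The principal obstacle is verifying the percolating axioms for $\overline{\mathsf{LCA}_{A,ab}}$ inside the exact hull $\mathsf{E}$ and rigorously establishing the equivalence with $\Mod A/\smod A$, since objects of $\mathsf{E}$ are no longer a priori locally compact modules and all arguments must be routed through the Hoffmann--Spitzweck filtration.
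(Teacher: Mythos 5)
Your proposal is correct and follows the paper's own proof essentially step for step: first localize at $\mathsf{LCA}_{A,com}$ (killed by a swindle), then pass to the exact hull of the quotient to make the adelic blocks an inflation-percolating subcategory, identify the second quotient with $\Mod A/\smod A$, and conclude by the swindle on $\Mod A$ and rotation. One small inaccuracy to fix in the last step: the discrete-part functor $M\mapsto M_{\mathbb{Z}}$ is \emph{not} exact on $\mathsf{LCA}_A$ (the ad\`ele sequence $A\rightarrowtail\mathbb{A}_A\twoheadrightarrow\mathbb{A}_A/A$ goes to $A\to 0\to 0$); only its composite with the localization $\Mod A\to\Mod A/\smod A$ is conflation-exact, and this is precisely where Lemma \ref{lemma_ExtensionFacts}(4) (finite generation of the kernel defect) is needed, as in the paper. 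Likewise, when invoking Theorem \ref{Theorem:MainLocalizationTheorem} to get the two $\mathbb{K}$-fiber sequences, remember to verify the hypothesis that $\Db$ of the percolating subcategory agrees with the relative derived category; the paper does this via ``enough injectives'' (compacts in $\mathsf{LCA}_A$, and adelic blocks in the hull via Lemma \ref{Lemma:Q_cInducesNaturalEquivalence}), which your injectivity observations already supply.
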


This result does not require $\mathbb{K}$ to commute with countable products.\\


The proof of Theorem \ref{thm_FiberSeq} follows Section 4 of \cite{paper1} closely.

The following is the analogue of \cite[Proposition~4.1]{paper1}.  Recall that $\mathsf{LCA}_{A,com}$ is the full subcategory of $\mathsf{LCA}_{A}$ consisting of all compact $A$-modules.

\begin{proposition}\label{Proposition:KillingCompacts}
The category $\mathsf{LCA}_{A,com}$ is a strictly inflation-percolating subcategory of $\mathsf{LCA}_A$ and the localization functor 
\[Q_c\colon \mathsf{LCA}_A\to \mathsf{LCA}_A/\mathsf{LCA}_{A,com}(\eqqcolon \EE)\]
induces an exact sequence of stable $\infty$-categories
	\[\mathsf{D}_{\infty}^{\mathsf{b}}(\mathsf{LCA}_{A,com})\to \mathsf{D}_{\infty}^{\mathsf{b}}(\mathsf{LCA}_{A})\to \mathsf{D}_{\infty}^{\mathsf{b}}(\mathcal{E}).\]
	As the category $\mathsf{LCA}_{A,com}$ admits all coproducts, there is an equivalence $\mathbb{K}(\mathsf{D}_{\infty}^{\mathsf{b}}(\mathsf{LCA}_{A}))\simeq \mathbb{K}(\mathcal{E})$.
	
	Moreover, as $\mathsf{LCA}_{A}$ is a quasi-abelian category, it has admissible cokernels, and thus, $\EE$ has admissible cokernels as well.
\end{proposition}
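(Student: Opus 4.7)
The plan is to check the axioms \ref{A1}--\ref{A3} of Definition \ref{definition:InflationPercolating} for $\mathsf{LCA}_{A,com}\subseteq \mathsf{LCA}_A$, then invoke Theorem \ref{Theorem:MainLocalizationTheorem} to obtain the localization sequence, and finally trivialise $\mathbb{K}(\mathsf{LCA}_{A,com})$ via an Eilenberg swindle.

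The Serre property \ref{A1} is immediate from three facts about topological groups: a closed subgroup of a compact group is compact, a continuous image of a compact group in a Hausdorff ambient is compact, and an extension of two compact groups is compact. For \ref{A2}, given $f\colon K\to X$ with $K$ compact, the image $f(K)$ is compact and, sitting inside the Hausdorff group $X$, closed; the continuous surjection $K\to f(K)$ has closed kernel and descends to a continuous bijection $K/\ker(f)\to f(K)$ of compact Hausdorff groups which is automatically a homeomorphism, giving an admissible factorization $K\deflation f(K)\inflation X$ with image in $\mathsf{LCA}_{A,com}$. For \ref{A3}, given $K\stackrel{\phi}{\inflation}Z$ and $Y\stackrel{\psi}{\deflation}Z$ with $K$ compact, the topological pullback $P=K\times_Z Y$ exists in $\mathsf{LCA}_A$; since $\mathsf{LCA}_A$ is quasi-abelian, deflations are stable under base change and $P\deflation K$ is a deflation. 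The other leg $P\to Y$ has image $\psi^{-1}(\phi(K))$, closed in $Y$ because $\phi(K)$ is closed in $Z$; the compactness of $K$ makes $\phi\colon K\xrightarrow{\sim}\phi(K)$ a homeomorphism, so the inverse $y\mapsto(\phi^{-1}(\psi(y)),y)$ is continuous and $P\inflation Y$ is a closed embedding.

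With $\mathsf{LCA}_{A,com}$ strictly inflation-percolating, Theorem \ref{Theorem:MainLocalizationTheorem} produces the inflation-exact quotient $\EE$ and the Verdier localization sequence. By Remark \ref{Remark:StrictPercolatingIsAbelian}(4), $\mathsf{LCA}_{A,com}$ is abelian, and a standard d\'evissage -- using that every morphism in $\mathsf{LCA}_{A,com}$ is admissible in $\mathsf{LCA}_A$ and that $\mathsf{LCA}_{A,com}$ is extension-closed -- verifies the hypothesis of part~(5) of that theorem, yielding the exact sequence of stable $\infty$-categories. For the $K$-theory equivalence, Pontryagin duality yields an anti-equivalence of $\mathsf{LCA}_{A,com}$ with the (bicomplete) category of right $A^{op}$-modules, so $\mathsf{LCA}_{A,com}$ is itself bicomplete; in particular it admits all small coproducts. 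The classical Eilenberg swindle $\Sigma(X)\coloneqq\bigsqcup_{n\geq 1}X$ therefore forces $\mathbb{K}(\mathsf{LCA}_{A,com})\simeq 0$, and the exact sequence collapses to the required equivalence $\mathbb{K}(\mathsf{D}_{\infty}^{\mathsf{b}}(\mathsf{LCA}_A))\simeq\mathbb{K}(\EE)$.

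The final assertion is essentially free: $\mathsf{LCA}_A$ is quasi-abelian by Remark \ref{rmk_z1}, and quasi-abelian categories automatically have admissible cokernels, so Proposition \ref{Proposition:AdmissibleCokernelsAreStableUnderQuotients} transfers this property to $\EE$. The main obstacle is the verification of axiom \ref{A3}, since it is the one step where the topological compactness of the members of $\mathsf{LCA}_{A,com}$ must be genuinely coupled with the exact structure of $\mathsf{LCA}_A$ in order to upgrade the set-theoretic pullback into an honest pullback square of inflations and deflations; the other axioms reduce to purely structural or purely topological facts.
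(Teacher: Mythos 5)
Most of your argument is sound and runs parallel to the paper's proof: your direct topological verification of axioms \ref{A1}--\ref{A3} is a legitimate substitute for the paper's citation of \cite[Proposition~8.34]{hr} (though note that axiom \ref{A3} is automatic here because $\mathsf{LCA}_A$ is an exact category, see Remark \ref{Remark:StrictPercolatingIsAbelian}, so it is not the ``main obstacle'' you single out), and your coproduct/Eilenberg-swindle step and the final admissible-cokernels step agree with the paper's reasoning.

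The genuine gap is the passage to the exact sequence of stable $\infty$-categories, i.e.\ the verification of the hypothesis of part (5) of Theorem \ref{Theorem:MainLocalizationTheorem}, which demands that the natural functor $\Db(\mathsf{LCA}_{A,com})\to \mathbf{D}^{\mathsf{b}}_{\mathsf{LCA}_{A,com}}(\mathsf{LCA}_{A})$ be a triangle equivalence. You dismiss this with ``a standard d\'evissage'' resting only on the facts that morphisms of $\mathsf{LCA}_{A,com}$ are admissible in $\mathsf{LCA}_A$ and that $\mathsf{LCA}_{A,com}$ is extension-closed. These properties do not by themselves yield such an equivalence: for a Serre subcategory $\AA\subseteq\EE$, higher extension groups computed inside $\AA$ need not agree with those computed in $\EE$, and the d\'evissage-type criteria in the literature (for instance \cite[Theorem 13.2.8]{KashiwaraSchapira06}, which the paper invokes elsewhere) require a genuine resolution-type input beyond being Serre. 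The paper supplies exactly this input at this point: compact $A$-modules are injective objects of $\mathsf{LCA}_A$ (Lemma \ref{lemma_CharacterizeInjectivesAndProjectives}), so $\mathsf{LCA}_{A,com}$ contains enough injectives for $\mathsf{LCA}_A$, and that is what makes the comparison functor above an equivalence before Theorem \ref{Theorem:MainLocalizationTheorem}(5) is applied. Your proposal never uses the injectivity of compact modules, so this step is unjustified as written; once you add it (together with the observation that $\mathsf{LCA}_{A,com}$ is a semisimple abelian category, being Pontryagin dual to $\Mod(A^{op})$), the rest of your argument goes through.
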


\begin{proof}
	That $\mathsf{LCA}_{A,com}\subseteq \mathsf{LCA}_A$ is a strictly inflation-percolating subcategory is shown in \cite[Proposition~8.34]{hr}. As $\mathsf{LCA}_{A,com}$ contains enough injectives for $\mathsf{LCA}_A$, $\Db(\mathsf{LCA}_{A,com})\to \mathsf{D}^{\mathsf{b}}_{\mathsf{LCA}_{A,com}}(\mathsf{LCA}_{A})$ lifts to a triangle equivalence. The remainder of the statement follows from Theorem \ref{Theorem:MainLocalizationTheorem}. The last statement follows from Proposition \ref{Proposition:AdmissibleCokernelsAreStableUnderQuotients}.
\end{proof}

In light of Corollary \ref{cor_Structure}, we now wish to further annihilate the adelic blocks from $\mathcal{E}$. To that end, write $\mathcal{V}$ for the full additive subcategory of $\mathcal{E}$ generated by the adelic blocks. Unfortunately, it is not clear whether $\mathcal{V}\subseteq \mathcal{E}$ is an inflation-percolating subcategory. However, as $\mathbb{K}(\EE^{\mathsf{ex}})\simeq \mathbb{K}(\EE)$, for our purposes, we content ourselves with showing that $\mathcal{V}$ is an inflation-percolating subcategory of the exact hull $\mathcal{E}^{\mathsf{ex}}$. We first consider the following analogue of \cite[Lemma~4.2]{paper1}.

\begin{lemma}%
\makeatletter%
\hyper@anchor{\@currentHref}%
\makeatother\label{Lemma:Q_cInducesNaturalEquivalence}%
	\begin{enumerate}
		\item Let $V$ be an adelic block, the localization functor $Q_c$ induces a natural equivalence 
		\[Q_c\colon \Hom_{\mathsf{LCA}_A}(-,V)\to \Hom_{\mathcal{E}}(Q_c(-),Q_c(V)).\]
			In particular, it follows that $V$ is injective in $\mathcal{E}$.
		\item The category $\mathcal{V}$ is equivalent to the category $\mathsf{LCA}_{A,ab}$.
	\end{enumerate}
\end{lemma}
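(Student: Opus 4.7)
The plan is to prove part (1) first and then deduce part (2) formally. For (1), I will exploit the calculus of left fractions provided by Theorem \ref{Theorem:MainLocalizationTheorem}(1): every morphism $Q_c(X)\to Q_c(V)$ in $\mathcal{E}$ is represented by a left roof $X\xrightarrow{f}Z\xleftarrow{t}V$ with $t\in S_{\mathsf{LCA}_{A,com}}$ a weak isomorphism, and my goal is to replace $t$ by $\mathrm{id}_V$, reducing the morphism to an honest map $X\to V$ in $\mathsf{LCA}_A$.

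Uniqueness of this lift (i.e.\ injectivity of $Q_c$ on the Hom-groups in question) is immediate: if $Q_c(f-g)=0$, then by Theorem \ref{Theorem:MainLocalizationTheorem}(6) the map $f-g$ factors through some $K\in\mathsf{LCA}_{A,com}$, but $\Hom_{\mathsf{LCA}_A}(K,V)=0$ by Lemma \ref{lemma_NoMapsFromQAToDiscrete}(2), forcing $f=g$. For existence, it suffices to construct a retraction $r\colon Z\to V$ in $\mathsf{LCA}_A$ with $r\circ t=\mathrm{id}_V$ and $Q_c(t\circ r)=\mathrm{id}_{Q_c Z}$, and then take $\tilde f\coloneqq r\circ f$.

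To construct such an $r$, I will use that $\mathsf{LCA}_{A,com}\subseteq\mathsf{LCA}_A$ is strictly inflation-percolating (Proposition \ref{Proposition:KillingCompacts}), so by Theorem \ref{Theorem:MainLocalizationTheorem} the set $S_{\mathsf{LCA}_{A,com}}$ is saturated and every weak isomorphism is strict. I then factor $t\colon V\to Z$ strictly as $V\xrightarrow{p}Y\xrightarrow{i}Z$ with $p$ a deflation and $i$ an inflation; saturation forces both $p$ and $i$ to be weak isomorphisms. The kernel of $p$ is a compact submodule of $V$, which vanishes by Lemma \ref{lemma_NoMapsFromQAToDiscrete}(2), so $p$ is an isomorphism. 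Hence, up to an isomorphism of the intermediate object, $t$ is an inflation $V\inflation Z$ with compact cokernel $K$, and this conflation splits by injectivity of $V$ in $\mathsf{LCA}_A$ (Theorem \ref{thm_AdelicBlocksAreProjectiveAndInjective}). Taking $r$ to be the projection $Z\cong V\oplus K\to V$ yields $r\circ t=\mathrm{id}_V$, while $\mathrm{id}_Z-t\circ r$ is the projection onto $K\in\mathsf{LCA}_{A,com}$; thus $Q_c(t\circ r)=\mathrm{id}$ in $\mathcal{E}$.

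The injectivity of $V$ in $\mathcal{E}$ then follows from the Hom-equivalence, by translating an extension problem against $Q_c V$ in $\mathcal{E}$ to the corresponding problem in $\mathsf{LCA}_A$, which is solvable by Theorem \ref{thm_AdelicBlocksAreProjectiveAndInjective}. Part (2) is then formal: the restriction $Q_c\colon\mathsf{LCA}_{A,ab}\to\mathcal{V}$ is essentially surjective by the definition of $\mathcal{V}$ (using that $\mathsf{LCA}_{A,ab}$ is closed under direct sums), and fully faithful by (1) applied with $X$ itself an adelic block. The main technical step is the decomposition of $t$ as a compact-kernel deflation followed by a compact-cokernel inflation, which crucially relies on the strictness and saturation properties granted by strict inflation-percolation.
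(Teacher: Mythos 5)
Your argument is correct and is essentially the paper's own proof: faithfulness because a morphism killed by $Q_c$ factors through a compact module and $\Hom_{\mathsf{LCA}_A}(\mathsf{LCA}_{A,com},\mathsf{LCA}_{A,ab})=0$ (Lemma \ref{lemma_NoMapsFromQAToDiscrete}), fullness by representing a morphism as a left fraction, noting that the weak isomorphism out of $V$ has vanishing compact kernel and is therefore an inflation with compact cokernel, which splits by injectivity of $V$ in $\mathsf{LCA}_A$ (Theorem \ref{thm_AdelicBlocksAreProjectiveAndInjective}), with injectivity in $\mathcal{E}$ and part (2) then following formally, exactly as in the paper. The only deviation is bookkeeping: where the paper simply quotes from the localization theory that elements of $S_{\mathsf{LCA}_{A,com}}$ are admissible with compact kernel and cokernel, you re-derive this via strictness and saturation, and there the two steps \textquotedblleft saturation forces $p$ and $i$ to be weak isomorphisms\textquotedblright\ and \textquotedblleft hence $\ker p$ is compact\textquotedblright\ deserve a word of justification (e.g.\ a mono-epi argument in $\mathcal{E}$ for the first, and $Q_c(\ker p)\cong 0$ together with Theorem \ref{Theorem:MainLocalizationTheorem}(6) applied to $\mathrm{id}_{\ker p}$ for the second), but this is the same fact the paper cites and the variation is harmless.
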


\begin{proof}
	\begin{enumerate}
		\item Let $f\in \Hom_{\mathsf{LCA}_A}(X,V)$ be a map such that $Q_c(f)=0$. By the quotient/localization theory of percolating subcategories, $\mathcal{E}$ is the localization $\mathsf{LCA}_{A}[S_{\mathsf{LCA}_{A,com}}^{-1}]$. Here, $S_{\mathsf{LCA}_{A,com}}$ is the collection of admissible morphisms with kernel and cokernel belonging to $\mathsf{LCA}_{A,com}$ and $S_{\mathsf{LCA}_{A,com}}$ is a left multiplicative system. As such, $Q_c(f)=0$ implies that there exists a $t\colon V\to Y$ in $S_{\mathsf{LCA}_{A,com}}$ such that $t\circ f=0$. It follows that $f$ factors through $\ker(t)$. On the other hand, $\ker(t)\rightarrowtail V$ must be zero by Lemma \ref{lemma_NoMapsFromQAToDiscrete} as $\ker(t)\in \mathsf{LCA}_{A,com}$. This shows that $f=0$ and thus $Q_c$ is injective.
		
		To show that $Q_c$ is surjective, let $g\in \Hom_{\mathcal{E}}(X,V)$ be represented by the roof $X\stackrel{f}{\rightarrow} Y \stackrel{s}{\leftarrow} V$ with $s\in S_{\mathsf{LCA}_{A,com}}$. By definition $s$ is admissible and $\ker(s)\in \mathsf{LCA}_{A,com}$. As $V$ has no non-trivial compact submodules, $s$ is an inflation. As $V$ is injective by Theorem \ref{thm_AdelicBlocksAreProjectiveAndInjective}, $s$ splits. Let $t\colon Y\to V$ be a splitting, i.e.~$t\circ s=1_V$, then $Q_c(t\circ f)=g$. This shows the desired bijection.
		
		As $V$ is injective in $\mathsf{LCA}_{A}$, $\Hom_{\mathsf{LCA}_A}(-,V)$ is an exact functor. Hence $\Hom_{\EE}(-,V)$ is an exact functor and thus $V$ is injective in $\mathcal{E}$.
		\item This follows immediately from the above natural equivalence. \qedhere
	\end{enumerate}
\end{proof}

The main difference with \cite{paper1} is that $\VV$ cannot be a strictly inflation-percolating subcategory of $\EE$. Indeed, by Example \ref{Example:AdelicsAreNotAbelian}, $\mathsf{LCA}_{A,ab}$ is not abelian and thus Remark \ref{Remark:StrictPercolatingIsAbelian} implies that $\VV$ cannot be strictly percolating. Nonetheless, we obtain the following analogue of \cite[Proposition~4.3]{paper1}.

\begin{proposition}
	The subcategory $\VV\subseteq \EE^{\mathsf{ex}}$ is an inflation-percolating subcategory.
\end{proposition}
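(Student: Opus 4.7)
Plan: Verify axioms \ref{P1}--\ref{P4} for $\VV\subseteq \EE^{\mathsf{ex}}$. The overall strategy uses the equivalence $\VV\cong \mathsf{LCA}_{A,ab}$ from Lemma \ref{Lemma:Q_cInducesNaturalEquivalence}(2) to reduce most checks to $\mathsf{LCA}_A$, and uses Proposition \ref{Proposition:StrongP2LiftsToHull} as the principal transfer tool from $\EE$ to $\EE^{\mathsf{ex}}$.

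First I would establish the strong form of axiom \ref{P2} for $\VV\subseteq \EE$ directly. Given $f\colon V\to X$ in $\EE$ with $V$ adelic, represent $f$ by a left fraction $V\xrightarrow{g}Y\xleftarrow{s}X_0$ in $\mathsf{LCA}_A$, where $X=Q_c(X_0)$ and $s\in S_{\mathsf{LCA}_{A,com}}$. Decompose $Y=K_Y\oplus X_Y\oplus D_Y$ via Corollary \ref{cor_Structure}. Lemma \ref{lemma_NoMapsFromQAToDiscrete} forces the component of $g$ into $D_Y$ to vanish, and $K_Y$ is zero in $\EE$; what remains is a morphism of adelic blocks $V\to X_Y$ in the split quasi-abelian category $\mathsf{LCA}_{A,ab}$, whose epi-inflation factorization $V\twoheadrightarrow V''\inflation X_Y$ composes with the split inflation $X_Y\inflation Y$ to yield the desired factorization $V\twoheadrightarrow V''\inflation X$ in $\EE$ with $V''\in \VV$. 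Since $\EE$ has admissible cokernels by Proposition \ref{Proposition:KillingCompacts}, Proposition \ref{Proposition:StrongP2LiftsToHull} then lifts strong \ref{P2} to $\VV\subseteq \EE^{\mathsf{ex}}$.

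Next I would handle the remaining axioms. For axiom \ref{P1} (Serre), every conflation in $\EE^{\mathsf{ex}}$ is isomorphic to one whose two rightmost terms come from $\EE$ by \cite[Cor.~7.5]{hr2}, reducing the problem to $\EE$ and further to $\mathsf{LCA}_A$ via roof representatives. Extension-closure of $\mathsf{LCA}_{A,ab}$ in $\mathsf{LCA}_A$ from Lemma \ref{lemma_ExtensionFacts}(3), combined with the observation that adelic blocks admit no non-trivial compact subobjects (since $\bigcap nC=0$), yields closure under subobjects, quotients, and extensions. For axiom \ref{P3}, I invoke that $\EE^{\mathsf{ex}}$ is an exact category, in which any cospan of an inflation and a deflation into the same object admits a pullback (Remark \ref{Remark:StrictPercolatingIsAbelian}(5)); combining this pullback with the strong form of \ref{P2} applied to $B\to Z$ produces the required diagram. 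For axiom \ref{P4}, given $f=f_2\circ f_1\colon X\to V\to Y$ and a deflation $p\colon Y\deflation V'$ with $V,V'\in \VV$ and $p\circ f=0$, the composition $p\circ f_2\colon V\to V'$ lies in the split quasi-abelian category $\mathsf{LCA}_{A,ab}$ (Proposition \ref{prop_LCAAbHasKernelsAndCokernels}), and its kernel $K_V$ is again in $\VV$ by \ref{P1}; since $X\to V$ factors through $K_V$, and since $K_V\to Y$ lands in $\ker(p)$, we obtain the desired factorization of $X\to\ker(p)$ through $\VV$.

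The main obstacle I anticipate is the analysis of morphisms \emph{out of} adelic blocks in $\EE$ (and $\EE^{\mathsf{ex}}$), where Lemma \ref{Lemma:Q_cInducesNaturalEquivalence}(1) offers no direct control since it handles only morphisms \emph{into} adelic blocks. The strategy above avoids computing such Hom-groups directly and instead consistently passes back to roof representatives in $\mathsf{LCA}_A$, combined with the structural trichotomy of Corollary \ref{cor_Structure} and the vanishing in Lemma \ref{lemma_NoMapsFromQAToDiscrete}. A secondary subtlety is that $\VV$ is not abelian (Example \ref{Example:AdelicsAreNotAbelian}), so the shortcut of Remark \ref{Remark:StrictPercolatingIsAbelian}(4) is unavailable and the full inflation-percolating axioms must be verified in their non-strict form.
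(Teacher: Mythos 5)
Your handling of strong axiom \ref{P2} is essentially parallel to the paper's (both reduce to $\VV\subseteq\EE$ via Proposition \ref{Proposition:StrongP2LiftsToHull} and then analyse a roof using Corollary \ref{cor_Structure} and Lemma \ref{lemma_NoMapsFromQAToDiscrete}), and your \ref{P3} argument (pullback of a deflation along an inflation in the exact category $\EE^{\mathsf{ex}}$, combined with strong \ref{P2}) is exactly what the paper's terse ``automatic'' claim amounts to. One small unaddressed point in \ref{P2}: the first map of your factorization is only known to be an epimorphism in the full subcategory $\mathsf{LCA}_{A,ab}$, while strong \ref{P2} needs an epimorphism in $\EE$; you must add an argument (e.g.\ that its cokernel in $\mathsf{LCA}_A$ is compact, or that it is componentwise surjective, hence an open surjection). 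The genuine gaps are in \ref{P1} and \ref{P4}. For \ref{P1}, your reduction of conflations of $\EE^{\mathsf{ex}}$ to $\mathsf{LCA}_A$ does not work: \cite[Corollary 7.5]{hr2} only co-resolves an \emph{object} of $\EE^{\mathsf{ex}}$ by a conflation whose two right terms lie in $\EE$; it does not say that an arbitrary conflation of $\EE^{\mathsf{ex}}$ is isomorphic to one coming from $\EE$, let alone from $\mathsf{LCA}_A$. The hull in general contains conflations that do not lift to $\EE$ (this is the very reason for passing to $\EE^{\mathsf{ex}}$), so extension-closedness of $\mathsf{LCA}_{A,ab}$ in $\mathsf{LCA}_A$ (Lemma \ref{lemma_ExtensionFacts}) cannot simply be transported. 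The missing idea, which is the paper's, is injectivity: by Lemma \ref{Lemma:Q_cInducesNaturalEquivalence} objects of $\VV$ are injective in $\EE$, they remain injective in $\EE^{\mathsf{ex}}$ because the hull is the extension closure of $\EE$, hence every conflation in $\EE^{\mathsf{ex}}$ with outer terms in $\VV$ splits and extension-closedness is immediate; the quotient and subobject parts of the Serre condition then follow from uniqueness of epi-inflation factorizations (strong \ref{P2}) and the quasi-abelian structure of $\mathsf{LCA}_{A,ab}$ (Proposition \ref{prop_LCAAbHasKernelsAndCokernels}), not from a ``no compact subobjects'' observation inside $\mathsf{LCA}_A$.

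For \ref{P4}, you form the kernel $K_V$ of $p\circ f_2\colon V\to V'$ \emph{inside} the split quasi-abelian category $\mathsf{LCA}_{A,ab}$ and then assert that $X\to V$ factors through $K_V$. That factorization requires $K_V$ to be a kernel of $p\circ f_2$ in $\EE^{\mathsf{ex}}$, where $X$ is an arbitrary object; a kernel computed in the full subcategory $\VV$ carries no universal property against maps from outside $\VV$, and nothing in your argument (or in the paper's toolkit, which controls cokernels under $Q_c$ and the hull, not kernels) shows that kernels of $\mathsf{LCA}_{A,ab}$ survive as kernels in $\EE^{\mathsf{ex}}$. As written this step does not go through; the paper instead invokes the dual of \cite[Proposition 4.11]{hr} together with Proposition \ref{prop_LCAAbHasKernelsAndCokernels} to settle \ref{P4}.
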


\begin{proof}
	We first show strong axiom \ref{P2}. By Proposition \ref{Proposition:ACPropertyLiftsToHull}, the category $\EE^{\mathsf{ex}}$ has admissible cokernels. By Proposition \ref{Proposition:StrongP2LiftsToHull}, it now suffices to show that $\VV\subseteq \EE$ satisfies strong axiom \ref{P2}.	To that end, let $f\colon V\to X$ be a map in $\EE$ with $V\in \VV$. As $\EE$ has admissible cokernels, $f$ admits an epi-inflation factorization $V \stackrel{g}{\to} T\stackrel{h}{\inflation}X$ in $\EE$. As $\EE=\mathsf{LCA}_{A}[S_{\mathsf{LCA}_{A,com}}^{-1}]$ is a localization and $S_{\mathsf{LCA}_{A,com}}$ is a left multiplicative system, the map $g$ can be represented as a roof $V\stackrel{g'}{\to} T' \stackrel{s}{\leftarrow} T$ with $s\in S_{\mathsf{LCA}_{A,com}}$. Note that $g'$ is an epimorphism in $\EE$ and thus its cokernel is zero in $\EE$. As the localization functor $Q_c$ commutes with cokernels, the cokernel of $g'$ belongs to $\mathsf{LCA}_{A,com}$. Hence, as $\mathsf{LCA}_{A}$ has admissible cokernels, we obtain the sequence in $\mathsf{LCA}_{A}$:
	\[V \to \im(g') \inflation T'\deflation C.\]
	Here, $C\in \mathsf{LCA}_{A,com}$, $V\to \im(g')$ is epic and the composition $V\to \im(g')\inflation T'$ is $g'$. It follows from Corollary \ref{cor_Structure} that epimorphic quotient of adelic blocks are quasi-adelic blocks, and thus $\im(g')\in \mathsf{LCA}_{A,qab}$. By Lemma \ref{lemma_DecompQAB}, $\im(g')$ is isomorphic to an adelic block in $\EE$ as compacts are annihilated. It follows that $\im(g')\in \VV$. This shows that $\VV\subseteq \EE$ satisfies strong axiom \ref{P2} as $Q_c$ maps epics to epics.
	
	We now claim that every object of $\VV$ is injective in $\EE^{\mathsf{ex}}$. Let $V\in \VV$. By Lemma \ref{Lemma:Q_cInducesNaturalEquivalence}, $V$ is injective in $\EE$. As $\EE^{\mathsf{ex}}$ is simply the extension-closure of $\EE$ in $\EE^{\mathsf{ex}}$, one readily verifies that $V$ remains injective in $\EE^{\mathsf{ex}}$. It follows that $\VV\subseteq \EE^{\mathsf{ex}}$ is extension-closed.
	
	Now let $X\stackrel{i}{\inflation} V \stackrel{p}{\deflation} Z$ be a conflation in $\EE^{\mathsf{ex}}$ with $V\in \VV$. By strong axiom \ref{P2}, $Z\cong \im(p)\in \VV$. It follows from Proposition \ref{prop_LCAAbHasKernelsAndCokernels} that $X\in \VV$. This shows axiom \ref{P1}.
	
	As $\EE^{\mathsf{ex}}$ is exact, axiom \ref{P3} is automatic. Axiom \ref{P4} follows from (the dual of) \cite[Proposition 4.11]{hr} and Proposition \ref{prop_LCAAbHasKernelsAndCokernels}. This concludes the proof.
\end{proof}

\begin{corollary}
	The localization functor $Q_{\VV}\colon \EE^{\mathsf{ex}}\to \FF(=\EE^{\mathsf{ex}}/\VV)$ induces a fiber sequence
	\[\mathbb{K}(\VV)\to \mathbb{K}(\EE^{\mathsf{ex}})\to \mathbb{K}(\FF).\]
\end{corollary}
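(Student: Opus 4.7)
The plan is to invoke Theorem \ref{Theorem:MainLocalizationTheorem} applied to the inflation-percolating subcategory $\VV \subseteq \EE^{\mathsf{ex}}$ established in the previous proposition. Part (4) of that theorem immediately yields a Verdier localization sequence
\[
\mathsf{D}^{\mathsf{b}}_{\VV}(\EE^{\mathsf{ex}}) \longrightarrow \Db(\EE^{\mathsf{ex}}) \longrightarrow \Db(\FF),
\]
and part (5) upgrades this to an exact sequence of stable $\infty$-categories
\[
\mathsf{D}_{\infty}^{\mathsf{b}}(\VV) \longrightarrow \mathsf{D}_{\infty}^{\mathsf{b}}(\EE^{\mathsf{ex}}) \longrightarrow \mathsf{D}_{\infty}^{\mathsf{b}}(\FF),
\]
provided we first verify the hypothesis that the canonical functor $\Db(\VV) \to \mathsf{D}^{\mathsf{b}}_{\VV}(\EE^{\mathsf{ex}})$ is an equivalence. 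Applying the weakly localizing invariant $\mathbb{K}$ to this exact sequence then produces the desired fiber sequence.

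The main (and essentially only) point requiring attention is checking that $\Db(\VV) \to \mathsf{D}^{\mathsf{b}}_{\VV}(\EE^{\mathsf{ex}})$ is a triangle equivalence. This will follow from a standard cofinality/resolution argument using the fact, already established during the proof of the preceding proposition, that every object of $\VV$ is injective in $\EE^{\mathsf{ex}}$: since $\VV$ is a fully exact subcategory in which every object is injective in the ambient category, any bounded complex over $\EE^{\mathsf{ex}}$ with cohomology in $\VV$ is quasi-isomorphic to one lying in $\Kb(\VV)$ (by truncating against an injective resolution, which exists trivially within $\VV$). Alternatively, since $\VV \simeq \mathsf{LCA}_{A,ab}$ by Lemma \ref{Lemma:Q_cInducesNaturalEquivalence}(2) is split exact (Proposition \ref{prop_LCAAbHasKernelsAndCokernels}), its bounded derived category collapses and the comparison functor is easily seen to be essentially surjective and fully faithful onto the thick subcategory it generates.

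With this equivalence in hand, the remainder is formal: Theorem \ref{Theorem:MainLocalizationTheorem}(5) provides the exact sequence of stable $\infty$-categories, and the definition of a (weakly) localizing invariant—specifically the requirement that $\mathbb{K}$ sends exact sequences of small stable $\infty$-categories to fiber sequences in $\mathsf{A}$—directly yields
\[
\mathbb{K}(\VV) \longrightarrow \mathbb{K}(\EE^{\mathsf{ex}}) \longrightarrow \mathbb{K}(\FF),
\]
as claimed. Note that no commutation with countable products is needed here, so weak localization suffices, which is consistent with the hypotheses of Theorem \ref{thm_FiberSeq}.
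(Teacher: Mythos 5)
Your proposal is correct and follows essentially the same route as the paper: the paper's proof likewise notes (via Lemma \ref{Lemma:Q_cInducesNaturalEquivalence} and the preceding proposition) that the objects of $\VV$ are injective in $\EE^{\mathsf{ex}}$, which guarantees the hypothesis $\Db(\VV)\xrightarrow{\sim}\mathsf{D}^{\mathsf{b}}_{\VV}(\EE^{\mathsf{ex}})$ needed for Theorem \ref{Theorem:MainLocalizationTheorem}(5), and then applies the localizing invariant to the resulting exact sequence of stable $\infty$-categories. Your injectivity-based verification of the derived-equivalence hypothesis is exactly the content the paper compresses into ``$\VV\subseteq\EE^{\mathsf{ex}}$ contains enough injectives,'' so there is no essential difference.
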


\begin{proof}
	By Lemma \ref{Lemma:Q_cInducesNaturalEquivalence}, $\VV\subseteq \EE^{\mathsf{ex}}$ contains enough injectives. The result now follows from Theorem \ref{Theorem:MainLocalizationTheorem}.
\end{proof}

\begin{proposition}
	The composition functor $\mathsf{LCA}_A\to \EE\to \EE^{\mathsf{ex}}\to \FF$ is $2$-universal with respect to conflation-exact functors $F\colon \mathsf{LCA}_A\to \CC$ with $\CC$ exact such that $F(\mathsf{LCA}_{A,ab})=0$. 
\end{proposition}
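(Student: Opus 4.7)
\textit{Plan.} The plan is to chain together the three $2$-universal properties inherent in the composition. The localization $Q_c\colon\mathsf{LCA}_A\to\EE$ is $2$-universal among conflation-exact functors annihilating $\mathsf{LCA}_{A,com}$ (Theorem \ref{Theorem:MainLocalizationTheorem}); the exact hull embedding $j\colon\EE\to\EE^{\mathsf{ex}}$ is $2$-universal among conflation-exact functors from $\EE$ into exact categories (\cite{hr2}); and $Q_{\VV}\colon\EE^{\mathsf{ex}}\to\FF$ is $2$-universal among conflation-exact functors annihilating $\VV$, which by Lemma \ref{Lemma:Q_cInducesNaturalEquivalence}(2) coincides with annihilating the image of $\mathsf{LCA}_{A,ab}$. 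Chaining these yields a unique-up-to-natural-iso factorization through $\FF$ for any conflation-exact $F\colon\mathsf{LCA}_A\to\CC$, $\CC$ exact, that kills \emph{both} $\mathsf{LCA}_{A,com}$ and $\mathsf{LCA}_{A,ab}$; the uniqueness clauses of each of the three universal properties combine to give uniqueness of the overall factorization.

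The essential content of the proposition is therefore the implication that the hypothesis $F(\mathsf{LCA}_{A,ab})=0$ already forces $F(\mathsf{LCA}_{A,com})=0$. The starting point is the conflation $A\inflation\mathbb{A}_A\deflation\mathbb{A}_A/A$ in $\mathsf{LCA}_A$, whose middle term is an adelic block. Applying $F$ yields a conflation $F(A)\inflation 0\deflation F(\mathbb{A}_A/A)$ in the exact category $\CC$, which in an exact category forces $F(A)=0$ and $F(\mathbb{A}_A/A)=0$. Pontryagin-dualizing the analogous conflation $A^{op}\inflation\mathbb{A}_{A^{op}}\deflation\mathbb{A}_{A^{op}}/A^{op}$ and invoking Proposition \ref{prop_J_Duality} (which ensures the dual of an adelic block in $\mathsf{LCA}_{A^{op}}$ is adelic in $\mathsf{LCA}_A$) gives a conflation in $\mathsf{LCA}_A$ of the form (discrete)$\inflation$(adelic)$\deflation (A^{op})^{\vee}$, and the same reasoning forces $F((A^{op})^{\vee})=0$.

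The main obstacle I anticipate is propagating these specific vanishings to every object of $\mathsf{LCA}_{A,com}$. The intended route is as follows. For any compact $K$, the Pontryagin dual $K^{\vee}$ is a discrete $A^{op}$-module, which by semisimplicity of $A^{op}$ splits as a direct sum of simples, each a direct summand of $A^{op}$; dualizing, every simple compact $A$-module occurs as a direct summand of $(A^{op})^{\vee}$ and is thus killed by $F$. This takes care of the finite-rank case immediately. Upgrading to arbitrary compacts will require analyzing the interaction of $F$ with the product decomposition $K\cong\prod_p K_p$ into topological $p$-torsion components (Lemma \ref{lemma5}), using conflation-exactness of $F$ and the structural results of Section~3; pinning down this compatibility, given that conflation-exact functors need not preserve infinite products, is the step that will demand the most care.
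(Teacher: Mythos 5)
Your first paragraph coincides with the paper's (one-line) proof: one glues the universal property of $Q_c$ from Theorem \ref{Theorem:MainLocalizationTheorem}, the universal property of the exact hull, and the universal property of $Q_{\VV}$, using Lemma \ref{Lemma:Q_cInducesNaturalEquivalence} to identify $\VV$ with the image of $\mathsf{LCA}_{A,ab}$. Note, however, what this glueing actually delivers: the composite is $2$-universal among conflation-exact functors annihilating \emph{both} $\mathsf{LCA}_{A,com}$ and $\mathsf{LCA}_{A,ab}$ (equivalently, annihilating $\mathsf{LCA}_{A,qab}$). You correctly isolate that the statement, read literally, needs in addition the implication ``$F(\mathsf{LCA}_{A,ab})=0$ implies $F(\mathsf{LCA}_{A,com})=0$'', and this is exactly where your argument has a genuine gap --- one that cannot be closed.

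Your ad\`{e}le-sequence argument only reaches compact modules whose Pontryagin dual is finitely generated (those are summands of finite powers of $(A^{op})^{\vee}$), and the proposed continuation via $K\cong\prod_p K_p$ cannot work: compact $A$-modules are \emph{connected} (Lemma \ref{lemma_CharacterizeInjectivesAndProjectives}), so the topological torsion decomposition of Lemma \ref{lemma5}, which concerns the totally disconnected compact clopen piece $C$ of an adelic block, does not apply to them. Worse, the implication you are trying to prove is false. Consider $F\colon \mathsf{LCA}_A\to \left(\Mod(A^{op})/\smod(A^{op})\right)^{op}$, $M\mapsto Q_{A^{op}}\bigl(D(M^{\vee})\bigr)$, the composite of Pontryagin duality with the conflation-exact functor $Q_{A^{op}}\circ D$ of the paper applied to $A^{op}$. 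This is a conflation-exact functor into an abelian (hence exact) category; it kills every adelic block, since the dual of an adelic block is again an adelic block (Proposition \ref{prop_J_Duality}) and so has vanishing discrete part. But for a compact $A$-module whose dual is not finitely generated, e.g.\ $K=\bigl((A^{op})^{(\mathbb{N})}\bigr)^{\vee}$, one has $F(K)\neq 0$. Since the composite $\mathsf{LCA}_A\to\EE\to\EE^{\mathsf{ex}}\to\FF$ annihilates all compact modules (they already die in $\EE$), such an $F$ cannot factor through it, so no refinement of your argument can bridge the gap: only compacts with finitely generated dual are forced to die by killing the adelic blocks. The proposition should therefore be read with the hypothesis $F(\mathsf{LCA}_{A,qab})=0$, i.e.\ $F$ vanishes on compact modules and on adelic blocks; under that hypothesis your first paragraph is already a complete proof and agrees with the paper's, and the remainder of your proposal is unnecessary.
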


\begin{proof}
	This follows by simply glueing together all $2$-universal properties.
\end{proof}

Our next goal is to show that $\FF$ is equivalent to the category $\Mod(A)/\smod(A)$ (see \cite[Proposition 4.9]{paper1}). Consider the localization functor  $Q_A\colon \Mod(A)\to \Mod(A)/\smod(A)$. We write $D\colon\mathsf{LCA}_A\to \mathsf{LCA}_{A,dis}\simeq \Mod(A)$ for the functor mapping an object to its torsion-free part with respect to the torsion theory in Proposition \ref{prop_Structure}. Note that $D$ is not conflation-exact (the standard ad\`{e}le sequence $A\inflation \mathbb{A}\deflation \mathbb{A}/A$ is mapped to $A\to 0\to 0$). However, we need not to change much to remedy this (see also \cite[Proposition 4.7]{paper1}).

\begin{proposition}
	The functor $Q_A\circ D\colon \mathsf{LCA}_A\to \Mod(A)/\smod(A)$ is conflation-exact.
\end{proposition}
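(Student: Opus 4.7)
The plan is to combine the left-adjoint property of $D$ with the structural results from Section 2. Given a conflation $f\colon X_1 \inflation X_2 \deflation X_3$ in $\mathsf{LCA}_A$, I must show that $Q_A \circ D$ applied to it is a short exact sequence in $\Mod(A)/\smod(A)$. Since $\Mod(A)/\smod(A)$ is an abelian Serre quotient, this amounts to three conditions: (i) $\ker(D(X_1) \to D(X_2))$ lies in $\smod(A)$; (ii) $\coker(D(X_2) \to D(X_3))$ lies in $\smod(A)$; and (iii) the homology at $D(X_2)$ lies in $\smod(A)$. Write $T(M)$ for the quasi-adelic summand of $M$, so that Proposition \ref{prop_Structure} furnishes a split conflation $T(M) \inflation M \deflation D(M)$.

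First I would observe that $D$ is left adjoint to the inclusion $\iota\colon \mathsf{LCA}_{A,dis}\hookrightarrow \mathsf{LCA}_A$. Indeed, the vanishing $\Hom_{\mathsf{LCA}_A}(T(M),N)=0$ for $N$ discrete, supplied by Lemma \ref{lemma_NoMapsFromQAToDiscrete}(1), combined with the above split conflation, yields a natural isomorphism $\Hom_{\mathsf{LCA}_A}(M,\iota N)\cong \Hom_{\Mod(A)}(D(M),N)$. As a left adjoint, $D$ preserves cokernels, and since $X_3=\coker(f)$ in $\mathsf{LCA}_A$ this forces $D(X_3)=\coker_{\Mod(A)}(D(X_1)\to D(X_2))$. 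Hence (ii) and (iii) already hold with zero homology in $\Mod(A)$.

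The main content, which I expect to be the chief obstacle, is (i): that $\ker(D(X_1)\to D(X_2))$ is finitely generated. Setting $P \coloneqq f^{-1}(T(X_2)) \subseteq X_1$, continuity of $f$ and closedness of $T(X_2)\subseteq X_2$ render $P$ a closed $A$-submodule of $X_1$, and functoriality of the Hoffmann--Spitzweck filtration gives $T(X_1)\subseteq P$. A direct verification identifies $\ker(D(X_1)\to D(X_2))$ with $P/T(X_1)$; since $T(X_1)$ is open in $X_1$ (because $D(X_1)$ is discrete), this quotient is itself a discrete $A$-module. The closed embedding $f$ identifies $P/T(X_1)$ with the closed $A$-submodule $f(P)/f(T(X_1))$ of $T(X_2)/f(T(X_1))$. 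Lemma \ref{lemma_ExtensionFacts}(2), applied to the deflation $T(X_2) \deflation T(X_2)/f(T(X_1))$, shows that the latter is quasi-adelic, and then Lemma \ref{lemma_ExtensionFacts}(4) forces the discrete $A$-module $P/T(X_1)$ to be finitely generated, which is (i).

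The technical nuisance I anticipate is the topological bookkeeping in this kernel analysis: one must check that the inclusion $P/T(X_1) \inflation T(X_2)/f(T(X_1))$ really is an inflation in $\mathsf{LCA}_A$ and not merely an algebraic monomorphism, and that the various sub- and quotient objects carry the topologies one expects. This reduces to the standard facts that closed embeddings of LCA groups restrict to closed embeddings on closed subgroups and descend to closed embeddings on quotients by compatible closed subgroups; once these are in place, the rest of the argument is formal.
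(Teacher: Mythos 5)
Your proof is correct, but it takes a different route from the paper's. The paper lifts the torsion decompositions of $X_1$ and $X_2$ to a morphism of conflations (the left vertical arrow being an inflation by \cite[Proposition~7.6]{MR2606234}) and then applies the Short Snake Lemma \cite[Corollary~8.13]{MR2606234} in the quasi-abelian category $\mathsf{LCA}_A$: this produces, in one stroke, an inflation $\ker(g)\rightarrowtail W_Y/W_X$ into a quasi-adelic module and the identification $\coker(g)\cong D_Z$ via uniqueness of the torsion sequence of $Z$. You instead obtain the right-exactness formally, from the observation that $D$ is left adjoint to the inclusion of $\mathsf{LCA}_{A,dis}$ (a clean consequence of the torsion pair of Proposition \ref{prop_Structure} and Lemma \ref{lemma_NoMapsFromQAToDiscrete}), so that $D(X_3)=\coker(D(X_1)\to D(X_2))$ on the nose; and you handle the kernel by hand, identifying it with $f^{-1}(T(X_2))/T(X_1)$ and embedding it as a closed discrete submodule of $T(X_2)/f(T(X_1))$ --- which is exactly the quasi-adelic object $W_Y/W_X$ appearing in the paper's snake sequence --- before invoking Lemma \ref{lemma_ExtensionFacts}(2) and (4), the same finiteness input the paper uses. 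The topological bookkeeping you flag (that the induced map on quotients is genuinely an inflation, i.e.\ a closed embedding) does work out: $f(P)=f(X_1)\cap T(X_2)$ is closed, its image in the quotient is closed because its preimage is saturated and closed, and openness of $T(X_1)$ in $X_1$ makes both sides discrete, so the continuous bijection is a homeomorphism. What your approach buys is independence from the quasi-abelian Snake Lemma (and from the torsion-sequence-uniqueness step identifying $\coker(g)$ with $D_Z$), at the cost of the explicit point-set argument; the paper's version is shorter once the Snake Lemma machinery is accepted.
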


\begin{proof}
	Let $X\inflation Y\deflation Z$ be a conflation in $\mathsf{LCA}_A$. By Proposition \ref{prop_Structure}, we obtain the following commutative diagram:
	\[\xymatrix{                                                                                                    
		W_X\ar@{>->}[r]\ar@{>.>}[d] & X\ar@{->>}[r]\ar@{>->}[d] & D_X\ar[d]^g\\
		W_Y\ar@{>->}[r] & Y\ar@{->>}[r] & D_Y
	}\] Note that the left downwards arrow is an inflation by \cite[Proposition 7.6]{MR2606234}. Clearly the map $g$ is strict. Applying the Short Snake Lemma (\cite[Corollary 8.13]{MR2606234}), we obtain an exact sequence
	\[\ker(g) \stackrel{i}{\inflation} W_Y/W_X \to Z\deflation \coker(g).\]
	By Lemma \ref{lemma_ExtensionFacts}, $W_Y/W_X\in \mathsf{LCA}_{A,qab}$ and hence $\coker(i)\in \mathsf{LCA}_{A,qab}$. Note that the conflation $\coker(i)\inflation Z \deflation \coker(g)$ thus is the torsion sequence of $Z$ (from Proposition \ref{prop_Structure}). Thus $\coker(g)\cong D_Z$.
	
	Lastly, it follows from Lemma \ref{lemma_ExtensionFacts} that $\ker(g)$ is finitely generated and discrete. Hence we find a conflation $Q_A(D_X)\inflation Q_A(D_Y)\deflation Q_A(D_Z)$ as required.
\end{proof}

\begin{corollary}
	The functor $Q_A \circ D\colon \mathsf{LCA}_A\to  \Mod(A)/\smod(A)$ induces an equivalence $\FF\to \Mod(A)/\smod(A)$ of abelian categories. 
\end{corollary}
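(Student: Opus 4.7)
The plan is to construct a quasi-inverse $\iota:\Mod(A)/\smod(A)\to\FF$ to the induced functor and verify that the two compositions are naturally isomorphic to the identity.

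First I would invoke the $2$-universal property of $\FF$ established in the previous proposition. The composite $Q_A\circ D$ is conflation-exact by the preceding proposition, and it vanishes on $\mathsf{LCA}_{A,ab}$ because $D$ is the torsion-free projection for the torsion pair $(\mathsf{LCA}_{A,qab},\mathsf{LCA}_{A,dis})$ of Proposition \ref{prop_Structure} and therefore kills all of $\mathsf{LCA}_{A,qab}$. Thus $Q_A\circ D$ descends to an exact functor $\widetilde{D}:\FF\to\Mod(A)/\smod(A)$.

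Next I would construct the candidate inverse. The composite $\iota_0:\Mod(A)\simeq\mathsf{LCA}_{A,dis}\hookrightarrow\mathsf{LCA}_A\to\FF$ is conflation-exact because the first inclusion is fully exact by Proposition \ref{prop_Structure}. The key claim is that $\iota_0$ kills $\smod(A)$. Fix $M\in\smod(A)$; since $A$ is semisimple, $M$ is projective, hence a direct summand of some $A^n$. Adelization yields a conflation $M\inflation\mathbb{A}_A\otimes_A M\deflation(\mathbb{A}_A\otimes_A M)/M$ in $\mathsf{LCA}_A$ whose middle term is an adelic block (a direct summand of $\mathbb{A}_A^n$). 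In $\FF$ the middle term is zero, and since $\FF$ is an exact category this forces $M=0$ there. A standard check then shows that $\iota_0$ inverts every $\smod(A)$-weak isomorphism (such morphisms admit an epi--mono factorization with kernel and cokernel in $\smod(A)$, which becomes an isomorphism after applying the conflation-exact $\iota_0$), so $\iota_0$ factors through the Gabriel quotient as an exact functor $\iota:\Mod(A)/\smod(A)\to\FF$.

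Finally I would verify the two compositions. Since $D$ restricts to the identity on $\mathsf{LCA}_{A,dis}$, the composition $\widetilde{D}\circ\iota$ is naturally isomorphic to the identity on $\Mod(A)/\smod(A)$. For $\iota\circ\widetilde{D}$, the canonical deflation $\pi_N:N\deflation D(N)$ coming from the torsion pair has kernel $F_{\mathbb{Z}}N$, which by Corollary \ref{cor_Structure} is an extension of a compact $A$-module by an adelic block; both classes vanish in $\FF$, so $\pi_N$ becomes an isomorphism there. Functoriality of the Hoffmann--Spitzweck filtration promotes this to a natural isomorphism $\mathrm{id}_{\FF}\simeq\iota\circ\widetilde{D}$. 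Since $\Mod(A)/\smod(A)$ is abelian, the equivalence transports the abelian structure to $\FF$.

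The main obstacle I anticipate is the middle step: one must verify that $\mathbb{A}_A\otimes_A M$ really is an adelic block for $M\in\smod(A)$, and one must use the fact that $\FF$ is a genuine exact category (not merely inflation-exact) to conclude that a conflation with zero middle term has vanishing outer terms. This is precisely why Section \ref{Section:OneSided} was invested in passing through the exact hull $\EE^{\mathsf{ex}}$ before taking the second quotient.
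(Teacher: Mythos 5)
Most of your construction is sound and close in spirit to what is needed: descending $Q_A\circ D$ through $\FF$ via the $2$-universal property, killing $\smod(A)$ in $\FF$ by means of the ad\`ele conflation $M\inflation \mathbb{A}_A\otimes_A M\deflation (\mathbb{A}_A\otimes_A M)/M$ (note that zero middle term kills the outer terms in \emph{any} conflation category, since inflations are kernels -- no two-sided exactness is needed for that), factoring $\iota_0$ through the Gabriel quotient, and obtaining $\widetilde{D}\circ\iota\cong\mathrm{id}$ from the universal property of $Q_A$. All of these steps check out.

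The genuine gap is the last step, $\iota\circ\widetilde{D}\cong\mathrm{id}_{\FF}$. Your argument produces a natural isomorphism $P\cong\iota\circ\widetilde{D}\circ P$ only for the composite $P\colon\mathsf{LCA}_A\to\EE\to\EE^{\mathsf{ex}}\to\FF$, i.e.\ only on objects and morphisms coming from $\mathsf{LCA}_A$. But the objects of $\FF$ are the objects of the exact hull $\EE^{\mathsf{ex}}$, which contains genuinely new objects (formal extensions of objects of $\EE$), and morphisms in $\FF$ are fractions whose intermediate objects may be such new objects; the Hoffmann--Spitzweck filtration simply does not exist for them, so ``functoriality of the filtration'' cannot promote your pointwise isomorphisms to a natural isomorphism on all of $\FF$. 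Nor do the data you have ($\widetilde{D}\circ\iota\cong\mathrm{id}$ together with $P\cong\iota\widetilde{D}P$) formally imply an equivalence: they are also satisfied, for instance, by a projection $\mathcal{G}\times\mathcal{H}\to\mathcal{G}$ with $P$ landing in $\mathcal{G}\times\{0\}$. To close the gap you would need either the uniqueness clause of the $2$-universal property applied with target $\FF$ itself -- but that clause is only available for \emph{exact} targets (because of the exact-hull step), and at this stage $\FF$ is only known to be inflation-exact; indeed, that $\FF$ is abelian is part of what the corollary asserts -- or a direct proof that $P$ (equivalently $\iota$) is essentially surjective and that $\widetilde{D}$ is fully faithful on all of $\FF$, including the hull's new objects. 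This is exactly the content of the argument the paper imports from \cite[Construction~4.8 and Proposition~4.9]{paper1}, and it is the part your sketch leaves unproved.
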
 

\begin{proof}
	The proof is identical to \cite[Construction~4.8 and Proposition 4.9]{paper1}.
\end{proof}

We are now in a position to prove Theorem \ref{thm_FiberSeq}.

\begin{proof}[Proof of Theorem \ref{thm_FiberSeq}]
Consider the following natural commutative diagram
\[\xymatrix{
	\smod(A)\ar[r]\ar[d] & \Mod(A)\ar[r]\ar[d] & \Mod(A)/\smod(A)\ar[d]^{\simeq}\\
	\VV\ar[r] & \EE^{\mathsf{ex}}\ar[r] & \FF
}\] whose rows are localization sequences. Note that this diagram lifts to a diagram on the bounded derived $\infty$-categories such that the rows are exact. As $\VV\simeq \mathsf{LCA}_{A,ab}$ by Lemma \ref{Lemma:Q_cInducesNaturalEquivalence}, we obtain the following bicartesian square of stable $\infty$-categories:
\[\xymatrix{
	\mathsf{D}_{\infty}^{\mathsf{b}}(\smod(A))\ar[r]\ar[d] & \mathsf{D}_{\infty}^{\mathsf{b}}(\Mod(A))\ar[d]\\
	\mathsf{D}_{\infty}^{\mathsf{b}}(\mathsf{LCA}_{A,ab})\ar[r] & \mathsf{D}_{\infty}^{\mathsf{b}}(\EE^{\mathsf{ex}})
}\] Again, using the Eilenberg swindle, every object of $\mathsf{D}_{\infty}^{\mathsf{b}}(\Mod(A))$ gets trivialized under a localizing invariant $\mathbb{K}\colon \mathsf{Cat}_{\infty}^{\mathsf{Ex}}\to \mathsf{A}$. Hence for each such $\mathbb{K}$, there is a fiber sequence $\mathbb{K}(\smod(A))\to \mathbb{K}(\mathsf{LCA}_{A,ab})\to \mathbb{K}(\EE^{\mathsf{ex}})$. By Proposition \ref{Proposition:KillingCompacts}, $\mathsf{D}_{\infty}^{\mathsf{b}}(\EE^{\mathsf{ex}})\simeq \mathsf{D}_{\infty}^{\mathsf{b}}(\mathsf{LCA}_A)$. The result follows.   
\end{proof}


All of the previous results give a complete computation of the $K$-theory
spectrum of $\mathsf{LCA}_{A}$. In the special case of the first homotopy
group $\pi_{1}K$ we recover a lot of well-known classical invariants:

\begin{theorem}
\label{thm_ComputeK1}Let $A$ be a finite-dimensional semisimple $\mathbb{Q}%
$-algebra and let $K$ denote Quillen's algebraic $K$-theory.

\begin{enumerate}
\item There is a natural isomorphism%
\[
K_{1}(\mathsf{LCA}_{A,ab})\overset{\sim}{\longrightarrow}\frac{J(A)}{J^{1}%
(A)}\text{,}%
\]
where $J(A)$ is Fr\"{o}hlich's id\`{e}le class group and $J^{1}$ the subgroup
of reduced norm one elements.

\item There is a natural isomorphism%
\[
K_{1}(\mathsf{LCA}_{A})\overset{\sim}{\longrightarrow}\frac{J(A)}%
{J^{1}(A)\cdot\operatorname*{im}A^{\times}}\text{,}%
\]
where the units $A^{\times}$ are diagonally mapped to the id\`{e}les.
\end{enumerate}
\end{theorem}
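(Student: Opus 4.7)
For part (1), I start from the exact equivalence $\mathsf{LCA}_{A,ab} \simeq \mathsf{J}_A^{(\infty)}$ of Proposition \ref{prop_IdentifyJ} and the attendant $K$-theory formula of Theorem \ref{thm_KAdelicBlocks}. Applied to Quillen $K$-theory (which commutes with countable products by \cite{MR1351941} and with filtered colimits), passage to $\pi_1$ yields
\[
K_1(\mathsf{LCA}_{A,ab}) \cong \operatorname*{colim}_{S} \Bigl( K_1(A_{\mathbb{R}}) \times \prod_{p \notin S} K_1(\mathfrak{A}_p) \times \prod_{p \in S} K_1(A_p) \Bigr).
\]
The next step is to invoke the standard reduced norm identifications at each place: by Wang's theorem ($SK_1 = 0$ over local fields) together with the semi-local nature of $A_p$ and the maximal order $\mathfrak{A}_p$, the reduced norm identifies $K_1(A_p) \cong \zeta(A_p)^\times$ and $K_1(\mathfrak{A}_p) \cong \zeta(\mathfrak{A}_p)^\times$. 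The resulting restricted colimit is the id\`{e}le group $J(\zeta(A))$, which finally matches $J(A)/J^1(A)$ because the reduced norm is locally surjective onto $\zeta$, hence globally surjective on id\`{e}les with kernel $J^1(A)$ by definition.

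For part (2), I plan to take the long exact sequence of homotopy groups of the fiber sequence
\[
K(A) \longrightarrow K(\mathsf{LCA}_{A,ab}) \longrightarrow K(\mathsf{LCA}_A)
\]
from Theorem \ref{thm_FiberSeq}. Reading off
\[
K_1(A) \to K_1(\mathsf{LCA}_{A,ab}) \to K_1(\mathsf{LCA}_A) \to K_0(A) \to K_0(\mathsf{LCA}_{A,ab}),
\]
I first check the last arrow is injective by post-composing with $K_0(\mathsf{LCA}_{A,ab}) \to K_0(A_{\mathbb{R}})$ and using the classical injectivity of $K_0(A) \to K_0(A_{\mathbb{R}})$ via dimension count over the semisimple base change to $\mathbb{R}$. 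This reduces (2) to computing the image of $K_1(A) \to K_1(\mathsf{LCA}_{A,ab}) = J(A)/J^1(A)$. Since the map is induced by $(-) \otimes_A \mathbb{A}$, the element $a \in A^\times$ goes to the diagonal id\`{e}le $(a)_p$ modulo $J^1(A)$; and because $A \cong \prod_i M_{n_i}(D_i)$ with each $D_i$ a division ring of stable rank one, the Dieudonn\'{e} determinant $\operatorname{GL}_{n_i}(D_i) \twoheadrightarrow K_1(D_i) = K_1(M_{n_i}(D_i))$ is surjective, so the natural map $A^\times \twoheadrightarrow K_1(A)$ is surjective. Hence the image of $K_1(A)$ in $J(A)/J^1(A)$ equals $A^\times \cdot J^1(A)/J^1(A)$, and (2) follows by passing to the cokernel.

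The step needing most care is this last identification: I must combine the Morita equivalence on each simple summand, the Dieudonn\'{e}-determinant surjectivity, and the compatibility between the reduced norm, the ad\`{e}lic tensor product $(-)\otimes_A \mathbb{A}$, and the diagonal embedding $A \hookrightarrow \mathbb{A}$. A secondary technical point is reconciling the connective Quillen $K$-theory appearing in the statement with the non-connective theory implicit in the fiber sequence of Theorem \ref{thm_FiberSeq}; this is harmless because all categories in sight are exact in the classical sense, so that Bass $K_0, K_1$ coincide with the Quillen groups, but it deserves explicit mention.
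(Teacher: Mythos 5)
Your strategy for both parts coincides with the paper's: part (1) is Theorem \ref{thm_KAdelicBlocks} (equivalently Proposition \ref{prop_IdentifyJ}) plus a reduced-norm comparison, and part (2) is the long exact sequence of Theorem \ref{thm_FiberSeq}, the vanishing of the boundary to $K_0(A)$, and the surjectivity $A^\times \twoheadrightarrow K_1(A)$ via the Dieudonn\'e determinant; your part (2) is essentially the paper's argument verbatim (the paper checks injectivity of $K_0(A)\to \prod' K_0(A_p)$ on simple modules rather than through the $A_{\mathbb{R}}$-factor, and likewise identifies the image of $K_1(A)$ with the diagonal units). The connective/non-connective point you raise is also handled in the paper (non-connective and Quillen $K$-groups agree in degrees $\geq 1$, and $A$, $A_p$ are regular).

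There is, however, a concrete false step in part (1). The reduced norm does \emph{not} identify $K_1(A_p)$ with $\zeta(A_p)^\times$ at the infinite place when $A_{\mathbb{R}}$ has a nonsplit simple factor: for the rational Hamilton quaternions, $\operatorname{nr}\colon K_1(\mathbb{H})\to\mathbb{R}^\times$ is injective with image $\mathbb{R}_{>0}$, not all of $\mathbb{R}^\times$. Consequently your restricted colimit is not $J(\zeta(A))$ in general, and the reduced norm $J(A)\to J(\zeta(A))$ is not globally surjective (its image consists of id\`eles that are positive at the ramified real places); indeed, if both of your intermediate identifications held, they would contradict the theorem itself in this case, since $J(A)/J^1(A)$ is then a \emph{proper} subgroup of $J(\zeta(A))$. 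The repair is exactly the paper's route: keep the restricted product $JK_1(A)=\prod'_p K_1(A_p)$ itself as the target. Since $A_p$ and $\mathfrak{A}_p$ are semilocal, $A_p^\times\to K_1(A_p)$ and $\mathfrak{A}_p^\times\to K_1(\mathfrak{A}_p)$ are surjective, so $J(A)\twoheadrightarrow JK_1(A)$; and since $SK_1$ vanishes locally (including at the archimedean places), the reduced norm $JK_1(A)\to J(\zeta(A))$ is \emph{injective}, so the kernel of $J(A)\twoheadrightarrow JK_1(A)$ coincides with the kernel of $\operatorname{nr}\colon J(A)\to J(\zeta(A))$, which is $J^1(A)$ by definition. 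This yields $K_1(\mathsf{LCA}_{A,ab})\cong JK_1(A)\cong J(A)/J^1(A)$ without any surjectivity claim for the reduced norm, and with this correction your proof agrees with the paper's.
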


\begin{remark}
The reduced norm one subgroup $J^{1}(A)$ always contains the commutator
subgroup of $J(A)$, explaining why both quotients on the right side are abelian.
\end{remark}

\begin{remark}
The theorem generalizes the main result of \cite{kthyartin}: If $F$ is a
number field, $J(F)$ simplifies to the usual id\`{e}les, the reduced norm one
subgroup $J^{1}(F)$ is trivial, so the second claim simplifies to $K_{1}(\mathsf{LCA}_{F})\cong J(F)/F^{\times}$,
which is Chevalley's id\`{e}le class group and the automorphic side of
global class field theory in dimension one. This statement was the original
prediction made by Clausen in \cite{clausen}.
\end{remark}

In higher degrees, we obtain the following, also matching the picture from the
number field case.

\begin{theorem}
\label{thm_ComputeKn}Let $A$ be a finite-dimensional semisimple $\mathbb{Q}%
$-algebra. Let $\mathfrak{A}\subset A$ be any $\mathbb{Z}$-order.%
\[
K_{n}(\mathsf{LCA}_{A,ab})\cong\left\{  \left.  (\alpha_{P})_{P}\in\prod
_{p \in \Pl}K_{n}(A_{p})\right\vert
\begin{array}
[c]{l}%
\alpha_{P}\in\operatorname*{im}K_{n}(\mathfrak{A}_{p})\text{ for all but
finitely}\\
\text{many }p \in \Plfin
\end{array}
\right\}  \text{,}%
\]
where $\mathfrak{A}_{p}\coloneqq\mathfrak{A}\otimes_{\mathbb{Z}}\mathbb{Z}_{p}$. The
right-hand side is independent of the choice of the order $\mathfrak{A}$ (see \ref{lcw1}, where we had introduced the above notation for restricted products of $K$-groups).
\end{theorem}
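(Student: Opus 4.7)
The plan is to deduce this result as an immediate consequence of Theorem~\ref{thm_KAdelicBlocks} applied to non-connective algebraic $K$-theory. Non-connective $K$-theory is a localizing invariant commuting with countable products by \cite{kwinfproducts}, so the hypotheses of Theorem~\ref{thm_KAdelicBlocks} are met. Fixing a maximal $\mathbb{Z}$-order $\mathfrak{A}\subset A$, that theorem gives an equivalence of spectra
\[
\mathbb{K}(\mathsf{LCA}_{A,ab})\simeq\operatorname*{hocolim}_S \Bigl(\prod_{p\notin S}\mathbb{K}(\mathfrak{A}_p)\times\prod_{p\in S}\mathbb{K}(A_p)\Bigr),
\]
where $S$ ranges over finite subsets of $\Pl$. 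Since $\pi_n$ of spectra commutes with filtered homotopy colimits and with arbitrary products, taking $\pi_n$ yields
\[
K_n(\mathsf{LCA}_{A,ab})\cong\operatorname*{colim}_S \Bigl(\prod_{p\notin S}K_n(\mathfrak{A}_p)\times\prod_{p\in S}K_n(A_p)\Bigr).
\]
On the exact categories appearing here, non-connective and Quillen $K$-theory agree in the degrees of interest.

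The next step is to identify this $1$-categorical colimit with the restricted product on the right-hand side of the theorem. The transition map from $S$ to $S'\supseteq S$ is the identity on the factors indexed by $p\in S$ or $p\notin S'$, and for $p\in S'\setminus S$ it is the natural map $K_n(\mathfrak{A}_p)\to K_n(A_p)$ induced by $\mathfrak{A}_p\hookrightarrow A_p$. Applying these transition maps universally yields a map from a representative at stage $S$ to a family $(\alpha_p)_p\in\prod_p K_n(A_p)$ with $\alpha_p\in\operatorname{im} K_n(\mathfrak{A}_p)$ for $p\notin S$. This is easily seen to descend to a well-defined bijection between the colimit and the restricted product of Equation~\eqref{lcw1}, since every element in the restricted product comes from a suitable stage and any two representatives differ by a transition map. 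Note that at the archimedean place one has $\mathfrak{A}_{\mathbb{R}}=A_{\mathbb{R}}$, so that place contributes no constraint.

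Finally, to see independence of the order, observe that any two $\mathbb{Z}$-orders $\mathfrak{A},\mathfrak{A}'\subset A$ satisfy $\mathfrak{A}_p=\mathfrak{A}'_p$ at all but finitely many primes $p$; since the restricted product only depends on the image subgroups $\operatorname{im}(K_n(\mathfrak{A}_p)\to K_n(A_p))$ at cofinitely many places, the two descriptions agree. In particular, the description obtained above from the maximal order used in Proposition~\ref{prop_IdentifyJ} matches the one stated in the theorem using an arbitrary order.

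There is no real obstacle beyond bookkeeping: the entire statement is essentially a $\pi_n$-level reformulation of Theorem~\ref{thm_KAdelicBlocks}. The only points requiring attention are commuting $\pi_n$ past the homotopy colimit and the infinite products (which forces the use of a product-preserving localizing invariant such as non-connective $K$-theory, rather than a merely weakly localizing one), and the standard independence-of-order argument.
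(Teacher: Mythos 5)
Your proposal is correct and is essentially the paper's own argument: the paper proves Theorem~\ref{thm_ComputeKn} in Step~1 of the joint proof of Theorems~\ref{thm_ComputeK1} and~\ref{thm_ComputeKn} by applying Theorem~\ref{thm_KAdelicBlocks} with non-connective $K$-theory (localizing and product-preserving by \cite{kwinfproducts}), using regularity of $A_p$ and $\mathfrak{A}_p$ to pass to Quillen $K$-groups, and reading off the restricted product of Equation~\eqref{lcw1} from the filtered colimit, with independence of the order handled exactly as in \S\ref{section:Recollections}. Your explicit bookkeeping of $\pi_n$ commuting with the filtered hocolim and the products, and of the colimit--restricted-product identification, matches the paper's (equally brief) treatment.
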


\begin{proof}
[Proof of both theorems]\textit{(Step 1)} Use Theorem \ref{thm_FiberSeq} and
Theorem \ref{thm_KAdelicBlocks} for $\mathbb{K}$ being non-connective
$K$-theory. This is a localizing invariant (in fact the universal one by
\cite{MR3070515}) and commutes with products by \cite{kwinfproducts}. However,
$A$ and each $A_{p}$ are regular, so their non-connective $K$-theory agrees
with Quillen's $K$-theory (By \cite[Remark 9.33]{MR3070515} there is an equivalence of the $0$-connective cover of connective vs. non-connective $K$-theory, and \cite[Remarks 3 and 7]{MR2206639} show that all negative degree homotopy groups of non-connective $K$-theory vanish and that it further is an equivalence on $\pi_0$. Thus, it is an equivalence on the whole). We thus get the exact sequence of abelian groups%
\begin{equation}
\cdots\longrightarrow K_{1}(A)\longrightarrow\left.  \prod\nolimits^{\prime
}\right.  K_{1}(A_{p})\longrightarrow K_{1}(\mathsf{LCA}_{A})\longrightarrow
K_{0}(A)\longrightarrow\cdots\text{,}\label{lwwaa2s}%
\end{equation}
where $p$ runs over all places of $\mathbb{Q}$.

This finishes the proof of Theorem \ref{thm_ComputeKn}, so henceforth assume
$n=1$. Then the group in Equation \eqref{lcw1} is also called $JK_{1}(A)$ (e.g., in \cite[p. 2,
end of page]{MR0447211} or \cite[(49.16), p. 224]{MR892316}), so we have
recovered a classical invariant with our $K$-theory computations:%
\begin{equation}
K_{1}(\mathsf{LCA}_{A,ab})\cong JK_{1}(A)\text{.}\label{lmit1}%
\end{equation}
In order to relate this to $J(A)$, we follow classical ideas of Wilson
\cite{MR0447211}:\ there is a fundamental commutative diagram%
\begin{equation}%
\xymatrix{
J(A) \ar@{->>}[r] \ar[dr]_{\operatorname{nr}} & JK_1(A) \ar[d]^{\operatorname
{nr}}_{\cong} \\
& J(\zeta(A))\text{,}
}
\label{lwwaa3a}%
\end{equation}
which we describe now. The group $J(A)$ is Fr\"{o}hlich's id\`{e}le class group of
Equation \eqref{l_def_idelegroupplain}. The horizontal arrow is induced from the
map $A_{p}^{\times}\rightarrow K_{1}(A_{p})$. It is surjective for each $p$ by
\cite[Theorem~40.31]{MR892316}. The downward arrow is induced for every
place $p$ by the reduced norm map%
\[
\operatorname*{nr}\colon K_{1}(A_{p})\overset{\sim}{\longrightarrow}\zeta
(A_{p})^{\times}%
\]
taking values in the units of the center $\zeta(A_{p})$. That these are
isomorphisms is shown for central simple algebras in \cite[Theorem~45.3]{MR892316} whose center is a finite field extension of $\mathbb{Q}%
_{p}$ resp. $\mathbb{R}$ (i.e. is a local field). However, then the general
case follows by the Artin--Wedderburn decomposition and Morita invariance of
$K$-theory on the left side, and on the right side since $\zeta(R)\overset
{\sim}{\longrightarrow}\zeta(M_{n}(R))$ by constant diagonal matrices. The
commutativity of Diagram \ref{lwwaa3a} implies that both arrows originating
from $J(A)$ have the same kernel (which for the diagonal arrow is the reduced
norm one id\`{e}les of Equation \eqref{l_red_norm_one_ideles}),%
\[
J(A)/J^{1}(A)\overset{\sim}{\longrightarrow}JK_{1}(A)\text{.}%
\]
Alongside Equation \eqref{lmit1}, we have proven the first claim.\newline%
\textit{(Step 2)} Since $A$ and each $A_{p}$ are semisimple, one has a
canonical isomorphism $K_{0}(A)\cong\bigoplus_{W}\mathbb{Z}$, where $W$ is the
set of isomorphism classes of simple objects in $\operatorname*{proj}(A)$ (and
analogously for $A_{p}$). Thus, the kernel $K_{0}(A)\rightarrow\left.
\prod\nolimits^{\prime}\right.  K_{0}(A_{p})$ which would occur at the right
end of Sequence \ref{lwwaa2s} is zero (it suffices to check what happens to
each simple module generator of $K_{0}(A)$ and since the underlying functor is
just tensoring $P\mapsto P\otimes_{A}A_{p}$, none goes to zero). Thus, we
obtain%
\[
K_{1}(\mathsf{LCA}_{A})\cong\left.  JK_{1}(A)\right/  \operatorname*{im}%
K_{1}(A)\text{.}%
\]
and we need to understand how $K_{1}(A)$ is mapped to $JK_{1}(A)$ here. To
this end, return to Diagram \ref{lwwaa3a} and observe that this map agrees with the horizontal
arrow. It follows that the map is indeed just the diagonal embedding. Our
second claim follows.
\end{proof}

\newcommand{\Places}{\Pl}
\newcommand{\Primes}{\Plfin}

\section{The \texorpdfstring{$K$}{K}-theory of the category of adelic blocks}

In our main theorem (Theorem \ref{theorem:MainIntroduction}), we describe the $K$-theory spectrum of $\mathsf{LCA}_A$ via a fiber sequence $K(A)\longrightarrow K(\mathsf{LCA}_{A,ab})\longrightarrow K(\mathsf{LCA}_{A}).$  In this sequence, we use the equivalence $\mathsf{LCA}_{A,ab} \simeq \mathsf{J}_{A}^{(\infty)}$ to construct a fiber sequence 
\[\bigoplus_{p \in \Primes}K(\tor \fA_p) \to \prod_{p \in \Places} K(\smod \fA_p) \to K(\mathsf{LCA}_{A,ab})\]
describing the $K$-theory spectrum of $K(\mathsf{LCA}_{A,ab})$.  Here, $\fA_p$ (for $p \in \Pl_{\mathbb{Q}}$) is as introduced in \S\ref{section:Recollections} and $\tor \fA_p$ stands for the subcategory of $\smod \fA_p$ consisting of the $\bZ_p$-torsion modules.

Thus, choose a maximal order $\mathfrak{A}\subset A$. For any finite set of prime numbers $S \subset \Primes$, consider the following category:
\[
\mathsf{L}_{A}^{(S)}\coloneqq\operatorname*{mod}(A_{\mathbb{R}})\times\prod_{p\notin
S}\operatorname*{mod}(\mathfrak{A}_{p})\times\prod_{p\in S}%
\operatorname*{mod}(A_{p})\text{.}\]
The category $\mathsf{L}_{A}^{(S)}$ is a product of abelian categories and, as such, is an abelian category itself.  For any inclusion of finite sets $S\subseteq S^{\prime}$ of primes there is an exact functor $\mathsf{U}^{(S)}\rightarrow\mathsf{U}^{(S^{\prime})}$ induced by $(-)\mapsto(-)\otimes_{\mathfrak{A}_{p}}A_{p}$ termwise for all primes $p\in S^{\prime}\setminus S$, and the identity functor for the remaining $p$.
Define
\[
\mathsf{L}_{A}^{(\infty)}\coloneqq\left.  \underset{S}{\operatorname*{colim}}\left.
\mathsf{L}_{A}^{(S)}\right.  \right.  \text{.}%
\]
We write $\T$ for the category for the full subcategory of $\mathsf{L}_{A}^{(\varnothing)}$ consisting of those objects which are finite direct sums of torsion $\fA_p$-modules.  Put differently, as sequences indexed by the set of places, the objects are almost everywhere zero and contain a torsion $\fA_p$-module for finitely many primes $p$, or thus:
\[\T \coloneqq \oplus_{p \in \Primes} \tor(\fA_p) = \left.  \underset{S}{\operatorname*{colim}}\left.
\prod_{p \in S}\tor(\fA_p)\right.  \right.  \text{,} \]
where $S$ ranges over the finite subsets of $\Plfin_{,\mathbb{Q}}$.  We have the following lemma.

\begin{lemma}\label{lemma:AboutLS}Let $S$ be a finite set of primes.
\begin{enumerate}
	\item $\mathsf{L}_{A}^{(S)} \simeq \mathsf{L}_{A}^{(S)} / \oplus_{p \in S} \tor(\fA_p).$
	\item The category $\mathsf{L}_{A}^{(S)}$ has enough projectives.
	\item The essential image of the embedding $\mathsf{J}_{A}^{(S)} \to \mathsf{L}_{A}^{(S)}$ consists of all projective objects in $\mathsf{L}_{A}^{(S)}.$
	\item The global dimension of $\mathsf{L}_{A}^{(S)}$ is one.
\end{enumerate}
\end{lemma}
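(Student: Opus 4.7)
My plan is to verify each assertion by reducing to the factor categories of the product $\mathsf{L}_A^{(S)}$. Two structural inputs will be used throughout: each of $A_\mathbb{R}$ and $A_p$ is semisimple, so $\smod(A_\mathbb{R})$ and $\smod(A_p)$ have global dimension zero; and each maximal order $\mathfrak{A}_p$ is hereditary by \cite[Theorem~21.4]{MR1972204}, so $\smod(\mathfrak{A}_p)$ has global dimension at most one and, by Lemma \ref{lemma_StructOfFinGenModulesOverMaxOrder}, its projective objects are precisely the finitely generated projective $\mathfrak{A}_p$-modules.

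For (1), I would note that the embedding of $\oplus_{p \in S}\tor(\mathfrak{A}_p)$ into $\mathsf{L}_A^{(S)}$ factors through zero: for $p \in S$ the $p$-th slot is $\smod(A_p)$, and any $\mathbb{Z}_p$-torsion $\mathfrak{A}_p$-module $M$ becomes $M\otimes_{\mathfrak{A}_p}A_p = 0$ since $A_p = \mathfrak{A}_p\otimes_{\mathbb{Z}_p}\mathbb{Q}_p$. The asserted equivalence is then the tautological identification of a Serre quotient by the zero subcategory. For (2) and (3) I would invoke the standard fact that in a (possibly infinite) product of abelian categories each having enough projectives, projective covers are formed slotwise and the projective objects are exactly the slotwise projectives. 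Each factor admits free finitely generated modules as projective generators, yielding (2); and assembling the slotwise classes of projectives via the structural inputs above produces $\operatorname{proj}(A_\mathbb{R}) \times \prod_{p\notin S}\operatorname{proj}(\mathfrak{A}_p) \times \prod_{p \in S}\operatorname{proj}(A_p) = \mathsf{J}_A^{(S)}$, giving (3).

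For (4), since $\operatorname{Ext}$ in a product of abelian categories is computed slotwise, the global dimension of $\mathsf{L}_A^{(S)}$ equals the supremum of the global dimensions of the factors, which by the structural inputs is one (attained at any $p \notin S$ for which $\mathfrak{A}_p$ is non-semisimple, e.g.\ any prime not dividing the discriminant of $\mathfrak{A}$). I foresee no substantive obstacle; the only care required is in making the tacit interpretation of (1) explicit and in checking that the componentwise behaviour of projectives and $\operatorname{Ext}$ does extend verbatim to the (possibly infinite) products involved here.
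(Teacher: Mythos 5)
Parts (2)--(4) of your argument are correct and are essentially the paper's own proof: projectives, projective resolutions and $\operatorname{Ext}$ in the product category are computed slotwise, the slots $\smod(A_{\mathbb{R}})$ and $\smod(A_p)$ ($p\in S$) are semisimple, and the slots $\smod(\fA_p)$ ($p\notin S$) are hereditary, which gives (2), (3) and global dimension at most one. Two small corrections: the identification of the projective objects of $\smod(\fA_p)$ with $\operatorname{proj}(\fA_p)$ is the usual splitting argument (a surjection from a finitely generated free module splits against a projective object), not really Lemma \ref{lemma_StructOfFinGenModulesOverMaxOrder}; and non-semisimplicity of $\fA_p$ has nothing to do with the discriminant --- $p\fA_p$ is always contained in the Jacobson radical (elements $1+px$ are invertible by $p$-adic completeness), so \emph{every} slot with $p\notin S$ has global dimension exactly one.

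The genuine gap is in your treatment of (1). Reading the quotient as being taken by a subcategory that lands at zero in $\mathsf{L}_{A}^{(S)}$ makes the statement vacuous and strips it of exactly the content the paper needs: as the paper's own proof (and the proof of Proposition \ref{proposition:AboutLinfty}) makes clear, (1) is meant as the identification $\mathsf{L}_{A}^{(S)}\simeq \mathsf{L}_{A}^{(\varnothing)}/\oplus_{p\in S}\tor(\fA_p)$, i.e.\ slotwise $\smod(A_p)\simeq\smod(\fA_p)/\tor(\fA_p)$ for $p\in S$ (the second $\mathsf{L}_{A}^{(S)}$ in the displayed statement is a typo for $\mathsf{L}_{A}^{(\varnothing)}$). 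This is what lets one view the transition functors $\mathsf{L}_{A}^{(S)}\to\mathsf{L}_{A}^{(S')}$ as Serre localizations and hence identify $\mathsf{L}_{A}^{(\infty)}$ with an abelian quotient category later on; a tautological ``quotient by zero'' cannot serve that purpose. What actually has to be checked is that $-\otimes_{\fA_p}A_p\colon\smod(\fA_p)\to\smod(A_p)$ is an exact functor whose kernel is exactly $\tor(\fA_p)$ (your observation that torsion modules die is only the containment in one direction), that it is essentially surjective because every finitely generated $A_p$-module contains a full $\fA_p$-lattice, and that $\Hom_{A_p}(M\otimes_{\fA_p}A_p,\,N\otimes_{\fA_p}A_p)\cong\Hom_{\fA_p}(M,N)\otimes_{\bZ_p}\bQ_p$ for finitely generated $M,N$ (central localization over a Noetherian ring), so that the induced functor $\smod(\fA_p)/\tor(\fA_p)\to\smod(A_p)$ is an equivalence. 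These are standard facts, but they, and not the vacuous reading, constitute the proof of (1).
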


\begin{proof}
As $A_p = \fA_p \otimes_{\bZ_p} \bQ_p$, for any prime $p$, we have $\mod A_p \simeq \mod \fA_p / \tor \fA_p$.  The first statement follows from this observation.  For the other statements, note that both the projective objects and projective resolutions in $\mathsf{L}_{A}^{(S)}$ can be determined componentwise.  The statements follow easily from this.  
\end{proof}

From this, we find the following properties of $\mathsf{L}_{A}^{(\infty)}.$

\begin{proposition}\label{proposition:AboutLinfty} With notation as above, we have:
\begin{enumerate}
	\item $\mathsf{L}_{A}^{(\infty)} \simeq \mathsf{L}_{A}^{(\varnothing)} / \oplus_p \tor(\fA_p).$  In particular, $\mathsf{L}_{A}^{(\infty)}$ is abelian.
	\item The category $\mathsf{L}_{A}^{(\infty)}$ has enough projectives.
	\item The essential image of the embedding $\mathsf{J}_{A}^{(\infty)} \to \mathsf{L}_{A}^{(\infty)}$ consists of all projective objects in $\mathsf{L}_{A}^{(\infty)}.$
	\item The global dimension of $\mathsf{L}_{A}^{(\infty)}$ is one.
\end{enumerate}
\end{proposition}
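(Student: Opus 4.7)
The plan is to identify $\mathsf{L}_A^{(\infty)}$ with the Gabriel quotient $\mathsf{L}_A^{(\varnothing)}/\T$ and then transport the finite-stage properties in Lemma \ref{lemma:AboutLS} to the colimit. By iterating Lemma \ref{lemma:AboutLS}(1), each transition functor $\mathsf{L}_A^{(S)} \to \mathsf{L}_A^{(S')}$ (for $S \subseteq S'$) is precisely the Serre quotient by $\bigoplus_{p \in S' \setminus S} \tor(\fA_p)$, and $\T$ is immediately seen to be a Serre subcategory of $\mathsf{L}_A^{(\varnothing)}$ (closed under subobjects, quotients and extensions componentwise). Comparing $2$-universal properties identifies $\mathsf{L}_A^{(\infty)}$ with the Gabriel quotient by the directed union $\T$ of Serre subcategories being inverted, which is abelian; this gives (1).

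For (2) and (3), I would first show that every object of $\mathsf{J}_A^{(\infty)}$, viewed in $\mathsf{L}_A^{(\infty)}$, is projective. Given $P$ represented in some $\mathsf{J}_A^{(S_0)}$ and any short exact sequence in $\mathsf{L}_A^{(\infty)}$, one lifts the latter to a short exact sequence in some $\mathsf{L}_A^{(S)}$ with $S \supseteq S_0$ (enlarging $S$ to absorb the finitely many torsion obstructions to kernel--cokernel--image exactness of the representatives at a fixed stage). There $P$ is projective by Lemma \ref{lemma:AboutLS}(3), so the $\Hom$ sequence is exact, and filtered colimits of exact sequences of abelian groups are exact; hence $P$ is projective in $\mathsf{L}_A^{(\infty)}$. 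Claim (2) then follows by lifting an arbitrary $X$ to $\mathsf{L}_A^{(\varnothing)}$, choosing a projective surjection $P \twoheadrightarrow X$ via Lemma \ref{lemma:AboutLS}(2), and pushing it down. Conversely, if $P \in \mathsf{L}_A^{(\infty)}$ is projective, (2) exhibits $P$ as a direct summand of some $P' \in \mathsf{J}_A^{(\infty)}$. Each $\mathsf{J}_A^{(S)}$ is idempotent complete, and $2$-filtered colimits of idempotent complete additive categories remain idempotent complete (an idempotent and its defining equation are representable at a single finite stage), so $\mathsf{J}_A^{(\infty)}$ is idempotent complete; together with the full faithfulness of $\mathsf{J}_A^{(\infty)} \hookrightarrow \mathsf{L}_A^{(\infty)}$ (each stage is fully faithful and $\Hom$ commutes with filtered colimits), this places $P$ in $\mathsf{J}_A^{(\infty)}$.

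For (4), Lemma \ref{lemma:AboutLS}(4) equips each object of $\mathsf{L}_A^{(S)}$ with a projective resolution of length at most one. Lifting $X \in \mathsf{L}_A^{(\infty)}$ to some $\mathsf{L}_A^{(S)}$, taking such a resolution, and pushing down yields, by exactness of the quotient and by (3), a length-one projective resolution in $\mathsf{L}_A^{(\infty)}$. Hence the global dimension is at most one, and a matching lower bound is obtained by exhibiting any non-projective object, for instance (in the case $A = \mathbb{Q}$) the image in $\mathsf{L}_A^{(\infty)}$ of the sequence $(\mathbb{Z}_p/p)_p$, which is nonzero but fails to lie in the essential image of $\mathsf{J}_A^{(\infty)}$.

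The main obstacle is the careful handling of the $2$-colimit: one must first identify $\mathsf{L}_A^{(\infty)}$ with the Gabriel quotient $\mathsf{L}_A^{(\varnothing)}/\T$ in a way that is compatible with the Gabriel localizations at each finite stage, and then consistently apply filtered-colimit arguments to transport projectivity, exactness of $\Hom$ sequences, and finite projective dimension from the approximants $\mathsf{L}_A^{(S)}$ to the infinite-level category.
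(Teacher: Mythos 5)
Your proposal is correct and follows essentially the same route as the paper: identify $\mathsf{L}_A^{(\infty)}$ with the Serre quotient $\mathsf{L}_A^{(\varnothing)}/\T$ via Lemma \ref{lemma:AboutLS}(1), then transport projectivity, enough projectives, and length-one resolutions from the finite stages $\mathsf{L}_A^{(S)}$ by descending the relevant data (exact sequences, sections, idempotents) to a finite set of primes. The only cosmetic difference is in (3), where you split an idempotent in the idempotent-complete category $\mathsf{J}_A^{(\infty)}$, whereas the paper descends the section of a projective epimorphism to a finite stage $S$ and applies Lemma \ref{lemma:AboutLS}(3) there; both arguments work, and your non-projective example for the lower bound in (4) generalizes verbatim from $A=\mathbb{Q}$ to arbitrary $A$ by taking $(\fA_p/p\fA_p)_p$.
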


\begin{proof}
It follows from Lemma \ref{lemma:AboutLS} that, up to equivalence, for each $p \in S$, we may replace the component $\operatorname*{mod}(A_{p})$ in $\mathsf{L}_{A}^{(S)}$ by $\operatorname*{mod}(\mathfrak{A}_{p}) / \tor(\fA_p).$  In this way, each of the transition maps $\mathsf{L}_{A}^{(S)} \to \mathsf{L}_{A}^{(S^\prime)}$ is a localization (at the Serre subcategory $\oplus_{p \in S^\prime \setminus S} \tor(\fA_p)$).  The first statement now follows easily.

In the rest of the proof, we identify $\mathsf{L}_{A}^{(\infty)}$ with $\mathsf{L}_{A}^{(\varnothing)} / \oplus_p \tor(\fA_p)$, meaning that the objects of $\mathsf{L}_{A}^{(\infty)}$ are the objects of $\mathsf{L}_{A}^{(\varnothing)}.$

To see that $\mathsf{L}_{A}^{(\infty)}$ has enough projectives, it suffices to see that the objects $P = (P_p)_{p}\in \mathsf{L}_{A}^{(\infty)}$ for which every $P_p \in \smod (\fA_p)$ is projective, are projective in $\mathsf{L}_{A}^{(\infty)}.$  This is straightforward.

Now, let $P = (P_p)_{p} \in \mathsf{L}_{A}^{(\infty)}$ be any projective object.  As $\mathsf{L}_{A}^{(\varnothing)}$ has enough projectives, we may consider an epimorphism $\alpha\colon Q \to P$ in $\mathsf{L}_{A}^{(\varnothing)}$ with $Q$ projective in $\mathsf{L}_{A}^{(\varnothing)}$ (thus, $Q \in \mathsf{J}_{A}^{(\varnothing)}$).  As $P$ is projective in $\mathsf{L}_{A}^{(\infty)}$, there is a section $\gamma^{-1} \circ \beta\colon P \to Q$ in $\mathsf{L}_{A}^{(\infty)}$.  Hence, there is a finite set of primes $S$ such that $\gamma^{-1} \circ \beta \in \Hom_{\mathsf{L}_{A}^{(S)}}(P,Q)$ and $\alpha \circ \gamma^{-1} \circ \beta = 1_P.$  Hence, $P$ is projective in $\mathsf{L}_{A}^{(S)}$ and so $P \in \mathsf{J}_{A}^{(S)}$ by Lemma \ref{lemma:AboutLS}.  This shows that $P \in \mathsf{J}_{A}^{(\infty)}$, as required.

Finally, as projective resolutions can be taken componentwise, we see that the global dimension of $\mathsf{L}_{A}^{(\infty)}$ is at most one.  As not all objects are projective, for example an object $X = (X_p)_p$ where each $X_p \in \smod(\fA_p)$ is non-projective, the global dimension is not zero.
\end{proof}

\begin{remark}
As $\mathsf{J}_{A}^{(\infty)}$ is equivalent to the category of projectives of an abelian category of global dimension at most one, every morphism in $\mathsf{J}_{A}^{(\infty)}$ has a kernel and all kernels are split monomorphisms.  This gives an alternative to steps 2 and 3 of the proof of proposition \ref{prop_LCAAbHasKernelsAndCokernels}.
\end{remark}

%


In the next proposition, we write $\DTb(\mathsf{L}_{A}^{(\infty)})$ for the full subcategory of $\Db(\mathsf{L}_{A}^{(\varnothing)})$ consisting of those objects whose cohomology groups lie in $\T = \oplus_{p \in \Primes} \tor(\fA_p).$

\begin{proposition}\label{proposition:KernelCoincides}
The natural triangle functor $\Db(\T) \to \DTb(\mathsf{L}_{A}^{(\varnothing)})$ is an equivalence.
\end{proposition}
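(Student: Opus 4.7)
The plan is to exploit that $\mathsf{L}_A^{(\varnothing)}$ has global dimension at most one by Lemma \ref{lemma:AboutLS}(4) (the bound survives the infinite product, since $\operatorname{Ext}$ in a product is a product of $\operatorname{Ext}$'s and a product of zero groups is zero). Heredity of the ambient category will reduce the proposition to a formal comparison of $\operatorname{Ext}$-groups, after which the decomposition theorem for complexes over hereditary categories does the rest.

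First I would show that the comparison map $\operatorname{Ext}^k_{\T}(X, Y) \to \operatorname{Ext}^k_{\mathsf{L}_A^{(\varnothing)}}(X, Y)$ is a bijection for all $k \geq 0$ and all $X, Y \in \T$. For $k = 0$ this is the definition of a fully faithful inclusion; for $k = 1$ it uses that the middle term of any short exact sequence in $\mathsf{L}_A^{(\varnothing)}$ with outer terms in $\T$ itself lies in $\T$, by the Serre property. For $k \geq 2$ the target vanishes by heredity, and the source must vanish as well: any Yoneda $k$-extension in $\T$ splices into two extensions of lower length through an intermediate object $Z$ that is a subobject of one of the given middle terms, and hence lies in $\T$ by the Serre property; the long exact Yoneda $\operatorname{Ext}$-sequence (valid in any abelian category, without needing enough injectives) combined with the already established $k=1$ isomorphism then makes the class vanish in $\operatorname{Ext}^k_{\T}$. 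In particular, $\T$ is itself hereditary.

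Second, I would invoke the familiar fact that in any hereditary abelian category $\mathcal{A}$, every bounded complex is non-canonically isomorphic in $\Db(\mathcal{A})$ to $\bigoplus_i H^i(X^{\bullet})[-i]$. The standard proof is by induction on amplitude: the truncation triangle
\[\tau_{<n}X^{\bullet} \longrightarrow X^{\bullet} \longrightarrow H^n(X^{\bullet})[-n] \xrightarrow{+1}\]
is classified by an element of $\bigoplus_{i<n}\operatorname{Ext}^{n+1-i}_{\mathcal{A}}(H^n(X^{\bullet}), H^i(X^{\bullet}))$, which vanishes because each exponent $n+1-i$ is at least $2$. Apply this to both $\mathcal{A} = \T$ and $\mathcal{A} = \mathsf{L}_A^{(\varnothing)}$; in the latter case, any object of $\DTb(\mathsf{L}_A^{(\varnothing)})$ then splits as a sum of shifts of objects of $\T$.

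Essential surjectivity of $\Db(\T) \to \DTb(\mathsf{L}_A^{(\varnothing)})$ is now immediate. Full faithfulness follows by computing $\Hom$-groups between two direct sums of shifts; this reduces to the $\operatorname{Ext}^k$-comparison of the first step. The only subtle point is the vanishing of Yoneda $\operatorname{Ext}^k$ in $\T$ for $k \geq 2$, and this is the main obstacle; the rest of the argument is formal heredity bookkeeping.
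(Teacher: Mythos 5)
Your argument is correct, but it takes a genuinely different route from the paper. The paper verifies the d\'evissage criterion of Kashiwara--Schapira (\cite[Theorem 13.2.8]{KashiwaraSchapira06}): given a monomorphism $X\to Y$ in $\mathsf{L}_{A}^{(\varnothing)}$ with $X\in\T$, one first projects $Y$ onto its components at the finitely many primes supporting $X$ and then, using the decomposition $Y'\cong Y'_{\mathrm{proj}}\oplus Y'_{\mathrm{tor}}$ of Lemma \ref{lemma_StructOfFinGenModulesOverMaxOrder}, composes with the projection onto the torsion part; since $\Hom(X,Y'_{\mathrm{proj}})=0$, the composite $X\to Y'_{\mathrm{tor}}$ stays monic with target in $\T$, and the cited theorem finishes the proof. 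You instead exploit that $\mathsf{L}_{A}^{(\varnothing)}$ has global dimension at most one (Lemma \ref{lemma:AboutLS}(4) applies verbatim with $S=\varnothing$, or by your componentwise Ext argument), prove the Yoneda $\operatorname{Ext}$-comparison $\operatorname{Ext}^{k}_{\T}\to\operatorname{Ext}^{k}_{\mathsf{L}_{A}^{(\varnothing)}}$ is bijective in all degrees, and then use the splitting of bounded complexes over hereditary categories to get essential surjectivity and full faithfulness. Your one flagged subtlety, the vanishing of $\operatorname{Ext}^{k}_{\T}$ for $k\geq 2$, does go through as you sketch it: for $k=2$ a class is the connecting image $\delta(e)$ of some $e\in\operatorname{Ext}^{1}_{\T}(Y,Z)$ along a conflation $X\rightarrowtail A\twoheadrightarrow Z$ in $\T$, and since the degree-one comparison identifies $\operatorname{Ext}^{1}_{\T}(Y,A)\to\operatorname{Ext}^{1}_{\T}(Y,Z)$ with the corresponding map in $\mathsf{L}_{A}^{(\varnothing)}$, which is surjective because $\operatorname{Ext}^{2}_{\mathsf{L}_{A}^{(\varnothing)}}(Y,X)=0$, exactness of the Yoneda long exact sequence forces $\delta(e)=0$; for $k\geq 3$ one inducts. (For degree one you should also note injectivity, which is immediate from fullness of $\T\subseteq\mathsf{L}_{A}^{(\varnothing)}$, and you use throughout that $\T$ is closed under subobjects, quotients and extensions, so the inclusion is exact and cohomology can be computed in either category.) What each approach buys: the paper's proof is shorter because the Kashiwara--Schapira criterion packages the homological bookkeeping, with the torsion-plus-projective structure of Lemma \ref{lemma_StructOfFinGenModulesOverMaxOrder} as the only input; your proof trades that citation for the hereditarity of $\mathsf{L}_{A}^{(\varnothing)}$ already recorded in Lemma \ref{lemma:AboutLS}, and yields extra information along the way, namely the full Ext-comparison, the hereditarity of $\T$, and the formality of every object of $\DTb(\mathsf{L}_{A}^{(\varnothing)})$ as a sum of shifted cohomologies.
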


\begin{proof}
We show that the conditions of \cite[Theorem 13.2.8]{KashiwaraSchapira06} are satisfied, namely that for each monomorphism $f\colon X \to Y$ in $\mathsf{L}_{A}^{(\varnothing)}$ with $X \in \T$, there is a morphism $Y \to Y'$ with $Y' \in \T$ such that the composition $X \to Y \to Y'$ is a monomorphism.  Thus, let $f\colon X \to Y$ be a monomorphism in $\mathsf{L}_{A}^{(\varnothing)}$ with $X \in \T$.  Recall that each object of $L = (L_p)_{p \in \Places}$ of $\mathsf{L}_{A}^{(\infty)}$ is a string of objects, where each $L_p \in \mod \fA_p$.  As $X \in \T$, there is a finite set $S \subset \Primes$ such that $X_p \in \tor \fA_p$ for $p \in S$ and $X_p = 0$ for $p \notin S$.

Let $Y'$ be the direct summand of $Y$ given by $Y_p' = Y_p$ when $p \in S$ and $Y'_p = 0$ otherwise.  The composition $X \to Y \to Y'$ is still a monomorphism.

It now follows from Lemma \ref{lemma_StructOfFinGenModulesOverMaxOrder} that $Y' = Y'_{\text{proj}} \oplus Y'_{\text{tor}}$ with $Y'_{\text{proj}}$ projective and $Y'_{\text{tor}}$ torsion.  As $\Hom(X, Y'_{\text{proj}}) = 0$, the composition $X \to Y \to Y' \to Y'_{\text{tor}}$ is a monomorphism with codomain in $\T.$  The statement now follows from \cite[Theorem 13.2.8]{KashiwaraSchapira06}.
\end{proof}

The following theorem gives the required fiber sequence.  It is an easy consequence of the results in this section.

\begin{theorem}
Let $\mathbb{K}\colon\operatorname*{Cat}_{\infty}^{\operatorname*{ex}}\longrightarrow\mathsf{A}$ be a localizing invariant with values in a stable presentable $\infty$-category $\mathsf{A}$. If $\mathbb{K}$ commutes with countable products, then there is a fiber sequence
\[\bigoplus_{p \in \Primes} \bK(\tor \fA_p) \to \prod_{p \in \Places} \bK(\smod \fA_p) \to \bK(\mathsf{LCA}_{A,ab}).
\]
\end{theorem}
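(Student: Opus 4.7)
The plan is to assemble the fiber sequence from three inputs developed in this section: the resolution theorem, the Serre quotient description of $\mathsf{L}_{A}^{(\infty)}$, and the product/colimit formulas for the outer terms.

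First, I would argue $\bK(\mathsf{LCA}_{A,ab}) \simeq \bK(\mathsf{L}_{A}^{(\infty)})$. By Proposition~\ref{prop_IdentifyJ}, $\mathsf{LCA}_{A,ab} \simeq \mathsf{J}_{A}^{(\infty)}$, and by Proposition~\ref{proposition:AboutLinfty} the category $\mathsf{L}_{A}^{(\infty)}$ is abelian with enough projectives, has global dimension at most one, and its projectives are exactly $\mathsf{J}_{A}^{(\infty)}$. Hence every object of $\mathsf{L}_{A}^{(\infty)}$ admits a two-term projective resolution, so the resolution theorem yields an equivalence $\mathsf{D}_{\infty}^{\mathsf{b}}(\mathsf{J}_{A}^{(\infty)}) \xrightarrow{\sim} \mathsf{D}_{\infty}^{\mathsf{b}}(\mathsf{L}_{A}^{(\infty)})$, which any localizing invariant sends to an equivalence on $\bK$.

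Next, I would produce the fiber sequence from the Serre quotient. By Proposition~\ref{proposition:AboutLinfty}(1) we have $\mathsf{L}_{A}^{(\infty)} \simeq \mathsf{L}_{A}^{(\varnothing)} / \T$, and Proposition~\ref{proposition:KernelCoincides} identifies the kernel of the associated Verdier localization on bounded derived categories with $\Db(\T)$. Lifting to stable $\infty$-categories gives an exact sequence
\[\mathsf{D}_{\infty}^{\mathsf{b}}(\T) \longrightarrow \mathsf{D}_{\infty}^{\mathsf{b}}(\mathsf{L}_{A}^{(\varnothing)}) \longrightarrow \mathsf{D}_{\infty}^{\mathsf{b}}(\mathsf{L}_{A}^{(\infty)}),\]
and applying $\bK$ produces a fiber sequence $\bK(\T) \to \bK(\mathsf{L}_{A}^{(\varnothing)}) \to \bK(\mathsf{L}_{A}^{(\infty)})$.

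Finally, I would identify the outer terms. Since $\mathsf{L}_{A}^{(\varnothing)} = \smod(A_{\bR}) \times \prod_{p \in \Primes} \smod(\fA_p)$ and $\bK$ commutes with countable products by hypothesis, the middle term becomes $\prod_{p \in \Places} \bK(\smod \fA_p)$ (recall $\fA_{\bR} = A_{\bR}$). For the fiber, $\T = \operatorname*{colim}_S \prod_{p \in S} \tor(\fA_p)$ with $S$ running over finite subsets of $\Primes$; since any localizing invariant commutes with filtered colimits and since finite products of exact categories agree with finite coproducts at the level of $\bK$, this colimit computes $\bigoplus_{p \in \Primes} \bK(\tor \fA_p)$. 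Combining with the first step yields the asserted fiber sequence.

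The main obstacle, already handled by Propositions~\ref{proposition:AboutLinfty} and~\ref{proposition:KernelCoincides}, is ensuring that the quotient $\mathsf{L}_{A}^{(\varnothing)} \to \mathsf{L}_{A}^{(\infty)}$ gives a genuine localization sequence of (bounded) derived $\infty$-categories with kernel precisely $\mathsf{D}_{\infty}^{\mathsf{b}}(\T)$; everything else is a formal manipulation with products, filtered colimits, and the resolution theorem.
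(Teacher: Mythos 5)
Your proposal is correct and follows essentially the same route as the paper: the Serre-quotient sequence $\T \to \mathsf{L}_{A}^{(\varnothing)} \to \mathsf{L}_{A}^{(\infty)}$ lifted to stable $\infty$-categories via Proposition \ref{proposition:KernelCoincides}, the equivalence $\Dbinf(\mathsf{J}_{A}^{(\infty)}) \simeq \Dbinf(\mathsf{L}_{A}^{(\infty)})$ coming from $\mathsf{J}_{A}^{(\infty)}$ being the projectives of an abelian category of global dimension one (where you invoke the resolution theorem and the paper cites a preresolving-categories result), and Proposition \ref{prop_IdentifyJ} together with the product and filtered-colimit compatibilities of $\bK$ to identify the outer terms. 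The only difference is cosmetic: you spell out the identification of the fiber and middle terms, which the paper leaves implicit.
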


\begin{proof}
The localization sequence $\oplus_{p} \tor (\fA_p) \to \prod_p \smod (\fA_p) \to \mathsf{L}_{A}^{(\infty)}$ of abelian categories, given by Proposition \ref{proposition:AboutLinfty}, yields an exact sequence $\Dbinf(\oplus_{p} \tor (\fA_p)) \to \Dbinf(\prod_p \smod (\fA_p)) \to \Dbinf(\mathsf{L}_{A}^{(\infty)})$ of stable $\infty$-categories (in the sense of \cite{LurieHA}); this uses that the natural functor $\Db(\oplus_{p} \tor (\fA_p)) \to \DTb(\mathsf{L}_{A}^{(\infty)})$ is a triangle equivalence, see \cite[Corollary 5.11]{MR3070515}, or \cite{MR3904731}. Furthermore, the natural embedding $\mathsf{J}_{A}^{(\infty)} \to \mathsf{L}_{A}^{(\infty)}$ induces an equivalence $\Dbinf(\mathsf{J}_{A}^{(\infty)}) \to \Dbinf(\mathsf{L}_{A}^{(\infty)})$. Indeed, since $\mathsf{J}_{A}^{(\infty)}$ is the category of projectives of $\mathsf{L}_A^{(\infty)}$, and the latter has enough projectives (see Proposition \ref{proposition:AboutLinfty} for both claims), the equivalence follows from \cite[Remark 3.15]{HenrardvanRoosmalen20Preresolving}.
This leads to an exact sequence $\Dbinf(\oplus_{p} \tor (\fA_p)) \to \Dbinf(\prod_p \smod (\fA_p)) \to \Dbinf(\mathsf{J}_{A}^{(\infty)})$ of stable $\infty$-categories.  Applying a localizing invariant as in the statement of the theorem gives the required sequence. Finally, use Proposition \ref{prop_IdentifyJ}.
\end{proof}

\begin{remark}
For a weakly localizing invariant $\bK$ which need not commute with countable products, there is a fiber sequence $\bigoplus_{p\in \Plfin} \bK(\tor \fA_p) \to \bK(\prod_{p \in \Pl} \smod \fA_p) \to \bK(\mathsf{LCA}_{A,ab}).$
\end{remark}


\section{A corollary regarding the second \texorpdfstring{$K$}{K}-group\label{section_HilbertReciprocityClassical}}

Suppose $G$ is an abelian group. We denote its profinite completion by%
\[
G^{\wedge}\coloneqq\underset{N}{\underleftarrow{\lim}}\,G/N\text{,}%
\]
where the limit is taken over all finite-index subgroups $N\subseteq G$,
partially ordered by inclusion. This notation is not to be confused with the
Pontryagin dual $G^{\vee}$.

We shall only need $G^{\wedge}$ as an abelian group and ignore that one could
consider it as a profinite topological group.

One can formulate the above in a more categorical fashion: the forgetful functor from profinite groups to groups has profinite completion as its left adjoint.

In particular, the unit of this adjunction provides a natural map
$G\rightarrow G^{\wedge}$ to the profinite completion. Despite the name
`completion', this map need not be injective.

\begin{example}\begin{enumerate}
  \item If $G$ is a finite group, the map $G\rightarrow G^{\wedge}$ is an isomorphism.
	\item We have $\mathbb{Z}^{\wedge}\cong\prod_{p}\mathbb{Z}_{p}$, where
$\mathbb{Z}_{p}$ denotes the $p$-adic integers.
  \item As $\mathbb{Q}$ has no nonzero finite quotient groups, we have $\mathbb{Q}^{\wedge}=0$.  Similarly, we have $\mathbb{R}^{\wedge}=0$, and $(\mathbb{Q}/\mathbb{Z})^{\wedge}=0$.
\end{enumerate}
\end{example}

\begin{construction}
\label{construction_AbstractHReciprocityLaw}Let $A$ be a simple
finite-dimensional $\mathbb{Q}$-algebra with center $F$. Below, we construct
an exact sequence%
\begin{equation}
K_{2}(A)\longrightarrow\left.  \underset{v\,\smallskip\,\smallskip
\,\smallskip}{\prod\nolimits^{\prime}}\right.  K_{2}(A_{v})\longrightarrow
K_{2}(\mathsf{LCA}_{A})\longrightarrow0\text{,} \label{lvppa5}%
\end{equation}
where $v$ runs over the set $\mathcal{P}\ell_{F}$. The middle term is to be
understood in the meaning of Equation \eqref{lcw1}.\medskip\newline The
construction only relies on Theorem \ref{theorem:MainIntroduction}, applied
with non-connective $K$-theory as the input invariant, and uses no class field theory.
\end{construction}

\begin{proof}
[Construction steps] Apply Theorem \ref{theorem:MainIntroduction} to $A$ using
non-connective $K$-theory as the localizing invariant. This outputs a fiber
sequence of spectra and its long exact sequence of homotopy groups reads as
follows%
\begin{equation}
\cdots\rightarrow K_{2}(A)\rightarrow\left.  \underset{p\,\,\,}{\prod
\nolimits^{\prime}}\right.  K_{2}(A_{p})\rightarrow K_{2}(\mathsf{LCA}%
_{A})\overset{\alpha}{\rightarrow}K_{1}(A)\overset{\beta}{\rightarrow}\left.
\underset{p\,\,\,}{\prod\nolimits^{\prime}}\right.  K_{1}(A_{p})\rightarrow
\cdots\text{,} \label{lmvp2}%
\end{equation}
where $p$ runs through $\mathcal{P}\ell_{\mathbb{Q}}$ and where we have
labeled various arrows for later reference. Here we use that for $n\geq1$
non-connective $K$-groups $K_{n}(-)$ agree with ordinary Quillen $K$-groups
(\cite[Remarks 3 and 7]{MR2206639}). For any finite place $p\in\mathcal{P}%
\ell_{\mathbb{Q}}$ we have%
\begin{equation}
A_{p}=A\otimes_{\mathbb{Q}}\mathbb{Q}_{p}=\bigoplus_{v\mid p}A_{v}\text{,}
\label{lmvp1}%
\end{equation}
where $v$ runs through the finitely many places $v$ of $F$ lying over $p$.
This also holds analogously for $p=\mathbb{R}$. Hence, $K_{i}(A_{p}%
)\cong\bigoplus_{v\mid p}K_{i}(A_{v})$. Finally, the morphism $\beta$ is
injective (it suffices to check that the map to the localization at a single prime $p$ of our choice is injective. In detail: apply Remark
\ref{rmk_ReducedNormInjectivities} to each simple summand of $A$, regarded over its individual center). Thus, we must have
$\alpha=0$ and we can truncate the exact sequence at this point. We arrive at
Equation \eqref{lvppa5}.
\end{proof}

\begin{remark}
\label{rmk_ReducedNormInjectivities} Suppose $A$ is a central simple $F$-algebra. Then $A_v$ is a central simple $F_v$-algebra. The reduced norm respects any base change, so%
\[
\xymatrix{
K_i(A) \ar[r] \ar[d]_{\operatorname{nr}} & K_i(A_v) \ar[d]^{\operatorname{nr}}
\\
K_i(F) \ar[r] & K_i(F_v)
}
\]
commutes for $i=0,1,2$ (and just for these $i$ because the reduced norm only
exists in this range). Suppose $i=1$. The reduced norm $K_1(A)\rightarrow K_1(F)$ is injective (\cite[Theorem 7.51]{MR1038525} or \cite[Theorem 45.3]{MR892316}). The bottom
horizontal arrow is just $F^{\times}\rightarrow F_{v}^{\times}$ and thus also injective
as this is just the metric completion at $v$. It follows that $K_{1}%
(A)\rightarrow K_{1}(A_{v})$ is injective. If $v$ is a finite place, the reduced
norm $K_{1}(A_{v})\rightarrow K_{1}(F_{v})$ is also an isomorphism.
\end{remark}

\section{Hilbert Reciprocity Law (classical)\label{subsect_ClassHRL}}

In this section we shall freely use the Hilbert symbol and its properties. A
useful survey reference is \cite[Chapter II, \S 7]{MR1941965}, and proofs of
the key properties can be found in \cite[Chapter IV, \S 5.1]{MR1915966}.

Let $F$ be a number field. We write $m\coloneqq\#\mu(F)$ for the number of its roots
of unity. Similarly, write $m_{v}\coloneqq\#\left(  \mu(F_{v})^{\wedge}\right)  $ for
any place $v\in\mathcal{P}\ell_{F}$. Thus, $m_{v}\geq1$ is a finite number in
any case. See Remark \ref{rmk_AtComplexPlaces} for an explanation why this
holds and an alternative, perhaps more common, description of the value
$m_{v}$.

The Hilbert Reciprocity Law states that for any $\alpha,\beta\in F^{\times}$
we have the identity%
\begin{equation}
\prod_{v\in\mathcal{P}\ell_{F}}h_{v}(\alpha,\beta)^{\frac{m_{v}}{m}}=1\text{,}
\label{lzzp0}%
\end{equation}
where%
\begin{equation}
h_{v}\colon F_{v}^{\times}\times F_{v}^{\times}\longrightarrow\mu(F_{v})
\label{lzzp1}%
\end{equation}
is the \emph{Hilbert symbol} for the local field $F_{v}$, the completion of
$F$ at the place $v$. All but finitely many factors in the product are $1$.

It does not make a difference whether the product over $v\in\mathcal{P}%
\ell_{F}$ includes the complex places or not because for these the Hilbert
symbol is trivial by construction\footnote{We recall the construction of the Hilbert symbol in
Equation \eqref{lzzu3} below. The Hilbert symbol concerns the action of the
Artin symbol on radical extensions of the local field. As $\mathbb{C}$ is
algebraically closed, no non-trivial extensions exist, so the action is
necessarily by the identity map. For the same reason, the action at real
places is necessarily $2$-torsion.}.

The Hilbert Reciprocity Law is a consequence of the stronger Artin Reciprocity Law (see, for example, \cite{Shastri00}), but is still a result with many important applications on its own.  For example, it immediately implies Gauss' original Quadratic Reciprocity Law.

\begin{example}
Suppose $F=\mathbb{Q}$. Then for all primes $p\neq2$ the Hilbert symbol%
\[
h_{p}\colon\mathbb{Q}_{p}^{\times}\times\mathbb{Q}_{p}^{\times}\longrightarrow
\mu(\mathbb{Q}_{p})\cong\mathbb{F}_{p}^{\times}%
\]
agrees with the tame symbol. We have $\frac{m_{p}}{m}=\frac{p-1}{2}$ and
therefore only the image of $h_{p}$ in the quotient group $\mathbb{F}%
_{p}^{\times}/\mathbb{F}_{p}^{\times2}\cong\{\pm1\}$ matters for the
reciprocity law. This makes it possible to express $h_{p}$ in terms of
Legendre symbols, leading to the classical formulation of Quadratic
Reciprocity. Only for $p=2,\infty$ the Hilbert symbol differs from the tame
symbol: for $p=2$ the residue field $\mathbb{F}_{2}^{\times}$ is trivial and
so is the tame symbol, for $p=\infty$ the definition of the tame symbol
collapses as there is no valuation ideal.
\end{example}

Moore \cite{MR244258} later found a strengthening of the\ Hilbert Reciprocity
Law. We recall his way of phrasing it. As a first step, one uses that
the\ Hilbert symbol satisfies the Steinberg relation%
\[
h_{v}(\alpha,1-\alpha)=1\qquad\text{for all}\qquad\alpha\in F\setminus\{0,1\}
\]
and therefore, using Milnor's formula, namely%
\begin{equation}
K_{2}(F)\cong\left(  F^{\times}\otimes_{\mathbb{Z}}F^{\times}\right)
/(\text{Steinberg relation})\text{,} \label{lmilnorformula}%
\end{equation}
can be reinterpreted as a map%
\[
h_{v}\colon K_{2}(F_{v})\longrightarrow\mu(F_{v})
\]
replacing Equation \eqref{lzzp1}. Then Moore proves the following.

\begin{theorem}
[Moore sequence] \label{theorem:MooreSequence} Let $F$ be a number field. Then the sequence%
\[
K_{2}(F)\longrightarrow\bigoplus_{v\text{ }\operatorname*{noncomplex}}%
\mu(F_{v})\overset{\cdot\frac{m_{v}}{m}}{\longrightarrow}\mu(F)\longrightarrow
0
\]
is exact, where the first arrow is the Hilbert symbol, and the second is
taking the $\frac{m_{v}}{m}$-th power.
\end{theorem}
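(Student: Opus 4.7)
The plan is to derive the Moore sequence from Construction~\ref{construction_AbstractHReciprocityLaw} and Theorem~\ref{thm_B2}(2), combined with the local structure of $K_2$ of a local field.

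Applying Construction~\ref{construction_AbstractHReciprocityLaw} to $A = F$ produces the exact sequence
\[
K_2(F) \overset{\iota}{\longrightarrow} \underset{v}{\prod\nolimits'} K_2(F_v) \overset{\phi}{\longrightarrow} K_2(\mathsf{LCA}_F) \longrightarrow 0,
\]
while Theorem~\ref{thm_B2}(2) identifies $K_2(\mathsf{LCA}_F)^{\wedge}$ with the finite group $\mu(F)$; hence the canonical map $\rho\colon K_2(\mathsf{LCA}_F) \twoheadrightarrow \mu(F)$ is surjective. Moore's local theorem decomposes $K_2(F_v) \cong \mu(F_v) \oplus U_v$ with $U_v$ uniquely divisible, the projection being the Hilbert symbol $h_v$. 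Since $K_2(\mathcal{O}_{F,v}) \subseteq U_v$ for almost all finite $v$ and $\mu(\mathbb{C})$ is divisible at complex places, the restricted product splits canonically as
\[
\underset{v}{\prod\nolimits'} K_2(F_v) \;\cong\; \Bigl(\bigoplus_{v\text{ noncomplex}} \mu(F_v)\Bigr) \oplus U, \qquad U \coloneqq \underset{v}{\prod\nolimits'} U_v,
\]
where $U$ is uniquely divisible; denote by $\pi$ the first projection, which is the collective Hilbert symbol.

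Combining these, the identity on $K_2(F)$, the map $\pi$, and $\rho$ give a commutative square whose top row is the input exact sequence and whose bottom row is the candidate Moore sequence
\[
K_2(F) \overset{h}{\longrightarrow} \bigoplus_{v\text{ noncomplex}} \mu(F_v) \overset{\partial}{\longrightarrow} \mu(F) \longrightarrow 0,
\]
with $h \coloneqq (h_v)_v$ the global Hilbert symbol by construction. The map $\partial$ is well-defined since $\phi(U)$, being divisible, maps to zero in the finite group $\mu(F)$; surjectivity of $\partial$ follows from surjectivity of $\phi$ and $\rho$. Exactness at the middle reduces by a diagram chase to identifying the quotient $\bigoplus \mu(F_v)/h(K_2(F))$ with $\mu(F)$ itself, and not merely with its profinite completion.

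The main obstacle is this last identification together with showing $\partial$ is the $\tfrac{m_v}{m}$-th power map. Both rest on unwinding the isomorphism $K_2(\mathsf{LCA}_F)^{\wedge} \cong \mu(F)$ from Theorem~\ref{thm_B2}(2), whose construction uses class field theory. The expected outcome is that a local root of unity $\zeta \in \mu(F_v) \subset K_2(\mathsf{LCA}_F)$ is sent via $\rho$ to $\zeta^{m_v/m}$, on account of the compatibility of local Artin maps with the global reciprocity map. This matches the classical normalization of Hilbert Reciprocity and confirms Moore's statement.
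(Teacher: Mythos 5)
There is a genuine problem here: your argument is circular with respect to the paper's logical structure. The paper does not prove Theorem \ref{theorem:MooreSequence} at all; it is taken as classical input, with the proof cited to Moore \cite{MR244258} and Chase--Waterhouse \cite{MR311623} (it ultimately rests on global class field theory). Your proposal instead tries to deduce it from Theorem \ref{thm_B2}(2), i.e.\ $K_2(\mathsf{LCA}_F)^{\wedge}\cong\mu(F)$. But in this paper that identification is proved in Theorem \ref{thm_numfieldsit}, whose proof explicitly uses the exactness of the bottom row of Diagram \eqref{lzwx1} --- which \emph{is} Moore's theorem. The authors even stress in the introduction that their results do not give an independent proof of Hilbert reciprocity precisely because Theorem \ref{thm_B2} relies on class field theory (via Moore). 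So the statement you feed in as input presupposes the statement you are trying to prove.

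Even setting circularity aside, the two decisive steps are asserted rather than established. First, knowing only that the profinite completion of $K_2(\mathsf{LCA}_F)$ is $\mu(F)$ does not identify the torsion quotient $\bigoplus_{v}\mu(F_v)/h(K_2(F))$ with $\mu(F)$: profinite completion kills divisible torsion (e.g.\ $(\mathbb{Q}/\mathbb{Z})^{\wedge}=0$), so the quotient could a priori be strictly larger than $\mu(F)$, differing by a divisible torsion subgroup. You acknowledge this ("and not merely with its profinite completion") but do not close it. Second, the claim that the induced map $\bigoplus_v \mu(F_v)\to\mu(F)$ is the $\tfrac{m_v}{m}$-th power map is left as "the expected outcome" of unwinding the class-field-theoretic identification; but carrying out that unwinding is exactly the content of Moore's theorem (compatibility of the local Artin maps with global reciprocity plus the exactness statement), so nothing short of reproving it fills this step. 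The correct posture for this statement, as in the paper, is to cite Moore/Chase--Waterhouse; if one wants a proof, it must be the classical class-field-theoretic one, not a deduction from Theorem \ref{thm_B2}.
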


The statement that this is a complex agrees with Hilbert's Reciprocity Law.
The exactness is Moore's refinement. A nice presentation and proof of Moore's
theorem can be found in \cite{MR311623}.

The new perspective brought by Moore's formulation directly suggests
numerous follow-up questions. Is there a natural way to extend Moore's
sequence to the left? Does the sequence have a counterpart for $K_{n}$ with
$n\neq2$?

Below, we present a novel viewpoint which might be considered an answer to
these questions. Building on top of this angle, we later go on to replace the
number field by a possibly non-commutative simple algebra.

Write $\mathcal{O}_{v}\coloneqq\mathcal{O}_{F,v}$ as a shorthand for
the valuation ring and $\kappa(v)$ for its residue field.

\begin{lemma}\label{lemma:ConstructionOfT}
For any place $v \in \Pl_F$, let $h_v\colon K_2(F_v) \to \mu(F_v)$ be the Hilbert symbol.
\begin{enumerate}
	\item For any complex place $v$, the group $K_2(F_v)$ is divisible.
	\item For noncomplex places $v$, we have
	\begin{enumerate}
	\item the maps $h_v\colon K_2(F_v) \to \mu(F_v)$ are epimorphisms with uniquely divisible kernels,
	\item for all finite places, the maps $h_v\colon K_2(\OO_v) \to \mu(F_v)[p^{\infty}]$ are epimorphisms with uniquely divisible kernels, and
	\item for all but finitely many places, we have $h_v(\OO_v) = 0$.
	\end{enumerate}
\end{enumerate}
\end{lemma}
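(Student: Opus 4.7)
The plan is to dispatch the four clauses by appealing to classical structure theorems for $K_2$ of local fields. For (1), a complex place has $F_v \cong \mathbb{C}$; the theorem of Bass--Tate asserts that $K_2$ of an algebraically closed field is uniquely divisible, hence in particular divisible. For (2)(a) I would invoke Moore's local structure theorem for the Hilbert symbol: for a non-archimedean local field $F_v$ of residue characteristic $p$ there is a (split) short exact sequence
\[
0 \longrightarrow U_v \longrightarrow K_2(F_v) \xrightarrow{h_v} \mu(F_v) \longrightarrow 0
\]
in which $U_v$ is the uniquely divisible wild kernel. For $v$ real, the analogous computation $K_2(\mathbb{R}) \cong \{\pm 1\} \oplus U_{\mathbb{R}}$ with $U_{\mathbb{R}}$ uniquely divisible and the real Hilbert symbol projecting onto $\{\pm 1\} = \mu(\mathbb{R})$ is due to Milnor.

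For (2)(b), combine (2)(a) with the Quillen localization sequence for the discrete valuation ring $\mathcal{O}_v$,
\[
K_2(\mathcal{O}_v) \longrightarrow K_2(F_v) \xrightarrow{\tau} K_1(\kappa(v)) \longrightarrow K_1(\mathcal{O}_v),
\]
where $\tau$ is the tame symbol. The last arrow is zero since $K_1(\mathcal{O}_v) = \mathcal{O}_v^{\times} \hookrightarrow F_v^{\times} = K_1(F_v)$ is injective, so $\tau$ is surjective onto $K_1(\kappa(v)) = \kappa(v)^{\times}$ and $K_2(\mathcal{O}_v) = \ker(\tau)$. Via the Teichm\"uller lift, $\kappa(v)^{\times}$ is identified with the prime-to-$p$ part of $\mu(F_v)$, and under the decomposition $\mu(F_v) = \mu(F_v)_{(p')} \oplus \mu(F_v)[p^{\infty}]$ the tame symbol coincides with the $(p')$-component of $h_v$. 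Combined with the splitting from (2)(a), this yields $K_2(\mathcal{O}_v) \cong \mu(F_v)[p^{\infty}] \oplus U_v$, with $h_v|_{K_2(\mathcal{O}_v)}$ realized as the projection onto the first summand; this proves both assertions of (2)(b).

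For (2)(c) I invoke a simple degree bound: if $\zeta_p \in F_v$ then $\mathbb{Q}_p(\zeta_p) \subseteq F_v$, so $p - 1 \leq [F_v : \mathbb{Q}_p] \leq [F : \mathbb{Q}]$; in particular $\mu(F_v)[p^{\infty}] = 0$ whenever $p > [F : \mathbb{Q}] + 1$, and for each of the finitely many remaining $p$ only finitely many places of $F$ lie above it. Hence $\mu(F_v)[p^{\infty}] \neq 0$ for at most finitely many $v$, and (2)(b) then forces $h_v(K_2(\mathcal{O}_v)) = 0$ at almost all $v$.

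The main obstacle is the invocation of Moore's local structure theorem in (2)(a), specifically the fact that the kernel of $h_v$ is uniquely divisible and splits off as a direct summand; everything else is formal manipulation of the localization sequence and an elementary ramification bound. The most delicate bookkeeping step is the identification in (2)(b) of the tame symbol with the prime-to-$p$ component of $h_v$, since the normalization of the local Hilbert symbol varies across references and must be matched to the convention used here.
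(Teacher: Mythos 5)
Your proposal is correct and follows essentially the same route as the paper: Moore's theorem (with Merkurjev's unique divisibility of the kernel) for $h_v$ at noncomplex places, the localization sequence for $\mathcal{O}_v$ together with the compatibility of the tame symbol with the prime-to-$p$ component of $h_v$ for clause (2)(b), and the finiteness of places carrying nontrivial $p$-primary roots of unity for (2)(c). The differences are cosmetic (direct-sum bookkeeping instead of the paper's Snake Lemma, a degree bound instead of the ramification argument); just note that identifying $K_2(\mathcal{O}_v)$ with $\ker(\tau)\subseteq K_2(F_v)$ also uses $K_2(\kappa(v))=0$, which the paper makes explicit.
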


\begin{proof}
We start by considering a complex place $v$.  In this case, $F_{v}\cong\mathbb{C}$ is algebraically closed and by $K_{2}(\mathbb{C})/lK_{2}(\mathbb{C})\cong H^{2}(\mathbb{C},\mu_{l}^{\otimes2})=0$ for all $l\geq1$ (or a direct computation using Milnor $K$-groups, $\{a,b\}=l\{a,\sqrt[l]{b}\}$) one sees that $K_{2}(\mathbb{C})$ is a divisible group.

Consider now the case where $v$ is noncomplex.  Using local class field theory, Moore \cite{MR244258} has shown the exactness of the sequence%
\[
0\longrightarrow m_{v}K_{2}(F_{v})\longrightarrow K_{2}(F_{v})\overset{h_{v}%
}{\longrightarrow}\mu(F_{v})\longrightarrow0\text{,}%
\]
where $h_{v}$ is the Hilbert symbol. This proceeds as follows: firstly, one
uses local class field theory to construct the Hilbert symbol, using the
explicit presentation of $K_{2}$ through Equation \eqref{lmilnorformula},%
\begin{equation}
h_{v}(a\otimes b)\coloneqq\frac{\operatorname*{Art}(a)\sqrt[m_{v}]{b}}{\sqrt[m_{v}%
]{b}}\in\mu(F_{v})\qquad\text{for}\qquad a,b\in F_{v}^{\times}\text{,}%
\label{lzzu3}%
\end{equation}
where $\sqrt[m_{v}]{b}$ is any $m_{v}$-th root of $b\in F_{v}^{\times}$ and
$\operatorname*{Art}\colon F_{v}^{\times}\rightarrow\operatorname*{Gal}%
(F^{\operatorname*{ab}}/F)$ denotes the local Artin map. Secondly, one
verifies that $h_{v}$ respects the Steinberg relation, so it factors over
$K_{2}(F_{v})$, and then that $h_{v}$ is zero on $m_{v}K_{2}(F_{v})$, which is
pretty immediate. Moore then goes on to prove that $m_{v}K_{2}(F_{v})$ is
divisible, which Merkurjev \cite{MR717828} strengthened to being uniquely
divisible. A complete textbook proof of all these results with full details
can be found in \cite[Ch. IX, \S 4]{MR1915966}.

Suppose that $v$ is a finite place. 
The torsion group $\mu(F_{v})$ splits into its different $\ell$-primary torsion parts,%
\[
\mu(F_{v})\cong\mu(F_{v})[p^{\infty}]\oplus\mu(F_{v})[\text{prime-to-}%
p^{\infty}]\text{.}%
\]
The group $\mu(F_{v})[p^{\infty}]$ denotes the $p$-primary part, where $p$ is
the prime in $\mathbb{Z}$ lying below the finite place $v$. Its complement,
the prime to $p$ part, is known to agree with $\mu(\kappa(v))$ through the
(canonical) Teichm\"{u}ller lift\footnote{See \cite[Chapter I, \S 7.1-7.3]%
{MR1915966} for a detailed construction of the Teichm\"{u}ller
representatives. One can also define a map $\omega\colon\kappa(v)^{\times
}\rightarrow\mu(F_{v})[$prime-to-$p^{\infty}]$ by%
\[
\omega(x)\coloneqq\lim_{n\rightarrow\infty}\tilde{x}^{q^{n}}\text{,}%
\]
where $q\coloneqq\#\kappa(v)$ and $\tilde{x}$ is any preimage of $x$ in
$\mathcal{O}_{v}$. Once one shows that the limit exists and is independent of
the choice of $\tilde{x}$, it is clear that $\omega$ is a group homomorphism
and a section to $\mathcal{O}_{v}^{\times}\rightarrow\kappa(v)^{\times}$.}.
The quotient sequence of abelian categories%
\[
\operatorname{mod}_{\mathfrak{m}_{v}}(\mathcal{O}_{v})\longrightarrow
\operatorname{mod}(\mathcal{O}_{v})\longrightarrow\operatorname{mod}(F_{v})
\]
induces a long exact sequence in $K$-theory (by the localization theorem;
$\operatorname{mod}_{\mathfrak{m}_{v}}(\mathcal{O}_{v})$ denotes finitely
generated modules supported on the prime $\mathfrak{m}_{v}$). We get%
\[
\cdots\longrightarrow K_{2}(\mathcal{O}_{v})\longrightarrow K_{2}%
(F_{v})\overset{\partial}{\longrightarrow}K_{1}(\kappa(v))\longrightarrow
\cdots\text{,}%
\]
where $\partial$ is the tame symbol, see \cite[Lemma~9.12]{Srinivas08}, and $K_1(\kappa(v)) \cong \mu(F_{v})[\text{prime-to-}p^{\infty}]$.  One can replace \textquotedblleft$\cdots$\textquotedblright\ on either side by zero and still get an exact sequence. On the right, this follows because
$\partial$ is split by (using Milnor $K$-group notation)%
\[
x\mapsto\{\pi,\omega(x)\}\text{,}%
\]
where $\omega\colon\kappa(v)^{\times}\rightarrow F_{v}^{\times}$ is the
Teichm\"{u}ller lift and $\pi$ any (fixed once and for all) uniformizer. On
the left, because $K_{2}$ of a finite field vanishes.

As the tame symbol factors as $K_2(F_v) \xrightarrow{h_v} \mu(F_v) \to \mu(F_{v})[\text{prime-to-}
p^{\infty}]$, where the last map is the canonical projection, we find the following commutative diagram
\[\begin{tikzcd}
0 \ar[r] & m_v K_2(F_v) \ar[r] \ar[d] & K_2(F_v) \ar[r, "h_v"] \ar[d, equal] & \mu(F_v) \ar[d] \ar[r]& 0\\
0 \ar[r] & K_2(\OO_v) \ar[r]&  K_2(F_v) \ar[r, "\partial"] & \mu(F_{v})[\text{prime-to-}p^{\infty}] \ar[r] & 0
\end{tikzcd}\]
with exact rows.  The left downward arrow is a split monomorphism since $m_v K_2(F_v)$ is divisible (and hence injective).  The Snake Lemma now gives a (split) exact sequence
\[0 \to m_v K_2(F_v) \to K_2(\OO_v) \xrightarrow{h'_v} \mu(F_{v})[p^{\infty}] \to 0\text{,}\]
where $h'_v$ the Hilbert symbol, with the domain restricted to $K_2(\OO_v) \subseteq K_2(F_v)$ and the codomain restricted to $\mu(F_{v})[p^{\infty}] \subseteq \mu(F_v)$.   We note that $F_{v}$ only has non-trivial $p$-primary roots of unity if $F_{v}/\mathbb{Q}_{p}$ is ramified. This only happens at those primes where $F/\mathbb{Q}$ is ramified, and thus finitely many.  Hence, $\mu(F_{v})[p^{\infty}]=0$ for all but finitely many places $v$.
\end{proof}

\begin{construction}\label{construction:ConstructionofT}
We now consider a map $T\colon K_2(\mathsf{LCA}_{F,ab}) \to \bigoplus_{v\enskip\operatorname{noncomplex}}{\mu(F_v)}$ as the composition of the maps $K_2(\mathsf{LCA}_{F,ab}) \to \prod_v' K_{2}(F_{v}) \to \bigoplus_{v\enskip\operatorname{noncomplex}}{\mu(F_v)}$, given as follows.
\begin{enumerate}
	\item Applying \Cref{theorem:MainIntroduction} with non-connective $K$-theory as the localizing invariant, we find an isomorphism $K_2(\mathsf{LCA}_{F,ab}) \xrightarrow{\cong} \prod_v' K_{2}(F_{v})$.  The isomorphism is induced by the equivalence $\mathsf{LCA}_{F,ab}\overset{\simeq}{\longrightarrow}\mathsf{J}_{F}^{(\infty)}$ from \Cref{prop_IdentifyJ}.
  \item For each place $v$, we consider the Hilbert symbol $h_v\colon K_2(F_v) \to \mu(F_v)$.  This then induces a map $\prod_v h_v\colon \prod_v K_2(F_v) \to \prod_v \mu(F_v)$, which restricts to a map
	\[\prod\nolimits_v' K_2(F_v) \to \bigoplus_{v\enskip\operatorname{noncomplex}}{\mu(F_v)}\]
	by the last claim of \Cref{lemma:ConstructionOfT}.
\end{enumerate}
We remind the reader that
	\[ \prod\nolimits_v' K_{2}(F_{v}) = \left\{  \left.  (\alpha_{v})_{v}\in
\prod_{v\in\Pl_{F}}K_{2}(F_{v})\right\vert
\begin{array}
[c]{l}%
\alpha_{v}\in K_{2}(\mathcal{O}_{v})\text{ for all but}\\
\text{finitely many finite places }v
\end{array}
\right\} \]
\end{construction}

\begin{theorem}
\label{thm_numfieldsit}Let $F$ be a number field.

\begin{enumerate}
\item\label{enumerate:Moore:1} The diagram%
\begin{equation}%
\xymatrix{
K_2(F) \ar@{=}[d] \ar[r] & K_2(\mathsf{LCA}_{F,ab}) \ar[r]
\ar[d]^{T} & K_2(\mathsf{LCA}_{F}) \ar[r] \ar[d]^{\overline{T} } & 0 \\
K_2(F) \ar[r] & \bigoplus_{v\enskip\operatorname{noncomplex}}{\mu(F_v)}
\ar[r] & \mu(F) \ar[r] & 0, \\
}
\label{lzwx1}%
\end{equation}
commutes, where the bottom row is the Moore sequence (\Cref{theorem:MooreSequence}) and the top row comes from Theorem \ref{theorem:MainIntroduction}. The arrow $T$ comes from the Hilbert symbol (see \Cref{construction:ConstructionofT}), and arrow $\overline{T}$ is the induced map on the quotient.

\item\label{enumerate:Moore:2} Both $T$ and $\overline{T}$ are surjective, and their kernels consist of the divisible elements of their respective domains.

\item\label{enumerate:Moore:3} Both rows in Diagram \ref{lzwx1} are exact.

\item\label{enumerate:Moore:4} Taking the profinite completion of the entire diagram, all downward
arrows become isomorphisms.

\item\label{enumerate:Moore:5} $K_{2}(\mathsf{LCA}_{F})^{\wedge}\cong\mu(F)$.
\end{enumerate}
\end{theorem}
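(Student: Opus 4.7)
The plan is to establish the diagram and its exactness first, then deduce the remaining parts via a divisibility analysis of $\ker T$ and $\ker \overline{T}$. For the top row, I would apply Theorem \ref{theorem:MainIntroduction} with non-connective $K$-theory; the long exact sequence of homotopy groups is precisely the one in Construction \ref{construction_AbstractHReciprocityLaw}. In the commutative case, the reduced norm of Remark \ref{rmk_ReducedNormInjectivities} is the identity, so $K_1(F)\to\prod\nolimits^{\prime} K_1(F_v)$ becomes the diagonal inclusion $F^{\times}\hookrightarrow\prod\nolimits^{\prime} F_v^{\times}$, which is injective. Thus the connecting map $K_2(\mathsf{LCA}_F)\to K_1(F)$ vanishes and the top row is exact (including surjectivity on the right). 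To prove the leftmost square commutes, I unpack Proposition \ref{prop_IdentifyJ}, under which $K_2(F)\to K_2(\mathsf{LCA}_{F,ab})\cong\prod\nolimits^{\prime}_v K_2(F_v)$ is the diagonal embedding; composition with $T$, which is coordinatewise application of $h_v$ by Construction \ref{construction:ConstructionofT}, produces exactly the Moore map $K_2(F)\to\bigoplus_v\mu(F_v)$. The middle square commutes by the universal property of the cokernel, which defines $\overline{T}$. Combined with Moore's Theorem \ref{theorem:MooreSequence} for the bottom row, this settles parts \eqref{enumerate:Moore:1} and \eqref{enumerate:Moore:3}.

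For part \eqref{enumerate:Moore:2}, I would argue as follows. Each Hilbert symbol $h_v$ at a noncomplex place is surjective by Lemma \ref{lemma:ConstructionOfT}, and any element of $\bigoplus_v\mu(F_v)$ has finite support, so lifts can be chosen with finite support and automatically lie in the restricted product; this gives surjectivity of $T$. The kernel consists of sequences $(x_v)_v$ with $x_v\in m_vK_2(F_v)$ for every noncomplex $v$. Lemma \ref{lemma:ConstructionOfT} says each local kernel $m_vK_2(F_v)$ is uniquely divisible, and at almost all places it coincides with $K_2(\mathcal{O}_v)$, so the restricted-product constraint is preserved by coordinatewise $n$-th roots. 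Therefore $\ker T$ is uniquely divisible; it equals the maximal divisible subgroup of $K_2(\mathsf{LCA}_{F,ab})$ because $\bigoplus_v\mu(F_v)$ has no nonzero divisible element (any finitely supported element has bounded exponent). The snake lemma applied to the two short exact sequences obtained by cutting each row at the image of $K_2(F)$ produces a short exact sequence $0\to\ker f_A\to\ker T\to\ker\overline{T}\to 0$, where the induced map $f_A$ on images is surjective (both sides are images of $K_2(F)$ under the identity vertical arrow). Hence $\ker\overline{T}$ is a divisible quotient of $\ker T$, and equals the maximal divisible subgroup of $K_2(\mathsf{LCA}_F)$ because $\mu(F)$ is finite.

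Parts \eqref{enumerate:Moore:4} and \eqref{enumerate:Moore:5} then fall out formally. Profinite completion is a left adjoint, hence right exact; moreover, any divisible abelian group has trivial profinite completion, since its finite quotients are both divisible and finite, therefore zero. Applying $(-)^{\wedge}$ to the short exact sequences $0\to\ker T\to K_2(\mathsf{LCA}_{F,ab})\to\bigoplus_v\mu(F_v)\to 0$ and $0\to\ker\overline{T}\to K_2(\mathsf{LCA}_F)\to\mu(F)\to 0$ shows that both $T^{\wedge}$ and $\overline{T}^{\wedge}$ are isomorphisms, and since $\mu(F)$ is finite one concludes $K_2(\mathsf{LCA}_F)^{\wedge}\cong\mu(F)$. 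The main obstacle I anticipate is the careful verification of commutativity of the leftmost square in part \eqref{enumerate:Moore:1}: one must trace how the map $K(F)\to K(\mathsf{LCA}_{F,ab})$ of Theorem \ref{theorem:MainIntroduction}, induced by $-\otimes_F\bA_F$, passes through the equivalence of Proposition \ref{prop_IdentifyJ} to become the diagonal embedding into $\prod\nolimits^{\prime}_v K_2(F_v)$. Everything else is formal diagram chasing once this identification is in place.
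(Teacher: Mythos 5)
Your proposal is correct and follows essentially the same route as the paper: top row from Construction \ref{construction_AbstractHReciprocityLaw}, bottom row from Moore, commutativity by tracing the Hilbert symbols through Proposition \ref{prop_IdentifyJ}, surjectivity and divisibility of the kernels via Lemma \ref{lemma:ConstructionOfT}, the Snake Lemma to pass from $\ker T$ to $\ker\overline{T}$, and killing the divisible kernels under profinite completion. The only (harmless) deviation is in part \eqref{enumerate:Moore:4}, where you invoke right-exactness of profinite completion while the paper splits the two short exact sequences using that divisible groups are injective and then applies additivity; your extra care that taking $n$-th roots respects the restricted-product condition (since $m_vK_2(F_v)=K_2(\mathcal{O}_v)$ at almost all places) is in fact slightly more explicit than the paper's own argument.
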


As the top row directly comes from the results in this paper, statement
\eqref{enumerate:Moore:4} could be summarized as the following slogan: \textit{Upon profinite
completion, Moore's sequence can be identified with an excerpt of the long
exact sequence coming from the fibration in} Theorem \ref{theorem:MainIntroduction}.

We note that our proof of Theorem \ref{theorem:MainIntroduction}, and thus the fibration sequence
referenced in the slogan, did not rely on any use of class field theory.

\begin{proof}
The top row in Diagram \ref{lzwx1} is exact by Construction \ref{construction_AbstractHReciprocityLaw}. The exactness of the bottom row is Moore's theorem, \cite{MR311623}.  This shows that statement \eqref{enumerate:Moore:3} holds.

For the commutativity of the left square, we observe that the top-right branch is given by 
\[\begin{tikzcd}[row sep = small]
K_2(F) \ar[r] & K_{2}(\mathsf{LCA}_{F,ab}) \ar[r,"\cong"] & \prod_v^{\prime } K_{2}(F_{v}) \ar[r] & \bigoplus_{v\enskip\operatorname{noncomplex}}{\mu(F_v)} \\
\alpha \otimes \beta \ar[rr, mapsto] && (\alpha \otimes \beta)_v \ar[r, mapsto]& \left( h_v(\alpha \otimes \beta) \right)_v \text{.} 
\end{tikzcd}\]
The commutativity of the left square is now immediate from the definition of the first arrow in Moore's sequence.  The commutativity of the right square holds by construction.  This shows statement \eqref{enumerate:Moore:1}.

It follows from \Cref{lemma:ConstructionOfT} that $T$ is an epimorphism.  The commutativity of the right-most square then shows that the composition $K_2(\mathsf{LCA}_{F,ab}) \to K_2(\mathsf{LCA}_{F}) \xrightarrow{\overline{T}} \mu(F)$ is an epimorphism.  We conclude that $\overline{T}$ is an epimorphism as well.

It follows from \Cref{lemma:ConstructionOfT} that $\ker T$ is a uniquely divisible subgroup of $K_{2}(\mathsf{LCA}_{F,ab})$.  As the codomain of $T$ is torsion, the $\ker T$ needs to contain all divisible elements of $K_{2}(\mathsf{LCA}_{F,ab})$.  This shows that $\ker T$ consists of all divisible elements of $K_{2}(\mathsf{LCA}_{F,ab})$.

It follows from the Snake Lemma (or from a straightforward diagram chase) that the canonical map $\ker T \to \ker \overline{T}$ is an epimorphism.  As $\ker T$ is a divisible group, so is $\ker \overline{T}$.  As $\mu(F)$ is torsion, we know that all divisible elements of $K_{2}(\mathsf{LCA}_{F})$ lie in $\ker \overline{T}$.    This establishes statement \eqref{enumerate:Moore:2}.

We now prove statement \eqref{enumerate:Moore:4}.  Consider the exact sequences
\[\begin{tikzcd}[row sep = tiny]
0 \ar[r] &\ker T \ar[r]& K_{2}(\mathsf{LCA}_{F,ab}) \ar[r, "T"]& \bigoplus_{v\enskip\operatorname{noncomplex}}{\mu(F_v)} \ar[r]& 0\text{, and} & \\
0 \ar[r] & \ker \overline{T} \ar[r] & K_{2}(\mathsf{LCA}_{F}) \ar[r] & \mu(F) \ar[r] & 0\text{.}
\end{tikzcd}\]
As both $\ker T$ and $\ker \overline{T}$ are divisible groups (and hence injective), these sequences are split exact.  Since taking profinite completions is an additive functor, these stay (split) exact after profinite completion.  Using that $(\ker T)^\wedge = 0 = (\ker \overline{T})^\wedge$, as profinite completions of divisible groups are zero, we find that $T^\wedge$ and $\overline{T}^\wedge$ are isomorphisms, as required.

Finally, statement \eqref{enumerate:Moore:5} follows from \eqref{enumerate:Moore:4}, together with the observation that $\mu(F) \cong \mu(F)^\wedge$ since $\mu(F)$ is a finite group.
\end{proof}

We also deduce the following characterization:

\begin{corollary}
Up to isomorphisms, the Moore sequence arises from Construction
\ref{construction_AbstractHReciprocityLaw} by quotienting out the subgroups of
divisible elements in the right two terms.%
\[
\xymatrix{
K_2(F) \ar@{=}[d] \ar[r] & K_2(\mathsf{LCA}_{F,ab})_{/div} \ar[r]
\ar[d]^{\cong} & K_2(\mathsf{LCA}_{F})_{/div} \ar[r] \ar[d]^{\cong} & 0 \\
K_2(F) \ar[r] & \bigoplus_{v\enskip\operatorname{noncomplex}}{\mu(F_v)}
\ar[r] & \mu(F) \ar[r] & 0. \\
}
\]
\end{corollary}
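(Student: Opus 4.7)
The plan is to deduce the corollary as a formal consequence of Theorem \ref{thm_numfieldsit}, particularly of its parts \eqref{enumerate:Moore:2} and \eqref{enumerate:Moore:3}. First I would observe that part \eqref{enumerate:Moore:2} already produces the required vertical isomorphisms: since $T$ and $\overline{T}$ are epimorphisms whose kernels are precisely the divisible subgroups of $K_{2}(\mathsf{LCA}_{F,ab})$ and $K_{2}(\mathsf{LCA}_{F})$, they factor through canonical isomorphisms $K_{2}(\mathsf{LCA}_{F,ab})_{/div} \xrightarrow{\cong} \bigoplus_{v\text{ noncomplex}} \mu(F_v)$ and $K_{2}(\mathsf{LCA}_{F})_{/div} \xrightarrow{\cong} \mu(F)$.

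Next I would construct the top row of the corollary's diagram from the top row of Diagram \eqref{lzwx1}. Each horizontal arrow is an additive group homomorphism, so it carries divisible elements to divisible elements; hence both horizontal arrows descend to well-defined maps between the quotients by divisible subgroups. The left square commutes because of the commutativity in Theorem \ref{thm_numfieldsit}\eqref{enumerate:Moore:1} and the identification of the resulting top row with the sequence of Construction \ref{construction_AbstractHReciprocityLaw} is then immediate via the isomorphism $K_{2}(\mathsf{LCA}_{F,ab}) \cong \prod\nolimits^{\prime}_v K_{2}(F_v)$ coming from Theorem \ref{theorem:MainIntroduction}.

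The only substantive verification is exactness of the quotiented top row. Surjectivity at the right end is preserved by any quotient. For exactness in the middle, suppose $x \in K_{2}(\mathsf{LCA}_{F,ab})$ represents a class whose image in $K_{2}(\mathsf{LCA}_{F})_{/div}$ vanishes; then the image of $x$ in $K_{2}(\mathsf{LCA}_{F})$ lies in $\ker \overline{T}$. The surjectivity of the canonical map $\ker T \twoheadrightarrow \ker \overline{T}$, already noted in the proof of Theorem \ref{thm_numfieldsit}, permits us to correct $x$ by an element of $\ker T$ (which is divisible and hence represents zero in the quotient) to an $x'$ mapping to zero in $K_{2}(\mathsf{LCA}_{F})$; exactness of the original top row (Theorem \ref{thm_numfieldsit}\eqref{enumerate:Moore:3}) then lifts $x'$ to $K_{2}(F)$, and this lift represents the same quotient class as $x$. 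There is no genuine obstacle: the hard analytic input has already been absorbed into Theorem \ref{thm_numfieldsit}, and this corollary is essentially a repackaging of parts \eqref{enumerate:Moore:2} and \eqref{enumerate:Moore:3} of that theorem through the divisibility filtration.
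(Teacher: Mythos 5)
Your proposal is correct and follows the route the paper intends: the corollary is stated without a separate proof precisely because it is the formal consequence of Theorem \ref{thm_numfieldsit}\eqref{enumerate:Moore:2} and \eqref{enumerate:Moore:3} that you spell out (the kernels of $T$ and $\overline{T}$ are exactly the divisible elements, so they induce the vertical isomorphisms, and exactness survives the quotient via the surjection $\ker T \twoheadrightarrow \ker \overline{T}$ already noted in the theorem's proof). Your verification of middle exactness after passing to quotients is the only point needing an argument, and your correction-by-an-element-of-$\ker T$ chase handles it properly.
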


This shows that, up to isomorphism, the Moore sequence stems from
Theorem \ref{theorem:MainIntroduction}. Ingredients from class field theory only enter in the
identification of the $K_{2}$-groups with groups of roots of unity. For the
middle term, this only uses tools from local class field theory, but showing
$K_{2}(\mathsf{LCA}_{F})_{/div}\cong\mu(F)$ relied on using global class field
theory, too.

\begin{remark}
[Wild kernel, \cite{MR2072396}]The kernel of the left-most horizontal maps in the above diagram is
understood. Historically, this kernel is known as the \emph{wild kernel}
$WK_{2}(F)$, as it is the joint kernel under all Hilbert symbols. The
nomenclature stems from the fact that the joint kernel of all tame symbols,
which agrees with $K_{2}(\mathcal{O}_{F})$, is known as the \emph{tame
kernel}. It is clear that the subgroup of divisible elements
$K_{2}(F)_{div}\subseteq K_{2}(F)$ must be contained in $WK_{2}(F)$. By Hutchinson \cite[Corollary 4.5]{MR1824144}, following ideas of Tate, the quotient $WK_{2}(F)/K_{2}(F)_{div}$ is either $0$ or
$\mathbb{Z}/2$, depending on the number field $F$. Both cases occur and it is
understood how one can distinguish these cases.
\end{remark}

\begin{remark}
\label{rmk_AtComplexPlaces}We complement the definition given for $m_{v}$ at
the beginning of \S \ref{subsect_ClassHRL} with a different viewpoint. For
each finite or real place $\mu(F_{v})$ is finite, so the profinite completion
$\mu(F_{v})\rightarrow\mu(F_{v})^{\wedge}$ is an isomorphism
and therefore $m_{v}=\#\mu(F_{v})$. Only for a complex place $\mu(F_{v})\cong\mathbb{Q}/\mathbb{Z}$ is infinite, and we have $(\mathbb{Q}%
/\mathbb{Z})^{\wedge}=0$. Hence, instead of using the profinite completion in
our definition of $m_{v}$, we could also agree on the convention that
$m_{v}\coloneqq 0$ for complex places, and $m_{v}=\#\mu(F_{v})$ for all noncomplex places.
\end{remark}

\section{Hilbert Reciprocity Law (non-commutative version)}

The considerations of the previous section suggest a kind of non-commutative
Moore sequence, and thus a non-commutative Hilbert Reciprocity Law. As we had
already seen in Construction \ref{construction_AbstractHReciprocityLaw}, the top row
of%
\[
\xymatrix{
K_2(F) \ar@{=}[d] \ar[r] & K_2(\mathsf{LCA}_{F,ab}) \ar[r]
\ar[d] & K_2(\mathsf{LCA}_{F}) \ar[r] \ar[d] & 0 \\
K_2(F) \ar[r] & \bigoplus_{v\enskip\operatorname{noncomplex}}{\mu(F_v)}
\ar[r] & \mu(F) \ar[r] & 0 \\
}
\]
is also available in the non-commutative setting. Let us explore this. Let $A$ be a
simple finite-dimensional $\mathbb{Q}$-algebra, let $F$ be its center. Then for every place $v$, $A_v$ is a central simple $F_v$-algebra.

We could hope that there is a nice concept of non-commutative
Hilbert symbols%
\[
h_{v}\colon K_{2}(A_{v})\longrightarrow(\ldots)
\]
with values in a yet unknown group $(\ldots)$ so that the $h_{v}$ satisfy an
interesting reciprocity law which would reduce to the ordinary Hilbert
Reciprocity Law if $A=F$. Indeed, it is not so difficult to identify what this
might be. As a first step, imitating the commutative situation, we could expect that the kernel of $h_{v}$ should be the
subgroup of divisible elements in $K_{2}(A_{v})$. \textit{Unless }$v$\textit{
is a real place such that }$A_{v}$\textit{ does not split over }$F_{v}$, the
reduced norm%
\[
K_{2}(A_{v})\overset{\operatorname*{nr}}{\longrightarrow}K_{2}(F_{v})
\]
is an isomorphism (in general, it is not: if $A$ are the rational Hamilton
quaternions, $A\otimes_{\mathbb{Q}}\mathbb{R}\cong\mathbb{H}$ so that
$K_{2}(\mathbb{H})=0$, yet $K_{2}(\mathbb{R})\cong\{\pm1\}$, so $-1$ cannot
possibly lie in the image of the reduced norm).

However, in all situations where the reduced norm is an isomorphism, the
subgroup of divisible elements in $K_{2}(A_{v})$ gets identified with the
divisible elements in $K_{2}(F_{v})$. Thus,%
\[
K_{2}(A_{v})/K_{2}(A_{v})_{div}\cong\mu(F_{v})\text{,}%
\]
where the isomorphism is given by the Hilbert symbol of the local field
$F_{v}$. Thus, the non-commutative Hilbert symbol reasonably would be taken to
be%
\begin{equation}
h_{v}^{\operatorname*{nc}}\colon K_{2}(A_{v})\longrightarrow\mu(F_{v}) \label{lzzx9}%
\end{equation}%
\[
h_{v}^{\operatorname*{nc}}\coloneqq h_{v}\circ\operatorname*{nr}\nolimits_{A_{v}}%
\]
and is merely the reduced norm, followed by the Hilbert symbol of the center.
Unlike the situation for number fields, this map $h_{v}^{\operatorname*{nc}}$
can fail to be surjective also for noncomplex places, but only in the rather
exceptional case where $v$ is a real place over which $A$ does not split.

One immediately deduces a non-commutative Hilbert Reciprocity Law with ease,
given the structure of the above definition.

\begin{proposition}
Let $A$ be a finite-dimensional simple $\mathbb{Q}$-algebra with center $F$.
Then for any $\alpha\in K_{2}(A)$ we have%
\[
\prod_{v\in\mathcal{P}\ell_{F}}h_{v}^{\operatorname*{nc}}(\alpha)^{\frac
{m_{v}}{m}}=1\text{,}%
\]
where $h_{v}^{\operatorname*{nc}}$ denotes the non-commutative\ Hilbert symbol
of Equation \eqref{lzzx9}. If $A=F$ the statement specializes to the classical
Hilbert Reciprocity Law of Equation \eqref{lzzp0}.
\end{proposition}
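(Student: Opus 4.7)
The plan is to reduce the non-commutative statement to the classical Hilbert Reciprocity Law of Equation \eqref{lzzp0}, applied to the image of $\alpha$ under the global reduced norm $\operatorname{nr}\colon K_{2}(A) \to K_{2}(F)$. The key input is the compatibility of the reduced norm with arbitrary base change, already invoked in Remark \ref{rmk_ReducedNormInjectivities}.

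First, I would observe that for every place $v \in \mathcal{P}\ell_{F}$ the square
\[
\xymatrix{
K_{2}(A) \ar[r] \ar[d]_{\operatorname{nr}} & K_{2}(A_{v}) \ar[d]^{\operatorname{nr}_{A_{v}}} \\
K_{2}(F) \ar[r] & K_{2}(F_{v})
}
\]
commutes, since the reduced norm respects base change. Writing $\alpha_{v} \in K_{2}(A_{v})$ for the image of $\alpha$ under the top horizontal arrow and $\beta \coloneqq \operatorname{nr}(\alpha) \in K_{2}(F)$, this commutative square yields $\operatorname{nr}_{A_{v}}(\alpha_{v}) = \beta_{v}$, where $\beta_{v}$ denotes the image of $\beta$ under the localization $K_{2}(F) \to K_{2}(F_{v})$. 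By the very definition in Equation \eqref{lzzx9}, it follows that
\[
h_{v}^{\operatorname{nc}}(\alpha_{v}) = h_{v}(\operatorname{nr}_{A_{v}}(\alpha_{v})) = h_{v}(\beta_{v})
\]
for each place $v$.

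Next, I would apply the classical Hilbert Reciprocity Law from Equation \eqref{lzzp0} to the element $\beta \in F^{\times} \otimes F^{\times}$, reinterpreted via Milnor's formula in Equation \eqref{lmilnorformula} as an element of $K_{2}(F)$. This gives
\[
\prod_{v \in \mathcal{P}\ell_{F}} h_{v}(\beta_{v})^{\frac{m_{v}}{m}} = 1,
\]
and combining this with the identity $h_{v}^{\operatorname{nc}}(\alpha_{v}) = h_{v}(\beta_{v})$ established above yields the desired reciprocity law for $\alpha$.

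I do not expect a serious obstacle: the bulk of the work has been offloaded into the \emph{definition} of $h_{v}^{\operatorname{nc}}$ as the composition $h_{v} \circ \operatorname{nr}_{A_{v}}$, so the non-commutative statement is essentially a tautological consequence of the commutative one. The only point deserving attention is that no positivity or surjectivity of the local reduced norms is required: even at a real place where $A_{v}$ does not split, the square above still commutes, so the product formula applies to $\beta$ without modification. Finally, specializing $A = F$ makes $\operatorname{nr}$ the identity, so $h_{v}^{\operatorname{nc}} = h_{v}$ and the statement recovers Equation \eqref{lzzp0} verbatim.
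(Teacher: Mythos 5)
Your proposal is correct and follows essentially the same route as the paper: both use the base-change compatibility of the reduced norm (Remark \ref{rmk_ReducedNormInjectivities}) to identify $h_{v}^{\operatorname*{nc}}$ applied to $\alpha$ with $h_{v}$ applied to $\operatorname*{nr}_{A}(\alpha)\in K_{2}(F)$, and then invoke the classical Hilbert Reciprocity Law, so the statement is tautological by the construction of $h_{v}^{\operatorname*{nc}}$.
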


\begin{proof}
The proof is tautological by construction of $h_{v}^{\operatorname*{nc}}$. We
compute%
\[
\prod_{v\in\mathcal{P}\ell_{F}}h_{v}^{\operatorname*{nc}}(\alpha)^{\frac
{m_{v}}{m}}=\prod_{v\in\mathcal{P}\ell_{F}}h_{v}(\operatorname*{nr}%
\nolimits_{A_{v}}(\alpha\otimes A_{v}))^{\frac{m_{v}}{m}}\text{,}%
\]
where $\operatorname*{nr}_{A_{v}}\colon K_{2} (A_{v})\rightarrow K_{2}(F_{v})$ is the reduced norm and $\alpha\otimes F_{v}$ refers to the image of $\alpha\in K_{2}(A)$ in $K_{2}(A_{v})$. By Remark \ref{rmk_ReducedNormInjectivities} we have $\operatorname*{nr}_{A} (\alpha)\otimes F_{v}=\operatorname*{nr}_{A_{v}}(\alpha\otimes A_{v})$, so the
right term equals $h_{v}(\operatorname*{nr}_{A}(\alpha))^{\frac{m_{v}}{m}}$
and thus our claim reduces to the classical Hilbert Reciprocity Law, applied
to the element $\operatorname*{nr}_{A}(\alpha)\in K_{2}(F)$. If $A=F$, the
reduced norm is the identity map.
\end{proof}

Clearly, the above proposition is not very interesting because it reduces
everything to the commutative case. But we can push the analogy further. We would expect that Moore's
sequence holds analogously in the non-commutative setting, and should also
reduce to the commutative Moore sequence, except perhaps with some
modifications when the reduced norm fails to be surjective. This suggests the following.

\begin{conjecture}
\label{conj7}Suppose $A$ is a finite-dimensional simple $\mathbb{Q}$-algebra
with center $F$. Then%
\[
K_{2}(\mathsf{LCA}_{A})\cong K_{2}(\mathsf{LCA}_{F})\text{.}%
\]

\end{conjecture}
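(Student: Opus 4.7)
The plan is to compare, via the reduced norm, the fiber sequence of Theorem \ref{theorem:MainIntroduction} applied to the simple algebra $A$ with the one applied to its center $F$. The placewise reduced norms $\operatorname{nr}\colon K_\ast(A_v) \to K_\ast(F_v)$ and the global $\operatorname{nr}\colon K_\ast(A) \to K_\ast(F)$ assemble into a natural transformation of the two long exact sequences on homotopy groups produced by that theorem.

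First I would collapse each long exact sequence near degree two. By Remark \ref{rmk_ReducedNormInjectivities} the reduced norm $K_1(A) \to K_1(F_v)$ is injective at any place $v$, so the composite $K_1(A) \to K_1(\mathsf{LCA}_{A,ab}) = \prod\nolimits'_v K_1(A_v)$ is injective and the boundary $K_2(\mathsf{LCA}_A) \to K_1(A)$ vanishes. This identifies $K_2(\mathsf{LCA}_A) \cong \coker(K_2(A) \to \prod\nolimits'_v K_2(A_v))$ and analogously $K_2(\mathsf{LCA}_F) \cong \coker(K_2(F) \to \prod\nolimits'_v K_2(F_v))$. The reduced norms then induce a natural comparison map $\gamma\colon K_2(\mathsf{LCA}_A) \to K_2(\mathsf{LCA}_F)$ between these cokernels, and the task becomes showing that $\gamma$ is an isomorphism.

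Next I would perform a placewise local analysis of the vertical map $R\colon \prod\nolimits'_v K_2(A_v) \to \prod\nolimits'_v K_2(F_v)$. For every place $v$ outside the finite set $\Sigma$ of real places where $A$ is ramified, the $v$-component of $R$ is an isomorphism: Morita invariance handles complex places and split real places, while the local Merkurjev--Suslin theorem handles the finite places. At $v \in \Sigma$ we have $A_v \cong M_n(\mathbb{H})$, so $K_2(A_v) = 0$ and the $v$-component of $R$ is the zero map $0 \to K_2(\mathbb{R})$. Applying the snake lemma to the commutative square formed by the two cokernel presentations, with verticals $r \coloneqq \operatorname{nr}$ and $R$, produces an exact sequence expressing $\ker\gamma$ and $\coker\gamma$ in terms of the image of $r$ and of $\coker R \cong \bigoplus_{v\in\Sigma} K_2(\mathbb{R})$.

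The conjecture therefore reduces to the assertion that the image of the reduced norm $K_2(A) \to K_2(F)$ accounts precisely for this local discrepancy concentrated at the ramified real places. This is the content of Conjecture \ref{Conj_MerkurjevSuslin}, and supplying it is the main obstacle. Without Merkurjev--Suslin one has no effective global description of $\im(\operatorname{nr}\colon K_2(A) \to K_2(F))$, and one recovers only the weaker profinite statement of Theorem \ref{thm_B2}(3). Once the conjecture is granted, the truncation, placewise local analysis, and snake lemma outlined above deliver the desired isomorphism.
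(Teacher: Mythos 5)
Your proposal is, in substance, the paper's own treatment of this statement: since it is a conjecture, the paper only establishes it conditionally (in the proof of the final theorem of the last section), by presenting both $K_{2}(\mathsf{LCA}_{A})$ and $K_{2}(\mathsf{LCA}_{F})$ as the cokernels supplied by Construction \ref{construction_AbstractHReciprocityLaw} --- your truncation of the long exact sequence via the injectivity statement of Remark \ref{rmk_ReducedNormInjectivities} is exactly that construction --- then mapping one presentation to the other by the reduced norm and closing with a diagram chase once Conjecture \ref{Conj_MerkurjevSuslin} is granted. Two small corrections to your placewise analysis are needed for the chase to close literally. First, the isomorphism $\operatorname{nr}\colon K_{2}(A_{v})\to K_{2}(F_{v})$ at the finite places is not an unconditional theorem: it is precisely the local-field case of Conjecture \ref{Conj_MerkurjevSuslin}, known only for square-free index, which is why the paper assumes exactness of the sequence for $A$ \emph{and all completions} $A_{v}$; since the conjecture as stated covers local fields, granting it does cover this step, but it should not be quoted as a theorem. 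Second, at a ramified real place $K_{2}(A_{v})$ is not zero but uniquely divisible, and the relevant (known, since the index $2$ case is square-free) local sequence is $0\to K_{2}(A_{v})\xrightarrow{\operatorname{nr}} K_{2}(F_{v})\to\mathbb{Z}/2\to 0$; hence the cokernel of your map $R$ is $\bigoplus_{v\in\Sigma}\mathbb{Z}/2$ rather than $\bigoplus_{v\in\Sigma}K_{2}(\mathbb{R})$ (the latter has a large divisible part), and it is exactly this identification that makes the local cokernel match the global cokernel $\bigoplus_{v\in\Sigma}\mathbb{Z}/2$ of $\operatorname{nr}\colon K_{2}(A)\to K_{2}(F)$ and forces the induced map on horizontal cokernels to be an isomorphism. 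With these adjustments, your reduction to Conjecture \ref{Conj_MerkurjevSuslin} coincides with the paper's.
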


Curiously, when trying to prove this, we were naturally led to an old
conjecture of Merkurjev and Suslin, which is still open.

\begin{conjecture}
[Merkurjev--Suslin, 1982]\label{Conj_MerkurjevSuslin}Suppose $A$ is a
finite-dimensional central simple $F$-algebra, where $F$ is a local or global
field (say of characteristic zero). Then there is an exact sequence%
\begin{equation}
0\longrightarrow K_{2}(A)\overset{\operatorname*{nr}}{\longrightarrow}%
K_{2}(F)\longrightarrow\bigoplus_{v}\mathbb{Z}/2\mathbb{Z}\longrightarrow
0\text{,} \label{l_WangFullSequenceK2}%
\end{equation}
where $v$ runs over all real places of $F$ for which the algebra $A_{v}$ is
non-split. (If $F$ is a $p$-adic or complex local field, then the set of such
$v$ is necessarily empty).
\end{conjecture}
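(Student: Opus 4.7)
The plan is to split the conjecture into its two halves: injectivity of the reduced norm $\operatorname{nr}\colon K_2(A)\to K_2(F)$, and identification of its cokernel with $\bigoplus_v \mathbb{Z}/2\mathbb{Z}$ indexed by the real places at which $A_v$ is non-split. My approach would be local-to-global, leveraging the non-commutative Moore-type sequence that arises from Theorem \ref{thm_B2} against the classical Moore sequence for the center $F$.

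For the cokernel, the local computation is essentially standard. At every finite place, the reduced norm $K_2(A_v)\to K_2(F_v)$ is an isomorphism (by Wang and Merkurjev--Suslin in the $p$-adic setting), and the same holds at complex places and at real places where $A_v$ is split. The only defect occurs at a real place with $A_v\cong M_n(\mathbb{H})$, where one must verify that $\operatorname{nr}(K_2(\mathbb{H}))$ lands precisely in the index-two subgroup of $K_2(\mathbb{R})$ detected by the archimedean Hilbert symbol, producing the single $\mathbb{Z}/2$ contribution. Assembling these locally into a commutative diagram with vertical arrows given by reduced norms and horizontal rows given by Moore's sequence (for $F$) and its non-commutative analogue (for $A$, coming from the fibration of Theorem \ref{theorem:MainIntroduction} together with the Hilbert-symbol framework of \S\ref{subsect_ClassHRL}), a Snake Lemma chase should deliver the exact sequence (\ref{l_WangFullSequenceK2}).

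The decisive obstacle is the injectivity piece, i.e.\ the vanishing of the reduced $SK_2(A)$. This has been open since 1982; it is known in isolated cases (cyclic algebras, the biquaternion algebra, certain low-index situations) but not in general, and I see no route to it that avoids the heavy Galois-cohomological machinery which has driven what partial progress exists. What the $\mathsf{LCA}$-framework does contribute is conceptual rather than technical: via Theorem \ref{thm_B2}(3), the conjecture would imply the clean identity $K_2(\mathsf{LCA}_A)^{\wedge}\cong K_2(\mathsf{LCA}_F)^{\wedge}$, refined integrally to Conjecture \ref{conj7}. Whether this geometric reformulation opens a genuinely new line of attack is unclear to me, but it at least isolates the obstruction in a new location, which I would find encouraging for future work.
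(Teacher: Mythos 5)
There is nothing in the paper for your proposal to be compared against: Conjecture \ref{Conj_MerkurjevSuslin} is not proved there. It is quoted from Merkurjev--Suslin \cite[Remark 17.5]{MR675529} as an open conjecture and enters the paper only as a \emph{hypothesis}, namely in Theorem \ref{thm_B2}(3) and in the final section, whose theorem explicitly assumes the exactness of Sequence \eqref{l_WangFullSequenceK2} for $A$ and all completions $A_{v}$. You correctly diagnose this: the injectivity statement $SK_{2}(A)=0$ is open beyond the square-free-index cases settled by Merkurjev--Suslin and Kahn--Levine, and you rightly decline to claim it. To that extent your assessment of the status of the statement agrees with the paper's own Remark on the matter.

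However, even as a conditional sketch, two steps you do propose would fail. First, deriving the cokernel part from the non-commutative Moore sequence is circular: in the paper that sequence is established only \emph{under the assumption} that \eqref{l_WangFullSequenceK2} is exact for $A$ and all $A_{v}$, so it cannot be an ingredient in a proof of \eqref{l_WangFullSequenceK2}. The paper's logic runs the other way (conjecture $\Rightarrow$ $K_{2}(\mathsf{LCA}_{A})\cong K_{2}(\mathsf{LCA}_{F})$ $\Rightarrow$ non-commutative Moore sequence), and the authors note that reversing it would require an independent proof of Conjecture \ref{conj7} together with control of the boundary map $K_{3}(\mathsf{LCA}_{A})\rightarrow K_{2}(A)$. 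Second, your assertion that the reduced norm $K_{2}(A_{v})\rightarrow K_{2}(F_{v})$ is an isomorphism at every finite place is itself the local case of the conjecture (which is stated for local \emph{or} global fields) and is known unconditionally only for square-free index; so your local-to-global assembly of the cokernel is also conditional on the very statement to be proved. In short, you correctly identify the obstruction, but the portions you sketch presuppose either the conjecture or its local instances, and no proof is available in the paper or in your proposal.
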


This conjecture is stated in \cite[Remark 17.5]{MR675529}. The conjecture is
fairly parallel to what is proven to happen for $K_{1}$ in view of the
Hasse--Schilling--Maass theorem \cite[Theorem 45.3]{MR892316}.

\begin{remark}
We should comment on the status of this conjecture:

\begin{enumerate}
\item (Merkurjev--Suslin) If $A$ has square-free index, the conjecture is
true. This is \cite[Theorem 17.4]{MR675529}.

\item (Merkurjev--Suslin, Kahn--Levine) If $A$ has square-free index, the
injectivity of $K_{2}(A)\overset{\operatorname*{nr}}{\longrightarrow}K_{2}(F)$
is actually true for all fields of characteristic zero. This was proven
independently by Merkurjev--Suslin \cite[Theorem 2.3]{MR2645334} and
Kahn--Levine \cite[Corollary 2]{MR2650808}. The latter exhibit an exact
sequence%
\[
0\longrightarrow K_{2}(A)\overset{\operatorname*{nr}}{\longrightarrow}%
K_{2}(F)\longrightarrow H^{4}(F,\mathbb{Z}/d(3))\longrightarrow H^{2}(F\left(
X\right)  ,\mathbb{Z}/d(3))\text{,}%
\]
where $X$ is the Severi--Brauer variety of $A$ and $d$ the index. Thus, if $F$
is a field of cohomological dimension $\leq3$, the reduced norm on $K_{2}$ is
an isomorphism for square-free $d$.

\item Sometimes one finds a mistaken claim in the literature attributing the
injectivity of the reduced norm on $K_{2}$, i.e. $SK_{2}(A)=0$, over number
fields to the paper \cite{MR743941}. However, this stems from a confusion and
this paper does not even claim to have shown this.
\end{enumerate}
\end{remark}

Assuming the Merkurjev--Suslin conjecture, we obtain a nice analogue of
Moore's sequence in the non-commutative setting which for $A=F$ reduces to the
original Moore sequence.  In here, we write $T^{\operatorname*{nc}}$ for the map as in \Cref{construction:ConstructionofT}, replacing the Hilbert symbol $h_v$ by $h_{v}^{\operatorname*{nc}}.$  Specifically, $T^{\operatorname*{nc}}$ is the composition
\[K_2(\mathsf{LCA}_{A,ab}) \xrightarrow{\cong} \prod\nolimits_v' K_{2}(A_{v}) \xrightarrow{h_{v}^{\operatorname*{nc}}} \bigoplus_{v\enskip\operatorname{noncomplex}}\mu(F_v)\text{.}\]

\begin{theorem}
Let $A$ be a simple finite-dimensional $\mathbb{Q}$-algebra. Write $F$ for its
center. Assume Sequence \ref{l_WangFullSequenceK2} is exact for $A$ and all
completions $A_{v}$ at the places of $F$ (e.g., if $A$ has square-free index).

\begin{enumerate}
\item The diagram%
\begin{equation}
\xymatrix{ K_2(A) \ar[d] \ar[r] & K_2(\mathsf{LCA}_{A,ab}) \ar[r] \ar[d]^{T^{\operatorname*{nc}}} & K_2(\mathsf{LCA}_{A}) \ar[r] \ar[d]^{\overline{T}^{\operatorname*{nc}}} & 0 \\ K_2(F) \ar[r] & \bigoplus
_{v\enskip\operatorname{noncomplex}}{\mu(F_v)}
\ar[r] & \mu(F) \ar[r] & 0, \\ }
\label{lsilo1}%
\end{equation}
commutes, where the bottom row is the Moore sequence (\Cref{theorem:MooreSequence}) and the top row comes from Theorem \ref{theorem:MainIntroduction}.  The arrow $T^{\operatorname*{nc}}$ arrow arises from the reduced norm, followed by the Hilbert symbol (defined above), and arrow $\overline{T}^{\operatorname*{nc}}$ is the induced map on the quotient.

\item The map $T^{\operatorname*{nc}}$ is an epimorphism after inverting $2$.  The kernel $\ker T^{\operatorname*{nc}}$ agrees with the subgroup of divisible elements in $K_{2}(\mathsf{LCA}_{A,ab})$.

\item Both rows in Diagram \ref{lsilo1} are exact.

\item Taking the profinite completion of the entire diagram, the left two
downward arrows become isomorphisms after inverting $2$, the right downward
arrow becomes an isomorphism on the nose.

\item $K_{2}(\mathsf{LCA}_{A})^{\wedge}\cong\mu(F)$.
\end{enumerate}
\end{theorem}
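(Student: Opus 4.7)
The strategy closely parallels the proof of Theorem \ref{thm_numfieldsit}, with the exactness of Sequence \eqref{l_WangFullSequenceK2} substituting for the automatic injectivity of the reduced norm in the commutative case. For statement (1), the top row comes from applying Theorem \ref{theorem:MainIntroduction} to $A$ with non-connective $K$-theory and truncating via the injectivity of $K_{1}(A)\to\prod^{\prime}K_{1}(A_{v})$, as in Construction \ref{construction_AbstractHReciprocityLaw}; the bottom row is Moore's theorem (\Cref{theorem:MooreSequence}). Commutativity of the left square combines Remark \ref{rmk_ReducedNormInjectivities} with the definition $h_{v}^{\operatorname*{nc}}=h_{v}\circ\operatorname*{nr}_{A_{v}}$; the right square commutes by construction of $\overline{T}^{\operatorname*{nc}}$. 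Statement (3) is the combination of these two exactnesses. For statement (2), the hypothesis on Sequence \eqref{l_WangFullSequenceK2} applied at each $A_{v}$ gives that $\operatorname*{nr}\colon K_{2}(A_{v})\to K_{2}(F_{v})$ is injective with $2$-torsion cokernel; composing with Lemma \ref{lemma:ConstructionOfT} (surjectivity of $h_{v}$ with uniquely divisible kernel, and $h_{v}(\mathcal{O}_{v})=0$ for almost all $v$) yields that each $h_{v}^{\operatorname*{nc}}$ has uniquely divisible kernel and is surjective after inverting $2$, and ensures the image of $T^{\operatorname*{nc}}$ lies in the claimed direct sum.

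The genuinely new content lies in the analysis of $\overline{T}^{\operatorname*{nc}}$. Surjectivity is achieved by picking any finite place $v^{\ast}$ at which $A$ splits (which form a cofinite set), lifting a generator $\zeta\in\mu(F)$ through the surjection $\mu(F_{v^{\ast}})\twoheadrightarrow\mu(F)$ of Moore's sequence, and pulling back further through the surjective $h_{v^{\ast}}^{\operatorname*{nc}}$ to produce an element of $K_{2}(\mathsf{LCA}_{A,ab})$ supported solely in the $v^{\ast}$-component. The main obstacle is to show $\ker\overline{T}^{\operatorname*{nc}}$ is divisible; as in step (2) of the proof of Theorem \ref{thm_numfieldsit}, it suffices to show the natural map $\ker T^{\operatorname*{nc}}\to\ker\overline{T}^{\operatorname*{nc}}$ is surjective. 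Given $c\in\ker\overline{T}^{\operatorname*{nc}}$, lift to $b\in K_{2}(\mathsf{LCA}_{A,ab})$; exactness of Moore's sequence yields $y\in K_{2}(F)$ with $T^{\operatorname*{nc}}(b)=h(y)$. At each real place $v$ at which $A_{v}$ is non-split, $K_{2}(A_{v})\cong K_{2}(\mathbb{H})=0$, so the $v$-component of $T^{\operatorname*{nc}}(b)$ vanishes, which forces $h_{v}(y)=1$. Since the hypothesis on Sequence \eqref{l_WangFullSequenceK2} identifies the cokernel of $\operatorname*{nr}$ with $\bigoplus_{v\text{ real non-split}}\mathbb{Z}/2\mathbb{Z}$, and this cokernel is detected precisely by the Hilbert symbols at those places (they vanish on $\operatorname*{nr}(K_{2}(A))$ because $K_{2}(A_{v})=0$, and they are jointly surjective by weak approximation at real places), we conclude $y\in\operatorname*{nr}(K_{2}(A))$. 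Writing $y=\operatorname*{nr}(y')$ and subtracting the image of $y'$ from $b$ produces a representative in $\ker T^{\operatorname*{nc}}$ still mapping to $c$.

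Once $\ker\overline{T}^{\operatorname*{nc}}$ is divisible, statements (4) and (5) follow formally. Both exact sequences
\[0\to\ker T^{\operatorname*{nc}}\to K_{2}(\mathsf{LCA}_{A,ab})\to\operatorname*{im}T^{\operatorname*{nc}}\to 0\]
and
\[0\to\ker\overline{T}^{\operatorname*{nc}}\to K_{2}(\mathsf{LCA}_{A})\to\mu(F)\to 0\]
split because their kernels are divisible (hence injective in abelian groups); profinite completion is additive and annihilates divisible groups, and $\mu(F)^{\wedge}=\mu(F)$ as $\mu(F)$ is finite. This gives the right vertical isomorphism of (4) and, as an immediate consequence, statement (5). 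The left two verticals of (4) additionally use that the cokernels of $\operatorname*{nr}$ and of $T^{\operatorname*{nc}}$ both have exponent $2$ (coming from the real non-split places), so they vanish after inverting $2$.
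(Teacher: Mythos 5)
Your proposal is correct in substance, but it takes a genuinely different route from the paper. The paper's proof is a reduction to the commutative case: it runs Construction \ref{construction_AbstractHReciprocityLaw} twice, for $A$ and for its center $F$, maps the first exact row to the second via the reduced norm (using Remark \ref{rmk_ReducedNormInjectivities}), and invokes the assumed exactness of Sequence \ref{l_WangFullSequenceK2} both globally and at every place to see that the two vertical cokernels are the same group $\bigoplus_{w}\mathbb{Z}/2$ over the real non-split places; a diagram chase then yields $K_{2}(\mathsf{LCA}_{A})\cong K_{2}(\mathsf{LCA}_{F})$, i.e.\ Conjecture \ref{conj7}, after which all five statements are simply quoted from Theorem \ref{thm_numfieldsit}. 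You instead redo the proof of Theorem \ref{thm_numfieldsit} directly in the non-commutative setting: a place-by-place analysis of $h_{v}^{\operatorname{nc}}$, surjectivity of $\overline{T}^{\operatorname{nc}}$ via a single finite split place, and divisibility of $\ker\overline{T}^{\operatorname{nc}}$ via surjectivity of $\ker T^{\operatorname{nc}}\to\ker\overline{T}^{\operatorname{nc}}$, where the genuinely new input is your detection argument that any $y\in K_{2}(F)$ whose Hilbert symbols vanish at all real non-split places lies in $\operatorname{nr}(K_{2}(A))$ (containment, plus equality of the two index-$2^{n}$ subgroups, using the hypothesis together with joint surjectivity of the real symbols by weak approximation). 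This is a correct replacement for the paper's cokernel-matching step. What your route costs is the isomorphism $K_{2}(\mathsf{LCA}_{A})\cong K_{2}(\mathsf{LCA}_{F})$, which the paper's argument produces as a byproduct and which is of independent interest (it is how the paper verifies Conjecture \ref{conj7} under the hypothesis); what it buys is an explicit, self-contained description of $T^{\operatorname{nc}}$, its image and kernel, without passing through $F$.

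Two small repairs. First, you justify the vanishing of the components of $T^{\operatorname{nc}}(b)$ at real non-split places by ``$K_{2}(A_{v})\cong K_{2}(\mathbb{H})=0$''; although the paper asserts this in passing, it is not accurate ($K_{2}(\mathbb{H})$ is uniquely divisible, and $K_{2}(\mathbb{R})$ is $\mathbb{Z}/2$ plus a uniquely divisible group). What you actually need, and what does follow from the assumed local exactness of Sequence \ref{l_WangFullSequenceK2} at such $v$, is that $\operatorname{im}\bigl(\operatorname{nr}\colon K_{2}(A_{v})\to K_{2}(F_{v})\bigr)$ is an index-two subgroup of $K_{2}(\mathbb{R})$; since the only such subgroup is the divisible part $2K_{2}(\mathbb{R})=\ker h_{v}$, one gets $h_{v}^{\operatorname{nc}}=0$ at these places, and your chase goes through verbatim. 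Second, the claim that $\ker T^{\operatorname{nc}}$ is divisible inside the restricted product requires, as in the commutative case, control at the almost-all finite places where $A_{v}$ splits and $\mathfrak{A}_{v}$ is maximal; there Morita equivalence reduces the statement to Lemma \ref{lemma:ConstructionOfT}. This is routine but deserves a sentence.
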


We repeat that if Conjecture \ref{Conj_MerkurjevSuslin} is true, the exactness
of Sequence \ref{l_WangFullSequenceK2} holds unconditionally. In this case,
the proof will also show the validity of Conjecture \ref{conj7}.

\begin{proof}
We first use Construction \ref{construction_AbstractHReciprocityLaw} twice: first
for $A$ itself and then for its center $F$. This yields the exact rows in the
following diagram%
\[
\xymatrix{
0 \ar[d] & 0 \ar[d] \\
K_2(A) \ar[d]_{\operatorname{nr} } \ar[r] & \prod^{\prime}K_{2}(A_{v}%
) \ar[r] \ar[d]_{\operatorname{nr} } & K_2(\mathsf{LCA}_{A})
\ar[r] \ar[d] & 0 \\
K_2(F) \ar[r] \ar[d] & \prod^{\prime}K_{2}(F_{v}) \ar[r] \ar[d] & K_2(\mathsf
{LCA}_{F}) \ar[r] & 0 \\
\bigoplus_{w}\mathbb{Z}/2 \ar[r] & \bigoplus_{w}\mathbb{Z}/2
}
\]
and we map the top row to the bottom row using the reduced norm on $K_{2}$. By
the commutative square in Remark \ref{rmk_ReducedNormInjectivities}, the top
left square commutes and the right square commutes by the exactness of the
rows. Next, as already indicated in the above diagram, we use Sequence
\ref{l_WangFullSequenceK2} to obtain the exact two columns on the left. As
both cokernels match, we deduce that the reduced norm induces an isomorphism%
\[
K_{2}(\mathsf{LCA}_{A})\underset{\operatorname*{nr}}{\overset{\cong
}{\longrightarrow}}K_{2}(\mathsf{LCA}_{F})\text{,}%
\]
i.e. we obtain Conjecture \ref{conj7}. All remaining claims now follow from
Theorem \ref{thm_numfieldsit}.
\end{proof}

If one could show that $K_{2}(\mathsf{LCA}_{A})\cong K_{2}(\mathsf{LCA}_{F})$
by a different method, this might be a promising start to reverse the logic in
order to prove Conjecture \ref{Conj_MerkurjevSuslin}. However, one might also
need some knowledge of the boundary map $K_{3}(\mathsf{LCA}_{A})\rightarrow
K_{2}(A)$.

\bibliographystyle{amsalpha}
\providecommand{\bysame}{\leavevmode\hbox to3em{\hrulefill}\thinspace}
\providecommand{\MR}{\relax\ifhmode\unskip\space\fi MR }
\providecommand{\MRhref}[2]{%
  \href{http://www.ams.org/mathscinet-getitem?mr=#1}{#2}
}
\providecommand{\href}[2]{#2}

\end{document}